\newcommand{\norm}[1]{\left\lVert#1\right\rVert}
\newcommand{\sub}[1]{\mathfrak{#1}}
\def \N{\mathbb N}
\def \Z{\mathbb Z}
\def \R{\mathbb R}
\def \P{{\mathbb P}}
\def \pa{{\partial}}
\def \O{\mathcal{O}}
\def \T{\mathbb{T}}
\def \E{\mathbb{E}}
\newcommand{\Ac}{\mathcal A}
\newcommand{\Ic}{\mathcal I}
\newcommand {\Jc}{\mathcal{J}}
\newcommand{\Kc}{\mathcal K}
\newcommand{\Lc}{\mathcal L}
\newcommand{\Nc}{\mathcal N}
\newcommand{\Pc}{\mathcal P}
\newcommand{\Rc}{\mathcal R}
\newcommand{\Sc}{\mathcal S}
\newcommand{\Tc}{\mathcal T}
\newcommand{\Wc}{\mathcal W}
\newcommand{\Ss}{\mathscr S}
\newcommand{\Kf}{\mathfrak K}
\numberwithin{equation}{section}
\theoremstyle{plain}
\newtheorem{thm}{Theorem}[section]
\newtheorem{claim}[thm]{Claim}
\newtheorem{lem}[thm]{Lemma}
\newtheorem{prop}[thm]{Proposition}
\newtheorem{cor}[thm]{Corollary}
\theoremstyle{definition}
\newtheorem{defn}[thm]{Definition}
\newtheorem{rk}[thm]{Remark}
\theoremstyle{plain}
\theoremstyle{remark}
\theoremstyle{plain}
\title{Rigorous derivation of damped-driven wave turbulence theory}
\author[R. Grande]{Ricardo Grande}
\address{International School for Advanced Studies (SISSA), Via Bonomea 265, 34136, Trieste, Italy}
\email{rgrandei@sissa.it} 
\author[Z. Hani]{Zaher Hani}
\address{Department of Mathematics, University of Michigan, Ann Arbor, MI, USA}
\email{zhani@umich.edu} 
\begin{document}
	
	\begin{abstract}
	We provide a rigorous justification of various kinetic regimes exhibited by the nonlinear Schr\"{o}dinger equation with an additive stochastic forcing and a viscous dissipation. The importance of such damped-driven models stems from their wide empirical use in studying turbulence for nonlinear wave systems. The force injects energy into the system at large scales, which is then transferred across scales, thanks to the nonlinear wave interactions, until it is eventually dissipated at smaller scales. The presence of such scale-separated forcing and dissipation allows for the constant flux of energy in the intermediate scales, known as the inertial range, which is the focus of the vast amount of numerical and physical literature on wave turbulence.

Roughly speaking, our results provide a rigorous kinetic framework for this turbulent behavior by proving that the stochastic dynamics can be effectively described by a deterministic damped-driven kinetic equation, which carries the full picture of the turbulent energy dynamic across scales (like cascade spectra or other flux solutions). The analysis extends previous works in the unperturbed setting \cite{DengHani}--\cite{DengHani3} to the above empirically motivated damped driven setting. Here, in addition to the  size $L$ of the system, and the strength $\lambda$ of the nonlinearity, an extra thermodynamic parameter has to be included in the kinetic limit ($L\to \infty, \lambda\to 0$), namely the strength $\nu$ of the forcing and dissipation. Various regimes emerge depending on the relative sizes of $L$, $\lambda$ and $\nu$, which give rise to different kinetic equations. Two major novelties of this work is the extension of the Feynman diagram analysis to additive stochastic objects, and the sharp asymptotic development of the leading terms in that expansion.
		\end{abstract}

	\maketitle
	
  \tableofcontents

\section{Introduction}

In an informal manner of expression, wave turbulence theory can be described as \emph{Boltzmann meets Kolmogorov in a wavy setting.} This concise slogan surprisingly captures the essence of the matter, as wave turbulence lies at an intersection of kinetic theory, a legacy of Boltzmann, and Kolmogorov's vision of turbulence in the context of nonlinear interacting waves. This striking intersection of ideas was first uncovered by Zakharov in 1960's \cite{Zakharov65}, when he showed that the kinetic theory for wave systems, a theory that has been developed since the late 1920s to parallel Boltzmann's kinetic theory for particle systems, can be used to give a rather systematic formulation of Kolmogorov's ideas of turbulence, such as energy cascades and their power-type spectra, in the context of nonlinear wave systems. 

\medskip

Like in Boltzmann's particle kinetic theory, the central object in wave kinetic theory is a kinetic PDE, known as the \emph{wave kinetic equation}, which plays the role of the Boltzmann equation to describe the lowest order statistics of the wave interactions, in the limit where the number of waves goes to infinity and the interaction strength goes to zero. More precisely, the microscopic system in this wave setting is given by a wave-type nonlinear Hamiltonian PDE posed, for example, on a large box of size $L$, which we shall denote by $\T^d_L$, and with a nonlinearity of strength $\lambda$. The parameter $L$ encodes the number of interacting waves (which is $\O(L^d)$) and is sent to infinity in the thermodynamic limit, whereas $\lambda$ is sent to zero. The wave kinetic equation appears as the effective equation for the second order moments of the solution $u(x, t)$, that can be described in Fourier space by $\mathbb E |\widehat u(k, t)|^2$. Just like in the classical Boltzmann theory, the higher order statistics are supposed to reduce to products of the lowest order ones (here second order), a phenomenon that is known as \emph{propagation of chaos} \cite{ZLF,Nazarenko}. 

\medskip

Zakharov discovered that such wave kinetic equations sustain stationary in time, power-type, solutions that exhibit a \emph{constant flux} of energy across scales, like forward or backward cascades. Such solutions are exact analogs to Kolmogorov's spectra in hydrodynamic turbulence, and as such they offer a deep insight into the out-of-equilibrium statistical physics of a dispersive system including its turbulent features. It is safe to say that the vast majority of the physical and empirical literature on wave kinetic theory has focused on exhibiting, either numerically, experimentally, or empirically, such power-type solutions.  While Zakharov's solutions can be formally obtained from the unperturbed wave kinetic equation, they are only observed in practice (such as in numerical simulations or empirical experiments) in the presence of forcing and dissipation in the system that are separated\footnote{While it is permissible for the forcing and dissipation to coexist on certain scales, the effect of each should be negligible on the scales where the other is significant.} by a large stretch of intermediate scales, known as the \emph{inertial range}, over which the system is left basically unperturbed. The presence of such scale-separated forcing and dissipation allows for the constant flux of energy in the \emph{inertial range}, on which the wave kinetic equation is supposed to hold. Of course, such constant flux of energy across scales cannot be sustained in a conservative system (i.e. without forcing and dissipation), at least not for finite energy solutions. 

\medskip 

Mathematically speaking, a lot needs to be done to get to, or even make sense of, this exciting combination of ideas presented by the Kolmogorov-Zakharov spectra. Naturally, the first step would be to justify the wave kinetic theory and establish its rigorous mathematical foundations. This has been an active avenue of research over the past ten years. We will review this progress in more detail later in the introduction, but that justification has been achieved successfully for some semilinear models (like the nonlinear Schr\"odinger equation), and even over arbitrarily long time intervals \cite{DengHani5}, which was the first result of its kind, even compared to the particle setting which only recently featured a similar progress \cite{DengHaniMa24}. Nonetheless, all the recent progress on rigorous wave kinetic theory has been restricted to situations where either the equation is left conservative or in the presence of phase-randomizing forces that don't introduce energy into the system, and act on all scales. In particular, this literature does not cover the setting that is most relevant to observing the Kolmogorov-Zakharov spectra, namely the one we described above in which an additive force injects energy into the system which is eventually dissipated by damping. This is precisely the setting we consider in this manuscript, where we study a forced-dissipated nonlinear Schr\"odinger model introduced by Zakharov and L'vov \cite{ZakharovLvov} for that same purpose.

Roughly speaking, our results provide a rigorous kinetic framework for the turbulent behavior of the damped-driven stochastic dynamics. More precisely, we show that the stochastic dynamics can be effectively described by a deterministic damped-driven kinetic equation, in a similar spirit to the recently established results in the unperturbed setting by Deng and the second author \cite{DengHani}--\cite{DengHani5}. This is despite the different randomness structures in the two settings: in the unperturbed setting, the randomness is coming from the initial data, whereas in our setting here, it is mainly coming from the stochastic force (especially when the initial data is zero). We do not attempt in this manuscript to match the progress achieved by the most recent results in the unperturbed setting, namely \cite{DengHani2}--\cite{DengHani5}, but only restrict to the subcritical regime of the problem (precisely, the parallel results to \cite{DengHani}). This is meant to provide a proof of concept that the same results should carry over to the damped-driven setting by combining the ideas in this paper and those used in the unperturbed setting (along with some technical developments; see Section \ref{intro.extension} below). 

Finally, we iterate that the importance of our results derives mainly from the fact that they show that \emph{one can reduce the formulation and proof of practically any turbulence conjecture for waves to the study of a single deterministic damped-driven kinetic equation}. We believe that this equation is the right theoretical framework to understand the, up until now formal and non-rigorous, turbulent statistics (like cascade spectra) of nonlinear wave systems. Moreover, it allows to draw parallels, and exchange insights, with hydrodynamic turbulence theory, where such kinetic formulation of turbulence is not present. The latter was the aim of the recent work of Bedrossian \cite{Bed24}, which explained the strong analogy between wave and hydrodynamic turbulence and formulated various conjectures based on this analogy. As we explain in \Cref{into.analysisofwke}, those conjectures can now be studied, numerically and rigorously, through our derived kinetic equation. 

\subsection{Setup}

We consider the following forced and damped NLS equation:
\begin{equation}\label{eq:intro_stochasticNLS}
 \pa_t u + i \Delta u = i \lambda\, \left(|u|^2- \frac{2}{L^d} \int_{\T_L^d} |u|^2 \right)\, u - \nu\, (1-\Delta)^r u + \sqrt{\nu} \, \dot{\beta}^{\omega}, \qquad u(t=0, x)=u_{\mathrm{in}}(x),
\end{equation}
on the torus $\T_L^d=\R^d/(L\Z^d)$, where $\lambda,\nu>0$ and $r\in (0,1]$.

\smallskip

Let $\Z_L^d=(L^{-1}\Z)^d$ be the dual of $\T_L^d$, with the Fourier transform and its inverse defined by
\begin{equation}\label{eq:defFourier}
\widehat{u}(t,k)=u_k(t) = {L^{-d/2}}\, \int_{\T_L^d} u(x)\, e^{-2\pi i  k\cdot x}\, dx\qquad \mbox{and} \qquad u(t,x)=L^{-d/2}\, \sum_{k\in \Z^d_L} u_k(t)\, e^{2\pi i k\cdot x} .
\end{equation}

Randomness comes into the system through the additive stochastic force $\beta^\omega$, and possibly the initial data $u_{\mathrm{in}}$ if the latter is nonzero. More precisely, the initial data and the forcing are taken as follows:
\begin{equation}\label{eq:intro_data}
\begin{split}
u^{\omega}(t,x)|_{t=0}&=u_{\mathrm{in}}^{\omega}(x)=L^{-d/2} \, \sum_{k\in\Z_L^d} c_k \, \eta_k^{\omega} \, e^{2\pi i k\cdot x}\\
 \beta^{\omega}(t,x)&= L^{-d/2}\, \sum_{k\in\Z_L^d} b_k\, \beta_k^{\omega} (t) \, e^{2\pi i k\cdot x}.
 \end{split}
\end{equation}
Here, the family $\{\eta_k^{\omega}\}_{k\in\Z_L^d}$ are independent complex standard Gaussian random variables, while\\ $\{\beta_k^{\omega} (t)\}_{k\in\Z_L^d}$ are independent complex standard Wiener processes, which are also independent of  $\{\eta_k^{\omega}\}_{k\in\Z_L^d}$. The coefficients $c_k=c(k)$ and $b_k =b(k)$, $c,b:\R^d \rightarrow \R$, are sufficiently smooth and decaying functions of $k$, which we take to be Schwartz functions to simplify the statement of our main results (although a finite amount of decay and regularity given by the first $100d$ Schwartz seminorms is sufficient for all our results).

In Fourier space, the dissipation operator appearing in \eqref{eq:intro_stochasticNLS} is a multiplier with coefficients given by:
\begin{equation}\label{eq:dissipation}
\gamma_k := (1+ |k|^2)^r,\quad \mbox{where}\quad  |k|^2=\sum_{j=1}^d |k^{(j)}|^2\quad \mbox{for}\quad k=(k^{(1)},\ldots,k^{(d)})\in\R^d.
\end{equation}

\subsubsection{The model} 
The normalization prefactor $L^{-d/2}$ in \eqref{eq:intro_data} guarantees that, for each $x\in\T_L^d$, the initial datum and the forcing have average size which is \emph{asymptotically independent of $L$}. Indeed,
\begin{equation}
\mathbb{E} |u_{\mathrm{in}}(x)|^2 = L^{-d} \sum_{k\in\mathbb{Z}_L^d} c_k^2 \longrightarrow \int_{\mathbb{R}^d} c(k)^2 \, \mathrm{d}k = \mathcal{O} (1)\qquad \mbox{as } L\rightarrow\infty.
\end{equation}
As a result, the nonlinearity in \eqref{eq:intro_stochasticNLS} has size $\lambda$, and the \emph{kinetic timescale} (or Van-Hove timescale) is given by 
\begin{equation}\label{eq:intro_Tkin}
T_{\mathrm{kin}}:=\lambda^{-2}.
\end{equation}

The parameters $\nu$ in front of the dissipation and $\sqrt{\nu}$ in the forcing balance input/output of energy in the system\footnote{While other choices of parameters are covered in \Cref{rk:timescales}, we focus our presentation on the most interesting choice.}.
Indeed, an application of the It\^{o} formula for \eqref{eq:intro_stochasticNLS} (cf. \Cref{subsec:bal_energy}) yields the balance of energy equation:
\begin{equation}\label{eq:intro_BalanceEnergy}
\E \norm{u(t)}_{L^2(\T_L^d)}^2 - \sum_{k\in\Z_L^d} c_k^2 = -2 \nu \, \E\int_0^t  \norm{(1-\Delta)^{r/2} u (t')}_{L^2(\T_L^d)}^2\, dt' + 2\nu \,  t \, \sum_{k\in\Z_L^d} b_k^2
\end{equation}
which, by the Gr\"onwall inequality, implies that 
\begin{equation}\label{eq:intro_BalanceEnergy2}
\E \norm{u(t)}_{L^2(\T_L^d)}^2 \leq (1-e^{-2\nu t})\,\sum_{k\in\Z_L^d} b_k^2 + e^{-2\nu t}\, \sum_{k\in\Z_L^d} c_k^2.
\end{equation}
It follows that the timescale for the forcing to input enough energy into the system, known as \emph{forcing timescale}, is given by
\begin{equation}\label{eq:intro_Tfor}
T_{\mathrm{for}}=\nu^{-1}.
\end{equation}

We remark here that this choice of additive stochastic forcing that is balanced by dissipation is standard in the study of turbulent systems, both wave-type and hydrodynamic. It allows for the necessary scale separation between the needed source and sink of energy, in a way that leaves an inertial range, where Hamiltonian interactions dominate, to form. For the sake of giving some representative examples, in the setting of wave turbulence, we cite the works \cite{MMT1, CMMT, Onorato, ZakharovLvov}; and in the setting of hydrodynamic turbulence we mention the mathematical works \cite{HaiMatt06, Bed1, Bed2}.

\subsubsection{Scaling laws}\label{intro.scalinglaws} Our goal is to study the dynamics of \eqref{eq:intro_stochasticNLS} as $\nu\rightarrow 0$, $\lambda\rightarrow 0$ and $L\rightarrow\infty$. These parameters are related by \emph{scaling laws} that specify the relative rates of their convergence to the limits. In particular, we assume that $\lambda = L^{-\kappa_1}$ and $\nu \sim L^{-\kappa_2}$ for some $\kappa_1, \kappa_2>0$. In the absence of forcing ($\kappa_2=\infty$), there are natural restrictions on $\kappa_1$ for one to obtain a useful kinetic description in the thermodynamic limit. This was precisely explained in \cite[Section~1.3]{DengHani4} (see also \cite{DengHaniExpo}), where it is shown that the optimal range of $\kappa_1$ on a general torus is the interval $(0, 1)$ (for example, this is the optimal range on the square torus), which can be extended to $(0, d/2)$ in the case of a generically irrational torus. The same limitations carry over to our forced-dissipated setting, so we shall restrict ourselves to the same ranges of $\kappa_1$. Note that $\kappa_1$ dictates the size of the kinetic timescale $T_{\mathrm{kin}}=L^{2\kappa_1}$ in terms of the system size $L$, and the mentioned range of $\kappa_1$ limits $T_{\mathrm{kin}}$ to the range $1\ll T_{\mathrm{kin}}\ll L^2$ on a general torus, and $1\ll T_{\mathrm{kin}}\ll L^d$ for a generically irrational torus.

\subsubsection{Heuristics}\label{intro.heuristics} Heuristically, one expects that the limiting dynamics of \eqref{eq:intro_stochasticNLS} to depend on whether the ratio
\begin{equation}\label{eq:intro_varrho}
\varrho := \frac{T_{\mathrm{kin}}}{T_{\mathrm{for}}}=\nu \lambda^{-2} \sim L^{2\kappa_1-\kappa_2}
\end{equation}
 tends to a fixed constant in $(0, \infty)$, to 0, or to infinity in the limit. 
 This trichotomy leads to the following asymptotic regimes:
\begin{enumerate}[label=(\roman*)]
\item If $2\kappa_1-\kappa_2=0$, nonlinear wave interactions and forcing/dissipation are comparable, and we expect a forced/damped wave kinetic equation in the kinetic limit.
\item If $2\kappa_1-\kappa_2<0$, nonlinear interactions dominate over the forcing and dissipation, and thus we expect similar behavior as in the case of the unforced and undamped cubic Schr\"odinger equation.
\item If $2\kappa_1-\kappa_2>0$, the forcing and dissipation dominate over nonlinear interactions, and we expect a kinetic limit entirely determined by forcing and dissipation.
\end{enumerate}
 
Roughly speaking, our results confirm these heuristics by deriving the adequate kinetic limit in each regime. But before stating the result, let us review the kinetic theory for the NLS equation in the absence of forcing and dissipation.

\subsubsection{The Wave Kinetic Equation for the NLS equation}\label{intro.NLSlit}

In the second case where $\varrho\rightarrow 0$, one expects the effective dynamics to be similar to those in the unforced and undamped NLS equation\footnote{This equation is equivalent to \eqref{eq:intro_stochasticNLS} with $\nu=0$, as one can go from one to the other using the phase change $u \to e^{\frac{2i\lambda t}{L^d}\int |u|^2 dx}\cdot u$ and noting that $\int |u|^2 dx$ is a conserved quantity for either equation.}:
 \begin{equation}
 i\pa_t v + \Delta v = \lambda |v|^2\, v \qquad x \in \T_L^d=\R^d/(L\Z^d).
 \end{equation}
For $\lambda = L^{-\kappa_1}$, and as $L\rightarrow\infty$, the average energy density of the $k$-th Fourier mode at the kinetic time is given by $\mathbb E |\widehat v(t, k)|^2$. This captures the full lowest (second) order statistics of the solution. The kinetic theory postulates that this energy density is described by the solution $n(t, k)$ of the \emph{Wave Kinetic Equation} given by:

\begin{equation}\label{eq:intro_WKE}
\pa_t n (t, k) = \Kc \left(n(t, k)\right) \qquad k\in\R^d,
\end{equation}
where 
\begin{equation}\label{eq:def_K}
\begin{split}
\Kc (\phi)(k)=\int_{\substack{k=k_1-k_2+k_3\\ k_1,k_2,k_3\in\R^d}}\  & 4\pi\,\delta_{\R}(|k_1|^2 - |k_2|^2 + |k_3|^2 -|k|^2) \left( \frac{1}{\phi} - \frac{1}{\phi_1}+\frac{1}{\phi_2} -\frac{1}{\phi_3}\right) \, \phi\, \phi_1\, \phi_2\, \phi_3 \,dk_1\, dk_2\, dk_3
\end{split}
\end{equation}
and where $\phi_j = \phi(k_j)$, $\phi=\phi(k)$ and $\delta$ is the Dirac delta. More precisely, one expects that 
\begin{equation}\label{approx0}
\mathbb E |\widehat v(t, k)|^2\approx n(\frac{t}{T_{\mathrm{kin}}}, k), \qquad as \; L \to \infty.
\end{equation}

\subsection{Statement of results}

The aim of this manuscript is to extend the above picture to the forced-dissipated equation \eqref{eq:intro_stochasticNLS}, which is a problem of particular scientific relevance from the point of view of turbulence as we explained earlier. Given that this is the first paper proving such an approximation in this setting, we decided to keep it at reasonable length and only prove the subcritical results, i.e. ones that hold for times $\leq T_{\mathrm{kin}}^{1-\epsilon}$ for some $\epsilon>0$. This corresponds to the progress in the unperturbed case (i.e. when $\nu=0$) that preceded the results in \cite{DengHani2}--\cite{DengHani4}. More precisely, we will be content with proving the analogous results to \cite{DengHani}, and focusing the presentation on the new ingredients needed to address the more challenging stochastic input. 
As we shall remark in Section \ref{intro.extension}, we contend that the ideas in this manuscript can be combined with the more sophisticated combinatorial and analytic methodology in \cite{DengHani2, DengHani4} in order to obtain similar results at the kinetic timescale. 

\medskip

The main result of this paper makes the intuition described in Section \ref{intro.heuristics} above rigorous, at least over some subcritical time intervals. Our first theorem is stated for a specific scaling law for which we can cover the full subcritical regime for the problem. The following theorem will cover other scaling laws.

\begin{thm}\label{thm:main} Consider equation \eqref{eq:intro_stochasticNLS} for $d\geq 2$, and $r\in (0,1]$. Assume that the initial data and the stochastic force are taken as in \eqref{eq:intro_data} with $b,c\in \Sc (\R^d; [0,\infty))$ being Schwartz functions. Fix $0<\varepsilon\ll 1$ to be sufficiently small.

Suppose that $\kappa_1=1+\frac{\varepsilon}{2}-$ so that $T_{\mathrm{kin}}=L^{2\kappa_1}=L^{2+\varepsilon-}$ (here, $C-$ represents a number strictly smaller than and sufficiently close to $C$), and that $\nu \sim L^{-\kappa_2}$, $\kappa_2>0$. Then there exists some $0<\delta=\delta(d, r, \varepsilon, \kappa_2)\ll 1$ such that, as $L\rightarrow\infty$, we have that the solution to the initial value problem given by \eqref{eq:intro_stochasticNLS}-\eqref{eq:intro_data} satisfies:
\begin{equation}\label{eq:main}
\E |\widehat{u}(t,k)|^2 = n_{\mathrm{app}} \left(\frac{t}{T_{\mathrm{kin}}},k\right) + \O_{\ell^{\infty}_k} \left( L^{-\delta}\, \frac{t}{T_{\mathrm{kin}}}\right)
\end{equation}
for all $L^{\delta}\leq t\leq L^{-\varepsilon} T_{\mathrm{kin}}$. Here, $n_{\mathrm{app}}$ is given according to the regime of $2\kappa_1-\kappa_2$ as follows:

\

\begin{enumerate}[label=(\roman*)]
\item if $2\kappa_1-\kappa_2=0$ (i.e. $T_{\mathrm{kin}}\sim T_{\mathrm{for}}$), then
\begin{equation}
\begin{split}
n_{\mathrm{app}} (t,k) & = f(t,k) + \int_0^t  e^{-2\varrho \gamma_k (t-s)} \Kc (f) (k,s)\, ds,\\
f(t,k) & =c_k^2\, e^{-2\varrho \gamma_k t}+\int_0^t e^{-2\varrho \gamma_k (t-s)} 2\varrho b_k^2\, ds
\end{split}
\end{equation}
is the first iterate of the damped/driven Wave Kinetic Equation (WKE):
\begin{equation}\label{eq:forced_WKE}
\pa_t n = \Kc (n) - 2\varrho \gamma \, n +2\varrho\, b^2,\qquad n(0,k)=c_k^2,
\end{equation}
for $\Kc$ defined in \eqref{eq:def_K} and $\varrho$ defined in \eqref{eq:intro_varrho}.

\

\item if $2\kappa_1-\kappa_2<0$ (i.e. $T_{\mathrm{kin}}\ll T_{\mathrm{for}}$), then
\begin{equation}\label{eq:WKE_cubic}
n_{\mathrm{app}} (t,k)=c_k^2 + t\, \Kc (c^2_{\cdot}) (t,k)
\end{equation}
is the first iterate of the WKE:
\begin{equation}
\pa_t n = \Kc (n), \qquad n(0,k)=c_k^2,
\end{equation}

\

\item if $2\kappa_1-\kappa_2<0$ (i.e. {$T_{\mathrm{kin}}\gg T_{\mathrm{for}}$}), then we have that 
\begin{equation}
n_{\mathrm{app}} (t,k)=c_k^2\, e^{-2\varrho \gamma_k t}+ \frac{b_k^2}{\gamma_k} \left( 1- e^{-2\varrho \gamma_k t}\right) +\O(L^{2\kappa_1-\kappa_2})
\end{equation}
is the solution to the equation:
\begin{equation}\label{eq:trivial_WKE}
\pa_t n = - 2\varrho \gamma \, n +2\varrho\, b^2,\qquad n(0,k)=c_k^2.
\end{equation}
\end{enumerate}
\end{thm}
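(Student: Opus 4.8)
The plan is to expand the solution $u$ of \eqref{eq:intro_stochasticNLS} as a perturbative (Picard/Duhamel) series in the nonlinearity strength $\lambda$, taking into account both sources of randomness (the Gaussian initial data and the additive white noise), and then to track which Feynman diagrams survive the limit $L\to\infty$ in each of the three regimes of $2\kappa_1-\kappa_2$. First I would set up a profile equation: writing $u_k(t)=e^{-i\gamma^{\mathrm{disp}}_k t}a_k(t)$ to remove the free flow and using Duhamel, one gets $a_k(t)=e^{-\nu\gamma_k t}c_k\eta_k+\sqrt\nu\int_0^t e^{-\nu\gamma_k(t-s)}\,d\beta_k(s)+i\lambda\int_0^t e^{-\nu\gamma_k(t-s)}\,(\text{cubic resonant sum})(s)\,ds$. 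Call the zeroth iterate $a^{(0)}$ (the linear damped-driven object, Gaussian with explicitly computable covariance) and iterate the Duhamel map to produce $a=\sum_{j\ge 0}\lambda^j a^{(j)}$. The key new structural point, relative to \cite{DengHani}, is that each $a^{(j)}$ is now a polynomial in \emph{two} families of Gaussians — the $\eta_k$'s and the Wiener increments — so a Wick/Itô-type expansion produces diagrams with two species of leaves; I would organize these exactly as in the Feynman-diagram formalism of \cite{DengHani}, but with an enriched decoration recording whether each leaf is an $\eta$-leaf (weight $c_k^2$) or a noise-leaf (weight $\nu b_k^2$, carrying an extra time integration and an extra factor of $\nu$).

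The second step is to compute $\E|\widehat u(t,k)|^2=\E|a_k(t)|^2$ order by order. The $\O(\lambda^0)$ term is $\E|a^{(0)}_k(t)|^2$, which by the Itô isometry equals $c_k^2 e^{-2\nu\gamma_k t}+\nu b_k^2\int_0^t e^{-2\nu\gamma_k(t-s)}\,ds$; after the time rescaling $t\mapsto t\,T_{\mathrm{kin}}$ and using $\varrho=\nu T_{\mathrm{kin}}$ this is exactly the function $f(t,k)$ in case (i) (and degenerates correctly in cases (ii), (iii)). The $\O(\lambda^2)$ term splits, after Wick contractions, into a "principal" diagram — the one whose time-frequency integral, over the near-resonant set $|k_1|_\zeta^2-|k_2|_\zeta^2+|k_3|_\zeta^2-|k|_\zeta^2=O(T_{\mathrm{kin}}^{-1})$, reproduces the collision operator $\Kc$ applied to $f$, now dressed by the damping kernel $e^{-2\varrho\gamma_k(t-s)}$ — plus "error" diagrams. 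The counting of the principal term is the standard number-theoretic lattice-point count on the generically irrational torus that forces the restriction $\kappa_1<1+\varepsilon/2$; here one simply checks that inserting the damping exponentials and the extra noise-leaf weights does not change the leading asymptotics, only multiplies the integrand by bounded, smoothly-varying factors, so the same counting goes through. Higher-order iterates $\lambda^j$, $j\ge 4$, and all non-principal $\lambda^2$ diagrams are shown to be $\O_{\ell^\infty_k}(L^{-\delta}\,t/T_{\mathrm{kin}})$ by the same subcriticality mechanism as in \cite{DengHani}: each additional pair of vertices costs a net negative power of $L$ on the time interval $t\le L^{-\varepsilon}T_{\mathrm{kin}}$.

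The third step is bookkeeping of the $\nu$-powers to separate the three regimes. Every diagram carries a factor $\nu^{(\#\text{noise-leaves})/2}\cdot(\#\text{something})$, and crucially $\varrho=\nu T_{\mathrm{kin}}=L^{2\kappa_1-\kappa_2}$ measures how these interact with the rescaled time. In case (i), $\varrho\asymp 1$, so the damping kernels and the noise-source term survive at the same order as $\Kc$, and assembling the $\lambda^0$ and $\lambda^2$ principal contributions yields exactly the first Picard iterate of \eqref{eq:forced_WKE}. In case (ii), $\varrho=o(1)$: the damping exponentials and the $2\varrho b^2$ source are negligible (absorbed into the $\O$-error after noting $\varrho\le L^{-\varepsilon'}$), and one recovers the conservative first iterate $c_k^2+t\,\Kc(c^2)(t,k)$, i.e. precisely the output of \cite{DengHani} with Gaussian data of variance $c_k^2$. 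In case (iii), $\varrho\to\infty$ (here the statement's "$2\kappa_1-\kappa_2<0$" is a typo for $>0$): now the nonlinear diagrams are suppressed by a power of $\varrho^{-1}\cdot(\text{something})=L^{-(2\kappa_1-\kappa_2)}$ relative to the linear damped-driven ODE, whose solution is the stated $c_k^2 e^{-2\varrho\gamma_k t}+\frac{b_k^2}{\gamma_k}(1-e^{-2\varrho\gamma_k t})$; the remainder is $\O(L^{2\kappa_1-\kappa_2})$ exactly because the leading nonlinear correction is $\Kc(f)$-sized but divided by $\varrho$.

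The main obstacle — and the reason this is not a routine transcription of \cite{DengHani} — is the third step combined with the diagram analysis of the second: one must run the entire Feynman-diagram machinery with \emph{additive} stochastic leaves, which unlike the multiplicative random-data leaves of \cite{DengHani} come with their own time integrations (from the Duhamel–Itô formula) and do not factor out of the time-resonance analysis. Controlling the resulting mixed time integrals, and proving that the noise-leaf insertions genuinely cost the advertised powers of $\nu$ (so that the trichotomy is sharp and the error terms in (iii) really are $\O(L^{2\kappa_1-\kappa_2})$ rather than merely $o(1)$), is where the "sharp asymptotic development of the leading terms" advertised in the abstract is needed; everything else is a careful but essentially established adaptation of the subcritical estimates of \cite{DengHani}.
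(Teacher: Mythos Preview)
Your overall architecture is correct and matches the paper: Picard expansion around the damped-driven linear flow, Wick/Isserlis on the two Gaussian inputs, subcritical diagram bounds to kill higher iterates and the remainder, then identification of the $\lambda^0$ and $\lambda^2$ contributions with the first iterate of the relevant kinetic equation. You also correctly flag that the additive-noise leaves carry their own time integrations and do not factor out; in the paper this is handled by proving the two-point covariance bound $\bigl|\E[\widetilde{w_k}^{(0)}(\tau_1)\overline{\widetilde{w_k}^{(0)}(\tau_2)}]\bigr|\lesssim_N \langle\tau_1-\tau_2\rangle^{-N}\langle\tau_1\rangle^{-1}\langle\tau_2\rangle^{-1}$ (Lemma~3.3), which is exactly what allows the reduction to the same lattice-point counting as in \cite{DengHani}.

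There is, however, a genuine gap in your treatment of the principal $\lambda^2$ term when $\varrho\gtrsim 1$ (regimes (i) and (iii)). You write that ``inserting the damping exponentials and the extra noise-leaf weights does not change the leading asymptotics, only multiplies the integrand by bounded, smoothly-varying factors, so the same counting goes through.'' This is precisely where the argument breaks. After the Wick reduction, the $\lambda^2$ contribution is a double time integral of the form $\int_0^t\int_{t_2}^t e^{-\varrho\Gamma_+(t-\cdot)}\cos\bigl(T_{\mathrm{kin}}\Omega(t_1-t_2)\bigr)\prod f(k_j,t_2)\,dt_1dt_2$. When $\varrho\to 0$ one integrates the cosine by parts and each remainder gains a power of $\varrho$, recovering the Deng--Hani kernel; but when $\varrho\gtrsim 1$ those remainder terms are \emph{not} lower order, and the damping exponential is not a passive smooth factor --- it is what produces the approximation-to-identity behavior. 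Integrating in $t_1$ first leads to kernels $h_0(x,y)=\tfrac{4y}{y^2+x^2}$, $h_1(x,y)=\tfrac{4y\cos(\varrho\tau x)}{y^2+x^2}$, $h_2(x,y)=\tfrac{4x\sin(\varrho\tau x)}{y^2+x^2}$ evaluated at $x=\nu^{-1}\Omega$, $y=\Gamma_-$. All three behave like $\delta(\Omega)$ as $\nu\to 0$, and the correct answer emerges only after an \emph{exact cancellation} between the $h_1$ and $h_2$ delta contributions --- a cancellation that depends on the algebraic fact that $\Gamma_->0$ on $\{\Omega=0\}$ for the specific dissipation $\gamma_k=(1+|k|_\zeta^2)^r$ (Claim~4.5 in the paper). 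Moreover $h_2$ is not absolutely integrable in $x$, so even showing it concentrates requires exploiting its oscillations. This ``sharp asymptotic development'' (Theorem~4.6 and Definition~4.7 of acceptable kernels) is the actual content behind the abstract's phrase, not the $\nu$-power bookkeeping of noise leaves that you point to in your last paragraph.
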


We summarize this result in  \Cref{fig:kinetic_table} below.

Our second theorem covers more general scaling laws and aspect rations of the torus. The latter is important to expose the effects that different boundary conditions can have on the result:

\begin{thm}\label{thm:main2} 
Consider the equation:
\begin{equation}\label{eq:intro_stochasticNLS_rescaledT}
 \pa_t u + i \Delta_{\zeta} u = i \lambda\, \left(|u|^2- \frac{2}{L^d} \int_{\T_L^d} |u|^2 \right)\, u - \nu\, (1-\Delta_{\zeta})^r u + \sqrt{\nu} \, \dot{\beta}^{\omega}, \qquad u(t=0, x)=u_{\mathrm{in}}(x),
\end{equation}
on the torus $\T_L^d=\R^d/(L\Z^d)$. The torus can be rational or irrational, which is captured by the (rescaled) Laplacian:
\begin{equation}
\Delta_{\zeta} := \sum_{j=1}^{d} \zeta_j \, \pa_j^2 , \qquad \zeta=(\zeta_1,\ldots,\zeta_d)\in [1,2]^d ,
\end{equation}
where $\zeta_j$ represent the aspect ratios of the torus. Under the assumptions of \Cref{thm:main}, fix $\updelta>0$ and suppose that $T$ satisfies:
\begin{equation}\label{eq:main2_T}
L^{\delta}\leq T\leq 
\begin{cases}
L^{2-\updelta} & \mbox{if}\ \zeta\ \mbox{arbitrary},\\
L^{d-\updelta} & \mbox{if}\ \zeta\ \mbox{generically irrational},\\
\end{cases}
\end{equation}
\begin{equation}\label{eq:main2_Tkin}
 T_{\mathrm{kin}} \geq 
\begin{cases}
L^{2\updelta}\, T^2 & \mbox{if}\ T\leq L,\\
L^{2+2\updelta} & \mbox{if}\ L\leq T\leq L^2,\\
L^{-2+2\updelta}\, T^2 & \mbox{if}\ L^2\leq T.
\end{cases}
\end{equation}
Then \eqref{eq:main} and the rest of the results in \Cref{thm:main} hold.
\end{thm}

\begin{figure}[h!]
\includegraphics[scale=0.24]{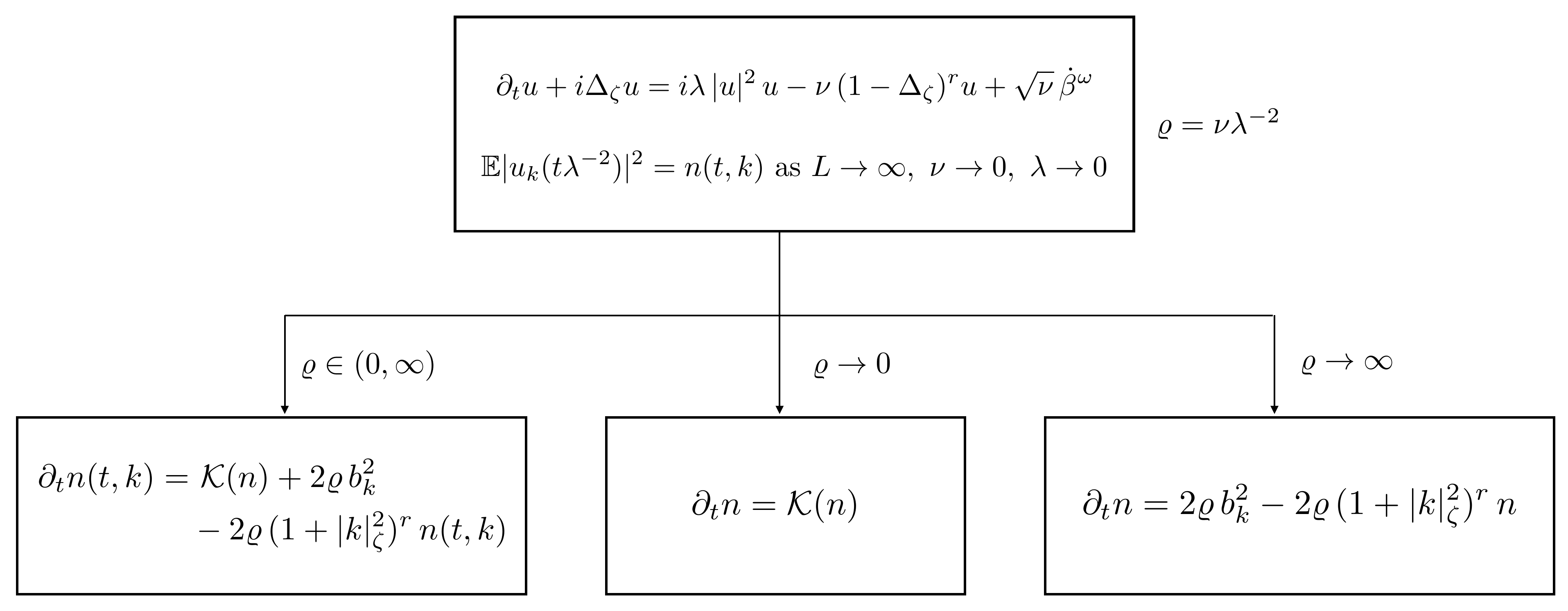}
\caption{Limiting dynamics depending on relative size between nonlinearity and forcing.}
\label{fig:kinetic_table}
\end{figure}

\begin{rk}
The main novelty of the above theorems lies in the regime $\varrho \in (0, \infty)$ where both the kinetic collisional dynamics and the forcing terms are balanced. We note here that in this case, the initial data can be well taken to be zero, in which case, the randomness is only coming from the stochastic force. The effective forced-damped kinetic equation has nontrivial solutions in this case if $b_k \neq 0$.
\end{rk}

\begin{rk}\label{rk:timescales}
Our techniques actually allow us to study the more general equation with three parameters in the nonlinearity:
\begin{equation}\label{eq:intro_stochasticNLS_alt}
 \pa_t u + i \Delta u = i \lambda\, \left(|u|^2- \frac{2}{L^d} \int_{\T_L^d} |u|^2 \right)\, u - \nu_1\, (1-\Delta)^r u + \nu_2 \, \dot{\beta}^{\omega}, \qquad u(t=0, x)=u_{\mathrm{in}}(x),
\end{equation}
for arbitrary $\lambda, \nu_1,\nu_2$. A scaling law in the form of $(\lambda,\nu_1,\nu_2)=( L^{-\kappa_1}, \widetilde{\nu_1} L^{-\kappa_2}, \widetilde{\nu_2} L^{-\kappa_3})$ with $\widetilde{\nu_1}, \widetilde{\nu_2} \in (0,\infty)$ would have to be imposed. To keep the number of different regimes more tractable, we have restricted attention in this paper to the case when $\kappa_2=2\kappa_3$, which gives the three regimes mentioned in Theorem \ref{thm:main}. The most interesting case, where $2\kappa_1=\kappa_2=2\kappa_3$ which is covered by our result, leads to the kinetic equation featuring all the different elements in the nonlinearity, namely
\begin{equation}\label{eq:forced_WKE_alt}
\pa_t n = \Kc (n) - 2\widetilde{\nu_1} \gamma \, n +2\widetilde{\nu_2}^2\,  b^2,\qquad n(0,k)=c_k^2.
\end{equation}
This is exactly \eqref{eq:forced_WKE} after a simple rescaling. If this condition $2\kappa_1 = \kappa_2=2\kappa_3$ is not satisfied, certain terms would be missing from the kinetic equation depending on the regime, similar to what we saw in Theorem \ref{thm:main}. It is worth pointing out that \eqref{eq:forced_WKE_alt}, especially in the case when $\widetilde{\nu_1} \ll \widetilde{\nu_2}$, gives a direct and rigorous route to checking the turbulence conjectures for the damped-driven (NLS) equation formulated in \cite{Bed24}, based on drawing analogies with hydrodynamic turbulence. We shall remark on this further in Section \ref{into.analysisofwke}.
\end{rk}

\subsection{Some background literature}
Equation \eqref{eq:intro_stochasticNLS} was introduced\footnote{Technically, the model proposed by Zakharov and L'vov allows for more general wave propagation and interaction. In this paper we have chosen to work with the cubic NLS equation, but in general one would write \eqref{eq:intro_stochasticNLS} in terms of the Fourier coefficients $u_k$ and replace the terms $i\Delta u$ and $|u|^2\, u$ by $\delta H/\delta \overline{u}_k$ for a different Hamiltonian $H$. See \cite{ZakharovLvov} and \cite{Falkovich} for full details. For the purpose of this paper, we have chosen the Wick ordered Hamiltonian 
\[
H(u)=\frac{1}{2}\, \int_{\T_L^d} |\nabla u|^2 + \frac{\lambda}{4} \int_{\T_L^d} :\!|u|^4\!: \ ,
\]
which gives rise to \eqref{eq:intro_stochasticNLS} above. See also \Cref{sec:setup}.} 
by Zakharov and L'vov in 1975 \cite{ZakharovLvov}. This model was first studied by Dymov-Kuksin \cite{DymovKuksin,DymovKuksin2}, and Dymov-Kuksin-Maiocchi-Vl\u{a}du\c{t} \cite{DymovKuksin3}. In \cite{DymovKuksin,DymovKuksin2}, the authors consider the case when $\varrho\in (0,\infty)$ and with zero initial data, and study the $d$-th order Picard iterates of \eqref{eq:intro_stochasticNLS} (which they call quasi-solutions). They show that the latter are well-approximated by solutions to \eqref{eq:forced_WKE}. In \cite{DymovKuksin3}, the authors focus on the limit $\nu\rightarrow 0$ (with fixed $L$) of the second order Picard iterates of \eqref{eq:intro_stochasticNLS} with zero initial data, and then they investigate the limit as $L\rightarrow \infty$.

This paper extends the results in \cite{DymovKuksin,DymovKuksin2} in three ways. Firstly and most importantly, our results pertain to the actual solution of \eqref{eq:intro_stochasticNLS} rather than its $d$-th order Pircard iterates. Secondly, we give a complete picture of the different asymptotic regimes $(i)-(iii)$ described in \Cref{thm:main}, and thirdly, we allow initial data to be random rather than being zero. 
All this requires estimating all the Picard iterates of \eqref{eq:intro_stochasticNLS} up to an arbitrarily high (but finite) threshold, and an analysis of the remainder term. This will require proving new sharp asymptotics of the main terms, and obtaining combinatorial results for Feynmann diagrams in the spirit of those in \cite{DengHani}. 

Following some earlier foundational works focused on some related problems in linear or equilibrium settings \cite{ESY08, LukSpohn}, the first attempt at rigorously deriving a kinetic equation from a dispersive wave system was in the joint work of the second author with Buckmaster, Germain, and Shatah \cite{BGHS}. There, the authors prove \eqref{approx0} on generically irrational tori but only on \emph{subcritical\footnote{These are intervals that are of the form $[0, T_{\mathrm{kin}}^{1-\updelta}]$ for some $\updelta>0$. On such intervals, the diagrammatic expansion of the NLS equation is subcritical in the sense that every subsequent iterate is better behaved than the previous one by a negative power of $L$.} time intervals}, like $t\leq T_{\mathrm{kin}}^{1/2}$. This was later improved by Deng and the second author in \cite{DengHani}, and independently by Collot and Germain in \cite{CollotGermain}, to reach much longer, but still subcritical, times intervals of the form $t\leq T_{\mathrm{kin}}^{1-\epsilon}$, at least for some scaling laws. The full justification of the approximation \emph{at the kinetic timescale}, which corresponds to proving \eqref{approx0} for time intervals of the form $[0, \delta \cdot T_{\mathrm{kin}}]$ where $\delta>0$ is independent of $L$ and $\lambda$, was first done in a series of works \cite{DengHani2, DengHani3,DengHani4} (with \cite{DengHani3} describing the asymptotics of the higher order statistics). This covered the case of general tori with their corresponding full range of scaling laws $\kappa_1 \in (0,1)$. We refer to the expository note \cite{DengHaniExpo} for more details about this progress. The rigorous derivation was recently extended to arbitrarily long times in \cite{DengHani5} (i.e. for as long as the solution of the wave kinetic equation exists).

In the stochastic setting, in addition to the works mentioned earlier, we should mention the recent works of Staffilani-Tran \cite{StaffilaniTran} and Hannani-Rosenzweig-Staffilani-Tran \cite{HRST}, who studied the kinetic limit of a discrete KdV-type equation with a time-dependent Stratonovich stochastic forcing. That forcing is constructed to effectively randomize the phases of the Fourier modes without injecting energy into the system.

Finally, we would like to draw a contrast between the kinetic and non-equilibrium paradigm explored in this paper, and the vast literature in stochastic PDE focused on constructing invariant measures for damped-driven equations. We won't make any attempt at being exhaustive in citing the literature there, but we single out the highly influential and representative work of Hairer and Mattingly \cite{HaiMatt06} in the context of the 2D Navier-Stokes equation with additive forcing, in which the authors construct the unique invariant measure for the dynamics. The existence of such measures is not always known or guaranteed; for example this seems to be open for equation \eqref{eq:intro_stochasticNLS}. Moreover, the turbulent features of such invariant measures (e.g. energy cascade behavior) is something that is not a priori evident, and usually very hard to establish. At a philosophical level, one can think of the kinetic paradigm derived in this paper as describing an explicit (in terms of its description of the turbulent features) transient regime that would hold on the way to the convergence of the dynamics to its unique invariant measure, if such measure exists. At the point of this supposed convergence, the solution of the kinetic equation would converge to one of its stationary solutions. 

\subsection{Strategy of proof}\label{intro.proof} We explain briefly here some of the ideas in the proof, focusing mainly on the novelties and differences with respect to the $\nu=0$ case. 

\subsubsection{Picard iterates with stochastic terms}

The main strategy consists of expanding the solution in terms of Picard iterates (also referred to as Dyson series expansion)
\begin{equation}\label{eq:intro_Picard}
u = u^{(0)} +u^{(1)} + \ldots +u^{(N)} + R_{N+1},
\end{equation}
up to some large order $N$ to be determined. The key is to obtain effective estimates for the iterates as well as the error, which will require taking $N$ large enough in order for the error contribution to be negligible.

Each $u^{(n)}$ can be written as a sum over ternary trees of order $n$, which admit similar combinatorics to those in \cite{DengHani}. The main difference with the case of the cubic NLS equation ($\nu=0$) is the fact that $u^{(0)}$ contains both the linear evolution of the initial data and a stochastic term, namely
\begin{equation}\label{eq:intro_first_iterate}
u^{(0)}(t) = e^{-it\Delta-\nu\,t(1-\Delta)^r} u_{\mathrm{in}}^{\omega} + \sqrt{\nu}\, \int_0^t e^{-i(t-s)\Delta-(t-s)\, \nu\,(1-\Delta)^r} d\beta^{\omega}(s)
\end{equation}
This means that the solution profile $e^{it\Delta}u^{(0)}(t)$ now has a nontrivial time dependence compared to the $\nu=0$ case, which leads to new difficulties at the level of estimating the higher iterates $u^{(n)}$, which are basically iterative self-interactions of the zeroth iterate $u^{(0)}$. In the case of the cubic NLS equation ($\nu=0$ case), this means that $u^{(n)}(t)$ can be written as a sum over expressions involving a product of $2n+1$ Gaussian random variables $(\eta^{\omega}_k)_{k\in\Z_L^d}$, which are \emph{time-independent}. In our setting, however, the stochastic terms in $u^{(n)}$ require a different treatment, due to their nontrivial time dependence coming from  the stochastic integral in \eqref{eq:intro_first_iterate}. In fact, $u^{(n)}$ can be seen as an integral expression involving a product of $2n+1$ copies of $u^{(0)}(t_j)$ for different times $t_j\in [0,t]$. 
As such, obtaining sharp bounds on $u^{(n)}$ requires precise bounds on the two-point correlations $\E[ u^{(0)}(t) \, \overline{u^{(0)}(t')}]$, or rather, their Fourier transform in time (to capture their time oscillation). To make this comparison explicit, it suffices to compare the contributions of the two terms in \eqref{eq:intro_first_iterate}. Let us rescale time by setting $t=sT_{\mathrm{kin}}$, let $\chi(s)$ be a smooth cut-off in time supported in $[0,2]$ such that $\chi=1$ in $[0,1]$,  and define 
\[
\begin{split}
v^{(0)}(s) & = \chi(s) \, e^{-\varrho\,s(1-\Delta)^r}  u_{\mathrm{in}}^{\omega} \\
w^{(0)}(s) & = \chi(s)\, \int_0^s \sqrt{\varrho}\,e^{-(s-s')\, \varrho\,(1-\Delta)^r}d\beta^{\omega}(s'),
\end{split}
\]
so that $u^{(0)}(t)= e^{-it\Delta} [ v^{(0)}(\frac{t}{T_{\mathrm{kin}}})  + w^{(0)}(\frac{t}{T_{\mathrm{kin}}}) ]$ for times $t\in [0,T_{\mathrm{kin}}]$. Denoting by $\widetilde {f_k}(\tau)$ the spacetime Fourier transform at $(k, \tau)$, one can show that the two-point correlations of the Fourier transform of the terms coming from the initial data, i.e. $v^{(0)}(s)$, admit bounds of the form,
\[
\E[ \widetilde{v_k}^{(0)}(\tau) \, \overline{\widetilde{v_{k'}}^{(0)}(\tau')}] \lesssim_N c_k^2\, \langle \tau\rangle^{-N}\, \langle \tau'\rangle^{-N}\, \delta_{k-k'} \qquad \qquad \forall N\in\N.
\]
On the other hand, those coming from the stochastic forcing are only small when they they are sufficiently separated in temporal frequency, e.g.
\[
\E[ \widetilde{w_k}^{(0)}(\tau) \, \overline{\widetilde{w_{k'}}^{(0)}(\tau')}] \lesssim_N b_k^2 \, \langle \tau -\tau'\rangle^{-N}\, \langle \tau \rangle^{-1} \, \langle \tau'\rangle^{-1}\, \delta_{k-k'} \qquad \qquad \forall N\in\N \ \mbox{and for}\ \varrho=1.
\]

We note that the two-point correlations between $\widetilde{v}^{(0)}$ and $\widetilde{w}^{(0)}$ vanish on account of the independence between forcing and initial datum. This is a crucial property in the computation of correlations between Picard iterates in \eqref{eq:intro_Picard}:
\begin{equation}\label{eq:intro_ncorrelations}
\E [ u^{(n_1)}_k(t) \overline{u^{(n_2)}_k(t)}], \qquad n_1,n_2\in \N\cup \{0\}.
\end{equation}
For instance, beyond terms coming exclusively from the initial datum or the forcing, $\E|u^{(1)}_k(t)|^2$ is also made of cross-terms given by integrals over $s_1,s_2\in [0,t]$ of
\[
\E[ v_{k_1}^{(0)}(s_1) \overline{w^{(0)}_{k_2}(s_1)} v^{(0)}_{k_3}(s_1)  \overline{v^{(0)}_{k_1'} (s_2)} w^{(0)}_{k_2'}(s_2) \overline{v^{(0)}_{k_3'} (s_2)} ] \stackrel{\rm{ind.}}{=} 
\E[ v_{k_1}^{(0)}(s_1)  v^{(0)}_{k_3}(s_1)  \overline{v^{(0)}_{k_1'} (s_2)}  \overline{v^{(0)}_{k_3'} (s_2)} ]\, \E[ \overline{w^{(0)}_{k_2}(s_1)} w^{(0)}_{k_2'}(s_2)].
\]
Whenever the initial datum is nonzero\footnote{When $c_k=0$ in \eqref{eq:intro_data}, $v^{(0)}\equiv 0$ and the only terms that give rise to the kinetic integral come from the forcing $w^{(0)}$.}, all such terms give rise to top-order contributions towards the kinetic equation \eqref{eq:forced_WKE}, as shown in \Cref{lem:towardsWKE}.

As can be seen from this example, the new input coming from the stochastic forcing --which is decoupled from the input coming from the initial datum via independence-- requires a new analysis to manage its contribution to the Dyson series. 
In the case of top-order contributions such as \eqref{eq:intro_ncorrelations} with $n_1+n_2\leq 2$, this input can be explicitly computed, cf. \Cref{lem:towardsWKE}-\Cref{thm:integral_to_delta}. Terms of the form \eqref{eq:intro_ncorrelations} with $n_1+n_2\geq 3$, as well as remainder terms, are then shown to be lower order in the kinetic limit. In order to do so, we derive a novel approach to bound all the new terms coming from the stochastic forcing --see \Cref{thm:FT_higher_iterates} and \Cref{thm:n_iterate}--, which we combine with some counting bounds for the number of Feynman diagrams similar to those used in \cite{DengHani}, which we use as a black box. See \Cref{sec:iterates} for full details.

\subsubsection{Convergence to WKE}

As a consequence of the analysis mentioned above, one obtains that, with overwhelming probability, the Picard iterates $u^{(n)}$ in \eqref{eq:intro_Picard} decay like a geometric sequence for $n \leq N$. An estimate on the remainder $R_{N+1}$ can be obtained by analyzing the linearization of the (NLS) equation around the first $N$ iterates; this is done in Section \ref{sec:error}. From this, it follows that
\begin{equation}
\E |u_k (s T_{\mathrm{kin}})|^2  = \E \left[ |u_k^{(0)} (sT_{\mathrm{kin}})|^2 +  |u_k^{(1)} (sT_{\mathrm{kin}})|^2  +2 \mbox{Re}\, \overline{u_k^{(0)} (sT_{\mathrm{kin}})} u_k^{(2)} (sT_{\mathrm{kin}}) \right] + o\left(s\right).
\end{equation}

The terms in the braces should coverge to the term $n_{\mathrm{app}}$ in \eqref{eq:main} which give rise to the kinetic kernel $\Kc$ in \eqref{eq:def_K}. This is another place where there is a major departure with the $\nu=0$ setting. Indeed, the convergence to the kinetic kernel is very different in the regimes $\varrho \ll 1$ and $\varrho \gtrsim 1$. In order to illustrate this, let us describe the computation in the case of $\E|u_k^{(1)} (t)|^2$ for $t=s\, T_{\mathrm{kin}}$ with $0\leq s\leq 1$. As shown in \Cref{sec:WKE}, 
\[
\begin{split}
\E|u_k^{(1)} (sT_{\mathrm{kin}})|^2   = 4\, \left(\frac{1}{\lambda\,L^{d}} \right)^2\! \sum_{\substack{k=k_1-k_2+k_3\\}}  \int_0^s\int_{s_2}^s  &\ e^{-\varrho\gamma_k (2s-s_1-s_2)-\varrho (s_1-s_2) \sum_{j=1}^{3} \gamma_{k_j}} \\ &   \cos \left( T_{\mathrm{kin}} \Omega\, (s_1-s_2)\right)\, \prod_{j=1}^{3} \E |u_{k_j}^{(0)}(s_2 T_{\mathrm{kin}})|^2 \,ds_2\, ds_1  +o (s).
\end{split}
\]
where $\Omega = |k_1|_{\zeta}^2 - |k_2|_{\zeta}^2+|k_3|_{\zeta}^2 -|k|_{\zeta}^2$. 

When $\varrho \rightarrow 0$, one may exploit oscillations by integrating by parts the cosine in order to recover the top order terms which give rise to the Wave Kinetic Equation \eqref{eq:intro_WKE}. This critically exploits the fact that we gain powers of $\varrho$ in the remainder terms from integrating by parts.

For $\varrho\gtrsim 1$, however, such terms are at least as important, and they even dominate as $\varrho\rightarrow\infty$. For that reason, a different approach is needed. Upon integration in $s_1$, we obtain
\[
\begin{split}
\E|u_k^{(1)} (sT_{\mathrm{kin}})|^2  = L^{-2d} &\ \sum_{\substack{k=k_1-k_2+k_3\\ }}   \left( \int_0^s e^{-2\varrho\gamma_k (s-s')} \,\nu^{-1}\, h_0(\nu^{-1}\Omega, \Gamma_{-}) \, \prod_{j=1}^{3} \E |u_{k_j}^{(0)}(s'T_{\mathrm{kin}})|^2\, ds' \right.\\
& - \left. \int_0^s  e^{-2\varrho\Gamma_{+} (s-s')} \, \nu^{-1}\, [h_1(\nu^{-1} \Omega, \Gamma_{-})-h_2(\nu^{-1}\Omega, \Gamma_{-})]\, \prod_{j=1}^{3} \E |u_{k_j}^{(0)}(s'T_{\mathrm{kin}})|^2\, ds'\right),
\end{split}
\] 
where $\Gamma_{\pm}=\pm \gamma_k + \sum_{j=1}^3 \gamma_{k_j}$ and
\[
\begin{split}
h_0(x,y)= \frac{4y}{y^2+x^2}, \qquad h_1(x,y) =\frac{4y\,\cos(\varrho (s-s') x)}{y^2+x^2}, \qquad h_2(x,y) = \frac{4x\,\sin(\varrho (s-s') x)}{y^2+x^2}.
\end{split}
\]

In the limit as $L \to \infty$, and hence $\nu \to 0$, all three kernels $h_0, h_1, h_2$ should behave like an approximation to the identity in their first variable and lead to $\delta(\Omega)$ factors as a leading contribution. While the term coming from $h_0$ leads to a key contribution towards the WKE, the contribution of the terms in $h_1$ and $h_2$ should vanish in the limit. This vanishing is the result of an exact cancellation of their respective contributions in the limit, which have to be computed precisely. 
Interestingly, in the complementary regime $\varrho\rightarrow 0$ we described above, the delta function coming from $h_2$ is the one leading to the WKE, while the deltas arising from $h_0$ and $h_1$ cancel in this asymptotic regime. This sharp asymptotic development of the oscillatory kernels in $h_1$ and $h_2$ as $\nu\rightarrow 0$ in the whole regimes of $\varrho \gtrsim 1$ constitutes one of the main novelties of this work, as they do not fall under the umbrella of the kernels studied in the previous works (e.g. Proposition 6.1 in \cite{DengHani3}). Note that $h_2$ is not even absolutely integrable in the $x$ variable, which requires exploiting its oscillations to reveal its approximation to the identity behavior.

\subsection{Future perspectives}

\subsubsection{Getting to the kinetic timescale}\label{intro.extension} It should be clear from our discussion in Section \ref{intro.proof} that the bulk of the analysis in this paper goes on two fronts: The first is reducing the combinatorial estimates on the Feynman diagrams to their analogs in the $\nu=0$ case, by proving and effectively utilizing the time correlations estimates of the stochastic integral terms. The second front is obtaining the exact asymptotics of the first iterates (here $u^{(n)}$ for $n\leq 2$) that converge to the corresponding iterate of the wave kinetic equation. Here, the analysis was quite different and more involved compared to the $\nu=0$ case. 

We believe that those same ideas can be extended, in combination to the more sophisticated analytical and combinatorial machinery in \cite{DengHani3, DengHani4}, to obtain a justification of the approximation \eqref{approx0} all the way to time intervals that reach the kinetic timescale, say of the form $[0, \delta \cdot T_{\mathrm{kin}}]$, or even longer time intervals as was done recently in \cite{DengHani5}.

\subsubsection{Analysis of the damped/driven WKE}\label{into.analysisofwke}

As we explained earlier, one of the main postulates in wave turbulence theory is that of scale separation: the random force operates at large scales, the dissipation acts on small scales, and there are intermediate scales (known as inertial range) in which the effect of the viscosity and the force are negligible. In this inertial range, physical arguments, as well as various experiments, suggest that $\E |u_k (t)|^2$ should resemble a power law in $k$, in the spirit of the stationary-in-time power-law solutions to \eqref{eq:intro_WKE} that were discovered by Zakharov. Unfortunately, the highly singular behavior of those power-law solutions and the very formal fashion in which they satisfy the equation pose substantial blocks to their rigorous study, and as a result, our understanding of the turbulence aspects of wave turbulence theory. 

This was a major motivation for us to study the forced-dissipated equation, as we believe that the analogous turbulent solutions to \eqref{eq:forced_WKE} should be relatively better behaved. This raises the highly important open problem of investigating the existence of such solutions to \eqref{eq:forced_WKE} that resemble the Kolmogorov-Zakharov power-law spectra at least within the inertial range.

Clearly, the analogy with hydrodynamic turbulence is quite strong here. In a recent manuscript, Bedrossian explored this analogy and formulated some conjectures on turbulence and anomalous dissipation for the damped-driven NLS equation \cite{Bed24}. While these conjectures were formulated in terms of invariant measures and averages of solutions to the damped-driven NLS, they can now be rigorously and concretely checked at the level of solutions of the \eqref{eq:forced_WKE} (or more generally \eqref{eq:forced_WKE_alt} with $\widetilde \nu_2 \gg \widetilde \nu_1$). This provides yet another motivation to study the dynamics of the latter kinetic equations.

\subsubsection{Admissible dissipations}

One of the basic questions in turbulence is that of universality: do the statistics of the problem depend on the external forcing or the internal friction? Which features are common to different turbulent systems? 

A key property satisfied by our chosen dissipation function $\gamma_k$ in \eqref{eq:dissipation} is that
 \begin{equation}\label{eq:intersection_mfolds}
\left\lbrace |k_1|^2 - |k_2|^2 + |k_3|^2 -|k|^2=0\right\rbrace \cap \lbrace -\gamma_k + \sum_{j=1}^3 \gamma_{k_j}=0\rbrace = \emptyset,
 \end{equation}
 see \Cref{VIP_claim}. From a technical viewpoint, this condition plays an important role in our derivation of the kinetic equation. However, we believe that our argument also allows to deal with other dissipation functions that satisfy weaker versions of \eqref{eq:intersection_mfolds}. For example, in \Cref{rk:degen_diss}, we explain how one can modify the argument in \Cref{sec:WKE} to deal with the case of the dissipation given by the Laplacian $\gamma_k=|k|_{\zeta}^2$, which violates condition \eqref{eq:intersection_mfolds}. More precisely, we show that it suffices to prove that the number of points in a neighborhood of the intersection \eqref{eq:intersection_mfolds} is sufficiently small (see Lemma \ref{lem:degen_diss}). Interestingly enough, our analysis suggests that one might always need a condition in the spirit of \eqref{eq:intersection_mfolds} (or a quantitative version of it in the spirit of Lemma \ref{lem:degen_diss}) where the equality in the second set is replaced by $<$.
 
Further study on the necessity of such criteria, as well as deriving quantitative estimates on the number of points in a neighborhood of the intersection of these manifolds for more general dissipations $\gamma_k$ are interesting open problems.

\subsection{Outline} In \Cref{sec:setup}, we present the setup of the problem, including the Picard iteration scheme and the main bounds on the iterates. In \Cref{sec:iterates}, we prove said bounds on the Picard iterates. \Cref{sec:WKE} is devoted to proving the convergence to the various wave kinetic equations depending in the three regimes presented in Theorem \ref{thm:main}. In \Cref{sec:error}, we prove the main bounds for the remainder term in the Picard iteration scheme. Finally, \Cref{sec:appendix} summarizes a few results from probability and combinatorics that are useful in our work.

\section{Setup of the problem}\label{sec:setup}

Consider the Zakharov-L'vov stochastic model for wave turbulence: 
\begin{equation}\label{eq:stochasticNLS}
 \pa_t u + i \Delta_{\zeta} u = i \lambda\, \left(|u|^2- 2M(u)\right)\, u - \nu\, (1-\Delta_{\zeta})^r u + \sqrt{\nu} \, \dot{\beta}^{\omega}
\end{equation}
where $(t,x)\in \R \times \T_L^d$, where $\T_L^d=\frac{\R^d}{L\Z^d}$ and $L\gg 1$ is the size of the large box, $\lambda,\nu>0$ are parameters that will be discussed later, $r\in (0,1]$ and 
\begin{equation}\label{eq:randomforcing}
 \beta^{\omega}(t,x):= L^{-d/2}\, \sum_{k\in\Z_L^d} b_k \beta_k^{\omega} (t) \, e^{2\pi i k\cdot x},
\end{equation}
where $\{ \beta_k (t)\}_{k\in \Z_L^d}$ are standard independent complex Wiener processes and $b_k=b(k)$ is a Schwartz function on $\R^d$. The normalized mass is defined as follows:
\begin{equation}\label{eq:mass}
\begin{split}
M(u)(t)& :=\fint_{\T_L^d} |u(t,x)|^2\, dx =  L^{-d}\,\int_{\T_L^d} |u(t,x)|^2\, dx.
\end{split}
\end{equation}
Note that, unlike in the case of the cubic NLS equation with random initial data, the mass is not conserved. However, we will show that $M(u)$ is uniformly bounded in time.

The rescaled Laplacian is given by:
\begin{equation}
\Delta_{\zeta} := \sum_{j=1}^{d} \zeta_j \, \pa_j^2 , \qquad \zeta=(\zeta_1,\ldots,\zeta_d)\in [1,2]^d ,
\end{equation}
where $\zeta$ controls the aspect ratios of the torus, and thus whether it is rational or irrational. The dissipation operator appearing in \eqref{eq:stochasticNLS} is a Fourier multiplier with coefficients given by:
\begin{equation}\label{eq:dissipation_rescaled}
\gamma_k := (1+ |k|_{\zeta}^2)^r,\quad \mbox{where}\quad  |k|^2_{\zeta}=\sum_{j=1}^d \zeta_j |k^{(j)}|^2\quad \mbox{for}\quad k=(k^{(1)},\ldots,k^{(d)})\in\R^d.
\end{equation}

The choice of normalization $L^{-d/2}$ in \eqref{eq:randomforcing} guarantees that that the size of the forcing at each point in the torus is asymptotically independent on the size of the torus $L$. Indeed, we define
\begin{equation}\label{eq:defB}
 B:=\norm{b}_{\ell^2_k (\Z^d_L)}^2= \sum_{k\in \Z^d_L} b_k^2,
 \end{equation}
which has size $L^d$ since
\begin{equation}\label{eq:aboutB}
L^{-d}\, B\longrightarrow \int_{\R^d} |b(\xi)|^2\, d\xi\sim 1\quad \mbox{as}\quad L\rightarrow \infty.
\end{equation}

Together with \eqref{eq:stochasticNLS}, let us prescribe random initial data at $t=0$ as follows:
\begin{equation}\label{eq:def_randomdata}
u(t,x)|_{t=0}= u_0(x)=L^{-d/2} \, \sum_{k\in\Z_L^d} c_k \, \eta_k^{\omega} \, e^{2\pi i k\cdot x}
\end{equation}
where $c_k=c(k)$ is a Schwartz function on $\R^d$ and $\{\eta_k\}_{k\in\Z_L^d}$ are iid complex Gaussian random variables with zero mean, $\E \eta_k\eta_j=0$, and $\E \eta_k\overline{\eta_j}=\delta_{kj}$. Moreover, we will assume that $\eta_k$ is independent of $\beta_k(t)$ for all $t\geq 0$. 

We may now interpret the stochastic PDE in \eqref{eq:stochasticNLS} in the integral sense, i.e.
\[ 
u(t)=u_0 + \int_0^t \left[ -i \Delta_{\zeta} u (t') +  i \lambda\, \left(|u(t')|^2-2M(u(t'))\right)\, u(t') - \nu\, (1-\Delta_{\zeta})^r u(t') \right] \, dt' +  \sqrt{\nu} \, \beta^{\omega}(t).
\]

\subsection{Balance of energy}\label{subsec:bal_energy} The choice of the coefficients $\nu$ and $\sqrt{\nu}$ for the dissipation and the forcing terms, respectively, is to balance the energy in the system. Using the standard notation for It\^o processes, we write 
\[ 
du = \left( -i \Delta_{\zeta} u  +  i \lambda\, \left(|u|^2-2 M(u)\right)\, u - \nu\, (1-\Delta_{\zeta})^{r} u \right) dt +\sqrt{\nu}\, d\beta(t):=\mu_t dt + \sqrt{\nu}\, d\beta(t).
\]
Then one can apply the It\^o formula, which yields:
\[ 
d(|u(t)|^2) = \left(\overline{u(t)}\,\mu_{t} + u(t)\,\overline{\mu_{t}} + 2 \nu\, L^{-d} B \right) \, dt + \sqrt{\nu}\, \overline{u(t)}\, d\beta(t) +  \sqrt{\nu}\, u(t)\, d\overline{\beta(t)}.
\]
By integrating in time and in space, we are led to the balance of energy equation:
\begin{equation}\label{eq:BalanceEnergy}
\E \norm{u(t)}_{L^2(\T_L^d)}^2 - \E \norm{u(0)}_{L^2(\T_L^d)}^2 = -2 \nu \, \E\int_0^t  \norm{(1-\Delta_{\zeta})^{r/2} u (t')}_{L^2(\T_L^d)}^2\, dt' + 2\nu \, B\, t .
\end{equation}
Here we see that our choice of coefficients $\nu$ and $\sqrt{\nu}$ led to the terms of the right-hand side having comparable size and opposite signs, in such a way that the dissipation and the forcing terms balance each other. 
Moreover, we find
\begin{equation}\label{eq:forcing_timescale_0}
 \E \norm{u(t)}_{L^2(\T_L^d)}^2 \leq \E \norm{u(0)}_{L^2(\T_L^d)}^2 + 2\nu B\, t,
 \end{equation}
which is a good estimate for times $t\lesssim \nu^{-1}$. In view of \eqref{eq:aboutB}-\eqref{eq:def_randomdata}, 
\[
\E \norm{u(0)}_{L^2(\T_L^d)}^2\sim L^d \ \mbox{and}\ B\sim L^d,\]
and therefore $\E \norm{u(t)}_{L^2(\T_L^d)}^2\sim L^d$ for times $t\lesssim \nu^{-1}$. This \emph{a priori} bound still holds for times $t\gtrsim \nu^{-1}$, but we must perform a more careful analysis of \eqref{eq:BalanceEnergy}.
Using the fact that 
\[
\norm{u(t)}_{L^2(\T_L^d)}\leq \norm{(1-\Delta_{\zeta})^{r/2} u (t)}_{L^2(\T_L^d)},
\]
equation \eqref{eq:BalanceEnergy} yields the following inequality:
\begin{equation}
\E \norm{u(t)}_{L^2(\T_L^d)}^2 - \E \norm{u(0)}_{L^2(\T_L^d)}^2 \leq  -2 \nu \, \int_0^t \, \E \norm{u (t')}_{L^2(\T_L^d)}^2\, dt' + 2\nu \, B\, t .
\end{equation}
An application of Gr\"onwall's inequality yields:
\begin{equation}\label{eq:BalanceEnergy2}
\E \norm{u(t)}_{L^2(\T_L^d)}^2 \leq B + (\E \norm{u_0}_{L^2(\T_L^d)}^2 - B) \, e^{-2\nu t}.
\end{equation}
Note that at times $t\gtrsim \nu^{-1}$ the effect of $\E \norm{u_0}_{L^2(\T_L^d)}^2$ fades off and the forcing takes over, while keeping $\E \norm{u(t)}_{L^2(\T_L^d)}^2\sim L^d$ at all times.

\subsection{Two timescales}

There are two natural timescales that are important in this problem. As suggested by \eqref{eq:forcing_timescale_0}, the timescale in which the forcing truly starts to operate is proportional to $\nu^{-1}$, and therefore we define
\begin{equation}\label{eq:forcing_timescale}
T_{\mathrm{for}} = \nu^{-1}.
\end{equation}
On the other hand, we have the kinetic timescale given by the typical size of the cubic nonlinearity in \eqref{eq:stochasticNLS}. For all times $t$ where the solution exists, we know that $\E \norm{u(t)}_{L^2(\T^d_L)}^2 \sim L^d$ thanks to \eqref{eq:BalanceEnergy2} and \eqref{eq:aboutB}. As a consequence, $|u|$ has size $1$ on average, so the strength of the cubic nonlinearity is $\lambda$. The kinetic timescale is the inverse of the square of this number:
\begin{equation}\label{eq:kinetic_timescale}
 T_{\mathrm{kin}} = \lambda^{-2}.
\end{equation}

Given the two natural timescales in this problem, there are three different scenarios depending on the relative size between $T_{\mathrm{kin}}$ and $T_{\mathrm{for}}$.
\begin{enumerate}
\item If $T_{\mathrm{kin}}\ll T_{\mathrm{for}}$, the forcing is so weak that it has no effect in the limit. The limiting dynamics will be governed by the WKE for the cubic NLS equation with random initial data, as in \cite{DengHani}.
\item If $T_{\mathrm{kin}}\sim T_{\mathrm{for}}$, there is a balance between forcing and the cubic nonlinearity. In the limit, we obtain a damped/driven wave kinetic equation.
\item If $T_{\mathrm{kin}}\gg T_{\mathrm{for}}$, the forcing is so strong that the limiting dynamics are dominated by it. This is a somewhat uninteresting scenario, where we expect the forcing and dissipation to entirely determine the dynamics. 
\end{enumerate}

Let us highlight that our techniques may handle a third timescale connected to the dissipation, should one consider different parameters $\nu_1$ and $\nu_2$ for the dissipation and the forcing. Such considerations lead to additional possible kinetic equations, see \Cref{rk:timescales} for more detailes.

We will study \eqref{eq:stochasticNLS} for times $t\in [0,T]$ where we wish to take $T$ to be as close to $T_{\mathrm{kin}}$ as possible. Given the heuristics above, we define
\begin{equation}\label{eq:def_vartheta}
\vartheta := \nu T = \frac{T}{T_{\mathrm{for}}}.
\end{equation}
Let us highlight that $ \vartheta\leq \varrho$ for $\varrho$ introduced in \eqref{eq:intro_varrho}.

\subsection{Fourier formulation and rescaled time} 

Given that we are interested in times $t\in [0,T]$ for some $T$ as close as possible to $T_{\mathrm{kin}}$, we rescale time so that we may work on the interval $[0,1]$. In order to do so, it is convenient to work at the level of the Fourier coefficients. Recall that
\begin{equation}
u(x)=L^{-d/2}\, \sum_{k\in \Z^d_L} u_k\, e^{2\pi i k\cdot x} \qquad \mbox{and} \qquad u_k = L^{-d/2}\,  \int_{\T_L^d} u(x)\, e^{-2\pi i  k\cdot x}\, dx.
\end{equation}
By the Plancherel theorem, we have that
\[ \norm{u}_{L^2(\T^d_L)}=\left(\int_{\T_L^d} |u(x)|^2\, dx \right)^{1/2} = \left( L^{-d}\, \sum_{k\in \Z^d_L} |u_k|^2\right)^{1/2}.\]

We rewrite \eqref{eq:stochasticNLS} in terms of the Fourier coefficients:
\begin{equation}
\begin{split}
 \pa_t u_k - i |k|_{\zeta}^2 u_k 
 & =  \frac{i\lambda}{L^{d}}\, \sum_{k_1-k_2+k_3=k} \epsilon_{k_1, k_2, k_3} \, u_{k_1} \overline{u_{k_2}} u_{k_3} - \nu \gamma_k u_k + \sqrt{\nu}\, b_k \dot{\beta}_k^{\omega}
 \end{split}
\end{equation}
where $\gamma_k := (1+|k|_{\zeta}^2)^{r}$, and 
\begin{equation}\label{eq:epsilon}
\epsilon_{k_1,k_2,k_3} = \begin{cases}
+1 & \mbox{if}\ k_2\notin \{ k_1,k_3\};\\
-1 & \mbox{if}\ k_1=k_2=k_3;\\
0 & \mbox{otherwise}.
\end{cases}
\end{equation}

We set $v(t):= e^{it\Delta_{\zeta}} u(t)$ so that $v_k = e^{-i t|k|_{\zeta}^2 } u_k$, which yields
\begin{equation}\label{eq:transformation}
 \pa_t v_k= \frac{i\lambda}{L^{d}}\, \sum_{k_1-k_2+k_3=k} \epsilon_{k_1, k_2, k_3} \, v_{k_1} \overline{v_{k_2}} v_{k_3} \, e^{it\Omega_{k2}^{13}} - \nu \gamma_k v_k + \sqrt{\nu}\, b_k \dot{\beta}_k^{\omega}
\end{equation}
 and 
\begin{equation}
 \Omega_{k2}^{13}=\Omega(k_1,k_2,k_3,k)= |k_1|_{\zeta}^2 - |k_2|_{\zeta}^2 + |k_3|_{\zeta}^2 - |k|_{\zeta}^2.
\end{equation}
Note that in \eqref{eq:transformation} we use the fact that $e^{-it |k|_{\zeta}^2} \,\dot{\beta}_k^{\omega}(t)$ is a white noise with the same distribution as $\dot{\beta}_k^{\omega}(t)$.

Finally, we rescale the time variable $t\mapsto Tt$. Using the fact that $T^{-1/2}\, \beta (T t)$ is a Wiener process with the same distribution as $\beta (t)$, we find that 
\begin{equation}\label{eq:Fouriercoeff}
\pa_t w_k+ \vartheta \, \gamma_k\, w_k = \frac{i\lambda\, T}{L^{d}}\, \sum_{k_1-k_2+k_3=k} \epsilon_{k_1, k_2, k_3} \,  w_{k_1} \overline{w_{k_2}} w_{k_3} \, e^{iTt\Omega_{k2}^{13}} + \sqrt{\vartheta}\, b_k\, \dot{\beta_k}
\end{equation}
with $t\in [0,1]$ and $\vartheta=\nu T$ as introduced in \eqref{eq:def_vartheta}.

\subsection{Picard iterates}

At this stage it is convenient to introduce some additional notation. For functions $\bm{a}(t)=(a_k (t))_{k\in\Z_L^d}$, $\bm{b}$ and $\bm{c}$ we define
\begin{equation}\label{eq:defW}
\mathcal{W}(\bm{a},\bm{b},\bm{c})_k (t) = \frac{i\lambda T}{ L^{d}}\, \sum_{k_1-k_2+k_3=k} \epsilon_{k_1, k_2, k_3} \, a_{k_1} \overline{b_{k_2}} c_{k_3} e^{iTt \Omega_{k2}^{13}}.
\end{equation}
Fix a smooth cutoff $\varphi\in C_c^{\infty}(\R)$ with $\mbox{supp}(\varphi)\subset (-2,2)$ such that $\varphi(t)=1$ if $t\in [-1,1]$. Then we define
\begin{equation}\label{eq:defI}
 (\mathcal{I} \bm{a})_k (t)  = \varphi(t) \, \int_{0}^{t} e^{-\vartheta \gamma_k (t-t')} a_k(t')\, \varphi(t')\,dt',
 \end{equation}
 with $\vartheta$ as in \eqref{eq:def_vartheta}. 
Writing $\bm{w}(t)=(w_k(t))_{k\in\Z_L^d}$, we may rewrite \eqref{eq:Fouriercoeff} as
\begin{equation}\label{eq:defW2}
 \dot{\bm{w}} + \vartheta\, \bm{\gamma} \cdot \bm{w} = \mathcal{W}(\bm{w}, \bm{w},\bm{w}) + \sqrt{\vartheta}\, \bm{b} \cdot \dot{\bm{\beta}},
 \end{equation}
which, in integral form, reads
\begin{equation}\label{eq:defW3}
 \bm{w}(t)-  e^{-\vartheta \bm{\gamma} t}\bm{c}\cdot \bm{\eta}=\mathcal{IW}(\bm{w}, \bm{w},\bm{w})(t) + \sqrt{\vartheta}\, \bm{b}\cdot \mathcal{I}\dot{\bm{\beta}},
 \end{equation}
for times $t\in [0,1]$. Note also that we need to properly interpret $\mathcal{I}\dot{\bm{\beta}}$ as an It\^{o} integral, which we detail in \eqref{eq:cutoff_w0} below.

Next we write an expansion for $\bm{w}$ based on Picard iteration: 
\begin{equation}\label{eq:Picarditeration}
 \bm{w} = \bm{w}^{(0)} + \bm{w}^{(1)} + \ldots +\bm{w}^{(N)} + \bm{\mathcal{R}}_{N+1}, 
 \end{equation}
so that 
\begin{align}
 \bm{w}^{(0)} & = e^{-\vartheta \bm{\gamma} t}\bm{c}\cdot \bm{\eta}+ \sqrt{\vartheta}\,\bm{b}\cdot \mathcal{I}\dot{\bm{\beta}},\\
 \bm{w}^{(1)} & = \mathcal{IW}(\bm{w}^{(0)}, \bm{w}^{(0)},\bm{w}^{(0)}),\\
  \bm{w}^{(N)} & = \sum_{n_1+n_2+n_3=N-1}\mathcal{IW}(\bm{w}^{(n_1)}, \bm{w}^{(n_2)},\bm{w}^{(n_3)}),\label{eq:Picard_iterates}\\
   \bm{\mathcal{R}}_{N+1} & = \bm{w}  - \sum_{n=0}^{N} \bm{w}^{(n)}.\label{eq:error}
\end{align}

In order to derive an equation for $\bm{\mathcal{R}}_{N+1}$, we plug \eqref{eq:error} into \eqref{eq:defW3}
\begin{equation}\label{eq:erroreq}
 \bm{\mathcal{R}}_{N+1} = \mathcal{L}(\bm{\mathcal{R}}_{N+1}) + \mathcal{Q}(\bm{\mathcal{R}}_{N+1}) +\mathcal{C}(\bm{\mathcal{R}}_{N+1})+ \sum_{\substack{N\leq n_1+n_2+n_3 \\ n_1,n_2,n_3\leq N}}\mathcal{IW}(\bm{w}^{(n_1)}, \bm{w}^{(n_2)},\bm{w}^{(n_3)})
\end{equation}
where 
\begin{align}
 \mathcal{L}(\bm{\mathcal{R}}_{N+1}) & = \sum_{0\leq n_1 , n_2 \leq N} 2\,\mathcal{IW}(\bm{\mathcal{R}}_{N+1}, \bm{w}^{(n_1)},\bm{w}^{(n_2)})+\mathcal{IW}(\bm{w}^{(n_1)},\bm{\mathcal{R}}_{N+1}, \bm{w}^{(n_2)}),\\
 \mathcal{Q}(\bm{\mathcal{R}}_{N+1})  & = \sum_{0\leq n_1\leq N} 2\,\mathcal{IW}(\bm{\mathcal{R}}_{N+1}, \bm{\mathcal{R}}_{N+1},\bm{w}^{(n_1)})+\mathcal{IW}(\bm{\mathcal{R}}_{N+1},\bm{w}^{(n_1)},\bm{\mathcal{R}}_{N+1}),\\
 \mathcal{C}(\bm{\mathcal{R}}_{N+1}) & = \mathcal{IW}(\bm{\mathcal{R}}_{N+1},\bm{\mathcal{R}}_{N+1},\bm{\mathcal{R}}_{N+1})
\end{align}

From now on, we let $w^{(n)}(t,x)=L^{-d/2}\,\sum_{k\in\Z_L^d} w_k^{(n)}(t) e^{2\pi i k\cdot x}$ be the function corresponding to the sequence $\bm{w}^{(n)}(t)$. Below we state our main results regarding the size of the Picard iterates and the remainder term.

\subsection{Main estimates}

For $s,b\geq 0$, consider the spaces
\[
\begin{split}
\norm{a}_{h^b} & = \left( \int_{\R} \langle \tau\rangle^{2b}\, |\widehat{a}(\tau)|^2\, d\tau\right)^{1/2},\\
\norm{\bm{a}}_{h^{s,b}} & = \left( L^{-d}\, \sum_{k\in\Z_L^d} \int_{\R}\langle \tau\rangle^{2b}\,\langle k\rangle^{2s}\, |\widehat{\bm{a}}_k(\tau)|^2\, d\tau\right)^{1/2}.
\end{split}
\]

In order to state our main estimates, let us introduce the quantity
\begin{equation}\label{eq:def_rho}
\rho:= \left\lbrace\begin{array}{ll}
T & \mbox{if } 1\leq T\leq L,\\
L & \mbox{if } L\leq T\leq L^2,\\
T\,L^{-1}  & \mbox{if } L^2\leq T \mbox{ and }\zeta\ \mbox{is generic}.
\end{array}\right.
\end{equation}

Note that the assumptions in \Cref{thm:main2} guarantee that $\rho\lesssim L^{-\delta} \sqrt{T_{\mathrm{kin}}}$. The definition of $\rho$ is motivated by a combinatorial result which we record in \Cref{sec:appendix}.

Our first result provides an upper bound on the size of the Picard iterates:

\begin{prop}\label{thm:n_iterate} 
Suppose that $n\geq 1$, $s\geq 0$, and $b>\frac{1}{2}$. For any $0<\theta\ll 1$, we have that
\begin{equation}\label{eq:n_iterate}
\begin{split}
\sup_{k\in\Z_L^d} \norm{\bm{w}_k^{(0)}}_{L^{\infty}_t} & \lesssim L^{\theta}\\
\sup_{k\in\Z_L^d} \langle k\rangle^{s} \norm{ \bm{w}_k^{(n)}}_{h^b} & \lesssim L^{\theta + c (b-1/2)} \, \left(\frac{\rho}{\sqrt{T_{\mathrm{kin}}}}\right)^{n-1}\, \sqrt{\frac{T}{T_{\mathrm{kin}}}}
\end{split}
\end{equation}
$L$-certainly (i.e. on an event with probability $\geq 1- C\, e^{-L^{\theta'}}$ for some $0<\theta'\ll 1$ and $C>0$ depending on $\theta'$).
\end{prop}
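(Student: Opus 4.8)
The plan is to induct on $n$. The base case $n=0$ requires bounding $\bm{w}^{(0)}_k = e^{-\vartheta\gamma_k t}\, c_k\,\eta_k + \sqrt{\vartheta}\, b_k\, (\mathcal{I}\dot\beta)_k$ in $L^\infty_t$ uniformly in $k$. The first summand is $\O(|c_k|\,|\eta_k|)$, and since $\{\eta_k\}$ are i.i.d. complex Gaussians and $c$ is Schwartz, a union bound over the $\O(L^d)$ relevant frequencies gives $\sup_k |c_k\eta_k|\lesssim L^\theta$ $L$-certainly. For the stochastic term, I would write $(\mathcal{I}\dot\beta)_k(t)=\varphi(t)\int_0^t e^{-\vartheta\gamma_k(t-t')}\varphi(t')\,d\beta_k(t')$, which is a Gaussian process in $t$ with variance $\lesssim \int_0^t e^{-2\vartheta\gamma_k(t-t')}\,dt'\lesssim \min(1,(\vartheta\gamma_k)^{-1})\lesssim 1$; using the Burkholder--Davis--Gundy or Doob inequality for the supremum over $t\in[0,2]$, plus Gaussian hypercontractivity, and then a union bound over $k$ with $|b_k|$ Schwartz, one gets $\sup_k \sqrt{\vartheta}\,|b_k|\,\|(\mathcal{I}\dot\beta)_k\|_{L^\infty_t}\lesssim L^\theta$ $L$-certainly. (Here one also needs $\sqrt{\vartheta}\lesssim 1$, which holds since $\vartheta\le\varrho$ and $\varrho\lesssim L^{-2\delta}$ in the relevant regime — or more directly $\vartheta=\nu T\lesssim 1$ under the hypotheses.)

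For the inductive step, I would use the tree expansion: $\bm{w}^{(n)}=\sum_{n_1+n_2+n_3=n-1}\mathcal{IW}(\bm{w}^{(n_1)},\bm{w}^{(n_2)},\bm{w}^{(n_3)})$, so it suffices to bound a single trilinear term $\mathcal{IW}(\bm{a},\bm{b},\bm{c})$ in $h^{s,b}$ (really in $\sup_k\langle k\rangle^s\|\cdot\|_{h^b}$) in terms of the corresponding norms of $\bm{a},\bm{b},\bm{c}$, losing one factor of $\rho/\sqrt{T_{\mathrm{kin}}}$ and picking up the $L^{c(b-1/2)}$. The operator $\mathcal{I}$ is a smoothing-in-time convolution with $\varphi(t)e^{-\vartheta\gamma_k t}\mathbbm{1}_{t>0}$ cut off to $[0,2]$; in temporal Fourier this is multiplication by a symbol of size $\lesssim \langle\tau\rangle^{-1}$ (uniformly in $\vartheta\gamma_k\ge 0$), so $\mathcal{I}$ gains one power of $\langle\tau\rangle$, which is exactly what upgrades an $h^{b-1}$ bound on $\mathcal{W}(\bm a,\bm b,\bm c)$ to an $h^b$ bound. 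The multiplier $\frac{i\lambda T}{L^d}\sum_{k_1-k_2+k_3=k}\epsilon_{k_1k_2k_3}(\cdot)e^{iTt\Omega}$ is precisely the object analyzed combinatorially in \cite{DengHani}: after expanding each $\bm{w}^{(n_j)}$ as a sum over ternary trees, $\bm{w}^{(n)}$ becomes a sum over ternary trees of order $n$, and the key is a counting bound on the number of frequency configurations (and on the resulting sums over dumbbells/pairings) that produces the factor $\rho^{n}/T_{\mathrm{kin}}^{(n-1)/2}\cdot\sqrt{T/T_{\mathrm{kin}}}$ together with the mild loss $L^{c(b-1/2)}$; the prefactor $\lambda T/L^d$ together with the $\ell^2\to\ell^\infty$ loss and the divisor counting on $\Omega$ is what converts each tree vertex into a gain of $\rho/\sqrt{T_{\mathrm{kin}}}$, using $\lambda T/L^d\sim T/(L^d\sqrt{T_{\mathrm{kin}}})$ and $\rho\gtrsim T/L^{d/2}$ (on irrational tori) resp. the analogous estimate on general tori.

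The genuinely new point — and the step I expect to be the main obstacle — is that $\bm{w}^{(0)}$ is \emph{not} a time-independent Gaussian vector as in the $\nu=0$ setting, because of the It\^o integral; so the Feynman-diagram bounds from \cite{DengHani} cannot be quoted verbatim. The strategy here is to prove the two-point correlation estimates sketched in the introduction, namely that $\mathbb{E}[\widetilde{v}^{(0)}_k(\tau)\overline{\widetilde{v}^{(0)}_{k'}(\tau')}]\lesssim_N c_k^2\langle\tau\rangle^{-N}\langle\tau'\rangle^{-N}\delta_{k-k'}$ for the data part and $\mathbb{E}[\widetilde{w}^{(0)}_k(\tau)\overline{\widetilde{w}^{(0)}_{k'}(\tau')}]\lesssim_N b_k^2\langle\tau-\tau'\rangle^{-N}\langle\tau\rangle^{-1}\langle\tau'\rangle^{-1}\delta_{k-k'}$ for the stochastic part (and the mixed term vanishes by independence). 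These decay estimates in the temporal-frequency separation let one absorb the extra time variables $t_j$ attached to the $2n+1$ copies of $\bm{w}^{(0)}$ in a tree: wherever a pairing forces two legs to the same spatial frequency, the correlation bound supplies the missing $\langle\tau\rangle^{-1}$-type decay (or better), after which the sum over the remaining free frequencies and over the diagram shapes reduces exactly to the counting bounds of \cite{DengHani}, applied as a black box. One must also control high moments (not just the second moment) to get the $L$-certain conclusion: since each $\bm{w}^{(n)}$ lives in a fixed ($\le 2n{+}1$)-st Wiener chaos, Gaussian hypercontractivity converts the $L^2_\omega$ (i.e.\ $\mathbb E\|\cdot\|_{h^{s,b}}^2$) bound into $L^p_\omega$ bounds with the right power of $p$, and a final union bound over $k$ and Chebyshev give the stated probability $\ge 1-Ce^{-L^{\theta'}}$. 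The only subtlety beyond \cite{DengHani} is that the Wiener chaos now mixes the $\eta$'s and the Brownian increments, but hypercontractivity applies uniformly to such mixed homogeneous chaos, so this causes no essential difficulty.
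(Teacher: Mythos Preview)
Your later paragraphs identify the right ingredients --- the tree expansion, the two-point correlation bounds for $\widetilde{w}^{(0)}$ in temporal frequency, Wick pairing, the counting bounds from \cite{DengHani}, and hypercontractivity for the tail estimate --- and these are exactly what the paper uses. The issue is that the inductive framing in your second paragraph does not actually work and is not how the argument runs.

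Concretely, there is no estimate of the form
\[
\sup_{k}\langle k\rangle^{s}\bigl\|\mathcal{IW}(\bm a,\bm b,\bm c)_{k}\bigr\|_{h^{b}}
\;\lesssim\;
\frac{\rho}{\sqrt{T_{\mathrm{kin}}}}\,
\prod_{j}\sup_{k}\langle k\rangle^{s}\|\bm a^{(j)}_{k}\|_{h^{b}}
\]
valid deterministically (or even $L$-certainly) for generic inputs $\bm a,\bm b,\bm c$. The prefactor $\lambda T/L^{d}$ together with the $\sim L^{2d}$-fold sum over $k_{1}-k_{2}+k_{3}=k$ and the $\langle\tau\rangle^{-1}$ gain from $\mathcal{I}$ is far too weak to produce $\rho/\sqrt{T_{\mathrm{kin}}}$; the sharp exponent comes \emph{only} from the cancellations revealed when one computes $\mathbb{E}\|\cdot\|_{h^{b}}^{2}$ for the full order-$n$ object and uses the pairing constraints in the counting. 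If you bound each $\bm w^{(n_{j})}$ in norm first and then feed those norms into a trilinear estimate, you have already thrown away the randomness and cannot recover the stated bound. The paper accordingly does \emph{not} induct on $n$: it writes $\bm w^{(n)}$ directly as a sum over ternary trees $\mathcal T$ of order $n$ (your tree expansion), derives the kernel bound \eqref{eq:FTbound_higher_iterates} for $\widetilde{\mathcal J_{\mathcal T}}$, and then computes $\mathbb{E}\|(\mathcal J_{\mathcal T})_{k}\|_{h^{b}}^{2}$ by Isserlis' theorem, plugging in the covariance estimates of Lemma~\ref{thm:covariance_w0} (note the two regimes $\vartheta\lessgtr 2$, which give different decay profiles) and reducing to the counting Propositions~\ref{thm:main_counting}--\ref{thm:special_counting}. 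Hypercontractivity and Chebyshev then give the $L$-certain statement, exactly as you say.

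A smaller point on the base case: you do not need $\sqrt{\vartheta}\lesssim 1$. The process $b^{(0)}_{k}$ is an Ornstein--Uhlenbeck process whose stationary variance is $b_{k}^{2}/(2\gamma_{k})$ regardless of $\vartheta$; the paper controls the supremum via the Graversen--Peskir bound on $\mathbb{E}\sup_{t}|V_{k}(t)|$ followed by Borell--TIS, which handles large $\vartheta$ with only a logarithmic loss absorbed into $L^{\theta}$.
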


Our next result allows us to control the size of the remainder in \eqref{eq:error}.

\begin{prop}\label{thm:error}  Suppose that $T$ and $T_{\mathrm{kin}}$ are as in \Cref{thm:main2}, and let $s>d/2$, and $b>\frac{1}{2}$.
Then the remainder in \eqref{eq:error} satisfies
\[
\norm{\bm{\Rc}_{N+1}}_{h^{s,b}} \leq L^{-\delta N}
\]
$L$-certainly, where $\delta>0$ is as in \eqref{eq:main2_Tkin}.
\end{prop}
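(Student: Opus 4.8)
\textbf{Proof proposal for Proposition \ref{thm:error}.} The plan is to run a contraction/bootstrap argument on the remainder equation \eqref{eq:erroreq}, working in the space $h^{s,b}$ with $s>d/2$ and $b>\tfrac12$. The key structural facts that make this feasible are: (i) the bounds on the Picard iterates from \Cref{thm:n_iterate}, which show that each $\bm{w}^{(n)}$ carries a gain of $(\rho/\sqrt{T_{\mathrm{kin}}})^{n-1}\sqrt{T/T_{\mathrm{kin}}}$, together with $\rho\lesssim L^{-\delta}\sqrt{T_{\mathrm{kin}}}$ from the hypotheses of \Cref{thm:main2}; and (ii) a multilinear estimate for the trilinear operator $\mathcal{IW}$ of the form $\norm{\mathcal{IW}(\bm{a},\bm{b},\bm{c})}_{h^{s,b}}\lesssim L^{\theta}\sqrt{T/T_{\mathrm{kin}}}\,\norm{\bm{a}}_{h^{s,b}}\norm{\bm{b}}_{h^{s,b}}\norm{\bm{c}}_{h^{s,b}}$ in the algebra regime $s>d/2$, $b>\tfrac12$ (this is the deterministic, pathwise analogue of the estimates proved in \Cref{sec:iterates} — one has to check that the same counting bounds that feed \Cref{thm:n_iterate} apply when the stochastic structure is replaced by fixed $h^{s,b}$ inputs, but in the subcritical regime $\sqrt{T/T_{\mathrm{kin}}}\le L^{-\varepsilon/2}$ this is a clean $L^2$-based trilinear estimate plus the already-proven Feynman-diagram counting used as a black box).

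First I would record the needed linear and multilinear bounds. From (ii), on the $L$-certain event of \Cref{thm:n_iterate} the source term $\sum_{N\leq n_1+n_2+n_3,\ n_i\leq N}\mathcal{IW}(\bm{w}^{(n_1)},\bm{w}^{(n_2)},\bm{w}^{(n_3)})$ in \eqref{eq:erroreq} is bounded in $h^{s,b}$ by a sum of terms of the shape $L^{O(\theta)}\sqrt{T/T_{\mathrm{kin}}}\prod_{i=1}^3\big[L^{\theta+c(b-1/2)}(\rho/\sqrt{T_{\mathrm{kin}}})^{n_i-1}\sqrt{T/T_{\mathrm{kin}}}\big]$; since $n_1+n_2+n_3\ge N$, the product of the $(\rho/\sqrt{T_{\mathrm{kin}}})^{n_i-1}$ factors is $\lesssim (\rho/\sqrt{T_{\mathrm{kin}}})^{N-3}\lesssim L^{-\delta(N-3)}$, and the remaining powers of $\sqrt{T/T_{\mathrm{kin}}}\le L^{-\varepsilon/2}$ and $L^{O(\theta)}$ are absorbed for $\theta$ small, giving a bound $\le \tfrac12 L^{-\delta N}$ (after adjusting $\delta$ by an $O(1)$ factor and using that there are only $O(N^2)$ tuples). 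Similarly, the linear operator $\mathcal{L}$ acting on $\bm{\Rc}_{N+1}$ has operator norm on $h^{s,b}$ bounded by $\sum_{0\leq n_1,n_2\leq N} L^{O(\theta)}\sqrt{T/T_{\mathrm{kin}}}\norm{\bm{w}^{(n_1)}}_{h^{s,b}}\norm{\bm{w}^{(n_2)}}_{h^{s,b}}\lesssim L^{O(\theta)}(T/T_{\mathrm{kin}})^{3/2}\lesssim L^{-\delta}$, so $(1-\mathcal{L})$ is invertible with norm $\le 2$.

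Next I would set up the fixed-point. On the ball $\mathcal{B}=\{\norm{\bm{\Rc}}_{h^{s,b}}\le L^{-\delta N}\}$ define $\Phi(\bm{\Rc})=(1-\mathcal{L})^{-1}\big[\mathcal{Q}(\bm{\Rc})+\mathcal{C}(\bm{\Rc})+\text{(source)}\big]$. For $\bm{\Rc}\in\mathcal{B}$, the quadratic term obeys $\norm{\mathcal{Q}(\bm{\Rc})}_{h^{s,b}}\lesssim L^{O(\theta)}\sqrt{T/T_{\mathrm{kin}}}\sum_{n_1\le N}\norm{\bm{w}^{(n_1)}}_{h^{s,b}}\norm{\bm{\Rc}}_{h^{s,b}}^2\lesssim L^{O(\theta)}(T/T_{\mathrm{kin}})\,L^{-2\delta N}$, and the cubic term $\norm{\mathcal{C}(\bm{\Rc})}_{h^{s,b}}\lesssim L^{O(\theta)}\sqrt{T/T_{\mathrm{kin}}}\,L^{-3\delta N}$; both are $o(L^{-\delta N})$, so $\Phi$ maps $\mathcal{B}$ into itself. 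The contraction estimate is identical: the difference $\Phi(\bm{\Rc})-\Phi(\bm{\Rc}')$ picks up at least one factor of $\bm{\Rc}$ or $\bm{\Rc}'$ (hence a gain $L^{-\delta N}$) times $L^{O(\theta)}\sqrt{T/T_{\mathrm{kin}}}$ from the multilinear bound, which is $\ll 1$. One subtlety: one must verify $\bm{\Rc}_{N+1}$ — which is defined by \eqref{eq:error} as an honest difference of the solution and its iterates, hence a priori only known to exist/lie in some rough space — actually lies in $\mathcal{B}$, i.e.\ that the Picard scheme converges; this follows by the same estimates applied iteratively (a standard continuity-in-time / local-existence argument on $[0,1]$ after the time rescaling, using that $\varphi$ truncates to a bounded interval), or alternatively by noting the fixed point of $\Phi$ in $\mathcal{B}$ is unique and must coincide with $\bm{\Rc}_{N+1}$.

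\textbf{Main obstacle.} The genuine content is not the fixed-point scheme but establishing the \emph{deterministic} trilinear estimate (ii) for $\mathcal{IW}$ in $h^{s,b}$ with the sharp factor $\sqrt{T/T_{\mathrm{kin}}}$, since $\mathcal{IW}$ involves the oscillatory phase $e^{iTt\Omega^{13}_{k2}}$ and the integral operator $\mathcal{I}$ with its dissipative kernel $e^{-\vartheta\gamma_k(t-t')}$; one must show that passing from the stochastic inputs of \Cref{sec:iterates} to arbitrary $h^{s,b}$ inputs costs nothing beyond the algebra property $s>d/2$, $b>\tfrac12$, and in particular that the time-frequency weight $\langle\tau\rangle^{2b}$ is controlled after applying $\mathcal{I}$ and multiplying by the oscillation — this is where the combinatorial counting bound behind $\rho$ (recorded in \Cref{sec:appendix}) must be invoked as a black box, exactly as in the proof of \Cref{thm:n_iterate}. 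Once (ii) is in hand, everything else is the soft functional-analytic packaging sketched above. I would therefore structure the section as: (1) state and prove the deterministic trilinear bound for $\mathcal{IW}$ on $h^{s,b}$; (2) deduce the bound on the source term and invertibility of $1-\mathcal{L}$; (3) run the contraction on $\mathcal{B}$ and identify the fixed point with $\bm{\Rc}_{N+1}$.
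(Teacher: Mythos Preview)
Your contraction-mapping skeleton matches the paper's, but the load-bearing ingredient you identify as ``(ii)'' --- a \emph{deterministic} trilinear bound
\[
\norm{\mathcal{IW}(\bm{a},\bm{b},\bm{c})}_{h^{s,b}}\lesssim L^{\theta}\sqrt{T/T_{\mathrm{kin}}}\,\norm{\bm{a}}_{h^{s,b}}\norm{\bm{b}}_{h^{s,b}}\norm{\bm{c}}_{h^{s,b}}
\]
--- is not available, and this is a genuine gap rather than a detail. The factor $\sqrt{T/T_{\mathrm{kin}}}=\lambda\sqrt{T}$ you want is $\sqrt{T}$ better than the prefactor $\lambda T$ sitting in front of $\mathcal{W}$; such a gain for \emph{arbitrary} $h^{s,b}$ inputs would have to come from the oscillatory phase $e^{iTt\Omega}$, but it vanishes on the resonant set $\Omega=0$ (take inputs concentrated near $\tau_j=0$ and at resonant spatial frequencies). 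The gains in \Cref{thm:n_iterate} come not from deterministic oscillation but from Isserlis' theorem: the Gaussian structure forces pairings among the leaves, and it is the counting of \emph{paired} decorations (\Cref{thm:main_counting}, \Cref{thm:special_counting}) that produces the $T^{-1}$ factors. Stripping out the randomness and replacing it by generic $h^{s,b}$ inputs destroys this mechanism. A secondary symptom of the same problem: your bound on $\norm{\mathcal{L}}$ tacitly uses $\norm{\bm{w}^{(0)}}_{h^{s,b}}$, but as noted after \eqref{eq:bdecay1}, $\bm{w}^{(0)}$ is \emph{not} in $h^{s,b}$ for $b>\tfrac12$ because of the stochastic integral.

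The paper's proof resolves this differently for the linear and nonlinear pieces. For $\mathcal{L}$, it does \emph{not} attempt a deterministic trilinear estimate; instead it proves a \emph{random matrix} bound (\Cref{thm:linear_error}): the operator $\bm v\mapsto \mathcal{IW}(\bm v,\Jc_{\Tc_1},\Jc_{\Tc_2})$ has small $h^{s,b}\to h^{s,b}$ norm $L$-certainly, established by a $TT^{\ast}$ argument iterated $D\gg1$ times so that the kernel of $(\Pc_+\Pc_+^{\ast})^D$ becomes a large Feynman tree to which the same pairing/counting machinery as in \Cref{thm:n_iterate} applies. For $\mathcal{Q}$ and $\mathcal{C}$, the paper uses only the crude deterministic bound $\norm{\mathcal{IW}(\bm f_1,\bm f_2,\bm f_3)}_{h^{s,b}}\lesssim L^{\theta}\lambda T\prod_j\norm{\bm f_j}$ with the lossy factor $\lambda T\le L^{d}$; this loss is harmless because these terms carry at least two copies of $\bm{\mathcal R}_{N+1}$, so one simply absorbs $L^{d}$ by taking $N>d/\delta$. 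In short: the ``main obstacle'' you flagged is correctly located, but its resolution is a random operator-norm estimate via iterated $TT^{\ast}$, not a sharp deterministic trilinear inequality.
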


The proof of \Cref{thm:n_iterate}  will be given in \Cref{sec:iterates}, while \Cref{sec:error} is devoted to the proof of \Cref{thm:error}.

\section{Estimates on the iterates}\label{sec:iterates}

In this section we prove \Cref{thm:n_iterate}. We start our analysis with the case $n=0$ and then develop a general approach to handle any $n\in\N$.

\subsection{Zeroth iterate}

Our first result corresponds to the statement about $w^{(0)}$ in \Cref{thm:n_iterate}.

\begin{prop}\label{thm:supnorm_w0} 
For any $s\geq 0$ and any $0<\theta\ll 1$, we have that 
\[
 \sup_{0\leq t\leq 1} \sup_{k\in\Z_L^d} |w_k^{(0)}(t)| \leq L^{\theta}
 \]
$L$-certainly.
\end{prop}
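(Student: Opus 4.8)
The plan is to treat separately the two pieces of
\[
w^{(0)}_k(t)=e^{-\vartheta\gamma_k t}\,c_k\eta_k+\sqrt{\vartheta}\,b_k\,(\mathcal I\dot{\bm\beta})_k(t),
\]
coming from the initial data and from the stochastic forcing, and to bound each one uniformly over $t\in[0,1]$ and $k\in\Z_L^d$, intersecting the finitely many $L$-certain events thus produced. For the initial-data piece there is nothing to do in $t$, since $\vartheta,\gamma_k>0$ gives $\sup_{t\in[0,1]}|e^{-\vartheta\gamma_k t}c_k\eta_k|\le |c_k|\,|\eta_k|$. For $|k|\le L$ there are $\lesssim L^{2d}$ lattice points, so the subgaussian tail of a complex Gaussian and a union bound give $\Pb(\exists\,|k|\le L:|\eta_k|>L^{\theta})\le CL^{2d}e^{-cL^{2\theta}}$, which is $L$-certain, and on the good event $|c_k||\eta_k|\le\|c\|_{L^\infty}L^{\theta}$ there. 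For $|k|>L$, $\Pb(\exists\,|k|>L:|\eta_k|>\langle k\rangle)\le\sum_{|k|>L}e^{-c\langle k\rangle^2}$ is superexponentially small in $L$, and on that event $|c_k||\eta_k|\le C_M\langle k\rangle^{1-M}\le 1$ once $M\ge 2$ and $L$ is large, using the Schwartz bound $|c_k|\le C_M\langle k\rangle^{-M}$.

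The forcing piece is the substantive part. For each $k$, $X_k(t):=(\mathcal I\dot{\bm\beta})_k(t)=\varphi(t)\int_0^t e^{-\vartheta\gamma_k(t-r)}\varphi(r)\,d\beta_k(r)$ is a continuous centered Gaussian process in $t$, supported in $[-2,2]$; writing $X_k(t)=\int_\R f_{t,k}(r)\,d\beta_k(r)$ with $f_{t,k}(r)=\varphi(t)e^{-\vartheta\gamma_k(t-r)}\varphi(r)\mathbf{1}_{[0,t]}(r)$, the It\^o isometry yields the variance bound
\[
\E|X_k(t)|^2=\|f_{t,k}\|_{L^2}^2\le\int_0^t e^{-2\vartheta\gamma_k(t-r)}\,dr\le\min\!\big(t,(2\vartheta\gamma_k)^{-1}\big),
\]
hence $\E\big|\sqrt{\vartheta}\,b_k X_k(t)\big|^2\le b_k^2/(2\gamma_k)\le\tfrac12\|b\|_{L^\infty}^2$: the $\sqrt{\vartheta}$ prefactor (which can be polynomially large in $L$) is exactly absorbed by the dissipative smoothing, which is the energy-balance mechanism behind the $\sqrt\nu$ scaling of the force. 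The second input is the uniform increment bound
\[
\E\big|X_k(t)-X_k(t')\big|^2\le C\,|t-t'|\qquad\text{for all }t,t'\in[-2,2],\ k,\ \vartheta,
\]
with $C$ depending only on $\|\varphi'\|_{L^\infty}$. This follows by estimating $\|f_{t,k}-f_{t',k}\|_{L^2}^2$ (say $t'<t$): on $[t',t]$ the integrand is $\le e^{-\vartheta\gamma_k(t-r)}$, contributing $\le\min(|t-t'|,(2\vartheta\gamma_k)^{-1})$; on $[0,t']$ one factors out $e^{-\vartheta\gamma_k(t'-r)}$ and uses $|\varphi(t)e^{-\vartheta\gamma_k(t-t')}-\varphi(t')|\le\|\varphi'\|_{L^\infty}|t-t'|+\min(1,\vartheta\gamma_k|t-t'|)$, so that this contribution is $\le\big(\|\varphi'\|_{L^\infty}|t-t'|+\min(1,\vartheta\gamma_k|t-t'|)\big)^2\min\!\big(t',(2\vartheta\gamma_k)^{-1}\big)$; the elementary inequalities $1-e^{-x}\le\min(1,x)$ and $\int_0^{t'}e^{-2\vartheta\gamma_k(t'-r)}dr\le(2\vartheta\gamma_k)^{-1}$ then dispose of the a priori dangerous factor $\vartheta\gamma_k$ in each regime of $\vartheta\gamma_k|t-t'|$.

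With these two inputs the claim reduces to standard Gaussian-process estimates. By Dudley's entropy bound (the canonical pseudometric of $X_k$ on $[-2,2]$ has covering numbers $\lesssim 1+C\epsilon^{-2}$ and diameter $\lesssim(\vartheta\gamma_k)^{-1/2}\wedge 1$, and $\gamma_k\ge 1$) one gets $\E\sup_t|\sqrt{\vartheta}\,b_kX_k(t)|\lesssim|b_k|\,\gamma_k^{-1/2}\big(1+\sqrt{\log_+(\vartheta\gamma_k)}\big)\le C_M\langle k\rangle^{-M}\sqrt{\log L}$ for any $M$, using that $\vartheta\gamma_k\lesssim L^{O(1)}\langle k\rangle^{O(1)}$ under the running hypotheses and absorbing a harmless $\sqrt{\log\langle k\rangle}$ into the Schwartz decay of $b$. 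By the Borell--TIS inequality, $\Pb\big(\sup_t|\sqrt{\vartheta}\,b_kX_k(t)|>\E\sup_t|\,\cdot\,|+u\big)\le C'e^{-c'u^2\gamma_k/b_k^2}\le C'e^{-c''u^2}$. Choosing $u=\tfrac12L^{\theta}$ and union-bounding over $|k|\le L$ gives, $L$-certainly, $\sup_{|k|\le L}\sup_t|\sqrt{\vartheta}\,b_kX_k(t)|\le L^{\theta}$; for $|k|>L$, comparing with the threshold $\langle k\rangle^{-M/2}$ and using the variance bound $\le C_M^2\langle k\rangle^{-2M}$ gives probability $\le C'e^{-c\langle k\rangle^{M}}$, whose sum over $|k|>L$ is $L$-certainly negligible, so $\sup_{|k|>L}\sup_t|\sqrt{\vartheta}\,b_kX_k(t)|\le 1$. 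Adding the two parts yields $\sup_k\|w^{(0)}_k\|_{L^\infty_t[0,1]}\le(\|c\|_{L^\infty}+2)L^{\theta}$ $L$-certainly, and relabeling $\theta$ finishes the argument. I expect the \emph{uniform increment estimate} to be the only genuine obstacle: without exploiting the exponential damping to absorb $\vartheta\gamma_k$, the naive bound carries a factor $\vartheta\gamma_k$ (or worse) that is polynomially large in $L$, and the Schwartz decay of $b$ only compensates it for large $|k|$, not at moderate frequencies.
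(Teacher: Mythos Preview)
Your proof is correct and follows essentially the same route as the paper: split $w^{(0)}_k$ into the initial-data and forcing pieces, bound the Gaussian sup via Borell--TIS with the variance $\vartheta b_k^2\,\E|X_k(t)|^2\le b_k^2/(2\gamma_k)$, and union-bound over $k$ using the Schwartz decay of $c_k,b_k$. The only substantive difference is how you obtain the expected-sup bound $\E\sup_t|\sqrt{\vartheta}\,b_kX_k(t)|\lesssim |b_k|\gamma_k^{-1/2}(1+\sqrt{\log_+(\vartheta\gamma_k)})$: the paper recognizes $X_k$ as an Ornstein--Uhlenbeck process and cites Graversen--Peskir's maximal inequality directly, whereas you derive it from scratch via the increment estimate $\E|X_k(t)-X_k(t')|^2\lesssim |t-t'|$ and Dudley's entropy integral with diameter $\lesssim(\vartheta\gamma_k)^{-1/2}\wedge 1$. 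Your route is more self-contained and makes transparent exactly the cancellation you flagged (the factor $\min(1,\vartheta\gamma_k|t-t'|)^2\cdot(2\vartheta\gamma_k)^{-1}\le|t-t'|/2$ that kills the dangerous $\vartheta\gamma_k$); the paper's citation is shorter but hides this mechanism. The union bound over $k$ is also organized slightly differently (you split $|k|\le L$ versus $|k|>L$, the paper sums over all $k$ using the decay of $c_k,b_k$ in the Gaussian exponent), but both are routine.
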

\begin{proof}
We write $w^{(0)} = c^{(0)} + b^{(0)}$, where
\begin{equation}\label{eq:decomposition_w0}
\begin{split}
c^{(0)}(t,x) & = L^{-d/2} \sum_{k\in\Z_L^d} c_k \eta_k\, e^{2\pi ik\cdot x-\vartheta \gamma_k t},\\
 b^{(0)}(t,x) & =L^{-d/2} \sum_{k\in\Z_L^d} b^{(0)}_k (t) \, e^{2\pi i k\cdot x}, \quad \mbox{for}\qquad 
 b^{(0)}_k (t)   = \sqrt{\vartheta} \, b_k\, \int_0^{t} e^{-\vartheta \gamma_k (t-t')}\, d\beta_k (t').
 \end{split}
\end{equation}
We first study $c^{(0)}$. We claim that it suffices to show that, for $c_k\neq 0$, 
\begin{equation}\label{eq:ptw_tail_bound_c}
\P \left( \sup_{0\leq t\leq 1} |c_k^{(0)} (t)| > L^{\theta} \right) \leq C\, \exp\left(-\frac{1}{2\, c_k^2}\, L^{2\theta}\right).
\end{equation}
Indeed, assuming \eqref{eq:ptw_tail_bound_c}, and using the fact that $c_k^2\leq C\, \langle k\rangle^{-2}$, we have that
\[
\begin{split}
\P\left( \sup_{0\leq t\leq 1} \sup_{k\in\Z_L^d} |c_k^{(0)}(t)| > L^{\theta} \right) &  \leq \sum_{k\in\Z_L^d} \P\left( \sup_{0\leq t\leq 1} |c_k^{(0)}(t)| > L^{\theta} \right) \lesssim  \sum_{k\in\Z_L^d} e^{-C\, \langle k\rangle^2\, L^{2\theta}} \\
& \lesssim e^{-L^{2\theta} }  \sum_{k\in\Z_L^d} e^{-C\, \langle k\rangle^2} \lesssim e^{-L^{2\theta} }  \sum_{k\in\Z_L^d}  \langle k\rangle^{-2} \lesssim L^d \, e^{-L^{2\theta} } \lesssim e^{-L^{\theta} } ,
\end{split}
\]
where we allow the constant $C$ to change from line to line. 

Let us thus focus on proving \eqref{eq:ptw_tail_bound_c}. To do so, we note that $\sup_{0\leq t\leq 1} |c_k^{(0)}(t)| \leq c_k |\eta_k|$ and therefore \eqref{eq:ptw_tail_bound_c} follows from an elementary tail bound for a Gaussian random variable.

\medskip

Next consider $b^{(0)}$. By the same reasoning as before, we claim that it suffices to show that, for $b_k\neq 0$, 
\begin{equation}\label{eq:ptw_tail_bound}
\P \left( \sup_{0\leq t\leq 1} |b_k^{(0)} (t)| > L^{\theta} \right) \leq C\, \exp\left(-\frac{\gamma_k}{2\, b_k^2}\, L^{2\theta}\right).
\end{equation}
for some $C$ independent of $k$. In order to prove this, we first note that  we may write
$b_k^{(0)} (t)= \sqrt{\vartheta} \, b_k \, V_k(t)$, where $V_k(t)$ is an Ornstein-Uhlenbeck process satisfying
\[
dV_k(t) = - \vartheta \gamma_k V_k(t) + d\beta_k (t).
\]
By \cite[Theorem~2.5]{GravPes}, we have that 
\[
\E \left[ \sup_{0\leq t\leq 1} |V_k(t)| \right]\leq \frac{C}{\sqrt{\vartheta\, \gamma_k}} \, \log ( 1+ \vartheta \gamma_k),
\]
for a universal constant $C$. Since $\vartheta\leq L^d$, this yields
\[
\E \left[\sup_{0\leq t\leq 1} |b_k^{(0)} (t)| \right]\leq \frac{C\, b_k}{\sqrt{\gamma_k}} \, [\log(1+\gamma_k)+\log ( 1+ \vartheta) ] \leq L^{\theta} b_k .
\]
The Borell-TIS inequality \cite[Theorem 2.1.1]{AdTay} 
then implies
\[
\begin{split}
\P \left( \sup_{0\leq t\leq 1} |b_k^{(0)} (t)| > L^{\theta}(1+b_k) \right) & \leq 
\P \left( \sup_{0\leq t\leq 1} |b_k^{(0)} (t)| > L^{\theta} + \E \left[\sup_{0\leq t\leq 1} |b_k^{(0)} (t)| \right] \right) \\
& \leq \exp \left( - \frac{\gamma_k}{2b_k^2 (1-e^{-2\vartheta\gamma_k})} \, L^{2\theta}\right)\leq \exp \left( - \frac{\gamma_k}{2b_k^2} \, L^{2\theta}\right),
\end{split}
\]
as desired.
\end{proof}

Despite the fact that $w^{(0)}$ is relatively rough, we would like to place higher iterates in a better space such as $H^{b}([0,1],H^s(\T^d_L))$ for $b>1/2$, $s>d/2$. 

We begin by computing the Fourier transform of $w^{(0)}$. In order to do so, it is convenient to fix a smooth cutoff $\varphi\in C_c^{\infty}(\R)$ with $\mbox{supp}(\varphi)\subset (-2,2)$ such that $\varphi(t)=1$ if $t\in [-1,1]$. Next we redefine 
\[
w^{(0)}(t,x):= L^{-d/2}\, \sum_{k\in\Z^d_L} e^{2\pi ik\cdot x} w_k^{(0)}(t)
\]
where
\begin{equation}\label{eq:cutoff_w0}
w_k^{(0)}(t)  =   c_k^{(0)}(t) + b_k^{(0)}(t) := c_k \eta_k e^{-\vartheta \gamma_k t}\varphi (t) + \sqrt{\vartheta}\, b_k \, \varphi (t) \, \mathbbm{1}_{t\geq 0} \,\int_0^t e^{-\vartheta  \gamma_k (t-t')}\, d\beta_k (t').
\end{equation}
Given that $\varphi=1$ in the interval $[0,1]$, we can focus on studying this function as long as we restrict ourselves to such times. Moreover, \Cref{thm:supnorm_w0}  and the continuity of $\varphi$ guarantee that \eqref{eq:cutoff_w0} is continuous almost surely.

We compute the temporal Fourier transform of $w_k^{(0)}$. First of all,
\begin{equation}\label{eq:Fourier_Laplace}
\widetilde{c_k}^{(0)}(\tau) = c_k \eta_k \, \widetilde{\varphi}(\tau-i\vartheta\gamma_k),
\end{equation}
where $\widetilde{\varphi}$ is the Fourier-Laplace transform of $\varphi$. Similarly, 
\begin{equation}\label{eq:representation_w0_1}
\begin{split}
 \widetilde{b_k}^{(0)}(\tau) &= \sqrt{\vartheta} \, b_k\, \int_{0}^{\infty} e^{-it\tau} \varphi(t) \int_0^{t} e^{-\vartheta \gamma_k (t-t')} \, d\beta_k(t') \, dt \\
 & = \sqrt{\vartheta} \, b_k\, \int_{0}^{\infty} e^{\vartheta \gamma_k t'} \int_{t'}^{\infty} \varphi(t)\, e^{-(\vartheta\gamma_k+i\tau) t} \, dt \, d\beta_k(t')
 =:\sqrt{\vartheta} \, b_k\, Y_k(\tau).
 \end{split}
\end{equation}
For fixed $k$ and $\tau$, it is easy to show that $Y_k(\tau)$ is a normal random variable with mean zero and variance $\E |Y_k(\tau)|^2\lesssim 1$.

On the other hand, we may integrate by parts to obtain some decay in $\tau$:
\begin{equation}\label{eq:representation_Xk}
\begin{split}
 \widetilde{b_k}^{(0)}(\tau) & = \sqrt{\vartheta} \, b_k\, \int_{0}^{\infty} e^{-it\tau} \varphi(t) \int_0^{t} e^{-\vartheta \gamma_k (t-t')} \, d\beta_k(t') \, dt = \sqrt{\vartheta} \, b_k\, \int_{0}^{\infty} e^{\vartheta \gamma_k t'} \int_{t'}^{\infty} \varphi(t)\, e^{-(\vartheta\gamma_k+i\tau) t} \, dt \, d\beta_k(t')\\
& = \frac{ \sqrt{\vartheta} \,b_k}{\vartheta\gamma_k+i\tau}\, \int_{0}^{2} \left( e^{-i\tau t'} \varphi (t') + e^{\vartheta \gamma_k t'}\int_{t'}^{\infty}e^{-(\vartheta \gamma_k+i\tau) t} \varphi'(t)\,dt\right) \, d\beta_k (t') = \frac{ \sqrt{\vartheta} \,b_k}{\vartheta \gamma_k+i\tau}\, X_k(\tau) .
\end{split}
\end{equation}
For fixed $k$ and $\tau$, $X_k(\tau)$ is a Gaussian random variable with mean zero and variance:
\begin{equation}
\E | X_k(\tau)|^2 = 2\, \int_{0}^{2} \left | e^{-i\tau t'} \varphi (t') + e^{\vartheta\gamma_k t'}\int_{t'}^{\infty}e^{-(\vartheta \gamma_k+i\tau) t} \varphi'(t)\,dt \right |^2 \, dt'\lesssim 1.
\end{equation}

When $\vartheta<1$, the best possible bounds are:
\begin{equation}\label{eq:bdecay1}
\E | \widetilde{b_k}^{(0)}(\tau) |^2 \lesssim 
\begin{cases}
\vartheta \, b_k^2 & \mbox{if}\ \tau < 1\\
\vartheta\, |\tau|^{-2}\, b_k^2 & \mbox{if}\ \tau > 1
\end{cases} \lesssim \vartheta \, b_k^2 \,\langle \tau\rangle^{-2}.
\end{equation}
Note that $b^{(0)}$ cannot live in $H^b H^s$ for $b>1/2$ due to the decay $\langle \tau\rangle^{-2}$, which captures the roughness of the Wiener process.

When $\vartheta>1$, the bound coming from integration by parts is best and we have that:
\begin{equation}\label{eq:bdecay2}
\E | \widetilde{b_k}^{(0)}(\tau) |^2 \lesssim \vartheta^{-1} \, b_k^2\, \langle \tau / \vartheta\rangle^{-2} .
\end{equation}

Despite the fact that the variance of $\widetilde{b_k}^{(0)}(\tau)$ only decays like $|\tau|^{-2}$, we can show that covariances have additional decay. That is the content of the next result.

\begin{lem}\label{thm:covariance_w0}
Let $k_1,k_2\in \Z^d_L$ and $\tau_1, \tau_2\in \R$. Then 
\begin{enumerate}[label=(\roman*)]
\item If $k_1\neq k_2$, or if $k_1=k_2$ and $\imath=+$, we have that
\begin{equation}\label{eq:covariance_w01}
\E \left[ \widetilde{w_{k_1}}^{(0)}(\tau_1)\, \widetilde{w_{k_2}}^{(0)}(\tau_2)^{\imath} \right] =
0,
\end{equation}
where $\imath\in \{-,+\}$ indicates conjugation. 
\item If $\vartheta >2$,  we have that
\begin{equation}\label{eq:covariance_w02}
\begin{split}
\Big |\E \left[ \widetilde{w_k}^{(0)}(\tau_1) \, \overline{\widetilde{w_k}^{(0)}(\tau_2)}\, \right] \Big |\lesssim_N &\ \vartheta^{-1}\,b_k^2\, \langle \tau_1 -\tau_2\rangle^{-N} \, \langle \tau_1 / \vartheta\rangle^{-1} \, \langle \tau_2 /\vartheta\rangle^{-1}  + c_k^2\, \langle \tau_1\rangle^{-N}\, \langle \tau_2\rangle^{-N} 
\end{split}
\end{equation}
for any $k\in \Z^d_L$ and any $N\in\N$. 
\item If $\vartheta <2$, we have that 
\begin{equation}\label{eq:covariance_w03}
\begin{split}
\Big |\E \left[ \widetilde{w_k}^{(0)}(\tau_1) \, \overline{\widetilde{w_k}^{(0)}(\tau_2)}\, \right] \Big |\lesssim_N &\ \vartheta\, b_k^2 \,\langle \tau_1 -\tau_2\rangle^{-N} \, \left\langle \tau_1\right\rangle^{-1} \, \left\langle \tau_2\right\rangle^{-1}  + c_k^2\, \langle \tau_1\rangle^{-N}\, \langle \tau_2\rangle^{-N} 
\end{split}
\end{equation}
for any $k\in \Z^d_L$ and any $N\in\N$.
\end{enumerate}
\end{lem}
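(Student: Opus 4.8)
The plan is to split the zeroth iterate into its two independent constituents, reduce every covariance to the sum of a ``Gaussian'' and a ``Wiener'' piece, and then --- for parts (ii)--(iii) --- read off the decay from the explicit deterministic kernels produced by the It\^o isometry. Concretely, write $w_k^{(0)}=c_k^{(0)}+b_k^{(0)}$ as in \eqref{eq:cutoff_w0}, where $c_k^{(0)}(t)=c_k\eta_k e^{-\vartheta\gamma_k t}\varphi(t)$ carries the Gaussian $\eta_k$ from the initial data and $b_k^{(0)}(t)$ is the It\^o term driven by $\beta_k$. Since $\{\eta_k\}$ and $\{\beta_k(\cdot)\}$ are independent and centered, for every $\tau_1,\tau_2$ and every sign $\imath$
\[
\E\!\left[\widetilde{w_{k_1}}^{(0)}(\tau_1)\,\widetilde{w_{k_2}}^{(0)}(\tau_2)^{\imath}\right]
=\E\!\left[\widetilde{c_{k_1}}^{(0)}(\tau_1)\,\widetilde{c_{k_2}}^{(0)}(\tau_2)^{\imath}\right]
+\E\!\left[\widetilde{b_{k_1}}^{(0)}(\tau_1)\,\widetilde{b_{k_2}}^{(0)}(\tau_2)^{\imath}\right],
\]
the cross terms vanishing by independence, and I would analyze the two summands separately.

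For part (i): by \eqref{eq:Fourier_Laplace} the Gaussian summand equals $c_{k_1}c_{k_2}\,\widetilde\varphi(\tau_1-i\vartheta\gamma_{k_1})\,\widetilde\varphi(\tau_2-i\vartheta\gamma_{k_2})^{\imath}\,\E[\eta_{k_1}\eta_{k_2}^{\imath}]$, and for complex standard Gaussians $\E[\eta_{k_1}\eta_{k_2}^{\imath}]$ vanishes unless $\imath$ denotes conjugation \emph{and} $k_1=k_2$; in particular it vanishes in all the cases of (i). For the Wiener summand, Fubini and the It\^o isometry turn it into a deterministic scalar times $\E\big[\big(\int f_{\tau_1}\,d\beta_{k_1}\big)\big(\int f_{\tau_2}\,d\beta_{k_2}\big)^{\imath}\big]$, which again vanishes unless $\imath$ is conjugation and $k_1=k_2$, because the $\beta_k$ are independent across $k$ and a complex Wiener process satisfies $\E[d\beta\,d\beta]=0$. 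This disposes of (i).

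For parts (ii)--(iii) one has $k_1=k_2=k$ and $\imath$ conjugation. The Gaussian part is, using $\E|\eta_k|^2=1$, exactly $c_k^2\,\widetilde\varphi(\tau_1-i\vartheta\gamma_k)\,\overline{\widetilde\varphi(\tau_2-i\vartheta\gamma_k)}$, and since $\varphi\in C^\infty_c(\R)$ the Paley--Wiener bound $|\widetilde\varphi(\tau-i\vartheta\gamma_k)|\lesssim_N\langle\tau\rangle^{-N}$ produces the $c_k^2\langle\tau_1\rangle^{-N}\langle\tau_2\rangle^{-N}$ term. The Wiener part is the heart of the matter: the It\^o isometry gives
\[
\E\!\left[\widetilde{b_k}^{(0)}(\tau_1)\,\overline{\widetilde{b_k}^{(0)}(\tau_2)}\right]
=c\,\vartheta\,b_k^2\!\iint_{[0,2]^2}\! e^{-i\tau_1 t+i\tau_2 s}\,K_k(t,s)\,dt\,ds,
\qquad
K_k(t,s)=\frac{\varphi(t)\varphi(s)}{\vartheta\gamma_k}\big(e^{-\vartheta\gamma_k|t-s|}-e^{-\vartheta\gamma_k(t+s)}\big),
\]
the cutoff Ornstein--Uhlenbeck covariance, and two independent gains must be extracted from this integral. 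The overall $\vartheta$-power and the $\langle\tau_i\rangle^{-1}$ (resp.\ $\langle\tau_i/\vartheta\rangle^{-1}$) decay come from the frequency structure: for $\vartheta>2$ I would use the integrated-by-parts representation \eqref{eq:representation_Xk}, $\widetilde{b_k}^{(0)}(\tau)=\frac{\sqrt\vartheta\,b_k}{\vartheta\gamma_k+i\tau}X_k(\tau)$ with $\E|X_k(\tau)|^2\lesssim1$, together with $|\vartheta\gamma_k+i\tau|\ge|\vartheta+i\tau|\sim\vartheta\langle\tau/\vartheta\rangle$, getting the factor $\vartheta^{-1}b_k^2\langle\tau_i/\vartheta\rangle^{-1}$; for $\vartheta<2$ I would instead invoke the direct bound $\E|\widetilde{b_k}^{(0)}(\tau)|^2\lesssim\vartheta\,b_k^2\langle\tau\rangle^{-2}$ of \eqref{eq:bdecay1}, getting $\vartheta\,b_k^2\langle\tau_i\rangle^{-1}$.

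The off-diagonal decay $\langle\tau_1-\tau_2\rangle^{-N}$ is where the work is. The plan is to change variables $t=\mu+\sigma$, $s=\mu$, turning the phase into $e^{-i\tau_1\sigma}e^{-i(\tau_1-\tau_2)\mu}$, and integrate by parts $N$ times in $\mu$. Two structural facts make this legitimate with constants uniform in $k$ and $\vartheta$: first, $K_k$ vanishes on the \emph{entire} temporal boundary of $[0,2]^2$ --- on $\{t=2\}\cup\{s=2\}$ because $\operatorname{supp}\varphi\subset(-2,2)$, and on $\{t=0\}\cup\{s=0\}$ because $e^{-\vartheta\gamma_k|t-0|}-e^{-\vartheta\gamma_k(t+0)}\equiv0$ (equivalently, the It\^o integral $\int_0^t(\cdot)\,d\beta_k$ vanishes at $t=0$), so all boundary terms in the $\mu$-integrations by parts disappear; second, $\partial_\mu=\partial_t+\partial_s$ differentiates along the diagonal $\{t=s\}$, which is tangent to the only singular locus of $K_k$ (where merely the normal derivative of $e^{-\vartheta\gamma_k|t-s|}$ jumps), and in fact $\partial_t+\partial_s$ kills the dangerous derivatives of the $e^{-\vartheta\gamma_k(t-s)}$-type terms, so $(\partial_t+\partial_s)^N K_k$ stays bounded uniformly in $\vartheta\gamma_k$. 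Combining the two mechanisms --- applying the frequency gain directly on \eqref{eq:representation_Xk} when $\vartheta>2$, or extracting $\langle\tau_i\rangle^{-1}$ through additional integrations by parts in $\sigma$ near $t=0$ when $\vartheta<2$, while the $\mu$-integrations by parts supply $\langle\tau_1-\tau_2\rangle^{-N}$ --- yields \eqref{eq:covariance_w02}--\eqref{eq:covariance_w03}. I expect the main obstacle to be precisely this simultaneous bookkeeping: the $\langle\tau\rangle^{-1}$ rate is the genuine roughness of white noise, so one must carefully isolate the part of $K_k$ carrying it (the transverse-to-diagonal and initial-time behavior) from the diagonally smooth part responsible for $\langle\tau_1-\tau_2\rangle^{-N}$, prevent the possibly very large or very small parameter $\vartheta\gamma_k$ from spoiling any constant, and --- in order to arrange the integrations by parts consistently --- split the $(\sigma,\mu)$-integral according to the sign of $\sigma$ and exploit the $\tau_1\leftrightarrow\tau_2$ symmetry of $|\E[\cdots]|$.
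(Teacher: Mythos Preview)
Your approach is essentially the paper's: split $w_k^{(0)}=c_k^{(0)}+b_k^{(0)}$, dispose of (i) by independence and the vanishing of $\E[\eta_{k_1}\eta_{k_2}^{\imath}]$ and $\E[d\beta_{k_1}d\beta_{k_2}^{\imath}]$, handle the Gaussian piece by Paley--Wiener on $\widetilde\varphi(\tau-i\vartheta\gamma_k)$, and for the Wiener piece extract the $\langle\tau_j\rangle^{-1}$ (resp.\ $\langle\tau_j/\vartheta\rangle^{-1}$) factors from the $X_k$/$Y_k$ representations while getting $\langle\tau_1-\tau_2\rangle^{-N}$ by integrating by parts against $e^{i(\tau_2-\tau_1)s}$.

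The only organizational difference is that the paper never introduces the double integral in $K_k(t,s)$: it applies the It\^o isometry directly to the $X_k$ representation \eqref{eq:representation_Xk}, obtaining the \emph{single} integral $\E[X_k(\tau_1)\overline{X_k(\tau_2)}]=2\int_0^2 e^{i(\tau_2-\tau_1)s}F(\tau_1,\tau_2;s)\,ds$ with $|\partial_s^N F|\lesssim_N 1$, and integrates by parts in $s$. Your $(\mu,\sigma)$-change of variables in the $K_k$ double integral is the same mechanism written in physical rather than stochastic coordinates; for (iii) the paper replaces your ``additional IBP in $\sigma$'' by a short case split on whether $|\tau_j|\le 2$ or $|\tau_j|>2$, using $Y_k$ in the former and $X_k$ in the latter.

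Two cautions about your stated justification, since both are false as written. First, $(\partial_t+\partial_s)^N K_k$ is \emph{not} bounded uniformly in $\vartheta\gamma_k$: each application of $\partial_t+\partial_s$ to the $e^{-\vartheta\gamma_k(t+s)}$ piece produces a factor $-2\vartheta\gamma_k$, so after $N$ hits you see $(\vartheta\gamma_k)^{N-1}e^{-\vartheta\gamma_k(t+s)}$. You should split $K_k$ into its two exponential pieces first; the $e^{-\vartheta\gamma_k(t+s)}$ part factors in $(t,s)$ and is handled directly as a rapidly decaying term, leaving only the diagonal part $e^{-\vartheta\gamma_k|t-s|}$ for the $(\mu,\sigma)$-argument. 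Second, ``all boundary terms disappear'' is only true for the first integration by parts: $K_k(t,0)=0$, but $(\partial_t+\partial_s)K_k|_{s=0}=2\varphi(t)\varphi(0)e^{-\vartheta\gamma_k t}\neq 0$, so higher IBPs throw off nonzero boundary contributions at $\mu=0$. These boundary terms are of Fourier--Laplace type in the remaining variable and can be controlled, but they must be tracked rather than declared to vanish.
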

\begin{proof}
\textbf{(i)}  Writing $\widetilde{w_{k}}^{(0)}=\widetilde{c_k}^{(0)}+\widetilde{b_k}^{(0)}$ we have three types of terms in \eqref{eq:covariance_w01}. Note that 
\[
\E[\widetilde{c_{k_1}}^{(0)}(\tau_1)\widetilde{c_{k_2}}^{(0)}(\tau_2)^{\imath}]=0
\]
 thanks to the definition of the $\eta_k$. The fact that $\E[\widetilde{b_{k_1}}^{(0)}(\tau_1)\widetilde{b_{k_2}}^{(0)}(\tau_2)^{\imath}]=0$ follows from \\
 $\E [d\beta_{k_1} d\beta_{k_2}^{\imath}]=0$. Finally, $\E[\widetilde{c_{k_1}}^{(0)}(\tau_1)\widetilde{b_{k_2}}^{(0)}(\tau_2)^{\imath}]=0$ follows from the fact that $\eta_{k_1}$ is independent of $\widetilde{b_{k_2}}^{(0)}$ for all times and all $k_1,k_2$.

Next we account for the second summand in \eqref{eq:covariance_w02} and \eqref{eq:covariance_w03}.
The fact that $\widetilde{c_{k}}^{(0)}$ is independent from $\widetilde{b_{k}}^{(0)}$ implies that we need only estimate 
\[
\E [\widetilde{c_{k}}^{(0)}(\tau_1)\, \overline{\widetilde{c_{k}}^{(0)}(\tau_2)}],\qquad  \mbox{and}\qquad  
\E [\widetilde{b_{k}}^{(0)}(\tau_1)\, \overline{\widetilde{b_{k}}^{(0)}(\tau_2)}].
\]

Since $\varphi\in C_c^{\infty}(\R)$, \eqref{eq:Fourier_Laplace} implies that 
\[
|\E [\widetilde{c_{k}}^{(0)}(\tau_1)\, \overline{\widetilde{c_{k}}^{(0)}(\tau_2)}]|= c_k^2\, |\widetilde{\varphi}(\tau_1-i\vartheta\gamma_k)|\,|\widetilde{\varphi}(\tau_2-i\vartheta\gamma_k)|\lesssim  c_k^2\,\langle \tau_1\rangle^{-N}\, \langle \tau_2\rangle^{-N} .
\]

\

The rest of the proof consists of estimating $\E [\widetilde{b_{k}}^{(0)}(\tau_1)\, \overline{\widetilde{b_{k}}^{(0)}(\tau_2)}]$. 

\

\textbf{(ii)}  Suppose that $\vartheta\gtrsim 1$. Using \eqref{eq:representation_Xk}, we write:
\begin{equation}
\E \left[ \widetilde{b_k}^{(0)}(\tau_1) \, \overline{\widetilde{b_k}^{(0)}(\tau_2)}\, \right]  = \frac{\vartheta b_k^2}{(\vartheta\gamma_k +i \tau_1)\, (\vartheta \gamma_k -i \tau_2)} \, \E[ X_k(\tau_1) \overline{X_k(\tau_2)}].
\end{equation}
Given that $\E [d\beta_{k}(\tau) \overline{d\beta_{k}}(\tau')]=2\delta(\tau-\tau')\, d\tau\, d\tau'$, we have that
\begin{equation}\label{eq:covariance_X}
\frac{1}{2}\, \E[ X_k(\tau_1) \overline{X_k(\tau_2)}]  = \int_{0}^{2} \Psi (\tau_1,s) \, \overline{\Psi (\tau_2,s)} \, ds
\end{equation}
where 
\begin{align}
\Psi (\tau,s) 
& = e^{-i\tau s} \varphi (s) + e^{-i \tau s}\, \int_{0}^{\infty} e^{-(\vartheta \gamma_k+i\tau)s'} \varphi'(s+s')\,ds'.\nonumber
\end{align}
It is easy to show that 
\begin{align*}
 \Psi (\tau_1,s) \, \overline{\Psi (\tau_2,s)}  = & \ e^{i(\tau_2-\tau_1)s}\, \varphi(s)^2 + e^{i(\tau_2-\tau_1)s}\, \varphi (s) \, \int_{0}^{\infty} e^{-(\vartheta\gamma_k-i\tau_2) s'} \varphi'(s+s')\,ds' \\
 & + e^{i(\tau_2-\tau_1)s}\, \varphi (s) \, \int_{0}^{\infty} e^{-(\vartheta\gamma_k+i\tau_1) s'} \varphi'(s+s')\,ds'\\
 & +   e^{i(\tau_2-\tau_1)s}\,\left( \int_{0}^{\infty} e^{-(\vartheta\gamma_k-i\tau_2) s'} \varphi'(s+s')\,ds' \right) \, \left(\int_{0}^{\infty} e^{-(\vartheta \gamma_k+i\tau_1) s'} \varphi'(s+s')\,ds'\right)\\
 =: &\  e^{i(\tau_2-\tau_1)s}\, F(\tau_1,\tau_2; s).
\end{align*}
In the last step, we factor out $e^{i(\tau_2-\tau_1)s}$ and notice that the remaining function is supported in $s \in (0,2)$, it is smooth, and that for any $N\in\N$
\[ |\pa_{s}^{N} F(\tau_1,\tau_2; s)|\lesssim_N 1\]
independently of $\tau_1, \tau_2$. Plugging this into \eqref{eq:covariance_X}, and integrating by parts $N$ times yields:
\[ \int_{0}^{2} e^{i(\tau_2-\tau_1)s}\, F(\tau_1,\tau_2; s) \, ds  = \O_N  \left( \langle \tau_1-\tau_2\rangle^{-N}\right).\]

\textbf{(iii)}  Suppose that $\vartheta<2$. If $|\tau_1|,|\tau_2|\leq 2$ we exploit \eqref{eq:representation_w0_1} to write:
\[
\E \left[ \widetilde{b_k}^{(0)}(\tau_1) \, \overline{\widetilde{b_k}^{(0)}(\tau_2)}\, \right]  = \vartheta \, b_k^2 \, \E [Y_k(\tau_1) \overline{Y_k(\tau_2)}]\lesssim \vartheta \, b_k^2 \sim \vartheta \, b_k^2\, \langle \tau_1\rangle^{-1} \,\langle \tau_2\rangle^{-1} \, \langle \tau_1-\tau_2\rangle^{-N} .
\]
If $|\tau_1|,|\tau_2|>2$ we proceed as in Step 2 to gain a factor of $\langle \tau_1-\tau_2\rangle^{-N}$, and we note that
\[
\vartheta^{-1}\,\langle\tau_1 / \vartheta\rangle^{-1} \, \langle\tau_2 / \vartheta\rangle^{-1} \sim \vartheta \, \langle \tau_1\rangle^{-1} \,\langle \tau_2\rangle^{-1} .
\]
Finally, suppose that $|\tau_1|\leq 2$ and $|\tau_2|>2$. We write
\[
\E \left[ \widetilde{b_k}^{(0)}(\tau_1) \, \overline{\widetilde{b_k}^{(0)}(\tau_2)}\, \right] \leq b_k^2\, \langle \tau_2/\vartheta\rangle^{-1}\, \E [Y_k(\tau_1) \overline{X_k(\tau_2)}].
\]
Then we can argue as in Step 2 to show that 
\[
\E [Y_k(\tau_1) \overline{X_k(\tau_2)}] = \int_0^2 e^{-i(\tau_1-\tau_2) t} G(\tau_1,\tau_2;t)\, dt
\]
where $|\pa^N_{t} G(\tau_1,\tau_2;t)|\lesssim_N 1$ uniformly in $\tau_1,\tau_2$. Integration by parts and the fact that $|\tau_1|\leq 2$ and $|\tau_2|>2$ yields
\[
\E \left[ \widetilde{b_k}^{(0)}(\tau_1) \, \overline{\widetilde{b_k}^{(0)}(\tau_2)}\, \right] \lesssim_N \vartheta \, b_k^2\, \langle \tau_2\rangle^{-1}\, \langle \tau_1 -\tau_2\rangle^{-N} \sim \vartheta \, b_k^2\, \langle \tau_2\rangle^{-1}\, \langle \tau_1 -\tau_2\rangle^{-N} \, \langle \tau_1\rangle^{-1}.
\]
This concludes the proof of \eqref{eq:covariance_w03}.
\end{proof}

\subsection{Higher iterates}\label{sec:higher_iterates}

We now move on to higher iterates. Despite $w^{(0)}$ being somewhat rough in time, we want to show that higher iterates live in $H^b(\R, H^s(\T^d_L))$ $L$-certainly for any $s\geq 0$ and $b>1/2$. Recall that this space embeds into $C([0,1],H^s(\T^d_L))$.

The first step is to write the Fourier transform of $w^{(1)}$ in terms of $w^{(0)}$. Recall that
\begin{equation}
 w_k^{(1)} (t) = \mathcal{I}\mathcal{W} (w^{(0)},w^{(0)},w^{(0)})_k(t)
\end{equation}
where 
\[ (\mathcal{I}F)_k(t)= \varphi(t) \, \int_{0}^{t} e^{-\vartheta \gamma_k (t-t')} F(t')\, \varphi(t')\,dt' .\]

Our first result allows us to understand the temporal Fourier transform of this operator.

\begin{lem}\label{thm:FT_integration} We have that 
\[
(\widetilde{\mathcal{I}F})_k (\tau ) = \int_{\R} [ I_{0,k} (\tau,\sigma) + I_{1,k} (\tau,\sigma)] \, \widetilde{F}(\sigma)\, d\sigma
\]
where 
\[ 
\begin{split}
| I_{0,k}(\tau,\sigma)| & \lesssim_N \langle \tau+i \vartheta \gamma_k\rangle^{-N} \,\langle \sigma\rangle^{-1}\lesssim_N \langle\tau\rangle^{-N}\,\langle \sigma\rangle^{-1}\\
| I_{1,k}(\tau,\sigma)| & \lesssim_N \langle (\tau-\sigma)+i \vartheta \gamma_k\rangle^{-N} \,\langle \sigma\rangle^{-1}\lesssim_N \langle \tau-\sigma\rangle^{-N}\,\langle \sigma\rangle^{-1}
\end{split}
\]
for any $N\in \N$.
\end{lem}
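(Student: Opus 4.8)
\textbf{Proof proposal for \Cref{thm:FT_integration}.}

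The plan is to compute the temporal Fourier transform of $(\mathcal{I}F)_k$ directly from its definition, exploiting the fact that the operator $\mathcal{I}$ is built from a cutoff in front, a Duhamel-type convolution with the kernel $e^{-\vartheta\gamma_k t}\mathbbm{1}_{t\geq 0}$, and a cutoff inside the integral. First I would write $g(t) := \varphi(t)\,\widetilde{\text{(stuff)}}$ — more precisely set $G(t') := F(t')\varphi(t')$, so that $(\mathcal{I}F)_k(t) = \varphi(t)\bigl(K_k * G\bigr)(t)$ where $K_k(t) = e^{-\vartheta\gamma_k t}\mathbbm{1}_{t\geq 0}$. Taking the Fourier transform turns the convolution into a product and the multiplication by $\varphi$ into a convolution: $(\widetilde{\mathcal{I}F})_k(\tau) = \int_{\R} \widetilde\varphi(\tau-\rho)\,\widehat{K_k}(\rho)\,\widehat{G}(\rho)\,d\rho$, where $\widehat{K_k}(\rho) = (\vartheta\gamma_k + i\rho)^{-1}$. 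Then $\widehat{G}(\rho) = \int_{\R}\widetilde\varphi(\rho-\sigma)\widetilde F(\sigma)\,d\sigma$, and substituting gives a double integral; performing the $\rho$-integral produces the kernel $(I_{0,k}+I_{1,k})(\tau,\sigma)$ acting on $\widetilde F(\sigma)$.

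The key structural point, which explains the two-term decomposition, is the partial-fractions / integration-by-parts identity used to handle the factor $(\vartheta\gamma_k+i\rho)^{-1}$. One cleanly way to see the split: write $\frac{1}{\vartheta\gamma_k+i\rho} = \frac{1}{\vartheta\gamma_k+i\sigma} + \frac{i(\sigma-\rho)}{(\vartheta\gamma_k+i\rho)(\vartheta\gamma_k+i\sigma)}$ is not quite what is needed; instead I would integrate by parts in $t$ in the original expression $(\mathcal{I}F)_k(t)=\varphi(t)\int_0^t e^{-\vartheta\gamma_k(t-t')}G(t')\,dt'$ — moving the derivative off the exponential kernel — which naturally yields one boundary-type term of the form $\frac{1}{\vartheta\gamma_k}\varphi(t)G(t)$ and one remainder term still carrying the oscillation $e^{-\vartheta\gamma_k(t-t')}$ but now with an extra $(\vartheta\gamma_k)^{-1}$ gain; after Fourier-transforming, the first produces $I_{0,k}$ (localized near $\tau\approx 0$, hence the factor $\langle\tau+i\vartheta\gamma_k\rangle^{-N}$ coming from the smoothness/decay of $\widetilde\varphi$ and repeated integration by parts), while the second produces $I_{1,k}$ (localized near $\tau-\sigma\approx 0$, again by repeated integration by parts against $\widetilde\varphi$). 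In both terms the $\langle\sigma\rangle^{-1}$ factor is exactly the decay $\widehat{K_k}(\sigma)\sim \langle\sigma\rangle^{-1}$ for $|\sigma|\gg\vartheta\gamma_k$ (and when $|\sigma|\lesssim\vartheta\gamma_k$ one simply has $|\widehat{K_k}(\sigma)|\leq (\vartheta\gamma_k)^{-1}\lesssim 1 \lesssim\langle\sigma\rangle^{-1}\cdot\langle\sigma\rangle$, which is compatible after absorbing constants — care is needed here but it is routine).

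The bounds $|I_{j,k}(\tau,\sigma)|\lesssim_N \langle\tau\rangle^{-N}\langle\sigma\rangle^{-1}$ (resp. $\langle\tau-\sigma\rangle^{-N}\langle\sigma\rangle^{-1}$) then follow from: (a) the Schwartz decay of $\widetilde\varphi$ on the real axis together with its analytic continuation bound $|\widetilde\varphi(\tau-i\vartheta\gamma_k)|\lesssim_N\langle\tau\rangle^{-N}$ (valid since $\varphi$ is compactly supported, so $\widetilde\varphi$ is entire with the stated decay for bounded imaginary part — and $\vartheta\gamma_k$ enters only through the harmless factor $e^{2\vartheta\gamma_k}$-type growth, which is controlled because $\vartheta\leq L^d$ and, more importantly, because the boundary/remainder structure keeps $\vartheta\gamma_k$ paired with the gain $(\vartheta\gamma_k)^{-1}$); and (b) repeated integration by parts in the convolution variable, each step costing a derivative of the smooth compactly supported factor and gaining one power of $\langle\tau\rangle$ or $\langle\tau-\sigma\rangle$. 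The second (weaker) inequality in each line is immediate from $\langle\tau+i\vartheta\gamma_k\rangle\geq\langle\tau\rangle$. The main obstacle I anticipate is bookkeeping the $\vartheta\gamma_k$-dependence: one must verify that no positive power of $\vartheta\gamma_k$ survives in the final kernel bounds (otherwise the estimate would degrade as $L\to\infty$ for the damped modes), and this is precisely what the integration-by-parts-in-$t$ normalization — trading each copy of $e^{-\vartheta\gamma_k(t-t')}$ for a $(\vartheta\gamma_k)^{-1}$ — is designed to guarantee. Once the kernel estimates are in hand, the lemma is proved.
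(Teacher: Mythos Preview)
There is a genuine gap. Your convolution representation $(\mathcal{I}F)_k(t)=\varphi(t)(K_k*G)(t)$ with $K_k(t)=e^{-\vartheta\gamma_k t}\mathbbm{1}_{t\geq 0}$ is incorrect: $(K_k*G)(t)=\int_{-\infty}^t e^{-\vartheta\gamma_k(t-t')}G(t')\,dt'$, not $\int_0^t$. The discrepancy is not merely a constant term --- for $t\in(-2,0)$ the two expressions are not related by anything of the form ``constant times $\varphi(t)e^{-\vartheta\gamma_k t}$'' --- and this discrepancy is precisely what generates the $I_{0,k}$ piece. Your fallback, integrating by parts in $t'$, does not fix this: each step produces \emph{two} boundary terms, $\tfrac{1}{\vartheta\gamma_k}G(t)$ and $-\tfrac{1}{\vartheta\gamma_k}e^{-\vartheta\gamma_k t}G(0)$, plus a remainder of exactly the same form as the original. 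Iterating brings in powers of $(\vartheta\gamma_k)^{-1}$, which are uncontrolled when $\vartheta\gamma_k\ll 1$; you also mislabel the terms --- $\varphi(t)G(t)=\varphi(t)^2 F(t)$ has Fourier transform localized near $\tau\approx\sigma$, not $\tau\approx 0$, so it cannot be $I_{0,k}$.

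The paper's device is the identity
\[
\int_0^t f(t')\,dt'=\tfrac12\,(f*\mathrm{sign})(t)+\tfrac12\int_{\R}f(t')\,\mathrm{sign}(t')\,dt',
\]
applied with $f(t')=e^{\vartheta\gamma_k t'}\varphi(t')F(t')$ and then multiplied by $e^{-\vartheta\gamma_k t}\varphi(t)$. The second (constant-in-$t$) term yields $I_{0,k}$: its Fourier transform in $t$ is the Fourier--Laplace transform of $\varphi$ at $\tau-i\vartheta\gamma_k$, giving the $\langle\tau+i\vartheta\gamma_k\rangle^{-N}$ decay; the constant factor, rewritten via Plancherel against $\widetilde F(\sigma)$, involves the Hilbert transform $\mathrm{p.v.}\,\tfrac{1}{\sigma-s}$ of a Schwartz function, which furnishes the $\langle\sigma\rangle^{-1}$. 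The convolution term gives $I_{1,k}$ by the same mechanism with $\tau$ replaced by $\tau-\sigma$. No $(\vartheta\gamma_k)^{-1}$ gain or loss is needed anywhere; the $\langle\sigma\rangle^{-1}$ comes from the Fourier transform of $\mathrm{sign}$, not from $\widehat{K_k}$.
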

\begin{proof}
We first write 
\begin{equation}\label{eq:sign_trick}
\int_{0}^{t} f(t')\, dt' = \frac{1}{2} \, (f\ast \mbox{sign} )(t) + \frac{1}{2} \int_{\R} f(t')\, \mbox{sign} (t') \, dt'.
\end{equation}
We multiply this equality by $e^{-\vartheta \gamma_k t}\, \varphi(t)$ and take the temporal Fourier transform. Next we substitute $f(t'):=e^{\vartheta \gamma_k t'}\,\varphi(t')\, F(t')$. The second term in \eqref{eq:sign_trick} becomes
\[
\frac{1}{2}  \int_{\R} f(t')\, \mbox{sign} (t') \, dt' \, \int_{\R} e^{-(\vartheta \gamma_k+i\tau) t} \varphi(t)\, dt
\]
which has the desired decay $\langle \vartheta \gamma_k+i\tau\rangle^{-N}$. Next note that the Plancherel theorem implies
\[
\int_{\R} f(t')\, \mbox{sign} (t') \, dt' = \int_{\R} e^{\vartheta \gamma_k t'} \varphi(t') \,  \mbox{sign} (t') \, F(t')\, dt'  = \int_{\R} \left(\int_{\R} \frac{1}{\sigma-s} \int_{\R} e^{-(is-\vartheta \gamma_k)s'} \varphi(s')\, ds' ds\right)\widetilde{F}(\sigma)\, d\sigma,
\]
where we interpret $(\sigma-s)^{-1}$ in the principal value sense. The integrals in parenthesis yield the decay $\langle \sigma\rangle^{-1}$. This completes the bound on $I_{0,k}(\tau,\sigma)$.

The term $I_{1,k}(\tau,\sigma)$ corresponds to the first term in \eqref{eq:sign_trick}. It admits a similar analysis, so we omit the details.
\end{proof}

Using \Cref{thm:FT_integration}, we may write the temporal Fourier transform of the first iterate as:
 \begin{equation}\label{eq:FT_first_iterate}
 \widetilde{w_k}^{(1)} (\tau) =  \int_{\R} [ I_{0,k} (\tau,\sigma) + I_{1,k} (\tau,\sigma)] \, \widetilde{F}(\sigma)\, d\sigma
\end{equation}
where
\[
\widetilde{F}(\sigma) = \frac{i\lambda T}{L^{d}}\, \sum_{k_1-k_2+k_3=k} \epsilon_{k_1,k_2,k_3}\, \int_{\sigma=\tau_1-\tau_2+\tau_3+T\Omega_{k2}^{13}}\  \prod_{j=1}^{3} \, \widetilde{w_{k_j}}^{(0)}(\tau_j)^{\imath_j}\, d\tau_j.
\]

The next step is to derive similar formulae for higher iterates $w^{(n)}$, $n\geq 1$. First of all, we introduce some notation to keep track of the indices over which we will sum. 

\begin{defn}
A \emph{ternary tree} $\mathcal{T}$ is a rooted tree where each non-leaf (or branching) node has exactly three children nodes.
Let  $\mathcal{L}$ the set of leaves of such a tree and $\mathcal{N}=\mathcal{T}-\mathcal{L}$ the set of branching nodes, with the convention that $\mathcal{N}=\emptyset$ when the tree is only a root $\mathcal{T}=\{\sub{r}\}$. The order of the tree, denoted by $n(\mathcal{T})$, is the number of branching nodes, i.e. $|\mathcal{N}|$. If $n(\mathcal{T})=n$, then 
\[ |\mathcal{N}|=n,\quad  |\mathcal{L}|=2n+1,\quad \mbox{and}\quad |\mathcal{T}|=3n+1.\]

For $\sub{l}\in\Lc$, we introduce the signs $\imath_{\sub{l}}\in\{-1,1\}$ to keep track of which terms are conjugated.
\end{defn}

\begin{defn}\label{def:decoration} A \emph{decoration} $\mathscr{D}$ of a tree $\mathcal{T}$ is a set of vectors $(k_{\sub{n}})_{n\in\mathcal{T}} \subset  \Z_L^d$ such that for each branching node $\sub{n}\in \mathcal{N}$ we have that 
 \begin{equation}\label{eq:tree1}
 k_{\sub{n}}=k_{\sub{n}_1}- k_{\sub{n}_2}+k_{\sub{n}_3}
 \end{equation}
where $\sub{n}_j$ are its children nodes. 
Given a decoration $\mathscr{D}$, we define
\begin{equation}\label{eq:epsilon_D}
\epsilon_{\mathscr{D}} := \prod_{\sub{n}\in\Nc} \epsilon_{k_{\sub{n}_1}, k_{\sub{n}_2}, k_{\sub{n}_3}} 
\end{equation}
following \eqref{eq:epsilon}, as well as
\begin{equation}\label{eq:tree2}
 \Omega_{\sub{n}}=\Omega (k_{\sub{n}_1},k_{\sub{n}_2},k_{\sub{n}_3},k_{\sub{n}}) = |k_{\sub{n}_1}|_{\zeta}^2 - |k_{\sub{n}_2}|_{\zeta}^2+|k_{\sub{n}_3}|_{\zeta}^2-|k_{\sub{n}}|_{\zeta}^2 \ .
 \end{equation}

Finally, we choose $d_{\sub{n}}\in \{0,1\}$ for each $\sub{n}\in \mathcal{T}$, and we define
\begin{equation}\label{eq:convolution}
 \tau_{\sub{n}}= d_{\sub{n}_1}\tau_{\sub{n}_1}- d_{\sub{n}_2}\tau_{\sub{n}_2}+d_{\sub{n}_3}\tau_{\sub{n}_3}+ T\Omega_{\sub{n}}
\end{equation}
for each branching node $\sub{n}\in\mathcal{N}$, where $\sub{n}_j$ are its children.
\end{defn}

Note that $(\tau_{\sub{n}})_{\sub{n}\in\Tc}$ is fully determined by $(\tau_{\sub{l}})_{\sub{l}\in\Lc}$. We will often use the following notation:
\[
\tau [\Lc] := (\tau_{\sub{l}})_{\sub{l}\in\Lc}, \qquad \Omega [\Nc]:= ( \Omega_{\sub{n}})_{\sub{n}\in\Nc}.
\]

We are ready to derive a formula for the Fourier transform of the higher iterates:

\begin{prop}\label{thm:FT_higher_iterates} Consider $n\geq 1$, then we have that the $n$-th iterate is given by
\[
\widetilde{w_k}^{(n)}(\tau ) = \sum_{n(\Tc)=n} \widetilde{\Jc_{\Tc}}(\tau)
\]
where we sum over all ternary trees $\Tc$ of order $n$ satisfying \eqref{eq:tree1}-\eqref{eq:convolution}, and 
\begin{equation}\label{eq:FT_higher_iterates}
 \widetilde{\Jc_{\Tc}}(\tau) = \left(\frac{i\lambda T}{L^{d}}\right)^n \, \sum_{\mathscr{D}}  \epsilon_{\mathscr{D}} \, \int_{\R^{2n+1}} \Kc (\tau, \mathscr{D},\tau [\Lc]) \, \prod_{\sub{l}\in\Lc} \widetilde{w_{k_{\sub{l}}}}^{(0)} (\tau_{\sub{l}}) \, d\tau_{\sub{l}}
\end{equation}
where we sum over all possible decorations $\mathscr{D}$ with $k_{\sub{r}}=k$. Moreover,
\begin{equation}\label{eq:FTbound_higher_iterates}
| \Kc (\tau, \mathscr{D},\tau [\Lc]) |\lesssim_N \sum_{(d_{\sub{n}}:\sub{n}\in\Nc)} \langle \tau-d_{\sub{r}} \tau_{\sub{r}}\rangle^{-N} \, \prod_{\sub{n}\in\Nc} \langle \tau_{\sub{n}}\rangle^{-1},
\end{equation}
for any $N\in\N$.
\end{prop}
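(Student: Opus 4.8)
\textbf{Proof proposal for \Cref{thm:FT_higher_iterates}.}

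The plan is to proceed by induction on the order $n$ of the ternary tree, using \Cref{thm:FT_integration} as the inductive engine and the convolution structure of $\mathcal{W}$ at each branching node. The base case $n=1$ is precisely \eqref{eq:FT_first_iterate} together with the bound in \Cref{thm:FT_integration}: the single branching node is the root $\sub{r}$, the decoration reduces to choosing $k_1-k_2+k_3=k$, and the kernel $\mathcal{K}(\tau,\mathscr{D},\tau[\Lc])=I_{0,k}(\tau,\sigma)+I_{1,k}(\tau,\sigma)$ with $\sigma=\tau_{\sub{r}}=\tau_1-\tau_2+\tau_3+T\Omega_{\sub{r}}$, so that \eqref{eq:FTbound_higher_iterates} with $n=1$ follows from the two cases $d_{\sub{r}}\in\{0,1\}$ corresponding to $I_{0,k}$ and $I_{1,k}$ respectively.

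For the inductive step, I would use the recursion \eqref{eq:Picard_iterates}, i.e. $w_k^{(n)}=\sum_{n_1+n_2+n_3=n-1}\mathcal{IW}(w^{(n_1)},w^{(n_2)},w^{(n_3)})_k$. Fix such a triple and apply the spatial convolution in $\mathcal{W}$: this introduces the constraint $k=k_1-k_2+k_3$ at the root, the factor $\frac{i\lambda T}{L^d}\epsilon_{k_1,k_2,k_3}$, and the oscillatory phase $e^{iTt\Omega_{\sub{r}}}$ whose effect under the temporal Fourier transform is to shift the frequency variable by $T\Omega_{\sub{r}}$; thus the Fourier transform of $\mathcal{W}(w^{(n_1)},w^{(n_2)},w^{(n_3)})_k$ at frequency $\sigma$ is a convolution $\int_{\sigma=\sigma_1-\sigma_2+\sigma_3+T\Omega_{\sub{r}}}\prod_j\widetilde{w_{k_j}}^{(n_j)}(\sigma_j)^{\imath_j}$ (up to the scalar prefactor). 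Then applying $\mathcal{I}$ and invoking \Cref{thm:FT_integration} produces $\widetilde{w_k}^{(n)}(\tau)=\int_{\R}[I_{0,k}(\tau,\sigma)+I_{1,k}(\tau,\sigma)]\,\widetilde{\mathcal{W}(\cdots)}(\sigma)\,d\sigma$. Substituting the inductive formula for each $\widetilde{w_{k_j}}^{(n_j)}$ (a sum over subtrees $\Tc_j$ of order $n_j$ with their decorations and leaf-integration kernels $\mathcal{K}_j$), and grafting the three subtrees onto a common root, I obtain exactly the claimed sum over ternary trees $\Tc$ of order $n$: the decorations $\mathscr{D}$ combine into a decoration of $\Tc$ satisfying \eqref{eq:tree1}, the products $\epsilon_{\mathscr{D}}$ multiply to \eqref{eq:epsilon_D}, the scalar prefactors multiply to $(i\lambda T/L^d)^n$, and the leaf set of $\Tc$ is the disjoint union of the leaf sets of the $\Tc_j$. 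The composite kernel is $\mathcal{K}(\tau,\mathscr{D},\tau[\Lc]) = [I_{0,k}(\tau,\sigma)+I_{1,k}(\tau,\sigma)]\cdot\prod_{j=1}^3 \mathcal{K}_j(\sigma_j,\mathscr{D}_j,\tau[\Lc_j])$, integrated over the internal frequencies $\sigma,\sigma_1,\sigma_2,\sigma_3$ subject to $\sigma=\sigma_1-\sigma_2+\sigma_3+T\Omega_{\sub{r}}$ and $\sigma_j=d_{\sub{r}_j}\tau_{\sub{r}_j}$ being $\tau_{\sub{n}}$-type combinations via \eqref{eq:convolution}.

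For the kernel bound \eqref{eq:FTbound_higher_iterates}, the point is that the constraint \eqref{eq:convolution} at each node lets me absorb the internal integration variables. Choosing $d_{\sub{r}}\in\{0,1\}$ selects $I_{0,k}$ (giving $\langle\tau\rangle^{-N}$, i.e. $\langle\tau-d_{\sub{r}}\tau_{\sub{r}}\rangle^{-N}$ with $d_{\sub{r}}=0$) or $I_{1,k}$ (giving $\langle\tau-\sigma\rangle^{-N}=\langle\tau-\tau_{\sub{r}}\rangle^{-N}$ with $d_{\sub{r}}=1$, since $\sigma$ plays the role of $\tau_{\sub{r}}$), and in either case a factor $\langle\sigma\rangle^{-1}=\langle\tau_{\sub{r}}\rangle^{-1}$ which is the $\sub{n}=\sub{r}$ term in the product $\prod_{\sub{n}\in\Nc}\langle\tau_{\sub{n}}\rangle^{-1}$. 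Recursively, each subtree kernel $\mathcal{K}_j$ contributes (after summing over its own choices of $d_{\sub{n}}$) a factor $\langle\sigma_j-d_{\sub{r}_j}\tau_{\sub{r}_j}\rangle^{-N}\prod_{\sub{n}\in\Nc_j}\langle\tau_{\sub{n}}\rangle^{-1}$; the rapidly decaying factors $\langle\sigma_j-d_{\sub{r}_j}\tau_{\sub{r}_j}\rangle^{-N}$, combined with the convolution constraint relating $\sigma$ to $\sigma_1,\sigma_2,\sigma_3$ and the phases, allow carrying out the internal $\sigma,\sigma_j$ integrations — using that a product of two Japanese-bracket powers $\langle a-x\rangle^{-N}\langle x-b\rangle^{-N}$ integrates in $x$ to $\lesssim_N\langle a-b\rangle^{-N+1}$ (then relabeling $N$) — leaving precisely $\langle\tau-d_{\sub{r}}\tau_{\sub{r}}\rangle^{-N}\prod_{\sub{n}\in\Nc}\langle\tau_{\sub{n}}\rangle^{-1}$ summed over all $(d_{\sub{n}})$. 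The main obstacle I anticipate is bookkeeping: correctly tracking how the $d_{\sub{n}}$ labels propagate so that the argument of the top bracket is always of the form $\tau-d_{\sub{r}}\tau_{\sub{r}}$ with $\tau_{\sub{r}}$ given by the nested convolution \eqref{eq:convolution}, and ensuring the internal-variable integrations genuinely close (i.e.\ that one never needs to integrate a bare $\langle\tau_{\sub{n}}\rangle^{-1}$ against nothing). This is handled by peeling the bracket-convolution estimate from the root downward and keeping $N$ large (reducing it by a bounded amount per node, which is harmless since $N$ is arbitrary), exactly in the spirit of the corresponding estimate in \cite{DengHani}.
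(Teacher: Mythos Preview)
Your proposal is correct and follows essentially the same approach as the paper's proof: induction on $n$ with base case $n=1$ from \Cref{thm:FT_integration}, grafting three subtrees onto a root in the inductive step, and integrating out the intermediate frequency variables $\sigma_j$ against the rapidly decaying factors $\langle\sigma_j-d_{\sub{r}_j}\tau_{\sub{r}_j}\rangle^{-N}$ to collapse the argument of $I_{d,k}$ onto $\tau_{\sub{r}}$ as defined in \eqref{eq:convolution}. The only cosmetic difference is that the paper writes the composite kernel directly as a single $\int_{\R^3}$ over the children frequencies (your $\sigma_1,\sigma_2,\sigma_3$), with $\sigma$ determined by the constraint rather than treated as a separate integration variable.
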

\begin{proof}
We prove the proposition by induction. The case $n=1$ follows from \Cref{thm:FT_integration}. Next suppose that $n\geq 2$ and that the statement is true for all $\widetilde{w_k}^{(m)}$ with $1\leq m\leq n-1$. We can write $w_k^{(n)}$ as a sum of terms such as $\Ic \Wc (w^{(n_1)},w^{(n_2)},w^{(n_3)})_k$ where $n_1+n_2+n_3+1=n$, so it is enough to prove the statement for one such term.

Using the induction hypothesis each $w^{(n_j)}$ can be written as a sum over trees of order $n_j$, therefore it is enough to prove our result for $\Ic \Wc (\Jc_{\Tc_1},\Jc_{\Tc_2},\Jc_{\Tc_3})_k$ where $n(\Tc_j)=n_j$. These three trees uniquely determine a tree $\Tc$ of order $n$ where we attach $\Tc_j$ to its root, thus we denote this term by $\Jc_{\Tc}$.

By \Cref{thm:FT_integration} we have that the Fourier transform of $\Jc_{\Tc}$ can be written as
\[
\widetilde{\Jc_{\Tc}}(\tau)=\frac{i\lambda T}{L^{d}} \, \sum_{d\in\{0,1\}} \sum_{k=k_1-k_2+k_3} \epsilon_{k_1,k_2,k_3}\, \int_{\R^3} I_{d,k}(\tau,\tau_1- \tau_2+\tau_3 + T\Omega_{k2}^{13}) \, \prod_{j=1}^{3} \widetilde{\Jc_{\Tc_j}}(\tau_j)\, d\tau_j.
\]
Next we use the induction hypothesis on each $\widetilde{\Jc_{\Tc_j}}(\tau_j)$ to write them as in \eqref{eq:FT_higher_iterates}.
Setting $\mathscr{D}_j := (k_{\sub{n}})_{ \sub{n}\in \Tc_j}$, we find that 
\begin{multline*}
\widetilde{\Jc_{\Tc}}(\tau)=\left(\frac{i\lambda T}{L^{d}}\right)^{n}\, \sum_{d\in\{0,1\}}  \sum_{k=k_1-k_2+k_3} \epsilon_{k_1,k_2,k_3}\, \int_{\R^{2n+4}} I_{d,k}(\tau,\tau_1- \tau_2+\tau_3 + T\Omega_{k2}^{13}) \\ 
\prod_{j=1}^{3} \sum_{\mathscr{D}_j}  \epsilon_{\mathscr{D}_j}\, \Kc (\tau_j, \mathscr{D}_j, \tau[\Lc_j])\, \prod_{\sub{l}\in\Lc} \widetilde{w_{k_{\sub{l}}}}^{(0)} (\tau_{\sub{l}}) \, d\tau_{\sub{l}}  d\tau_j.
\end{multline*}
Let\footnote{Technically we would also need to sum over all $n_1,n_2,n_3$ such that $n_1+n_2+n_3+1=n$, but we omit this to keep the notation as simple as possible.}
\[
 \Kc (\tau, \mathscr{D},\tau [\Lc]) := \sum_{d\in\{0,1\}} \int_{\R^3} I_{d,k}(\tau,\tau_1- \tau_2+\tau_3 + T\Omega_{k2}^{13}) \, \prod_{j=1}^{3} \Kc (\tau_j, \mathscr{D}_j, \tau[\Lc_j])\, d\tau_j.
\]
Using \Cref{thm:FT_integration} and the induction hypothesis, we have that 
\[
\begin{split}
| \Kc (\tau, \mathscr{D},\tau [\Lc]) | \lesssim \sum_{d\in\{0,1\}} \int_{\R^3}  & \langle \tau-d (\tau_1- \tau_2+\tau_3 + T\Omega_{k2}^{13})\rangle^{-N} \langle \tau_1- \tau_2+\tau_3 + T\Omega_{k2}^{13}\rangle^{-1} \\
&\prod_{j=1}^{3} \left(\sum_{(d_{\sub{n}}:\sub{n}\in\Nc_j)} \langle \tau_j-d_{\sub{r}_j} \tau_{\sub{r}_j}\rangle^{-N}\prod_{\sub{n}\in\Nc_j}  \langle \tau_{\sub{n}}\rangle^{-1}\right)\, d\tau_j \lesssim \sum_{(d_{\sub{n}}:\sub{n}\in\Nc)} \langle \tau-d_{\sub{r}} \tau_{\sub{r}}\rangle^{-N}\,\prod_{\sub{n}\in\Nc} \langle \tau_{\sub{n}}\rangle^{-1}
\end{split}
\]
where $d_{\sub{r}}=d$ and $\tau_{\sub{r}}$ is given by 
\[
\tau_{\sub{r}}= d_{\sub{r}_1}\tau_{\sub{r}_1}- d_{\sub{r}_2}\tau_{\sub{r}_2}+d_{\sub{r}_3}\tau_{\sub{r}_3}+ T\Omega_{k2}^{13}.
\]
Note that the roots of the three trees $\Tc_j$, $\sub{r}_j$, are the children of the main root $\sub{r}$ of $\Tc$. This concludes the proof.
\end{proof}

We are ready to prove \Cref{thm:n_iterate}.

\begin{proof}[Proof of \Cref{thm:n_iterate}]
For simplicity, let us fix $(d_{\sub{n}} :\sub{n}\in\Tc)$ (since there is a finite number of options) and derive a uniform bound over such choices. By \Cref{thm:FT_higher_iterates}, let us prove the desired bound \eqref{eq:n_iterate} for a unique $\Jc_{\Tc}$ since a finite amount of such terms add up to $w^{(n)}$. 

 We start by computing the expectation of the square of \eqref{eq:FT_higher_iterates}. Letting $\mathscr{D}':= (k_{\sub{n}}')_{\sub{n}\in\Tc}$, we have that 
\begin{equation}\label{eq:variance_n}
\E |\widetilde{\Jc_{\Tc}}(\tau)|^2 = \left(\frac{\lambda T}{L^{d}}\right)^{2n} \int_{(\R^{2n+1})^2}  \sum_{\mathscr{D}, \mathscr{D}' } \epsilon_{\mathscr{D}}\, \epsilon_{\mathscr{D}'} \Kc (\tau, \mathscr{D}, \tau[\Lc])\, \overline{\Kc (\tau, \mathscr{D}', \tau'[\Lc])} \, \E\left[ \prod_{\sub{l}\in\Lc} \widetilde{w_{k_{\sub{l}}}}^{(0)} (\tau_{\sub{l}}) \, \overline{\widetilde{w_{k_{\sub{l}}'}}^{(0)} (\tau_{\sub{l}}')} \right]\, d\tau_{\sub{l}}\, d\tau_{\sub{l}}' .
\end{equation}
We can estimate such correlations using \Cref{thm:Iserlis} and \Cref{thm:covariance_w0}. We said that two leaves $\sub{l}_1$ and $\sub{l}_2$ are \emph{paired} if 
\[ 
k_{\sub{l}_1} = k_{\sub{l}_2}\quad \mbox{and} \quad \imath_{\sub{l}_1}+\imath_{\sub{l}_2}=0, 
\] 
or if 
\[ 
k_{\sub{l}_1} = k_{\sub{l}_2}'\quad \mbox{and} \quad \imath_{\sub{l}_1}-\imath_{\sub{l}_2}=0.
\] 
Let $\mathcal{P}$ be the set of such unordered pairs $\{\sub{l}_1,\sub{l}_2\}$, where $\sub{l}_1, \sub{l}_2 \in \mathcal{L}$. Note that this is equivalent to making two copies of the tree $\Tc$, which we will denote by $\widetilde{\Tc}$, and pairing all $4n+2$ leaves. As such, we will use $\tau_{\sub{n}}$ and $\tau_{\sub{n}}'$ indistinctly for those $\sub{n}$ in the copy of the tree whenever there is no ambiguity. Let us also denote by $\widetilde{\mathcal{N}}$ the set of branching nodes of $\widetilde{\mathcal{T}}$ and by $\widetilde{\mathcal{L}}$ the set of all $4n+2$ leaves of these trees. 

Some of the pairs of leaves in $\mathcal{P}$ will come from within a tree; let the number of such pairs be $p$. The rest of the pairings will involve a leaf from each tree; there are $2n+1-p$ such pairs. From now on we will fix such a pairing $\mathcal{P}$ since there is a finite number of them (depending on $n$), so this will only contribute in the form of a constant to our final bound. 

Finally, it will be convient to introduce the quantities $q_{\sub{n}}$, which are inductively defined according to
\begin{equation}\label{eq:tree3}
q_{\sub{n}} =0\ \mbox{if}\ \sub{n}\in\mathcal{L}; \quad q_{\sub{n}} = d_{\sub{n}_1} q_{\sub{n}_1}- d_{\sub{n}_2} q_{\sub{n}_2}+d_{\sub{n}_3} q_{\sub{n}_3}+ \Omega_{\sub{n}},
\end{equation}
where $\sub{n}_j$ are the children of the branching node $\sub{n}$.

\medskip

\noindent {\bf Case 1.} Suppose that $\vartheta<2$. Then \Cref{thm:Iserlis} and \Cref{thm:covariance_w0} yield
\[
\begin{split}
\E |\widetilde{\Jc_{\Tc}}(\tau)|^2 \lesssim \left(\frac{\lambda T}{L^{d}}\right)^{2n} &\ \int_{(\R^{2n+1})^2}  \sum_{\mathscr{D}} \,\sum_{\mathscr{D}'}    |\epsilon_{\mathscr{D}}\, \epsilon_{\mathscr{D}'} |\, \Kc (\tau, \mathscr{D}, \tau[\Lc])\, \overline{\Kc (\tau, \mathscr{D}', \tau'[\Lc])} \\
&  \prod_{\{\sub{l}_1,\sub{l}_2\}\in\Pc} \left( \vartheta\, b_{k_{\sub{l}_1}}^2 \langle \tau_{\sub{l}_1} -\tau_{\sub{l}_2}\rangle^{-N}  \left\langle \tau_{\sub{l}_1}\right\rangle^{-1}  \left\langle \tau_{\sub{l}_2}\right\rangle^{-1}  +  c_{k_{\sub{l}_1}}^2\, \langle \tau_{\sub{l}_1}\rangle^{-N} \langle \tau_{\sub{l}_2}\rangle^{-N} \right)\, d\tau_{\sub{l}_1}d\tau_{\sub{l}_2}
\end{split}
\]
where we sum over all decorations $\mathscr{D}, \mathscr{D}'$ that respect the pairings given by $\mathcal{P}$. Next we use \eqref{eq:FTbound_higher_iterates} to estimate the kernels $\Kc$,
\[
\begin{split}
\E |\widetilde{\Jc_{\Tc}}(\tau)|^2 \lesssim \left(\frac{\lambda T}{L^{d}}\right)^{2n} &\ \int_{(\R^{2n+1})^2}  \sum_{\mathscr{D}} \,\sum_{\mathscr{D}'}   | \epsilon_{\mathscr{D}}\, \epsilon_{\mathscr{D}'} | \langle \tau-d_{\sub{r}}\,\tau_{\sub{r}}\rangle^{-4}\langle \tau-d_{\sub{r}}\,\tau_{\sub{r}}'\rangle^{-4} \, \prod_{\sub{n}\in\Nc} \langle \tau_{\sub{n}}\rangle^{-1} \, \langle \tau_{\sub{n}}'\rangle^{-1} \\
&\prod_{\{\sub{l}_1,\sub{l}_2\}\in\Pc} \left( \vartheta\,  b_{k_{\sub{l}_1}}^2\,\langle \tau_{\sub{l}_1} -\tau_{\sub{l}_2}\rangle^{-N} \, \left\langle \tau_{\sub{l}_1}\right\rangle^{-1} \, \left\langle \tau_{\sub{l}_2}\right\rangle^{-1} + c_{k_{\sub{l}_1}}^2\, \langle \tau_{\sub{l}_1}\rangle^{-N}\, \langle \tau_{\sub{l}_2}\rangle^{-N} \right)\, d\tau_{\sub{l}_1}d\tau_{\sub{l}_2}.
\end{split}
\]
We multiply this quantity by $\langle \tau\rangle^{2b}$ and integrate in $\tau$:
\[
\begin{split}
\E \norm{(\Jc_{\Tc})_k}^2_{h^b}\lesssim \left(\frac{\lambda T}{L^{d}}\right)^{2n} & \int_{(\R^{2n+1})^2} \sum_{\mathscr{D}} \,\sum_{\mathscr{D}'}    |\epsilon_{\mathscr{D}}\, \epsilon_{\mathscr{D}'}|  \min\{ \langle \tau_{\sub{r}}\rangle^{2b-2},\langle  \tau_{\sub{r}}'\rangle^{2b-2}\}\, \langle d_{\sub{r}}\, (\tau_{\sub{r}} - \tau_{\sub{r}}')\rangle^{-2}\prod_{\sub{n}\in\Nc-\{\sub{r}\}} \langle \tau_{\sub{n}}\rangle^{-1} \, \langle \tau_{\sub{n}}'\rangle^{-1} \\
& \prod_{\{\sub{l}_1,\sub{l}_2\}\in\Pc} \left( \vartheta\,  b_{k_{\sub{l}_1}}^2\,\langle \tau_{\sub{l}_1} -\tau_{\sub{l}_2}\rangle^{-N} \, \left\langle \tau_{\sub{l}_1}\right\rangle^{-1} \, \left\langle \tau_{\sub{l}_2}\right\rangle^{-1} + c_{k_{\sub{l}_1}}^2\, \langle \tau_{\sub{l}_1}\rangle^{-N}\, \langle \tau_{\sub{l}_2}\rangle^{-N} \right)\, d\tau_{\sub{l}_1}d\tau_{\sub{l}_2}.
\end{split}
\]
Next we expand the product over $\Pc$. All such terms admit analogous estimates, and so we only detail the computation in the case of the term coming from the product of all the $ c_{k_{\sub{l}_1}}^2$. In order to bound this term, we integrate over all $\tau_{\sub{l}_1}$ and $\tau_{\sub{l}_2}$, which yields
\[
\left(\frac{\lambda T}{L^{d}}\right)^{2n}\sum_{\mathscr{D}} \,\sum_{\mathscr{D}'} |\epsilon_{\mathscr{D}}\, \epsilon_{\mathscr{D}'} | \,\prod_{\sub{l}\in\Lc}  c_{k_{\sub{l}}}^2 \min\{ \langle Tq_{\sub{r}}\rangle^{2b-2},\langle T q_{\sub{r}}'\rangle^{2b-2}\}\, \langle d_{\sub{r}}\, T (q_{\sub{r}} - q_{\sub{r}}')\rangle^{-2}
\prod_{\sub{n}\in\Nc-\{\sub{r}\}} \langle Tq_{\sub{n}}\rangle^{-1} \, \langle Tq_{\sub{n}}'\rangle^{-1}.
\]
Recall that we sum over all decorations $\mathscr{D},\mathscr{D}'$ that respect the pairings given by $\mathcal{P}$. 
Thanks to the decay of the $ c_{k_{\sub{l}}}$, we may assume that $|k_{\sub{l}}|\leq L^{\theta}$ for all $\sub{l}\in\Lc$. Given that all the $q$'s are bounded by $L^{2\theta}$ we may fix the integer parts of each $T q_{\sub{n}}$ and $T q_{\sub{n}}'$, $\sub{n}\in\mathcal{N}$, and reduce this sum to a counting bound at the cost of a power of $L^{(b-1/2)+\theta}$.

Once the integer parts of the $T q_{\sub{n}}$ are fixed, we have also fixed $\sigma_{\sub{n}}\in \R$ and $\sigma_{\sub{n}}'\in \R$ such that 
\begin{equation}\label{eq:assignment_condition}
|\Omega_{\sub{n}} - \sigma_{\sub{n}}|\leq T^{-1}, \quad |\Omega_{\sub{n}}' - \sigma_{\sub{n}}'|\leq T^{-1}.
\end{equation}

All we need to do now is count the number of decorations $(k_{\sub{l}} \mid \sub{l}\in\widetilde{\Lc})$ that give rise to the pairing $\Pc$ satisfying $|k_{\sub{l}}|\leq L^{\theta}$ and \eqref{eq:assignment_condition}. We start with the first copy of the tree $\mathcal{T}$. We let $p$ be the number of pairings within the leaves in the first tree $\mathcal{L}$. Then \Cref{thm:special_counting} allows us to bound the number of such decorations by
\[ M:= L^{\theta} \left(T^{-1}\, L^d \rho \right)^{2n-p-2} L^{2d} T^{-1},\]
where $\rho$ was defined in \eqref{eq:def_rho}.

Next we consider the second copy of the tree. We let $\mathcal{S}$ be the set of leaves which were paired to the first copy of the tree (which have already been counted), and we consider the number of decorations $(k_{\sub{l}}' \mid \sub{l}\in\mathcal{L}-\mathcal{S})$ satisfying \eqref{eq:assignment_condition}. \Cref{thm:main_counting} with $\mathcal{R}=\mathcal{S}\cup \{\sub{r}\}$ yields the bound
\[ M':=L^{\theta} \left(T^{-1} \, L^d \rho\right)^{p}.\]
Combining these counting bounds, we obtain
\[
\E \norm{(\Jc_{\Tc})_k}^2_{h^b}  \lesssim L^{\theta+c(b-1/2)} \, \left(\frac{\lambda T}{L^{d}}\right)^{2n}\, \left(T^{-1} \, L^d \rho\right)^{2n-2} L^{2d} T^{-1} \lesssim L^{\theta+c(b-1/2)} \,\left(\lambda \rho\right)^{2n-2}\, \lambda^2 T.
\]
The same bound holds for $\E \norm{\Jc_{\Tc}}^2_{H^b(\R,H^s(\T_L^d))}$, which admits a similar proof involving an additional sum in $k$ (which yields an additional factor $L^{(2s+d)\theta}$).

To finish the proof, set 
\[
A:= e^{4n+2} \, L^{\theta'}\, L^{\theta+c(b-1/2)} \,\left(\lambda \rho\right)^{2n-2}\, \lambda^2 T\]
for $\theta'>0$ as small as desired. The Chebyshev inequality and Gaussian hypercontractivity estimates (\Cref{thm:hypercontractivity}) yield:
\[
\begin{split}
\P \left( \norm{(\Jc_{\Tc})_k}^2_{h^b}>A\right) & \leq \frac{\E  \norm{(\Jc_{\Tc})_k}^{2q}_{h^b}}{A^{q}} \leq \left(\frac{q^{2n+1}}{L^{\theta'}}\right)^{q}
\end{split}
\]
Finally, set $q=\exp( \frac{\theta'}{2n+1} \, \log L -  \frac{\varepsilon}{2n+1})$ and $c=\varepsilon e^{- \frac{\varepsilon}{2n+1}}$. With this choice 
\[
\P \left( \norm{(\Jc_{\Tc})_k}^2_{h^b}>A\right) \leq e^{-cL^{\theta'/(2n+1)}} 
\]
as desired. The intersection of $L^{\theta}$ of such $L$-certain events (which is $L$-certain) yields \eqref{eq:n_iterate} for all $|k|\leq L^{\theta}$. When $|k|>L^{\theta}$ one may exploit the fast decay of $c_k$ and $b_k$ to gain an arbitrarily large negative power of $L$. A Gaussian hypercontractivity argument using $\sum_{|k|>L^{\theta}} \langle k\rangle^{2s} \E \norm{(\Jc_{\Tc})_k}_{h^b}^2$ then concludes the proof.

\medskip

{\bf Case 2.} Now suppose that $\vartheta>2$. Most of the steps are similar to those in Case 1, so we only give a sketch of the proof. We apply \Cref{thm:covariance_w0} to \eqref{eq:variance_n} in this new regime.
\[
\begin{split}
\E |\widetilde{\Jc_{\Tc}}(\tau)|^2 \lesssim \left(\frac{\lambda T}{L^{d}}\right)^{2n} &\ \int_{(\R^{2n+1})^2}  \sum_{\mathscr{D}} \,\sum_{\mathscr{D}'}    |\epsilon_{\mathscr{D}}\, \epsilon_{\mathscr{D}'}|  \,\Kc (\tau, \mathscr{D}, \tau[\Lc])\, \overline{\Kc (\tau, \mathscr{D}', \tau'[\Lc])}\\
& \prod_{\{\sub{l}_1,\sub{l}_2\}\in\Pc} \left( \frac{b_{k_{\sub{l}_1}}^2}{\vartheta} \langle \tau_{\sub{l}_1} -\tau_{\sub{l}_2}\rangle^{-N}  \langle \tau_{\sub{l}_1} / \vartheta\rangle^{-1} \langle \tau_{\sub{l}_2} /\vartheta\rangle^{-1}  +  c_{k_{\sub{l}_1}}^2 \langle \tau_{\sub{l}_1}\rangle^{-N} \langle \tau_{\sub{l}_2}\rangle^{-N} \right)\, d\tau_{\sub{l}_1}d\tau_{\sub{l}_2}
\end{split}
\]
As before, we expand the product over $\Pc$. The term involving the product over all $c_{k_{\sub{l}_1}}^2$ is treated as in Case 1, so let us focus on the term which involves a product over the $b_{k_{\sub{l}_1}}^2$ as an example. We bound the kernels $\Kc$ using \eqref{eq:FTbound_higher_iterates}, we multiply by $\langle\tau\rangle^{2b}$ and integrate in $\tau$. This leads to
\[
\begin{split}
\E \norm{(\Jc_{\Tc})_k}_{h^b}^2\lesssim \left(\frac{\lambda T}{L^{d}}\right)^{2n} & \int_{(\R^{2n+1})^2} \sum_{\mathscr{D}} \,\sum_{\mathscr{D}'}    |\epsilon_{\mathscr{D}}\, \epsilon_{\mathscr{D}'} | \min\{ \langle \tau_{\sub{r}}\rangle^{2b-2},\langle  \tau_{\sub{r}}'\rangle^{2b-2}\}\, \langle d_{\sub{r}}\, (\tau_{\sub{r}} - \tau_{\sub{r}}')\rangle^{-2} \\
& \prod_{\sub{n}\in\Nc-\{\sub{r}\}} \langle \tau_{\sub{n}}\rangle^{-1} \, \langle \tau_{\sub{n}}'\rangle^{-1}\,
\prod_{\{\sub{l}_1,\sub{l}_2\}\in\Pc}  \frac{b_{k_{\sub{l}_1}}^2}{\vartheta}\, \langle \tau_{\sub{l}_1} -\tau_{\sub{l}_2}\rangle^{-N} \, \langle \tau_{\sub{l}_1} / \vartheta\rangle^{-1} \,\langle \tau_{\sub{l}_2} /\vartheta\rangle^{-1}\, d\tau_{\sub{l}_1}d\tau_{\sub{l}_2}.
\end{split}
\]
Let us isolate the sum and rewrite this integral as 
\begin{equation}\label{eq:last_integral}
\int_{(\R^{2n+1})^2} S(k,\tau[\Lc], \tau' [\Lc])\, \prod_{\{\sub{l}_1,\sub{l}_2\}\in\Pc} \vartheta^{-1}\, \langle \tau_{\sub{l}_1} -\tau_{\sub{l}_2}\rangle^{-N} \, \langle \tau_{\sub{l}_1} / \vartheta\rangle^{-1} \,\langle \tau_{\sub{l}_2} /\vartheta\rangle^{-1}\, d\tau_{\sub{l}_1}d\tau_{\sub{l}_2},
\end{equation}
where
\[
S(k,\tau[\Lc], \tau' [\Lc])= \left(\frac{\lambda T}{L^{d}}\right)^{2n} \sum_{\mathscr{D}} \,\sum_{\mathscr{D}'}    |\epsilon_{\mathscr{D}}\, \epsilon_{\mathscr{D}'}|  \min\{ \langle \tau_{\sub{r}}\rangle^{2b-2},\langle  \tau_{\sub{r}}'\rangle^{2b-2}\}\, \langle d_{\sub{r}}\, (\tau_{\sub{r}} - \tau_{\sub{r}}')\rangle^{-2}  \prod_{\sub{n}\in\Nc-\{\sub{r}\}} \langle \tau_{\sub{n}}\rangle^{-1} \, \langle \tau_{\sub{n}}'\rangle^{-1} \, \prod_{\sub{l}\in\Lc} b_{k_{\sub{l}}}^2.
\]
We fix all the $\tau_{\sub{n}}$ and $\tau_{\sub{n}}'$ and derive a uniform bound for $S$. As in Case 1, we may assume that $|k_{\sub{l}}|\leq L^{\theta}$ for all $\sub{l}\in\Lc$ thanks to the decay of the $b_{k_{\sub{l}}}$. Once again, we can fix the integer parts of all $Tq_{\sub{n}}$ and $T q_{\sub{n}}'$ and reduce the bound to a counting problem at the cost of a power of $L^{\theta + c(b-1/2)}$. As in Case 1, we have \eqref{eq:assignment_condition}, and therefore we can use \Cref{thm:main_counting} and \Cref{thm:special_counting} again. They yield the bound
\[
|S(k,\tau[\Lc], \tau' [\Lc])|\lesssim L^{\theta+c(b-1/2)} \,\left(\lambda \rho\right)^{2n-2}\, \lambda^2 T.
\]
All that remains is estimating the integral in \eqref{eq:last_integral}, i.e.
\[
\int_{(\R^{2n+1})^2} \prod_{\{\sub{l}_1,\sub{l}_2\}\in\Pc} \vartheta^{-1}\, \langle \tau_{\sub{l}_1} -\tau_{\sub{l}_2}\rangle^{-N} \, \langle \tau_{\sub{l}_1} / \vartheta\rangle^{-1} \,\langle \tau_{\sub{l}_2} /\vartheta\rangle^{-1}\, d\tau_{\sub{l}_1}d\tau_{\sub{l}_2}.
\]
This integral can be factored into $2n+1$ integrals, and so it suffices to use the elementary bound:
\[
\int_{\R^2} \vartheta^{-1}\, \langle \tau_1 -\tau_2\rangle^{-N} \, \langle \tau_1 / \vartheta\rangle^{-1} \,\langle \tau_2/\vartheta\rangle^{-1}\, d\tau_1 d\tau_2\lesssim 1
\]
uniformly in $\vartheta>2$. 
As in Case 1, the Chebyshev inequality and \Cref{thm:hypercontractivity} finish the proof of the proposition.
\end{proof}

\section{Convergence to the WKE}\label{sec:WKE}

\subsection{First reductions}

In this section we want to find the equation satisfied by $\E |u_k(t)|^2$ in the limit.  We have the following technical lemma that will allow us to ignore small-probability events when we compute the expectation:
 
\begin{lem}\label{thm:restricted_expectation} Let the (finite) intersection of all $L$-certain events in \Cref{thm:n_iterate} and \Cref{thm:error} be $A$, then there exist $c>0$ and $0<\theta\ll 1$ such that for all $t\in [0,L^d]$ we have 
\begin{equation}\label{eq:restricted_expectation}
\E |u_k (t)|^2=  \E ( \mathbbm{1}_A |u_k (t)|^2 ) + \E ( \mathbbm{1}_{A^c} |u_k (t)|^2 ) = \E_A |u_k (t)|^2 + \O (e^{-c L^{\theta}}).
\end{equation}
\end{lem}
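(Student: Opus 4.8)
The plan is to show that the bad event $A^{c}$ contributes only an exponentially small amount to $\E|u_k(t)|^2$, by combining the exponentially small probability of $A^{c}$ with a crude polynomial-in-$L$ bound on the fourth moment of $|u_k(t)|$. First I would write
\[
\E|u_k(t)|^2=\E(\mathbbm{1}_A|u_k(t)|^2)+\E(\mathbbm{1}_{A^{c}}|u_k(t)|^2),
\]
set $\E_A|u_k(t)|^2:=\E(\mathbbm{1}_A|u_k(t)|^2)$, and estimate the second term by Cauchy--Schwarz,
\[
\E(\mathbbm{1}_{A^{c}}|u_k(t)|^2)\le \P(A^{c})^{1/2}\,\big(\E|u_k(t)|^4\big)^{1/2}.
\]
By construction $A$ is a finite intersection of the $L$-certain events furnished by \Cref{thm:n_iterate} and \Cref{thm:error}, so $\P(A^{c})\le C\,e^{-L^{\theta'}}$ for some $\theta'>0$. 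It therefore remains to produce an unconditional bound $\E|u_k(t)|^4\lesssim L^{Cd}$ valid for all $t\in[0,L^{d}]$; since $|u_k(t)|\le \|u(t)\|_{L^2(\T_L^d)}$ by Cauchy--Schwarz in $x$, it suffices to bound $\E\|u(t)\|_{L^2(\T_L^d)}^4$.

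For this I would run the same It\^o/Gr\"onwall scheme that underlies the balance of energy \eqref{eq:BalanceEnergy}, now applied to the square of $X(t):=\|u(t)\|_{L^2(\T_L^d)}^2$. The structural point, exactly as in the derivation of \eqref{eq:BalanceEnergy}, is that the Wick-ordered nonlinearity makes no contribution to the drift of $X$, so $dX=\mu_t\,dt+dM_t$ with $\mu_t\le 2\nu B$, while the quadratic variation satisfies $d\langle M\rangle_t\lesssim \nu\sum_{k}b_k^2|u_k(t)|^2\,dt\lesssim \nu\,X(t)\,dt$ because $b$ is Schwartz (so $\sup_k b_k^2\lesssim1$). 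It\^o's formula then gives
\[
\frac{d}{dt}\,\E\big[X(t)^2\big]\le 4\nu B\,\E[X(t)]+C\nu\,\E[X(t)]\lesssim \nu L^{d}\,\E[X(t)],
\]
and inserting the bound $\E[X(t)]\lesssim L^{d}$ already available from \eqref{eq:BalanceEnergy2} together with $\nu\le1$ and $t\le L^{d}$ yields $\E[X(t)^2]\lesssim \E[X(0)^2]+\nu L^{3d}\lesssim L^{3d}$; the data term $\E[X(0)^2]=\E\big(\sum_k c_k^2|\eta_k|^2\big)^2\lesssim \big(\sum_k c_k^2\big)^2+\sum_k c_k^4\lesssim L^{2d}$ is elementary. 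Hence $\E|u_k(t)|^4\lesssim L^{3d}$, and plugging this into the Cauchy--Schwarz bound above gives $\E(\mathbbm{1}_{A^{c}}|u_k(t)|^2)\le C\,e^{-L^{\theta'}/2}L^{3d/2}\le e^{-cL^{\theta}}$ for any $0<\theta<\theta'$ once $L$ is large, which is \eqref{eq:restricted_expectation}.

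The one point requiring care is the legitimacy of the It\^o computation on the full interval $[0,L^{d}]$, since global well-posedness of \eqref{eq:stochasticNLS} is not a priori obvious in higher dimensions. I would handle this in the standard way, by introducing the stopping time $\tau_R:=\inf\{t:X(t)>R\}$, running the differential inequality for $\E[X(t\wedge\tau_R)^2]$ with constants independent of $R$, and letting $R\to\infty$ (equivalently, working with a suitably truncated equation and passing to the limit), precisely as in the a priori argument behind \eqref{eq:BalanceEnergy}. Everything else --- the probability estimate on $A^{c}$, the Plancherel bound $|u_k|\le\|u\|_{L^2}$, and the Gaussian computation of $\E[X(0)^2]$ --- is routine, so I expect this stopping-time/global-existence bookkeeping to be the only genuine subtlety.
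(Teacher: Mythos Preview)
Your proposal is correct and follows the same overall strategy as the paper: split off $A^c$, apply Cauchy--Schwarz, and reduce to a crude polynomial bound on $\E|u_k(t)|^4$ via $|u_k|^2\le\|u\|_{L^2}^2$ and the energy balance \eqref{eq:BalanceEnergy}. The only difference is in how the fourth-moment bound is extracted. The paper works pathwise: from $X(t)\le X(0)+2\nu Bt+M_t$ (obtained by dropping the nonpositive dissipation term in the It\^o identity for $X$) it squares, takes expectation, and bounds $\E M_t^2$ via the It\^o isometry together with $\E X(t')\lesssim L^d$, arriving at $\E|u_k(t)|^4=\O(L^{4d})$. You instead apply It\^o directly to $X^2$ and integrate the resulting differential inequality $\frac{d}{dt}\E[X^2]\lesssim\nu L^d\,\E[X]$, obtaining the marginally sharper $\O(L^{3d})$. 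Both routes are equally valid and nearly identical in content; yours is slightly cleaner in that the martingale term vanishes upon taking expectation rather than having to be estimated separately, at the cost of invoking the quadratic-variation bound $d\langle M\rangle_t\lesssim\nu X\,dt$. Your remark about the stopping-time localisation is appropriate and applies equally to the paper's argument, which also tacitly assumes the It\^o computation is justified on $[0,L^d]$.
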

\begin{proof}
By the Cauchy-Schwarz inequality,
\[
\E ( \mathbbm{1}_{A^c} |u_k (t)|^2 ) \leq (\E |u_k (t)|^4)^{1/2} \, \P (A^c)^{1/2} \lesssim (\E |u_k (t)|^4)^{1/2} \, e^{-c L^{\theta}}.
\]
Our goal will be to show that $\E |u_k (t)|^4$ is at most polynomial in $L$ uniformly in $k\in\Z_L^d$ and $t\in [0,L^d]$. 

We repeat the argument that led to \eqref{eq:BalanceEnergy}, which is based on the It\^o isometry:
\[
\begin{split}
\norm{u(t)}^2_{L^2(\T_L^d)} -\norm{u(0)}^2_{L^2(\T_L^d)} = & \ -2\nu \int_0^t \norm{(1-\Delta_{\zeta})^{r/2} u(t')}^2_{L^2(\T_L^d)} \! dt' + 2\nu B t + 2\nu \mbox{Re} \int_0^t \int_{\T_L^d} \overline{u(t',x)} \, dx\,d\beta (t')\\
 \leq & \ 2\nu B t  + 2\nu \mbox{Re} \int_0^t \int_{\T_L^d} \overline{u(t',x)} \, dx\, d\beta (t') .
\end{split}
\]
Using the Plancherel theorem, we find that
\begin{equation}
|u_k(t)|^2 \leq  \sum_{k\in\Z_L^d} |u_k(0)|^2 + 2\nu B t  + 2\nu \mbox{Re} \int_0^t \int_{\T_L^d} \overline{u(t',x)} \, dx\, d\beta (t').
\end{equation}
Using the Cauchy-Schwarz inequality, we estimate the expectation of the fourth moment:
\begin{equation}\label{eq:4th_moment}
\E |u_k(t)|^4 \lesssim \E\left(\sum_{k\in\Z_L^d} c_k^2 |\eta_k|^2\right)^2 + (2\nu B t)^2  + 4\nu^2 \E \left | \int_0^t \int_{\T_L^d} \overline{u(t',x)} \, dx\, d\beta (t') \right |^2.
\end{equation}
Finally, we estimate the rightmost term in \eqref{eq:4th_moment}. We use the Cauchy-Schwarz inequality in the $x$ variable, and the It\^o formula to bound 
\[ 
\begin{split}
\E \left | \int_0^t \int_{\T_L^d} \overline{u(t',x)} \, dx\, d\beta (t') \right |^2 & 
\lesssim L^d \, \int_{\T_L^d} \int_0^t \E |u(t',x)|^2\, dt' dx \lesssim L^d \,\int_0^t \E \norm{u(t')}^2_{L^2(\T_L^d)} \, dt' \lesssim L^{2d} \,t,
\end{split}
\]
where we use the fact that the mass has size $L^d$, cf.\  \eqref{eq:BalanceEnergy2}. Back to \eqref{eq:4th_moment}, we use the fact that $\eta_k$ are independent Gaussian random variables to obtain
\[
\left(\E |u_k(t)|^4\right)^{1/2} \lesssim \sum_{k\in\Z_L^d} c_k^2+2\nu B t  + \sqrt{4\nu^2 L^{2d} \, t}.
\]
Given that $t\leq L^d$ and $B\sim L^d$ (cf.\ \eqref{eq:aboutB}), we find that $\E |u_k(t)|^4=\O (L^{4d})$ uniformly in $t\in [0,L^d]$ and $k\in\Z_L^d$, as we wanted.
\end{proof}

Let us set $t=T s$ where $s\in [1/2,1]$ and $T$ satisfies \eqref{eq:main2_T}, and recall that $\E |u_k (t)|^2 = \E |w_k (s)|^2$. Thanks to \eqref{eq:restricted_expectation}, it is enough to estimate the $\E_A |u_k (t)|^2 = \E_A |w_k (s)|^2$. To do so, we write $w_k (s)$ as in \eqref{eq:Picarditeration}, 
\begin{equation}\label{eq:expectation_Picard}
\begin{split}
\E_A |w_k (s)|^2  =&\  \E_A \left[ |w_k^{(0)} (s)|^2 +  |w_k^{(1)} (s)|^2 + 2 \mbox{Re} \overline{w_k^{(0)} (s)} w_k^{(1)} (s) +2 \mbox{Re} \overline{w_k^{(0)} (s)} w_k^{(2)} (s) \right] \\
& + \sum_{3\leq n \leq N} 2 \E_A \mbox{Re} \overline{w_k^{(0)} (s)} w_k^{(n)} (s) + \sum_{1\leq n_1,n_2 \leq N;\ n_1+n_2\geq 3} 2 \E_A  \overline{w_k^{(n_1)} (s)} w_k^{(n_2)} (s)\\
& + \sum_{n\leq N} 2 \E_A \mbox{Re} \overline{w_k^{(n)} (s)} (\mathcal{R}_{N+1})_k (s) + \E_A | (\mathcal{R}_{N+1})_k (s) |^2 \, .
\end{split}
\end{equation}

The terms in the second and third rows are lower order. For instance, \Cref{thm:n_iterate} yields
\[
\sum_{\substack{1\leq n_1,n_2 \leq N\\ n_1+n_2\geq 3}} 2 |\E_A  \overline{w_k^{(n_1)} (s)} w_k^{(n_2)} (s) | \lesssim L^{\theta+c(b-1/2)} \, \frac{T}{T_{\mathrm{kin}}}\ \sum_{\substack{1\leq n_1,n_2 \leq N\\ n_1+n_2\geq 3}} L^{-\delta (n_1+n_2-2)} \lesssim L^{-\delta/2}\, T/T_{\mathrm{kin}}
\]
by choosing $\theta$ and $b-1/2$ small enough in terms of $\delta$ (which is defined in \Cref{thm:main2}).
\Cref{thm:n_iterate} and \Cref{thm:error} take care of the terms in the third row of \eqref{eq:expectation_Picard} using similar arguments. The first term in the second row of \eqref{eq:expectation_Picard} requires slightly more work: one must repeat the arguments in the proof of \Cref{thm:n_iterate} in order to show that 
\[
| \E \overline{w_k^{(0)} (s)} w_k^{(n)} (s)|\lesssim L^{\theta} L^{-\delta (n-2)}\, \frac{T}{T_{\mathrm{kin}}}.
\]
As a result, we can arrange
\[
 \sum_{3\leq n \leq N} 2 |\E_A \mbox{Re} \overline{w_k^{(0)} (s)} w_k^{(n)} (s)|\leq L^{-\delta/2} \, \frac{T}{T_{\mathrm{kin}}}.
\]
Finally, it is easy to show that $\E\overline{w_k^{(0)} (s)} w_k^{(1)} (s) = 0$. We have thus shown that there for $\delta>0$ as in \eqref{eq:main2_T}, we have that 
\begin{equation}\label{eq:expectation_Picard_2}
\E_A |w_k (s)|^2  = \E_A \left[ |w_k^{(0)} (s)|^2 +  |w_k^{(1)} (s)|^2  +2 \mbox{Re} \overline{w_k^{(0)} (s)} w_k^{(2)} (s) \right] + \O \left( L^{-\delta/2} \frac{T}{T_{\mathrm{kin}}} \right).
\end{equation}

From now on we focus on the main terms on the right-hand side of \eqref{eq:expectation_Picard_2}, which are the ones that will give rise to the WKE.

\subsection{Main terms: from sums to integrals}

We have thus shown that 
\[
\E |u_k (t T_{\mathrm{kin}})|^2 = \E |w_k (t)|^2 = \E_A \left[ |w_k^{(0)} (t)|^2 +  |w_k^{(1)} (t)|^2  +2 \mbox{Re} \overline{w_k^{(0)} (t)} w_k^{(2)} (t) \right] + \O \left( L^{-\delta/2} t \right)
\]
for $t\leq T/T_{\mathrm{kin}}=L^{-\varepsilon}$ (which follows from \eqref{eq:expectation_Picard_2} with $s=t T_{\mathrm{kin}}/T$). We now proceed to compute the contributions from each of the terms in the right-hand side. We note that the results that follow in the remaining of this section are valid for any $t\in [0,1]$, not only $t\in [0,L^{-\varepsilon}]$.

First, we compute
\begin{equation}\label{eq:def_f}
\E |w_k^{(0)} (t)|^2 = c_k^2 \, e^{-2\varrho \gamma_k t}+\frac{b_k^2}{\gamma_k} \left( 1- e^{-2\varrho\gamma_k t}\right)
=: f(k,t), \qquad \qquad t\in [0,1].
\end{equation} 

Consider next $\E |w_k^{(1)} (t)|^2$. Using \eqref{eq:Picard_iterates}, we have that 
\begin{equation}\label{eq:w1_abs}
\begin{split}
|w_k^{(1)} (t)|^2  =&\ \left(\frac{\lambda T_{\mathrm{kin}}}{L^{d}} \right)^2\,  \sum_{k=k_1-k_2+k_3}  \sum_{k=k_1'-k_2'+k_3'} \, \epsilon_{k_1,k_2,k_3}\,\epsilon_{k_1',k_2',k_3'} \\
& \hspace{1cm} \int_0^t\int_0^t  e^{-\varrho\gamma_k (2t-t_1-t_2)} \prod_{j=1}^{3} w_{k_j}^{(0)} (t_1)^{\imath_j}\, w_{k_j'}^{(0)} (t_2)^{-\imath_j} \, e^{iT_{\mathrm{kin}} t_1\Omega^{13}_{2k} - iT_{\mathrm{kin}} t_2\Omega^{1'3'}_{2'k}} \, dt_1 dt_2 .
\end{split}
\end{equation}

We take the expectation and consider the terms where $\epsilon=+1$ (the rest are lower order). Using \Cref{thm:Iserlis}, there are two possible pairings between $\{ k_1,k_2,k_3\}$ and $\{ k_1',k_2',k_3'\}$, which correspond to $k_1$ being paired with $k_1'$ or $k_3'$. This implies that $\Omega^{13}_{2k}=\Omega^{1'3'}_{2'k}$ for all pairings. We also need to use the fact that 
\begin{equation}
\E w_k^{(0)} (t_1) \overline{w_k^{(0)} (t_2)} = c_k^2\, e^{-\varrho \gamma_k (t_1+t_2)} +\frac{b_k^2}{\gamma_k} \left( e^{-\varrho\gamma_k |t_1-t_2|} - e^{-\varrho\gamma_k (t_1+t_2)}\right),
\end{equation}
which follows from the independence of $\beta_k(t)$ and $\eta_k$. As a result, we have that 
\begin{equation}\label{eq:triple_product}
\E \prod_{j=1}^{3} w_{k_j}^{(0)} (t_1)^{\imath_j}\, w_{k_j'}^{(0)} (t_2)^{-\imath_j}  = 2\, \prod_{j=1}^{3}  \left[ c_{k_j}^2\, e^{-\varrho \gamma_{k_j} (t_1+t_2)}+\frac{b_{k_j}^2}{\gamma_{k_j}} \left( e^{-\varrho \gamma_{k_j} |t_1-t_2|} - e^{-\varrho\gamma_{k_j} (t_1+t_2)}\right)\right]
\end{equation}
accounting for the two possible pairings between $\{ k_1,k_2,k_3\}$ and $\{ k_1',k_2',k_3'\}$.

Next we consider two different cases: $\varrho\gtrsim 1$ and $\varrho\ll 1$. The integration in \eqref{eq:w1_abs} must be performed differently in each case in order to identify the top order terms.

\begin{prop} Suppose that $\varrho=\nu T_{\mathrm{kin}} \sim L^{-\kappa}$ for some $\kappa>0$ with $T_{\mathrm{kin}}$ is as in \Cref{thm:main2}. Then there exists some $0<\theta\ll \kappa$ such that for all $0\leq t\leq 1$,
\begin{equation}\label{eq:towards_cubicNLS}
\E |w_k^{(1)} (t)|^2 =2\, \left(\frac{\lambda t\,T_{\mathrm{kin}}}{L^{d}} \right)^2\, \sum_{k=k_1-k_2+k_3}  \frac{\Big |\sin \left(\frac{tT_{\mathrm{kin}}\, \Omega^{13}_{2k}}{2}\right)\Big |^2}{|t\,T_{\mathrm{kin}}\,\Omega^{13}_{2k}/2|^2}\, \prod_{j=1}^{3} c_{k_j}^2 + \O \left( L^{-\theta}\, t \right).
\end{equation}
Moreover,
\begin{equation}\label{eq:conclusion_WKE_cubicNLS}
\E |u(k,s)|^2 = c_k^2 + \frac{s}{T_{\mathrm{kin}}}\,\Kc (c_{\cdot})(k) + \O \left( L^{-\delta}\, \frac{s}{T_{\mathrm{kin}}}\right),
\end{equation}
for $\Kc$ is as in \eqref{eq:def_K} and $L^{\delta}\leq s\leq T$ with $T$ satisfying \eqref{eq:main2_T}.
\end{prop}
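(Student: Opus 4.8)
Here is the plan. Throughout write $\Omega:=\Omega^{13}_{2k}=|k_1|_\zeta^2-|k_2|_\zeta^2+|k_3|_\zeta^2-|k|_\zeta^2$, $T_{\mathrm{kin}}=\lambda^{-2}$, and recall that in this regime $\varrho=\nu T_{\mathrm{kin}}\sim L^{-\kappa}\ll1$. First I would produce a clean starting identity for $\E|w_k^{(1)}(t)|^2$: take the expectation in \eqref{eq:w1_abs} and apply Wick's theorem (\Cref{thm:Iserlis}) to the complex Gaussian field $w^{(0)}$. Only the two pairings of $\{k_1,k_2,k_3\}$ against $\{k_1',k_2',k_3'\}$ matching $k_2=k_2'$ and $\{k_1,k_3\}=\{k_1',k_3'\}$ survive; on both of them $\Omega^{1'3'}_{2'k}=\Omega$ and the two‑point function factorizes as in \eqref{eq:triple_product}. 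The terms with $\epsilon\neq1$ in \eqref{eq:epsilon} (only $k_1=k_2=k_3$) and the ``non‑generic'' pairings with coinciding momenta involve strictly fewer free frequencies and are $\O(L^{-\delta}t)$ by the counting bounds of \Cref{sec:appendix} (as in the proof of \Cref{thm:n_iterate}). What remains is
\[
\E|w_k^{(1)}(t)|^2=2\Big(\tfrac{\lambda T_{\mathrm{kin}}}{L^d}\Big)^2\!\!\sum_{k=k_1-k_2+k_3}\int_0^t\!\!\int_0^t e^{-\varrho\gamma_k(2t-t_1-t_2)}e^{iT_{\mathrm{kin}}(t_1-t_2)\Omega}\prod_{j=1}^3\Big[c_{k_j}^2 e^{-\varrho\gamma_{k_j}(t_1+t_2)}+\tfrac{b_{k_j}^2}{\gamma_{k_j}}\big(e^{-\varrho\gamma_{k_j}|t_1-t_2|}-e^{-\varrho\gamma_{k_j}(t_1+t_2)}\big)\Big]\,dt_1\,dt_2+\O(L^{-\delta}t).
\]

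Next, exploit $\varrho\ll1$ to prove \eqref{eq:towards_cubicNLS}. Since $b,c$ are Schwartz we may restrict the sum to $|k_j|\le L^\theta$ at a cost $\O(L^{-100}t)$, and then $\varrho\gamma_{k_j},\varrho\gamma_k\les L^{-\kappa+2\theta}$. From $|e^{-x}-1|\le x$ and $0\le e^{-\varrho\gamma_{k_j}|t_1-t_2|}-e^{-\varrho\gamma_{k_j}(t_1+t_2)}\le2\varrho\gamma_{k_j}t$, each bracket equals $c_{k_j}^2+\O(\varrho t\,h_j(k_j))$ with $h_j$ Schwartz, and $e^{-\varrho\gamma_k(2t-t_1-t_2)}=1+\O(\varrho t)$. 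Replacing the integrand by its leading part $\prod_j c_{k_j}^2\,e^{iT_{\mathrm{kin}}(t_1-t_2)\Omega}$ and using
\[
\int_0^t\!\!\int_0^t e^{iT_{\mathrm{kin}}(t_1-t_2)\Omega}\,dt_1\,dt_2=\Big|\int_0^t e^{iT_{\mathrm{kin}}t_1\Omega}\,dt_1\Big|^2=t^2\,\frac{\sin^2(T_{\mathrm{kin}}t\Omega/2)}{(T_{\mathrm{kin}}t\Omega/2)^2}
\]
produces exactly the displayed main term of \eqref{eq:towards_cubicNLS}. The remaining error carries a factor $\varrho$ and is otherwise an expression of the same structure, with the same $b^2,c^2$‑weights and bounded time factors, as the one already bounded in \Cref{thm:n_iterate}; hence it is $\O(\varrho L^\theta t)=\O(L^{-\theta'}t)$ for a suitable $0<\theta'\ll\kappa$, which gives \eqref{eq:towards_cubicNLS}.

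The passage from the sum in \eqref{eq:towards_cubicNLS} to the collision integral is the crux. Using $\lambda^2T_{\mathrm{kin}}=1$, that main term equals $\tfrac{4t}{L^{2d}}\sum_{k=k_1-k_2+k_3}K_s(\Omega)\,c_{k_1}^2c_{k_2}^2c_{k_3}^2$ with $s:=tT_{\mathrm{kin}}$ and $K_s(x):=\tfrac s2\,\tfrac{\sin^2(sx/2)}{(sx/2)^2}$, which obeys $\int_\R K_s=\pi$ and concentrates at $0$. The equidistribution estimate for lattice points within $\O(1/s)$ of the quadric $\{\Omega=0\}$ recorded in \Cref{sec:appendix} (this is where the admissibility ranges \eqref{eq:main2_T}--\eqref{eq:main2_Tkin}, the lower bound $s\ge L^\delta$, and the genericity of $\zeta$ are used) upgrades the weak convergence $K_s\to\pi\delta$ to a power‑saving rate,
\[
\frac{1}{L^{2d}}\sum_{k_1,k_2\in\Z_L^d}K_s(\Omega)\,G(k_1,k_2)=\pi\int_{\R^{2d}}\delta(\Omega)\,G\,dk_1\,dk_2+\O(L^{-\delta})\qquad(G\ \text{Schwartz}),
\]
which, applied with $G=c_{k_1}^2c_{k_2}^2c_{k_3}^2$, gives $\E|w_k^{(1)}(t)|^2=\tfrac{s}{T_{\mathrm{kin}}}\cdot4\pi\!\int\!\delta(\Omega)\,c_{k_1}^2c_{k_2}^2c_{k_3}^2\,dk_1\,dk_2+\O(L^{-\delta}\tfrac{s}{T_{\mathrm{kin}}})$: the ``gain'' part of $\tfrac{s}{T_{\mathrm{kin}}}\Kc(c^2_\cdot)(k)$ (cf.\ \eqref{eq:def_K}).

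Finally, the cross term $2\,\mathrm{Re}\,\E\,\overline{w_k^{(0)}(t)}\,w_k^{(2)}(t)$ is treated by the same three steps: expanding $w^{(2)}$ via \eqref{eq:Picard_iterates} and $w^{(1)}=\mathcal{IW}(w^{(0)},w^{(0)},w^{(0)})$ writes $\overline{w_k^{(0)}}w_k^{(2)}$ as an expectation of six copies of $w^{(0)}$; Wick's theorem (the generic pairings surviving, the rest lower order), the $\varrho\to0$ expansion, the time integrals, and the counting estimate combine to give $\tfrac{s}{T_{\mathrm{kin}}}$ times $-4\pi\!\int\!\delta(\Omega)\big[c_k^2c_{k_2}^2c_{k_3}^2-c_k^2c_{k_1}^2c_{k_3}^2+c_k^2c_{k_1}^2c_{k_2}^2\big]dk_1\,dk_2+\O(L^{-\delta}\tfrac{s}{T_{\mathrm{kin}}})$, the ``loss'' part of $\Kc(c^2_\cdot)(k)$; adding it to the gain part reconstitutes $\Kc$. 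Combined with $\E|w_k^{(0)}(t)|^2=f(k,t)=c_k^2+\O(\varrho t)=c_k^2+\O(L^{-\delta}\tfrac{s}{T_{\mathrm{kin}}})$ from \eqref{eq:def_f}, the reduction \eqref{eq:expectation_Picard_2}, and passing from $\E_A$ to $\E$ at negligible cost via \Cref{thm:restricted_expectation} and the hypercontractivity bounds on $\E|w_k^{(n)}|^4$ (\Cref{thm:hypercontractivity}), this yields \eqref{eq:conclusion_WKE_cubicNLS} with $t=s/T_{\mathrm{kin}}$ (after possibly shrinking $\delta$). I expect the main obstacle to be the quantitative equidistribution used in Steps three and four — counting lattice points near $\{\Omega=0\}$ with a power‑saving error, which consumes the scaling restrictions \eqref{eq:main2_T}--\eqref{eq:main2_Tkin} — together with the (purely bookkeeping) verification that the Wick pairings of $\overline{w^{(0)}}w^{(2)}$ assemble into exactly the three ``loss'' monomials with the correct signs.
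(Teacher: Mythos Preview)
Your overall strategy is sound and the main term is extracted correctly, but there is a real gap in the error estimate following the Taylor expansion, and this is precisely where your argument diverges from the paper's.

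You claim that after replacing the integrand by $\prod_j c_{k_j}^2\,e^{iT_{\mathrm{kin}}(t_1-t_2)\Omega}$, ``the remaining error carries a factor $\varrho$ and is otherwise an expression of the same structure \ldots as the one already bounded in \Cref{thm:n_iterate}; hence it is $\O(\varrho L^\theta t)$.'' This is too quick. Once you extract $\varrho$ (or $\varrho t$) pointwise from $e^{-x}-1$, the leftover piece has the form
\[
\Big(\frac{\lambda T_{\mathrm{kin}}}{L^d}\Big)^2\sum_{k=k_1-k_2+k_3} h(\vec k)\int_0^t\!\!\int_0^t e^{iT_{\mathrm{kin}}(t_1-t_2)\Omega}\,\psi(t_1,t_2)\,dt_1\,dt_2
\]
with $|\psi|\le 1$, but $\psi$ is \emph{not} a product of two-point functions of Gaussians, so the Fourier/covariance machinery behind \Cref{thm:n_iterate} does not apply. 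If instead you bound the time integral trivially by $t^2$ (the oscillation is decoupled from the sign-indefinite $\psi$), the $k$-sum contributes $L^{2d}$ and you obtain $\varrho t\cdot T_{\mathrm{kin}}\,t^2$, which is useless since $T_{\mathrm{kin}}$ is a large power of $L$ while $\varrho=L^{-\kappa}$ need not compensate.

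The paper avoids this by first using the exact factorization
\[
\prod_{j=1}^3\Big[c_{k_j}^2 e^{-\varrho\gamma_{k_j}(t_1+t_2)}+\tfrac{b_{k_j}^2}{\gamma_{k_j}}\big(e^{-\varrho\gamma_{k_j}|t_1-t_2|}-e^{-\varrho\gamma_{k_j}(t_1+t_2)}\big)\Big]
= e^{-\varrho(t_1-t_2)\sum_j\gamma_{k_j}}\prod_{j=1}^3 f(k_j,t_2)
\]
on the triangle $t_1\ge t_2$, and then integrating by parts the cosine in $t_1$ and the resulting sine in $t_2$. The boundary terms assemble into the $\sin^2/\Omega^2$ main term; each interior error $E_{1,k},E_{2,k}$ gains an explicit factor $\varrho\Gamma_\pm$ from $\partial_{t_i}$ hitting the damping exponentials, and---crucially---\emph{retains a nonnegative Fej\'er-type kernel} $\sin^2(T_{\mathrm{kin}}\Omega\,\cdot)/(T_{\mathrm{kin}}\Omega)^2$. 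One can then bound these errors pointwise and feed them into the same sum-to-integral/equidistribution estimate (\Cref{NTDH}) used for the main term, producing $\O(\varrho L^{2\theta}t)$ with the correct linear-in-$t$ scaling. Your Taylor expansion does not produce such a kernel for the error; to get the required $\O(t)$ you would end up performing the same integration by parts. The remainder of your outline (the sum-to-integral step via the approximate identity $K_s\to\pi\delta$, and the treatment of $2\,\mathrm{Re}\,\E\,\overline{w_k^{(0)}}w_k^{(2)}$) agrees with the paper, which likewise defers the details of the latter to \cite{DengHani}.
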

\begin{rk}\label{rk:towards_cubicNLS}
Note that the top order term in \eqref{eq:towards_cubicNLS}, which has size $\O(t)$, is one of the terms that leads to the WKE for the cubic NLS equation, see Section 4 and Theorem 5.1 in \cite{DengHani}.
\end{rk}
\begin{proof}
We use \eqref{eq:triple_product} to compute the expectation of \eqref{eq:w1_abs}. Note that the only possible pairings imply that $\Omega:= \Omega^{13}_{2k} = \Omega^{1'3'}_{2'k}$. We focus on the terms where $\epsilon=+1$ since the rest can be shown to be lower order.
\[
\begin{split}
\E |w_k^{(1)} (t)|^2   = 2\, \left(\frac{\lambda T_{\mathrm{kin}}}{L^{d}} \right)^2\, & \sum_{k=k_1-k_2+k_3} \int_0^t \! \int_0^t  e^{-\varrho\gamma_k (2t-t_1-t_2)} \, e^{iT_{\mathrm{kin}}\Omega^{13}_{2k} (t_1-t_2)} \\ 
& \prod_{j=1}^{3}  \left[ c_{k_j}^2\, e^{-\varrho \gamma_{k_j} (t_1+t_2)}+\frac{b_{k_j}^2}{\gamma_{k_j}} \left( e^{-\varrho \gamma_{k_j} |t_1-t_2|} - e^{-\varrho\gamma_{k_j} (t_1+t_2)}\right)\right]\, dt_1 dt_2   +\O (L^{-\theta} t^2).
\end{split}
\]

We split the integral into two parts: $t_1\geq t_2$ and its complement. Note that the resulting integrands are similar up to conjugation, and so we may write them as twice the real part of the integration over $t_1\geq t_2$. More precisely, 
\[
\begin{split}
\E |w_k^{(1)} (t)|^2   = 4\, \left(\frac{\lambda T_{\mathrm{kin}}}{L^{d}} \right)^2\, & \sum_{k=k_1-k_2+k_3}  \int_0^t\int_{t_2}^t  e^{-\varrho\gamma_k (2t-t_1-t_2)} \, \cos \left( T_{\mathrm{kin}} \Omega^{13}_{2k} (t_1-t_2)\right) \\ & \prod_{j=1}^{3}  \left[ c_{k_j}^2\, e^{-\varrho \gamma_{k_j} (t_1+t_2)}+\frac{b_{k_j}^2}{\gamma_{k_j}} \left( e^{-\varrho \gamma_{k_j} |t_1-t_2|} - e^{-\varrho\gamma_{k_j} (t_1+t_2)}\right)\right]\, dt_1\, dt_2 + \O (L^{-\theta} t^2).
\end{split}
\]
When $t_1\geq t_2$, we may rewrite
\begin{equation}\label{eq:triple_product2}
 \prod_{j=1}^{3}  \left[ c_{k_j}^2\, e^{-\varrho \gamma_{k_j} (t_1+t_2)}+\frac{b_{k_j}^2}{\gamma_{k_j}} \left( e^{-\varrho \gamma_{k_j} |t_1-t_2|} - e^{-\varrho\gamma_{k_j} (t_1+t_2)}\right)\right] 
 = e^{-\varrho (t_1-t_2) \sum_{j=1}^{3} \gamma_{k_j}}\, \prod_{j=1}^{3}  f(k_j,t_2)
\end{equation}
for $f=\E |w^{(0)}|^2$ as in \eqref{eq:def_f}. Letting 
\begin{equation}\label{eq:Gammas}
\Gamma_{\pm} =\pm \gamma_k+\sum_{j=1}^{3} \gamma_{k_j}\ ,
\end{equation}
we integrate by parts the cosine in the $t_1$ variable:
\[
\begin{split}
\E |w_k^{(1)} (t)|^2   = &\ 4\, \left(\frac{\lambda T_{\mathrm{kin}}}{L^{d}} \right)^2\, \sum_{k=k_1-k_2+k_3} \int_0^t  e^{-(t-t_2)\,\varrho \Gamma_{+}} \, \frac{\sin \left( T_{\mathrm{kin}} \Omega^{13}_{2k} (t-t_2)\right)}{T_{\mathrm{kin}} \Omega^{13}_{2k}} \prod_{j=1}^{3}  f(k_j,t_2)\,  dt_2\\ 
& + E_{1,k} (t)+  \O (L^{-\delta} t^2),
\end{split}
\]
where we have only explicitly written the boundary terms from the integration by parts, the rest being part of $E_{1,k} (t)$. Finally, we integrate by parts the sine in the $t_2$ variable:
\[
\begin{split}
\E |w_k^{(1)} (t)|^2   = &\ 4\, \left(\frac{\lambda T_{\mathrm{kin}}}{L^{d}} \right)^2\, \sum_{k=k_1-k_2+k_3} e^{-t \varrho \Gamma_{+}} \, \frac{1-\cos \left( T_{\mathrm{kin}} \Omega^{13}_{2k} t\right)}{(T_{\mathrm{kin}} \Omega^{13}_{2k})^2}\, \prod_{j=1}^{3} f(k_j,0)\\
&+ E_{1,k} (t)+ E_{2,k} (t)+ \O (L^{-\theta} t^2).
\end{split}
\]
Using the identity $1-\cos (x)= 2\sin^2 (x/2)$, together with the fact that $e^{-t\varrho\Gamma_{+}}=1+ \O(t\,\varrho L^{\theta})$, and choosing $\theta < \kappa/2$ yields the top order term in \eqref{eq:towards_cubicNLS}.

In order to finish the proof, let us study the errors $E_{1,k}(t)$ and $E_{2,k}(t)$. We start with the latter:
\[
E_{2,k}(t) = 8\, \left(\frac{\lambda T_{\mathrm{kin}}}{L^{d}} \right)^2\,  \sum_{k=k_1-k_2+k_3}  \int_0^t \frac{\sin^2 \left( T_{\mathrm{kin}} \Omega^{13}_{2k} (t-t_2)/2\right)}{(T_{\mathrm{kin}} \Omega^{13}_{2k})^2}  \frac{\pa}{\pa t_2}\left( e^{- (t-t_2)\varrho\Gamma_{+}} \, \prod_{j=1}^{3}  f(k_j,t_2) \right) \, dt_2.
\]
Regardless of which term the derivative hits, we gain a factor of $\varrho$, and potential factors of $\gamma_{k_j}$ (which are $\O (L^{2\theta})$ thanks to the rapid decay of $c_{k_j}$ and $b_{k_j}$). Therefore 
\[
\begin{split}
|E_{2,k}(t)|&  \lesssim  \varrho\,L^{2\theta}\, \left(\frac{\lambda T_{\mathrm{kin}}}{L^{d}} \right)^2\, \sum_{k=k_1-k_2+k_3}  \int_0^t \frac{\sin^2 \left( T_{\mathrm{kin}} \Omega^{13}_{2k} (t-t_2)/2\right)}{(T_{\mathrm{kin}} \Omega^{13}_{2k})^2} \, \prod_{j=1}^{3}  \left[ c_{k_j}^2+b_{k_j}^2\right]\, dt_2\\
&  \lesssim  L^{2\theta-\kappa}\, \left(\frac{\lambda T_{\mathrm{kin}}}{L^{d}} \right)^2\,  \int_0^t \sum_{k=k_1-k_2+k_3}  \prod_{j=1}^{3}  \left(c_{k_j}^2+b_{k_j}^2\right)\, \frac{\sin^2 \left( T_{\mathrm{kin}} \Omega^{13}_{2k} t_2/2\right)}{(T_{\mathrm{kin}} \Omega^{13}_{2k})^2} \,   dt_2.
\end{split}
\]
Using the arguments in Theorem 5.1 in \cite{DengHani} (see also the analogous \Cref{NTDH} below), one can replace the inner sum by an integral and prove that it has size $L^{2d}\,T_{\mathrm{kin}}^{-1}$. All in all $E_{2,k}(s)=\O ( L^{2\theta-\kappa} t)=\O( L^{-\theta} t)$ by further reducing $\theta < \kappa/3$.

Finally, consider the error $E_{1,k}(t)$, which reads
\[
\begin{split}
E_{1,k}(t) = &\ 4\, \left(\frac{\lambda T_{\mathrm{kin}}}{L^{d}} \right)^2\! \sum_{k=k_1-k_2+k_3}  \int_0^t  \! \int_{t_2}^{t}  \frac{\sin \left( T_{\mathrm{kin}} \Omega^{13}_{2k} (t_1-t_2)\right)}{T_{\mathrm{kin}} \Omega^{13}_{2k}} \frac{\pa}{\pa t_1}\left( e^{-t_1 \varrho \Gamma_{-}} \right)\, e^{-\varrho\, 2t\gamma_k +t_2 \varrho \Gamma_{+}} \, \prod_{j=1}^{3}  f(k_j,t_2)\, dt_1\,  dt_2 \\ 
=&\ 4\, \left(\frac{\lambda T_{\mathrm{kin}}}{L^{d}} \right)^2\! \sum_{k=k_1-k_2+k_3}  \int_0^t   \! \int_{t_2}^{t}  \frac{\sin \left( T_{\mathrm{kin}} \Omega^{13}_{2k} (t_1-t_2)\right)}{T_{\mathrm{kin}} \Omega^{13}_{2k}} (-\varrho \Gamma_{-})\, e^{-t_1 \varrho \Gamma_{-}}\, e^{-\varrho\, 2t\gamma_k +t_2 \varrho \Gamma_{+}} \, \prod_{j=1}^{3}  f(k_j,t_2)\, dt_1\,  dt_2.
\end{split}
\]
We integrate by parts the sine in $t_1$ again. This leads to a term like $E_{2,k}$, which can be estimated as detailed above, plus the following contribution:
\[
8\, \left(\frac{\lambda T_{\mathrm{kin}}}{L^{d}} \right)^2\! \sum_{k=k_1-k_2+k_3}   \int_0^t \! \int_{t_2}^{t}  \frac{\sin^2 \left( T_{\mathrm{kin}}\Omega^{13}_{2k} (t_1-t_2)/2\right)}{(T_{\mathrm{kin}} \Omega^{13}_{2k})^2} (\varrho\Gamma_{-})^2\, e^{-t_1 \varrho \Gamma_{-}}\, e^{-\varrho\, 2t\gamma_k +t_2 \varrho\Gamma_{+}} \, \prod_{j=1}^{3}  f(k_j,t_2)\, dt_1\,  dt_2 .
\]
As in the case of $E_{2,k}$, we have gained $\varrho^2 L^{2\theta}$ (from bounding $\Gamma_{-}^2$) times a Schwarz function of $k_1,k_2,k_3$ (uniformly in $t_1,t_2,t$). A similar argument to that of $E_{2,k}$ finishes the proof of \eqref{eq:towards_cubicNLS}.

Finally, \eqref{eq:conclusion_WKE_cubicNLS} follows from \eqref{eq:expectation_Picard},  \eqref{eq:towards_cubicNLS} and analogous estimates for $2 \mbox{Re} \E[\overline{w_k}^{(0)} (t) w_k^{(2)} (t)]$ . The proof may be found in Theorem 5.1 in \cite{DengHani}, and so we omit it.
\end{proof}

In the rest of this section we assume that $\varrho\gtrsim 1$. Our first result is a useful formula for $\E |w_k^{(1)} (t)|^2$ for such values of $\varrho$.

\begin{lem}\label{lem:towardsWKE}
Let $f$ be as in \eqref{eq:def_f}, $\Gamma_{\pm}$ as in \eqref{eq:Gammas} and $T_{\mathrm{kin}}$ as in \Cref{thm:main2}. Then there exists some $0<\theta \ll 1$ such that
\begin{equation}
\E |w_k^{(1)} (t)|^2 = I_1 (f) (k,t) - I_2 (f) (k,t) + \O \left(L^{-\theta}t\right) \quad 0\leq t\leq 1,
\end{equation}
where
\begin{equation}\label{eq:towardsWKE1}
I_1(f)(k,t) = 4 L^{-2d} \sum_{k=k_1-k_2+k_3} \frac{\nu^{-1}\Gamma_{-}}{\Gamma_{-}^2+(\nu^{-1}\,\Omega^{13}_{2k})^2}  \int_0^t e^{-2\varrho\gamma_k (t-t')}  \prod_{j=1}^{3}  f(k_j,t')\, dt'
\end{equation}
and 
\begin{equation}\label{eq:towardsWKE2}
\begin{split}
I_2(f)(k,t) = 4 L^{-2d} \sum_{k=k_1-k_2+k_3}
 \int_0^t &\ e^{-\varrho\Gamma_{+} (t-t')}  \mbox{Re}\left( \frac{\nu^{-1}\,e^{iT_{\mathrm{kin}}\Omega^{13}_{2k}(t-t')}}{\Gamma_{-}-i\nu^{-1}\Omega^{13}_{2k}}\right) \prod_{j=1}^{3} f(k_j,t')\, dt'.
\end{split}
\end{equation}
Note that the denominators in \eqref{eq:towardsWKE1}-\eqref{eq:towardsWKE2} do not vanish in view of \Cref{VIP_claim} below. Similarly,
\begin{equation}\label{eq:w0w2}
2\mbox{Re}\, \E \left[ w_k^{(0)}(t) \,\overline{w_k^{(2)}(t)} \right] = \widetilde{I_1} (f) (k,t) - \widetilde{I_2} (f) (k,t) + \O \left( L^{-\theta}\, t\right)
\end{equation}
where 
\begin{equation}\label{eq:w0w2_kernels}
\begin{split}
\widetilde{I_1} (f) (k,t) & = 4 L^{-2d} \sum_{k=k_1-k_2+k_3}  \frac{\nu^{-1}\Gamma_{-}}{\Gamma_{-}^2+(\nu^{-1}\,\Omega^{13}_{2k})^2}  \int_0^t e^{-2\varrho\gamma_k (t-t')} \, \Kf[f(\cdot,t')]\, dt'\\
\widetilde{I_2} (f) (k,t) & =  4 L^{-2d}\sum_{k=k_1-k_2+k_3} 
 \int_0^t  e^{-2\varrho \Gamma_{+}\, (t-t')}  \mbox{Re}\left( \frac{\nu^{-1}e^{iT_{\mathrm{kin}}\Omega^{13}_{2k}(t-t')}}{\Gamma_{-}-i\nu^{-1}\Omega^{13}_{2k}}\right) \Kf[f(\cdot,t')]\, dt'
\end{split}
\end{equation}
where 
\begin{equation}\label{eq:rest_of_kernel}
\Kf[a] =  a(k_1) a (k_3) a (k) - a (k_2) a(k_3) a (k) - a (k_1) a (k_2) a (k) .
\end{equation}
\end{lem}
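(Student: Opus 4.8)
The plan is to start from the explicit time-integral expression \eqref{eq:w1_abs} for $|w_k^{(1)}(t)|^2$, take the expectation via Isserlis (\Cref{thm:Iserlis}) keeping only the $\epsilon=+1$ terms (the others being lower order, as already noted), and use the two admissible pairings to reduce to the single phase $\Omega:=\Omega^{13}_{2k}=\Omega^{1'3'}_{2'k}$, together with the correlation formula \eqref{eq:triple_product}. After splitting the $(t_1,t_2)$-square into $\{t_1\ge t_2\}$ and its complement and using that the two contributions are complex conjugates, one is left with
\[
\E|w_k^{(1)}(t)|^2 = 4L^{-2d}\sum_{k=k_1-k_2+k_3}\int_0^t\!\!\int_{t_2}^t e^{-\varrho\gamma_k(2t-t_1-t_2)}\,\cos\!\big(T_{\mathrm{kin}}\Omega(t_1-t_2)\big)\,e^{-\varrho(t_1-t_2)\sum_j\gamma_{k_j}}\prod_{j=1}^3 f(k_j,t_2)\,dt_1\,dt_2 + \O(L^{-\theta}t),
\]
using \eqref{eq:triple_product2}. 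Unlike the $\varrho\ll 1$ case, I would \emph{not} integrate by parts here (that loses the main term); instead I would carry out the inner $t_1$-integral exactly. Writing $\cos(T_{\mathrm{kin}}\Omega(t_1-t_2))=\tfrac12(e^{iT_{\mathrm{kin}}\Omega(t_1-t_2)}+e^{-iT_{\mathrm{kin}}\Omega(t_1-t_2)})$, the $t_1$-integrand is a pure exponential $e^{(\varrho\Gamma_- \pm iT_{\mathrm{kin}}\Omega)(t_1-t_2)}$ (times the $t_2$-dependent factor $e^{-2\varrho\gamma_k(t-t_2)}\prod_j f(k_j,t_2)$, after pulling out $e^{-\varrho\gamma_k(2t-2t_2)}$ and recombining), so $\int_{t_2}^t$ is elementary and produces denominators $\varrho\Gamma_-\pm iT_{\mathrm{kin}}\Omega$. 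Crucially, since $T_{\mathrm{kin}}=\varrho\nu^{-1}$, dividing through by $\varrho$ turns these into $\Gamma_-\pm i\nu^{-1}\Omega$, exactly the denominators in \eqref{eq:towardsWKE1}--\eqref{eq:towardsWKE2}. The boundary term at $t_1=t_2$ gives, after taking real parts and recombining the two conjugate halves, the term $I_1(f)$ with kernel $\tfrac{\nu^{-1}\Gamma_-}{\Gamma_-^2+(\nu^{-1}\Omega)^2}$ and the factor $e^{-2\varrho\gamma_k(t-t_2)}$; the boundary term at $t_1=t$ gives $-I_2(f)$ with the oscillatory numerator $\nu^{-1}e^{iT_{\mathrm{kin}}\Omega(t-t_2)}$ and the factor $e^{-\varrho\Gamma_+(t-t_2)}$ (note $2t\gamma_k - 2t_2\gamma_k$ combined with the $-\varrho(t-t_2)\sum_j\gamma_{k_j}$ left over gives $-\varrho\Gamma_+(t-t_2)$, matching \eqref{eq:towardsWKE2}). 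The non-vanishing of the denominators is exactly \Cref{VIP_claim} (condition \eqref{eq:intersection_mfolds}), which I would cite. Replacing $\prod_j f(k_j,t')$ by itself with $t'=t_2$ and absorbing the difference $\prod_j f(k_j,t_2)-(\text{value with }t_1)$ is not needed since I integrate in $t_1$ only; the only error terms come from discarding $\epsilon=\pm1$ boundary cases and the $\O(L^{-\theta}t^2)$ already present, so the claimed $\O(L^{-\theta}t)$ remainder follows.

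For the second identity, \eqref{eq:w0w2}, I would expand $\overline{w_k^{(2)}(t)}$ using \eqref{eq:Picard_iterates}: $w_k^{(2)}$ is a sum over $\mathcal{IW}$ terms built from two copies of $w^{(0)}$ and one $w^{(1)}$, i.e. ultimately five copies of $w^{(0)}$. Pairing $w_k^{(0)}(t)$ against $\overline{w_k^{(2)}(t)}$ via Isserlis gives products of five two-point functions \eqref{eq:triple_product}-type factors; the combinatorics of which leaf of the order-$2$ tree pairs with the external $w_k^{(0)}$ produces exactly the three sign combinations in $\Kf[a]$ in \eqref{eq:rest_of_kernel} (the "$+,-,-$" structure coming from $\epsilon_{k_1,k_2,k_3}$ and conjugations, just as in the $\nu=0$ theory of \cite{DengHani}). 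The remaining two $w^{(0)}$'s are "internal" to the $w^{(1)}$ sub-diagram and, after the same $t_1$-integration as above, reproduce the identical resolvent kernels $\tfrac{\nu^{-1}\Gamma_-}{\Gamma_-^2+(\nu^{-1}\Omega)^2}$ and $\mathrm{Re}\big(\tfrac{\nu^{-1}e^{iT_{\mathrm{kin}}\Omega(t-t')}}{\Gamma_--i\nu^{-1}\Omega}\big)$, with the only change being $e^{-\varrho\Gamma_+(t-t')}\mapsto e^{-2\varrho\Gamma_+(t-t')}$ in $\widetilde{I_2}$ and $e^{-2\varrho\gamma_k(t-t')}$ retained in $\widetilde{I_1}$, reflecting the extra $\mathcal{I}$-integration present in $w^{(2)}$ versus $w^{(1)}$. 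One then checks these exponents match \eqref{eq:w0w2_kernels} by tracking the $\gamma$-factors through the nested $\mathcal{I}$ operators exactly as in the computation for $I_1,I_2$.

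The main obstacle I anticipate is \emph{bookkeeping rather than analysis}: correctly tracking all the exponential decay factors ($\gamma_k$ versus $\Gamma_\pm$, and the powers of $\varrho$) through the nested $\mathcal{I}$-integrations so that $T_{\mathrm{kin}}=\varrho\nu^{-1}$ converts $\varrho\Gamma_\pm\pm iT_{\mathrm{kin}}\Omega$ precisely into $\varrho(\Gamma_\pm\pm i\nu^{-1}\Omega)$ and cancels with the overall $L^{-2d}$-normalization to leave the stated $\nu^{-1}$-weighted kernels; and, for \eqref{eq:w0w2}, verifying that the Isserlis pairing combinatorics on the order-$2$ tree collapses to exactly the three terms of $\Kf$ with the correct signs. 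The genuinely analytic inputs---bounding the discarded $\epsilon=\pm1$ terms and the error terms---are routine given \Cref{thm:n_iterate} and the arguments already used in the $\varrho\ll 1$ proposition above, so I would state them briefly and refer back. Throughout, I would invoke \Cref{VIP_claim} to justify that every denominator $\Gamma_--i\nu^{-1}\Omega$ is bounded away from zero, so all the integrals are well-defined with kernels bounded uniformly in $L$.
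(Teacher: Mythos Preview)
Your plan is essentially the paper's own proof: compute the expectation via Isserlis, reduce to the triangle $t_1\ge t_2$, use \eqref{eq:triple_product2}, and integrate the pure exponential in $t_1$ exactly to produce the resolvent $(\varrho\Gamma_- - iT_{\mathrm{kin}}\Omega)^{-1}$; for \eqref{eq:w0w2} the paper likewise splits $w_k^{(2)}=W_k^1+W_k^2+W_k^3$ according to the position of $w^{(1)}$ in $\mathcal{IW}$, and it is these \emph{three positions} (not ``which leaf pairs with the external $w^{(0)}$'') that generate the three terms of $\Kf$. Two small slips to fix when you write it out: the six Gaussian factors in $w_k^{(0)}\overline{w_k^{(2)}}$ give \emph{three} two-point functions, not five; and the sign of the $t_1$-exponent is $-\varrho\Gamma_-$, not $+\varrho\Gamma_-$.
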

\begin{proof}
\textbf{The term $\E |w_k^{(1)} (t)|^2$.} Let us ignore the terms corresponding to $\epsilon=-1$ (i.e. $k_1=k_2=k_3$) since they give rise to the lower order error and may easily be computed separately. The integral in \eqref{eq:w1_abs} over the square $[0,t]^2$ may be separated into two triangles: the upper triangle $t_2\geq t_1$ and the lower triangle $t_1>t_2$, which are complex conjugates. For that reason, we may focus on the region $t_1>t_2$. Using \eqref{eq:triple_product}, we have that
\[
\E \prod_{j=1}^{3} w_{k_j}^{(0)} (t_1)^{\imath_j}\, w_{k_j'}^{(0)} (t_2)^{-\imath_j} = 2\,e^{-\varrho (t_1-t_2) \sum_{j=1}^{3} \gamma_{k_j}}\, \prod_{j=1}^{3}  f(k_j,t_2).
\]

We plug this identity into \eqref{eq:w1_abs} and integrate in the variable $t_1$: 
\begin{equation}\label{eq:upper_triangle}
\begin{split}
 \E |w_k^{(1)} (t)|^2 =&\ \left(\frac{\lambda T_{\mathrm{kin}}}{L^{d}} \right)^2 \! \sum_{k=k_1-k_2+k_3} \frac{2}{\varrho \Gamma_{-}-iT_{\mathrm{kin}} \Omega^{13}_{2k}} \int_0^t  \prod_{j=1}^{3}  f(k_j,t_2)\, \left[ e^{-2\varrho\gamma_k (t-t_2)} - e^{-\varrho \Gamma_+ (t-t_2) +i T_{\mathrm{kin}} \Omega^{13}_{2k} (t-t_2)} \right]\, dt_2\\
& + \mbox{complex conjugate} + \O (L^{-\theta} t),
\end{split}
\end{equation}
where the error terms accounts for the terms in $\epsilon=-1$. Formulas \eqref{eq:towardsWKE1}-\eqref{eq:towardsWKE2} now follow from the complex conjugate.

\medskip

\textbf{The term $\E w_k^{(0)}(t) \,\overline{w_k^{(2)}(t)}$.} Following \eqref{eq:Picard_iterates}, one may write
\begin{equation}\label{eq:bigW}
\begin{split}
w_k^{(2)}(t) & = \Ic \Wc (w^{(1)},w^{(0)},w^{(0)})_k(t) + \Ic \Wc (w^{(0)},w^{(1)},w^{(0)})_k(t) + \Ic \Wc (w^{(0)},w^{(0)},w^{(1)})_k (t)\\
&  =: W_k^1 (t) + W_k^2 (t)+ W_k^3 (t).
\end{split}
\end{equation}
Consider, for example, 
\begin{multline}\label{eq:w0w2_example}
\E [ W_k^1 (t) \, \overline{w_k^{(0)}(t)} ]= -\left(\frac{\lambda T_{\mathrm{kin}}}{L^{d}} \right)^2\!\!
\sum_{\substack{k=k_1-k_2+k_3\\k_1=k_{11}-k_{12}+k_{13}}} \!\! \epsilon_{k_1,k_2,k_3} \epsilon_{k_{11},k_{12},k_{13}}
\int_0^t \! \int_0^{t_1} e^{-\varrho \gamma_k (t-t_1) - \vartheta \gamma_{k_1} (t_1-t_2)} e^{iT_{\mathrm{kin}} t_1 \Omega_k +i T_{\mathrm{kin}} t_2 \Omega_{k_1}} \\
 \E[ \overline{w_{k_2}^{(0)}(t_1)}  w_{k_3}^{(0)}(t_1) w_{k_{11}}^{(0)}(t_2) \overline{w_{k_{12}}^{(0)}(t_2)} w_{k_{13}}^{(0)}(t_2) w_{k}^{(0)}(t)]\, dt_2 \, dt_1.
\end{multline}
Note that there are only two possible pairings between $\{k_{11},k_{12},k_{13}\}$ and $\{k,k_2,k_3\}$ depending on whether we pair $k_{11}$ with $k$ or with $k_2$. Note that for both parings we have that $\Omega_{k}=-\Omega_{k_1}$. Therefore
\begin{multline}\label{eq:w0w2_correlations}
 \E[ \overline{w_{k_2}^{(0)}(t_1)}  w_{k_3}^{(0)}(t_1) w_{k_{11}}^{(0)}(t_2) \overline{w_{k_{12}}^{(0)}(t_2)} w_{k_{13}}^{(0)}(t_2) w_{k}^{(0)}(t)]  = 2\,  \left(c_{k}^2 e^{-\varrho \gamma_{k}(t+t_2)}+ \frac{b_{k}^2}{\gamma_{k}} [ e^{-\varrho \gamma_{k}(t-t_2)} - e^{-\varrho \gamma_{k}(t+t_2)}]\right)\\
 \prod_{j\in \{k_2,k_3\}}  \left(c_{j}^2 e^{-\varrho \gamma_{j}(t_1+t_2)}+ \frac{b_{j}^2}{\gamma_{j}} [ e^{-\varrho \gamma_{j}(t_1-t_2)} - e^{-\varrho \gamma_{j}(t_1+t_2)}]\right)\\
 = 2\, e^{-\varrho \gamma_{k}(t+t_2) - \varrho (\gamma_{k_2}+\gamma_{k_3})(t_1+t_2)} \,   f(k,t_2)\, f(k_2,t_2)\, f(k_3,t_2).
\end{multline}

Plugging \eqref{eq:w0w2_correlations} into \eqref{eq:w0w2_example}, we may integrate in $t_1$ first. As before, each boundary term yields one of the terms in \eqref{eq:w0w2}. The other terms which give rise to $\Kf[f]$ in \eqref{eq:rest_of_kernel} come from $W_k^2$ and $W_k^3$ in \eqref{eq:bigW}.
\end{proof}

Let us now focus on the term $\E |w_k^{(1)} (t)|^2$ as an example.
Our next goal is to replace the sums in \eqref{eq:towardsWKE1}-\eqref{eq:towardsWKE2} by integrals. In order to do so, we first isolate the sums by rewriting $I_1$ and $I_2$ as:
\begin{equation}\label{eq:fromItoSigma}
I_1(k,t)   = \int_0^t e^{-2\varrho\gamma_k (t-t')} \Sigma_1(k,t')\, dt', \qquad I_2 (k,t) = \int_0^t \Sigma_2(k,t',t-t')\, dt',
\end{equation}
for $k\in \Z_L^d$ and $t\in [0,1]$, where
\begin{equation}\label{eq:Sigmas}
\begin{split}
 \Sigma_1(k, t) & =4L^{-2d} \sum_{k=k_1-k_2+k_3} \frac{\nu^{-1}\Gamma_{-}}{\Gamma_{-}^2+(\nu^{-1} \Omega^{13}_{2k})^2}\prod_{j=1}^{3}  f_j(k_j,t) \ ,\\
 \Sigma_2(k,t, \tau) & =4L^{-2d}\sum_{k=k_1-k_2+k_3} \mbox{Re}\left( \frac{\nu^{-1} e^{iT_{\mathrm{kin}} \Omega^{13}_{2k}\tau}}{\Gamma_{-}-i\nu^{-1} \Omega^{13}_{2k}}\right)\,  e^{-\varrho\Gamma_{+} \tau}\, \prod_{j=1}^{3}  f_j(k_j,t) \ ,
\end{split}
\end{equation}
and where $f=f_1=f_2=f_3$ in our setting. 

\begin{prop}\label{thm:sum_to_integral} Suppose that $f_j (k,t)$ are Schwartz functions of $k$ uniformly in $t$,  $j=1,2,3$, and suppose that $\varrho\gtrsim 1$. Let $\Gamma_{\pm}$ be as in \eqref{eq:Gammas} and let $T_{\mathrm{kin}}$ satisfy \eqref{eq:main2_T}. Then there exists $c>0$ such that
\begin{equation}\label{eq:sum_to_integral}
\begin{split}
\Sigma_1(k,t) & = \Ss_1 (k,t) + \O ( L^{-c\delta}) \quad 0\leq t\leq 1,\\
\Sigma_2(k,t,\tau) & = \Ss_2 (k,t,\tau) +  \O (L^{-c\delta})\quad 0\leq t,\tau\leq 1,
\end{split}
\end{equation}
uniformly in $k$, $t$ and $\tau$, where $\delta$ was defined in \eqref{eq:main2_T}, and
\begin{equation}\label{eq:Ss}
\begin{split}
\Ss_1 (k,t) & = 4\int_{k=k_1-k_2+k_3} \frac{\nu^{-1}\,\Gamma_{-}}{\Gamma_{-}^2+(\nu^{-1} \Omega^{13}_{2k})^2} \, \prod_{j=1}^{3} f_j(k_j,t)\, dk_1\, dk_3,\\
\Ss_2 (k, t,\tau) & = 4\int_{k=k_1-k_2+k_3}  \mbox{Re}\left( \frac{\nu^{-1}\,e^{iT_{\mathrm{kin}} \Omega^{13}_{2k}\tau}}{\Gamma_{-}-i\nu^{-1} \Omega^{13}_{2k}}\right)\,  e^{-\varrho\Gamma_{+} \tau}\, \prod_{j=1}^{3} f_j(k_j,t)\, dk_1\, dk_3.
\end{split}
\end{equation}
\end{prop}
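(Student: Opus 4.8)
The plan is to treat $\Sigma_1$ and $\Sigma_2$ on the same footing by writing both as $L^{-2d}\sum_{k=k_1-k_2+k_3}\Phi(k_1,k_3)\,g(\Gamma_-,\Omega^{13}_{2k})$, the sum over $(k_1,k_3)\in(\Z_L^d)^2$ with $k_2=k_1+k_3-k$, where $\Phi(k_1,k_3)=e^{-c\varrho\Gamma_+\tau}\prod_{j=1}^3 f_j(k_j,t)$ ($c\in\{0,1\}$) is Schwartz in $(k_1,k_3)$ with all seminorms bounded uniformly in $k,t,\tau$ (using $\Gamma_+\ge1$), and $g(\Gamma_-,\omega)$ is $\nu^{-1}\Gamma_-(\Gamma_-^2+\nu^{-2}\omega^2)^{-1}$ in the first case and $\mathrm{Re}\bigl(\nu^{-1}e^{iT_{\mathrm{kin}}\omega\tau}(\Gamma_--i\nu^{-1}\omega)^{-1}\bigr)$ in the second. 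First I would discard $|k_1|+|k_3|>L^\theta$ for a fixed small $\theta\ll\delta$: since $|g|\le\nu^{-1}\lesssim L^{C}$ is only polynomially large and the $f_j$ are Schwartz, this tail is $O(L^{-N})$ for every $N$, both in the sum and in the integrals \eqref{eq:Ss}. On the truncated region $|k|\lesssim L^{\theta}$, $\gamma_k\lesssim L^{C\theta}$, and — this is the role of \Cref{VIP_claim} — the compact sets $\{\Omega^{13}_{2k}=0\}$ and $\{\Gamma_-=0\}$ are disjoint, which by a {\L}ojasiewicz-type quantification yields a threshold $\epsilon_0\gtrsim L^{-C\theta}$ with $\Gamma_-\gtrsim L^{-C\theta}$ wherever $|\Omega^{13}_{2k}|<\epsilon_0$. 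Combined with the elementary arithmetic--geometric bound $|g|\le|\omega|^{-1}$ coming from the denominator, this gives
\[
|g(\Gamma_-,\omega)|\ \lesssim\ \min\bigl(\,|\omega|^{-1},\ \nu^{-1}L^{C\theta}\,\bigr),
\]
so $g$ is an $L^1_\omega$ kernel of mass $\lesssim\log L$ whose only dangerous feature is a spike of height $\sim\nu^{-1}$ carried by $\{|\omega|\lesssim\nu\}$.

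Next I would perform a dyadic decomposition in $|\Omega^{13}_{2k}|$, from the resonant scale $\nu$ up to the frequency cutoff $L^{2\theta}$, i.e. $O(\log L)$ pieces. On a piece $\{|\Omega^{13}_{2k}|\sim\Delta\}$ the kernel $g$ equals a constant of size $\lesssim\min(\nu^{-1},\Delta^{-1})$ times a smooth factor varying on the scale $\Delta$ in $\Omega^{13}_{2k}$, so — after a further dyadic split of $\Phi$ in $(k_1,k_3)$ and summation by parts in $\Omega^{13}_{2k}$ — the sum-to-integral comparison on that piece reduces to bounding the number of lattice points $(k_1,k_3)$, $|k_\bullet|\le L^{\theta}$, in a slab $\{|\Omega^{13}_{2k}-a|\lesssim\Delta\}$ and comparing it to $L^{2d}$ times the slab's volume. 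This is exactly the combinatorial/equidistribution estimate recorded in \Cref{sec:appendix} — the one motivating the definition \eqref{eq:def_rho} of $\rho$, and the mechanism behind \Cref{NTDH} — which I would use as a black box: the volume term reassembles, with a Riemann-sum error at scale $\Delta$ (harmless because $\Delta$ always exceeds the lattice's $\Omega$-resolution, namely $L^{-2}$ for general $\zeta$ and $L^{-d}$ for generic $\zeta$, by \eqref{eq:main2_T}), into the corresponding piece of the integral $\Ss_i$, while the discreteness defect contributes a controlled error. For $\Sigma_1$ this already closes the argument.

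For $\Sigma_2$ the kernel carries in addition the oscillatory factor $e^{iT_{\mathrm{kin}}\Omega^{13}_{2k}\tau}$, which is not absolutely integrable in $\omega$ and is not slowly varying on a piece $\{|\Omega^{13}_{2k}|\sim\Delta\}$ once $\Delta\gtrsim(T_{\mathrm{kin}}\tau)^{-1}$. There I would use the Laplace representation $(\Gamma_--i\nu^{-1}\omega)^{-1}=\nu\int_0^\infty e^{-\Gamma_-\nu\sigma+i\sigma\omega}\,d\sigma$ — legitimate on the resonant pieces where $\Gamma_-\gtrsim L^{-C\theta}>0$, while on the far pieces $|\Omega^{13}_{2k}|\gtrsim\epsilon_0$ (where $\Gamma_-$ may change sign) the kernel is already bounded by $\epsilon_0^{-1}\lesssim L^{C\theta}$ and one argues dyadically as above — to express the relevant part of $\Sigma_2$ as a superposition $\int_0^{\xi_{\max}}\bigl(L^{-2d}\sum\Phi_\sigma e^{i\xi(\sigma)\Omega^{13}_{2k}}\bigr)w(\sigma)\,d\sigma$ of pure exponentials, with $\Phi_\sigma=e^{-\Gamma_-\nu\sigma}\Phi$ Schwartz with seminorms $\lesssim L^{C\theta}$, with $\int w\lesssim L^{C\theta}$, and with effective frequency range $\xi_{\max}\lesssim\max(\nu^{-1},T_{\mathrm{kin}}\tau)\,L^{C\theta}\lesssim T_{\mathrm{kin}}L^{C\theta}$ (using $\varrho=\nu T_{\mathrm{kin}}\gtrsim1$, $\tau\le1$), which stays below the equidistribution threshold precisely because $\kappa_1<1$ (resp. $\kappa_1<d/2$). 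Each fixed-frequency sum $L^{-2d}\sum\Phi_\sigma e^{i\xi\Omega^{13}_{2k}}$ is then again compared to its integral via the lattice-equidistribution input of \Cref{sec:appendix}; the same device applied to the spike piece of $\Sigma_1$ (where $\Gamma_->0$) is an alternative way of organising the resonant contribution there.

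Finally, summing over the $O(\log L)$ dyadic pieces and over the frequency variable, weighting each by the corresponding kernel size, the Riemann-sum errors sum to $\lesssim L^{-1+C\theta}$ while the accumulated discreteness defect is $\lesssim L^{C\theta}\,\rho^{2}T_{\mathrm{kin}}^{-1}$; by $\rho\lesssim L^{-\delta}\sqrt{T_{\mathrm{kin}}}$ (guaranteed by \eqref{eq:main2_T}--\eqref{eq:main2_Tkin}) this is $\lesssim L^{C\theta-2\delta}$, hence $O(L^{-c\delta})$ after choosing $\theta$ small relative to $\delta$, uniformly in $k$, $t$ and $\tau\in[0,1]$; running the same argument on the integral with the truncation removed produces $\Ss_1$, $\Ss_2$ themselves. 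The step I expect to be the main obstacle is precisely the near-resonant set: showing that the lattice sum over $\{|\Omega^{13}_{2k}|\lesssim\nu\}$ reproduces the corresponding integral requires the values of $\Omega^{13}_{2k}=2\,(k_1-k_2)\cdot_\zeta(k_2-k_3)$ to equidistribute on scales finer than the spike width $\nu=L^{-\kappa_2}$; this is usable only because in the regime $\varrho\gtrsim1$ one has $\kappa_2\le 2\kappa_1<d$, so $\nu$ does exceed the lattice's $\Omega$-resolution, and only because the $L^{2\delta}$-room in \eqref{eq:main2_Tkin} upgrades the a priori $O(1)$ size of $\rho^{2}T_{\mathrm{kin}}^{-1}$ to a genuine negative power of $L$. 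Tracking the exact dependence of the counting lemma's discreteness term on $\Delta$ and on $\xi$, so that the weighted sum really produces $\rho^{2}T_{\mathrm{kin}}^{-1}$ and not something larger, is the bulk of the remaining bookkeeping.
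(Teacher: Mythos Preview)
Your plan for $\Sigma_1$ is essentially the paper's: localize in $\Omega$, freeze the kernel on each piece, feed the frozen piece to the number-theoretic black box (Lemma~\ref{NTDH}), then unfreeze. The paper uses an equally-spaced partition $\chi(\nu^{-1}\Omega/\varepsilon-n)$ rather than a dyadic one, and freezes $\nu^{-1}\Omega\to\varepsilon n$ so that the remaining $\Gamma_-$-dependence sits in a Schwartz weight $\widetilde W_n$; this avoids the vague ``summation by parts in $\Omega$'' and keeps the error bookkeeping to two mean-value estimates and one application of Lemma~\ref{NTDH}. Two minor points: your {\L}ojasiewicz step is unnecessary, since \Cref{VIP_claim} gives directly $\Gamma_-\ge 1$ whenever $|\Omega|\le 1$ (by subadditivity of $x\mapsto x^r$); and the error you record as $\rho^2/T_{\mathrm{kin}}$ mixes up the counting lemmas of \Cref{sec:appendix} (upper bounds, used in Section~\ref{sec:iterates}) with Lemma~\ref{NTDH} (an asymptotic with error $O(L^{-c\delta}\widetilde T^{-1})$, used here). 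The error that actually emerges is $O(\varepsilon)+O(L^{-(c-c_0)\delta})$ with $\varepsilon=L^{-c_0\delta}$, no $\rho$ involved.

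For $\Sigma_2$ your route diverges from the paper's and has a gap. The paper does \emph{not} pass to a Laplace representation: it repeats the $\Sigma_1$ argument verbatim, but partitions at the finer scale $T_{\mathrm{kin}}\Omega/\varepsilon$ so that the oscillatory factor $e^{iT_{\mathrm{kin}}\Omega\tau}$ is itself frozen on each piece (the mean-value bound \eqref{MVT1} replacing \eqref{MVT0}). The only cost is that the frozen kernel decays like $\langle\varepsilon n\rangle^{-1}$ rather than $\langle\varepsilon n\rangle^{-2}$, producing a harmless $\log L$ that is absorbed by the $L^{-c\delta}$ gain. Your Laplace integral $(\Gamma_--i\nu^{-1}\omega)^{-1}=\nu\int_0^\infty e^{-\Gamma_-\nu\sigma+i\sigma\omega}\,d\sigma$ is only valid where $\Gamma_->0$; on the complementary region $|\Omega|\gtrsim\epsilon_0$ you propose to ``argue dyadically as above'', but on those pieces the exponential $e^{iT_{\mathrm{kin}}\Omega\tau}$ is not slowly varying on scale $\Delta$ once $\Delta\gtrsim (T_{\mathrm{kin}}\tau)^{-1}$, so the freeze-and-apply-NTDH step you used for $\Sigma_1$ no longer applies, and you have not said how to compare a pure exponential sum $L^{-2d}\sum\Phi_\sigma e^{i\xi\Omega}$ to its integral (this is not the statement of Lemma~\ref{NTDH}, nor of the counting propositions in \Cref{sec:appendix}). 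The paper's uniform localization at scale $(T_{\mathrm{kin}}/\varepsilon)^{-1}$ handles both the oscillation and the possible sign of $\Gamma_-$ in one stroke and is considerably shorter.
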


Before we prove this result, we cite the following lemma, whose proof is contained in Theorem 5.1 in \cite{DengHani}.

\begin{lem}\label{NTDH}
Let $\chi\in \mathcal S(\R)$, and $W\in \mathcal S(\R^{2d})$. Consider the sum
\begin{equation}\label{SDH}
\mathscr S=L^{-2d}\sum_{(k_1, k_2)\in \Z_L^{2d}} W\left(k_1,k_2\right)\chi(\widetilde{T}\, \Omega(k_1,k_2)),
\end{equation}
where $\Omega(k_1,k_2)=k_1 \cdot k_2$ and $\widetilde{T}$ as in \eqref{eq:main2_T}. Then, there exists $c>0$ such that
\begin{equation*}
\mathscr S=\int_{\R^{2d}}W(z_1, z_2) \chi(\widetilde{T} \, \Omega(z_1,z_2)) \, dz_1 dz_2+ \O ( L^{-c\delta} \widetilde{T}^{-1}\|W\|_{X^{100d, 1}})
\end{equation*}
where the implicit constants depend on finitely many Schwartz seminorms of $\chi$ and 
\[
\|W\|_{X^{k,1}}=\sum_{|\alpha|+|\beta|\leq k} \|x^\alpha \partial^\beta W\|_{L^1(\R^{2d})}.
\]
\end{lem}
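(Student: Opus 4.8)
The plan is to reproduce the argument of \cite[Theorem~5.1]{DengHani}, arranging it so that the rapidly-concentrating weight $\chi(\widetilde T\,\Omega)$ is peeled off by a unit-scale decomposition and the surviving error is reduced to a lattice-point count for the quadric $\Omega=\mathrm{const}$. Fix $\omega\in C_c^\infty(\R)$ with $\mathrm{supp}\,\omega\subset(-1,1)$ and $\sum_{j\in\Z}\omega(\,\cdot\,-j)\equiv1$, and split $\chi(\widetilde T\,\Omega)=\sum_{j\in\Z}\chi(\widetilde T\,\Omega)\,\omega(\widetilde T\,\Omega-j)$. Because $\chi$ is Schwartz, the $j$-th summand is supported in the thin slab $S_j=\{z:|\widetilde T\,\Omega(z)-j|<1\}$ and, after rescaling, carries a factor $\lesssim_M\langle j\rangle^{-M}$ in every $C^K$ seminorm; so it suffices to establish, uniformly in $j$,
\begin{equation*}
L^{-2d}\sum_{k\in\Z_L^{2d}} W_j(k)=\int_{\R^{2d}} W_j(z)\,dz+\O\!\big(\langle j\rangle^{-M}L^{-c\delta}\,\widetilde T^{-1}\,\|W\|_{X^{100d,1}}\big),\qquad W_j:=W\cdot\chi(\widetilde T\,\Omega)\,\omega(\widetilde T\,\Omega-j),
\end{equation*}
and then sum the convergent series in $j$. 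Each $W_j$ is smooth, concentrated on $S_j\cap\{|z|\lesssim L^\theta\}$ up to a negligible Schwartz tail in $z$, and satisfies $\|\partial^\alpha W_j\|_{L^1}\lesssim_\alpha\widetilde T^{|\alpha|}\,\langle j\rangle^{-M}\,\|W\|_{X^{|\alpha|+2d,1}}$, since each derivative either lands on $W$ or releases a factor $\widetilde T\,\nabla\Omega$ with $|\nabla\Omega(z)|\sim|z|\lesssim L^\theta$ on the relevant range.

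For the one-slab identity I would apply Poisson summation over $\Z_L^{2d}=(L^{-1}\Z)^{2d}$, which turns the left side minus $\int W_j$ into $\sum_{n\in\Z^{2d}\setminus\{0\}}\widehat{W_j}(Ln)$. Inserting $\chi(\widetilde T u)=\int\widehat\chi(\sigma)\,e^{2\pi i\sigma\widetilde T u}\,d\sigma$ expresses $\widehat{W_j}(\xi)$ as a $\sigma$-superposition of oscillatory integrals with phase $\phi_\sigma(z)=\sigma\widetilde T\,z_1\!\cdot\! z_2-z\!\cdot\!\xi$, for which $\nabla\phi_\sigma(z)=\sigma\widetilde T\,(z_2-z_2^\ast,\,z_1-z_1^\ast)$ with the unique critical point $z^\ast=(\sigma\widetilde T)^{-1}(\xi_2,\xi_1)$, constant Hessian of determinant $\pm(\sigma\widetilde T)^{2d}$, and $|\nabla\phi_\sigma(z)|=|\sigma\widetilde T|\,|z-z^\ast|$. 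When $\widetilde T\lesssim L^{1-\eta}$ for a small fixed $\eta>0$ — equivalently, when the slab thickness $\widetilde T^{-1}$ exceeds the lattice spacing $L^{-1}$ — the point $z^\ast$ is pushed outside $\{|z|\lesssim L^\theta\}$ for every $n\neq0$ (after absorbing the Schwartz tail of $\widehat\chi$ into $|\sigma|\lesssim L^{\eta/2}$), so $|\nabla\phi_\sigma|\gtrsim L|n|$ on the support and repeated integration by parts gives $|\widehat{W_j}(Ln)|\lesssim_N\langle j\rangle^{-M}(L|n|)^{-N}\|W\|_{X^{N+2d,1}}$; summing over $n$ and $j$ yields an error $\lesssim_N L^{-N}\|W\|_{X^{N+2d,1}}$, much stronger than required. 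Equivalently, in this regime one may bound the Riemann-sum error crudely by $L^{-1}\sum_{|\alpha|\le 2d+1}\|\partial^\alpha W_j\|_{L^1}\lesssim L^{-1+O(\theta)}\langle j\rangle^{-M}\|W\|_{X^{3d+1,1}}$, which again beats $L^{-c\delta}\widetilde T^{-1}\|W\|$ once $\theta$ and $c\delta$ are chosen small.

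The main obstacle is the complementary, genuinely arithmetic regime in which $\widetilde T$ is large — up to $L^{2-\delta}$ for arbitrary $\zeta$, and up to $L^{d-\delta}$ when $\zeta$ is generically irrational — so that $S_j$ is thinner than the lattice spacing and no smoothness argument closes the gap: the error is then exactly the discrepancy between $\#\big(\Z_L^{2d}\cap S_j\cap\{|z|\lesssim L^\theta\}\big)$ and $L^{2d}\,\mathrm{Vol}\big(S_j\cap\{|z|\lesssim L^\theta\}\big)\sim L^{2d}\,\widetilde T^{-1}\,L^{O(\theta)}$, i.e., the equidistribution of $(L^{-1}\Z)^{2d}$ relative to the level sets of the quadratic form $\Omega(k_1,k_2)=\sum_{j}\zeta_j(k_1)_j(k_2)_j$. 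Bounding this discrepancy with the power saving $L^{-c\delta}$, uniformly in $j$ and precisely for $\widetilde T$ in the range of \eqref{eq:main2_T}, is where the Diophantine properties of $\zeta$ and the rational/generic dichotomy enter; this is exactly the counting input supplied by \cite[Theorem~5.1]{DengHani}, of the same nature as the counting bounds \Cref{thm:main_counting} and \Cref{thm:special_counting} used elsewhere in this paper. Granting it, one rewrites the slab sum as a weighted count over the admissible values of $\widetilde T\,\Omega$, compares to the corresponding slab integral, and sums the $\langle j\rangle^{-M}$ series; the $100d$ derivatives in the $X^{100d,1}$-norm are the combined cost of the integrations by parts above and of converting the pointwise counting bounds into an estimate for the smoothly weighted sum.
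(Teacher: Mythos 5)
Your proposal in substance coincides with the paper's treatment: the paper offers no internal proof of \Cref{NTDH}, citing it as contained in Theorem 5.1 of \cite{DengHani}, and your argument, after disposing of the regime $\widetilde{T}\lesssim L^{1-\eta}$ (where Poisson summation, or a one-derivative Riemann-sum bound combined with the slab-measure gain, indeed suffices), grants the decisive large-$\widetilde{T}$ equidistribution input by appealing to that same theorem. Do keep in mind that this granted input is the entire content of the lemma for $\widetilde{T}$ up to $L^{2-\delta}$ (resp.\ $L^{d-\delta}$), and that it is a two-sided discrepancy statement with a power saving, uniform over the slabs, which neither the one-sided counting bounds of \Cref{thm:main_counting} and \Cref{thm:special_counting} nor your ``crude'' Riemann-sum estimate (whose $\widetilde{T}^{|\alpha|}$ derivative losses do not yield $L^{-1+O(\theta)}$ as written) could supply on their own.
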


\begin{proof}[Proof of Proposition \ref{thm:sum_to_integral}]
Since, we are only interested in the case when $t, t' \in [0,1]$ are fixed, it will be enough to prove a convergence theorem for a sum of the form:
\begin{equation}\label{Sigma}
\Sigma(k,t)=L^{-2d}\sum_{k=k_1-k_2+k_3} \frac{\nu^{-1}\,\Gamma_{-}}{\Gamma_{-}^2+(\nu^{-1}\Omega)^2}\, W(k_1, k_2, k_3; t),
\end{equation}
where $W\in \mathcal S(\R^{3d})$ uniformly in $t\in [0,1]$ and we set $\Omega=\Omega^{13}_{2k}$ for the rest of the proof. We will detail how to adapt the proof for the sum $\Sigma_2$ at the end.
We shall show that 
\begin{equation}\label{SigmaAim}
\Sigma(k,t)=\int_{k=k_1-k_2+k_3} \frac{\nu^{-1} \Gamma_{-}}{\Gamma_{-}^2+(\nu^{-1}\Omega)^2}\, \, W(k_1, k_2, k_3)\, dk_1\, dk_3 +O(L^{-c\delta}),
\end{equation}
uniformly in $k \in \Z^d_L$ and $t\in [0,1]$ for some $c>0$. Before we start, let us mention an elementary observation that will be useful for us:

\begin{claim}\label{VIP_claim} With $\Gamma_{-}$ defined as in \eqref{eq:Gammas} and with $0\leq r\leq 1$ and $\nu\leq 1$, there holds that $\Gamma_{-}^2+(\nu^{-1} \Omega)^2 \geq \Gamma_{-}^2+\Omega^2 \geq 1$. Moreover, if $\Omega=0$ then $\Gamma_-\geq 1$.
\end{claim}
\begin{proof}[Proof of Claim]
To see this, suppose that $|\Omega| \leq 1$, and we will notice that his implies that $\Gamma_-\geq 1$. In fact, this follows by noticing that since $|k|_{\zeta}^2\leq |k_1|_{\zeta}^2-|k_2|_{\zeta}^2+|k_3|_{\zeta}^2+1$, we have
\[
(1+|k|_{\zeta}^2)^r\leq (1+|k_1|_{\zeta}^2+|k_3|_{\zeta}^2+1)^r\leq (1+|k_1|_{\zeta}^2)^r+(1+|k_3|_{\zeta}^2)^r\leq \sum_{j=1}^3 (1+|k_j|_{\zeta}^2)^r-1.
\]
\end{proof}

Going back to proving \eqref{SigmaAim}, we start by performing a smooth partition of unity to localize $\nu^{-1}\Omega$ in intervals of size $\varepsilon:=L^{-c_0\delta}$ for an adequately chosen constant $0<c_0\ll 1$ to be specified later. In particular we let $\chi\in C_c^{\infty}(\R)$ with $\mbox{supp}(\chi)\subseteq [-1,1]$ and such that $\sum_{n\in\Z} \chi(x-n)=1$. We then have that:
\begin{align}
\Sigma(k,t)=&\ \sum_{n\in \Z}L^{-2d}\sum_{k=k_1-k_2+k_3} \chi\left(\frac{\nu^{-1}\Omega}{\varepsilon}-n\right)\frac{\nu^{-1}\Gamma_{-}}{\Gamma_{-}^2+(\nu^{-1}\Omega)^2}\, \, W(k_1, k_2, k_3) \nonumber \\
=&\ \sum_{n\in \Z}L^{-2d}\sum_{k=k_1-k_2+k_3} \chi\left(\frac{\nu^{-1}\Omega}{\varepsilon}-n\right)\frac{\nu^{-1}\Gamma_{-}}{\Gamma_{-}^2+(\varepsilon n)^2}\, \, W(k_1, k_2, k_3) \label{eq:SigmaAim_main} \\
&-\sum_{n\in \Z}L^{-2d}\sum_{k=k_1-k_2+k_3} \chi\left(\frac{\nu^{-1}\Omega}{\varepsilon}-n\right)\,\left[\frac{\nu^{-1}\Gamma_{-}}{\Gamma_{-}^2+(\varepsilon n)^2}\,-\frac{\nu^{-1}\Gamma_{-}}{\Gamma_{-}^2+(\nu^{-1}\Omega)^2}\,\right] \, W(k_1, k_2, k_3). \label{eq:SigmaAim_error}
\end{align}

Note that 
\begin{equation}\label{MVT0}
\Big |\frac{\nu^{-1} \Gamma_{-}}{\Gamma_{-}^2+(\varepsilon n)^2}\,-\frac{\nu^{-1}\Gamma_{-}}{\Gamma_{-}^2+(\nu^{-1}\Omega)^2}\Big | 
\lesssim  \varepsilon \, \nu^{-1}\, \frac{\varepsilon n \Gamma_{-}}{[\Gamma_{-}^2+(\varepsilon n)^2]^2}   \lesssim  \frac{\varepsilon\nu^{-1}}{\Gamma_{-}^2+(\varepsilon n)^2} \lesssim \varepsilon\, \nu^{-1}\, \langle\varepsilon n\rangle^{-2}
\end{equation}
by \Cref{VIP_claim}. As a result, we may bound 
\[
| \eqref{eq:SigmaAim_error} | \leq
\sum_{n\in \Z}  \frac{\, \varepsilon\, \nu^{-1}}{\langle\varepsilon n\rangle^{2}}\, L^{-2d}\, \sum_{k=k_1-k_2+k_3} \Big | \chi\left(\frac{\nu^{-1}\Omega}{\varepsilon}-n\right)\,\, W(k_1, k_2, k_3) \Big | \lesssim
\sum_{n\in \Z}  \frac{ \varepsilon\, \nu^{-1}}{\langle \varepsilon n\rangle^{2}}\, \, \varepsilon\,\nu\lesssim \varepsilon
\]
the second inequality following from \Cref{NTDH}. Let us now consider the term \eqref{eq:SigmaAim_main}. We write 
\[
\eqref{eq:SigmaAim_main}=\sum_{n\in \Z}L^{-2d}\, \nu^{-1}\, \sum_{k=k_1-k_2+k_3} \chi\left(\frac{\nu^{-1}\Omega}{\varepsilon}-n\right) \, \widetilde{W}_n(k_1, k_2, k_3)
\]
with 
\[
\widetilde{W}_n(k_1, k_2, k_3)=\Gamma_{-} \, (\Gamma_{-}^2+ \varepsilon^2 n^2)^{-1} \, W(k_1, k_2, k_3).
\]
By Lemma \ref{NTDH}, and using that $\Gamma_{-}^2+ \varepsilon^2 n^2 \gtrsim \langle \varepsilon n\rangle^2$, we have
\[
\begin{split}
\eqref{eq:SigmaAim_main}=&\ \sum_{n} \, \nu^{-1}\, \int_{k_1-k_2+k_3=k}  \chi\left(\frac{\nu^{-1}\Omega}{\varepsilon}-n\right) \, \widetilde{W}_n (k_1, k_2, k_3) \, dk_1 dk_3  + \O(  \, L^{-(c-c_0)\delta})
\end{split}
\]
which is sufficient with a well-chosen $c_0<c$. 

In order to obtain \eqref{SigmaAim}, we argue as in \eqref{MVT0} again 
\begin{align*}
& \sum_{n\in\N} \, \nu^{-1}\, \int_{k_1-k_2+k_3=k}  \chi\left(\frac{\nu^{-1}\Omega}{\varepsilon}-n\right) \, \widetilde{W}_n (k_1, k_2, k_3) \, dk_1 dk_3 \\
 &= \sum_{n\in\Z}  \int_{k_1-k_2+k_3=k}  \chi\left(\frac{\nu^{-1}\Omega}{\varepsilon}-n\right) \, \frac{\nu^{-1}\Gamma_{-}}{\Gamma_{-}^2+(\nu^{-1}\Omega)^2} \, W(k_1, k_2, k_3) \, dk_1 dk_3 \\
& \qquad + \sum_{n\in\Z} \int_{k_1-k_2+k_3=k}  \chi\left(\frac{\nu^{-1}\Omega}{\varepsilon}-n\right) \, \left[\frac{\nu^{-1}\Gamma_{-}}{\Gamma_{-}^2+\varepsilon^2 n^2}-\frac{\nu^{-1}\Gamma_{-}}{\Gamma_{-}^2+(\nu^{-1}\Omega)^2}\right] \, W(k_1, k_2, k_3) \, dk_1 dk_3 \\
& = \int_{k_1-k_2+k_3=k}  \left[\sum_{n\in\Z}  \chi\left(\frac{\nu^{-1}\Omega}{\varepsilon}-n\right)\right] \, \frac{\nu^{-1}\Gamma_{-}}{\Gamma_{-}^2+(\nu^{-1}\Omega)^2} \, W(k_1, k_2, k_3) \, dk_1 dk_3 +\O(\varepsilon) \\
  &= \int_{k_1-k_2+k_3=k}  \frac{\nu^{-1}\Gamma_{-}}{\Gamma_{-}^2+(\nu^{-1}\Omega)^2} \, W(k_1, k_2, k_3) \, dk_1 dk_3 +  \O (L^{-c_0\delta}) ,
\end{align*}
where we bound the error terms with same arguments as previously used after \eqref{MVT0}. This completes the proof of \eqref{SigmaAim} in the case of $\Sigma_1$.

A similar strategy allows us to treat $\Sigma_2$, but one must perform the cut-off at the level of the variable $T_{\mathrm{kin}}\Omega$ instead of $\nu^{-1}\Omega$, i.e.
\[
\begin{split}
 \Sigma_2(k,t, \tau) & =  \sum_{n\in \Z} L^{-2d}\sum_{k=k_1-k_2+k_3}  \chi\left(\frac{T_{\mathrm{kin}}\,\Omega}{\varepsilon}-n\right) \mbox{Re}\left( \frac{\nu^{-1} e^{iT_{\mathrm{kin}} \Omega^{13}_{2k}\tau}}{\Gamma_{-}-i\nu^{-1} \Omega^{13}_{2k}}\right)\, W(k_1,k_2,k_3; t,\tau)\\
W(k_1,k_2,k_3; t,\tau) & = e^{-\varrho \Gamma_{+} \tau}\,\prod_{j=1}^{3}  f_j(k_j,t).
\end{split}
\]

One may then substitute \eqref{MVT0} by
\begin{equation}\label{MVT1}
\Big |\frac{\nu^{-1}\, e^{i\varepsilon n \tau }}{\Gamma_{-}-i\varepsilon \varrho^{-1} n}-\frac{\nu^{-1}\, e^{iT_{\mathrm{kin}}\Omega \tau}}{\Gamma_{-}-i\nu^{-1}\Omega}\Big | 
\lesssim \varepsilon\, T_{\mathrm{kin}}\, \frac{\tau}{\sqrt{\varrho^2 \Gamma_{-}^2+(\varepsilon n)^2}}\lesssim \varepsilon\, T_{\mathrm{kin}}\, \langle\varepsilon n  \rangle^{-1}
\end{equation}
in view of \Cref{VIP_claim}.

In comparison to the argument for $\Sigma$ above, this only leads to logarithmic losses in $L$, which can be absorbed by the the $L^{-c\delta}$ gains. To see this, we just need to note that the sum $n$ can be restricted to the range $|n|\leq L^{100d}$, since the sum in $k$ can be restricted from the start to the range where $|k_j|\leq L$ thanks to the Schwartz decay of $W$.
\end{proof}

\subsection{Main terms: convergence to kinetic kernel}

We are ready to study the asymptotic behavior of the integrals in \eqref{eq:Ss}.

\begin{thm}\label{thm:integral_to_delta} Consider the integrals $\Ss_1$ and $\Ss_2$ in \eqref{eq:Ss} for $\varrho\gtrsim 1$. Then there exists a small $\theta\in (0,1)$  (which is independent of $t$ and $\tau$) such that the following holds as $\nu\rightarrow 0$
\begin{equation}\label{eq:approx_kernel}
\begin{split}
\Ss_1(k,t) & = \Kc_1 (f_1,f_2,f_3) + \O \left(\nu^{\theta}\right) \qquad t\in [0,1]\\
\Ss_2 (k,t,\tau)   & = \begin{cases}
\O \left( \nu^{\theta} \right) & \tau \in [\varrho^{-1}\nu^{\theta} ,1], \ t\in [0,1],\\
\O(|\log \nu|) & \tau \in (0,\varrho^{-1}\nu^{\theta}),\  t\in [0,1].
\end{cases}
\end{split}
\end{equation}
with bounds independent of $k$, $\tau$ and $t$, where
\begin{equation}\label{eq:kinetickernel1}
\mathcal{K}_1 \left(\phi_1,\phi_2,\phi_3\right)(k) = \int_{k=k_1-k_2+k_3} 4\pi\,\delta_{\R} (|k_1|_{\zeta}^2 - |k_2|_{\zeta}^2 +|k_3|_{\zeta}^2 -|k|_{\zeta}^2) \, \prod_{j=1}^{3} \phi_j(k_j) \,dk_1\, dk_3 .
\end{equation}
\end{thm}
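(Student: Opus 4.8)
The plan is to prove both asymptotics by reducing each integral to a one--dimensional integral of a universal profile against a slice density, via a coarea argument that \emph{freezes} the dissipation combination $\Gamma_-$. Two structural inputs are used: (a) \Cref{VIP_claim}, which gives $\Gamma_-\ge 1$ on $\{|\Omega^{13}_{2k}|\le 1\}$, so that the Poisson width $a:=\nu\Gamma_-$ satisfies $\nu\le a\lesssim \nu L^{C\theta}\to 0$ after the (free) reduction to $|k_j|\lesssim L^\theta$; and (b) the identity $\Gamma_+=\Gamma_-+2\gamma_k$, so that the weight $e^{-\varrho\Gamma_+\tau}$ is a function of $k$ and $\Gamma_-$ alone. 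I would first peel off the region $|\Omega|>1$: there the $\Ss_1$--kernel is $\lesssim \nu|\Gamma_-|$ and the $\Ss_2$--kernel is $\lesssim |\Omega|^{-1}\le 1$, so, using that the Schwartz decay of the $f_j$ dominates the polynomial growth of $\Gamma_-$ uniformly in $k$ and that $\int_{k=k_1-k_2+k_3}\prod f_j\lesssim 1$ uniformly in $k$, these pieces are $O(\nu^\theta)$ for $\Ss_1$ and $O(1)=O(|\log\nu|)$ for $\Ss_2$, and contribute nothing to $\Kc_1$; moreover for $\Ss_2$ with $\tau\ge \varrho^{-1}\nu^\theta$ the phase $\Omega$ has no critical point on $\{|\Omega|>1\}$, so repeated integration by parts gives rapid decay in $T_{\mathrm{kin}}\tau\ge\nu^{\theta-1}$ there.

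On $\{|\Omega|\le 1\}$ (localized via a cutoff $\chi$ with $\chi(0)=1$) I would apply the coarea formula to foliate the $(k_1,k_3)$--integral by the level sets of the pair $(\Omega,\Gamma_-)$, writing each integral as $\int\!\!\int (\text{kernel in }\omega,\gamma)\, e^{-\varrho(\gamma+2\gamma_k)\tau}\, H(\omega,\gamma)\,d\omega\,d\gamma$, where $H(\omega,\gamma)$ is the density on $\{\Omega=\omega,\Gamma_-=\gamma\}$ (Schwartz--tailed in $\gamma$, Lipschitz in $\omega$ away from a degenerate locus). For $\Ss_1$, the substitution $\omega=\nu\gamma u$ turns $\tfrac{\nu^{-1}\gamma}{\gamma^2+\nu^{-2}\omega^2}\,d\omega$ into the approximate identity $\tfrac{du}{1+u^2}$ of total mass $\pi$, so the $\omega$--integral on each slice equals $\pi H(0,\gamma)+\int\tfrac{H(\nu\gamma u,\gamma)-H(0,\gamma)}{1+u^2}\,du=\pi H(0,\gamma)+O(\nu\gamma\,\mathrm{Lip}\,|\log\nu|)$; integrating against $d\gamma$ and re-assembling by coarea gives $\int_0^\infty H(0,\gamma)\,d\gamma=\langle \delta_\R(\Omega),\prod f_j\rangle$, hence $\Ss_1=\Kc_1+O(\nu^\theta)$ (the $4\pi$ in $\Kc_1$ being the outer $4$ in $\Ss_1$ times $\pi=\int_\R\tfrac{du}{1+u^2}$).

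For $\Ss_2$ the same substitution (using $T_{\mathrm{kin}}\nu=\varrho$) turns the slice kernel into $\mbox{Re}\,\tfrac{e^{ibu}}{1-iu}$ with $b=\varrho\gamma\tau>0$, and the key point is that $\int_\R \mbox{Re}\,\tfrac{e^{ibu}}{1-iu}\,du=0$ for every $b>0$ — this is exactly the cancellation between the ``$h_1$'' and ``$h_2$'' contributions described in \Cref{intro.proof}, in closed form. Thus the slice integral equals $\int \mbox{Re}\,\tfrac{e^{ibu}}{1-iu}\bigl(H(\nu\gamma u,\gamma)-H(0,\gamma)\bigr)\,du$. When $\tau\ge \varrho^{-1}\nu^\theta$ one has $b\ge\nu^\theta$, and one integration by parts in $u$ exploiting the oscillation $e^{ibu}$, together with $|1-iu|^{-1}\lesssim\langle u\rangle^{-1}$ and the Lipschitz/Schwartz control of $H$, bounds this by $b^{-1}\nu\gamma\,\mathrm{Lip}\,|\log\nu|=O(\nu^{\theta'})$; for $\tau<\varrho^{-1}\nu^\theta$ one drops the oscillation and uses only $|\mbox{Re}\,\tfrac{e^{ibu}}{1-iu}|\lesssim\langle u\rangle^{-1}$ and $|H(\nu\gamma u,\gamma)-H(0,\gamma)|\lesssim\min(\mathrm{Lip}\,\nu\gamma|u|,1)$, which gives the weaker $O(|\log\nu|)$. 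Integrating the per-slice bounds against $d\gamma$ (bounded, via the $\gamma$--tail of $H$ and $e^{-\varrho(\gamma+2\gamma_k)\tau}\le 1$) and adding the degenerate--locus and $|\Omega|>1$ contributions completes both claims.

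The main obstacle is making the $(\Omega,\Gamma_-)$--coarea rigorous. One must locate the degenerate sets — where $\nabla\Omega=0$ (only the single point $(k_1,k_3)=(k,k)$, at which $\Omega$ is a \emph{nondegenerate} quadratic form on $\R^{2d}$) or $\nabla\Omega\parallel\nabla\Gamma_-$ (a set of positive codimension) — show that their $\rho_0$--neighborhoods contribute at most $O(\nu^{-1}\rho_0^{2d})$, which is tolerable for $d\ge 2$ after optimizing $\rho_0$, and establish that $H(\omega,\gamma)$ is Lipschitz with constant $\lesssim\rho_0^{-C}\lesssim L^{C\theta}$ away from them; this is precisely the near--resonance counting in the spirit of \cite{DengHani} and \Cref{NTDH}. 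The secondary, $\Ss_2$--specific difficulty is that before extracting the cancellation the ``$h_2$''--type kernel is only conditionally integrable, so the oscillation must genuinely be used — the window $\tau\ge\varrho^{-1}\nu^\theta$ is exactly what makes $b=\varrho\gamma\tau\gtrsim\nu^\theta$ large enough for one integration by parts to beat the $L^{C\theta}$ losses, and the parameter $\theta$ (hence the $\theta$ in the statement) must be calibrated against $\kappa_2$.
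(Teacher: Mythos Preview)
Your strategy is genuinely different from the paper's. The paper never foliates by $(\Omega,\Gamma_-)$; instead it takes the Fourier transform in $\Omega$ (passing to a dual variable $\xi$), computes the explicit transforms $\widehat{h_0},\widehat{h_1},\widehat{h_2}$ of the three kernels, subtracts off the value at $\xi=0$ (which produces the $\delta(\Omega)$ and exhibits the $\Ss_2$-cancellation as $\widehat{h_1}(0,y)=\widehat{h_2}(0,y)$ for $y\ge 0$), and then bounds the remainder via a package of ``acceptable kernel'' axioms (K.1)--(K.8) combined with repeated integration by parts in $\xi$ and in one spatial coordinate $k_{11}$. Your route is more transparent---the identities $\int_{\R}\tfrac{du}{1+u^2}=\pi$ and $\int_{\R}\mathrm{Re}\,\tfrac{e^{ibu}}{1-iu}\,du=0$ for $b>0$ are exactly the leading-order content of the paper's computation---and bypasses the lengthy kernel calculus. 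The paper's approach, on the other hand, never needs any regularity of the push-forward by $(\Omega,\Gamma_-)$, only one-variable level-set arguments for $\Omega$ (via \Cref{NTDH}), and this is what makes it robust across all $r\in(0,1]$.

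There is, however, a concrete gap in your degeneracy analysis. You claim the singular set of the $(\Omega,\Gamma_-)$-coarea contributes $O(\nu^{-1}\rho_0^{2d})$, but that exponent is correct only for the isolated point $\nabla\Omega=0$. The set $\{\nabla\Omega\parallel\nabla\Gamma_-\}$ can be much larger: for $r=1$ one checks (in the paper's rescaled variables where $\Omega=k_1\cdot k_3$) that it contains the $d$-dimensional affine subspace $\{k_1=k_3-\sqrt{2}\,k\}$, so its $\rho_0$-neighborhood has volume $\sim\rho_0^{d}$. The crude bound $\nu^{-1}\rho_0^{d}$, optimized against your Lipschitz error $\nu\,\rho_0^{-2}|\log\nu|$, gives only $\nu^{(d-2)/(d+2)}$, which is useless for $d=2$. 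The fix is to \emph{not} use the trivial kernel bound $\lesssim\nu^{-1}$ on that neighborhood: since $\nabla\Omega\neq 0$ there, one transverse direction can still be integrated out against the approximate identity, bringing the contribution down to $O(\rho_0^{\,d-1})$, which does optimize to a positive power of $\nu$ for all $d\ge 2$. This extra step is essential and is precisely what your proposal is missing; the paper sidesteps the issue entirely by never introducing the second foliation variable.
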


\begin{rk} This result implies that $I_1$ and $I_2$ in \eqref{eq:fromItoSigma} are well-approximated by
\[
\begin{split}
I_1(k,t)  & = \int_0^t e^{-2\varrho\gamma_k (t-t')}  \Kc_1 (f,f,f)(t')\, dt' + \O(L^{-\theta}), \qquad t\in [0,1],\\
I_2 (k,t) & = \int_0^t \Ss_2 (k,t-\tau,\tau)\, d\tau+ \O(L^{-\theta}t) = \int_0^{\varrho^{-1}\,\nu^{\theta}} \O( |\log \nu|) \, d\tau + \O ( \nu^{\theta}) + O(L^{-\theta}t ) \\
& = \O ( \nu^{\theta/2}) + O(L^{-\theta}t ).
\end{split}
\]
\end{rk}
\begin{proof} 
We fix $t\in [0,1]$ and rewrite $\Ss_1$ in \eqref{eq:Ss} as follows:
\[
\Ss_1(k,t) = \int_{\R^{2d}} \nu^{-1} h_0(\nu^{-1} \Omega,\Gamma_{-}) \, f (k_1, t) \, f(k_1+k_3-k, t) \, f(k_3,t)\, dk_1 dk_3
\]
where $\Gamma_{-}$ was defined in \eqref{eq:Gammas} and
\begin{equation}\label{eq:first_h}
h_0(x,y)  = \frac{4y}{y^2+x^2}.
\end{equation}

Note that we may rewrite $\Omega$ as $\Omega = 2\,(k_1-k)\cdot(k-k_3)$. By the change variables $k_1 \mapsto \sqrt{2} (k_1-k)$ and $k_3\mapsto \sqrt{2} (k-k_3)$, we have that 
\[
\Ss_1(k,t) = \int_{\R^{2d}} \nu^{-1} h_0(\nu^{-1} k_1\cdot k_3,y) \, \varphi(k_1,k_3;t)\, dk_1 dk_3
\]
where 
\[
\begin{split}
y& = -\gamma_k+\gamma_{\frac{k_1}{\sqrt{2}} +k} + \gamma_{\frac{k_1-k_3}{\sqrt{2}} +k} + \gamma_{k-\frac{k_3}{\sqrt{2}}}\\
\varphi(k_1,k_3)& =\varphi(k_1,k_3;t)= f \left(\frac{k_1}{\sqrt{2}}+k, t\right) \, f\left(\frac{k_1-k_3}{\sqrt{2}}+k, t\right)\, f\left(k-\frac{k_3}{\sqrt{2}},t\right).
\end{split}
\]
Note that $\varphi$ is a Schwartz function of $k_1,k_3$ uniformly in $t$, and that $\Omega=k_1\cdot k_3$ in these new variables. Similarly, note that
\begin{equation}\label{eq:approx_id_Ss2}
4\nu^{-1}\,\mbox{Re}\left( \frac{e^{i\tau T_{\mathrm{kin}} \, \Omega}}{y-i\nu^{-1}\Omega}\right)  = \nu^{-1} h_1(\nu^{-1}\Omega,y) - \nu^{-1} h_2(\nu^{-1}\Omega,y),
\end{equation}
where
\begin{equation}\label{eq:h12}
h_1(x,y) = \frac{4y\,\cos(\varrho \tau x)}{y^2+x^2}\qquad \qquad h_2(x,y) = \frac{4x\,\sin(\varrho \tau x)}{y^2+x^2}.
\end{equation}
Therefore
\[
\Ss_2(k,t,\tau) = \int_{\R^{2d}} [\nu^{-1} h_1(\nu^{-1} \Omega,y)-\nu^{-1} h_2(\nu^{-1} \Omega,y)]  \, \varphi(k_1,k_3;t,\tau)\, dk_1 dk_3,
\]
where\footnote{
Note that $\varphi(k_1,k_3;t,\tau)\in \Sc (\R^d_{k_1}\times \R^d_{k_3})$ \emph{uniformly} in $t,\tau$ and $\varrho$. Indeed, we claim that $\pa_{k_{ij}}^n e^{-\varrho (y+\gamma_k) \tau} = \O_n( 1)$ for any $n\in\N_0$, $i=1,3$, $j=1,\ldots ,d$. This follows from the following estimates:
\[
\pa_{k_{ij}}^n e^{-\varrho \gamma_{k_{\ell}} \, \tau} = \O_n( \sum_{\alpha=0}^n (\varrho\tau)^{\alpha} \, \gamma_{k_{\ell}} \, e^{-\varrho \gamma_{k_{\ell}} \, \tau}) = \O_n( 1) ,
\]
which in turn follow from the fact that $z^n \, e^{-z}=\O_n(1)$ for $n\geq 0$, from $\pa_{k_{ij}}^n \gamma_{k_\ell} \lesssim_n \gamma_{k_\ell}$ for any $n\in\N_0$, $i=1,3$, $j=1,\ldots ,d$, and from $1\leq \gamma_{k_\ell}$. As a result we may treat $\varphi(k_1,k_3;t,\tau)$ as $\varphi(k_1,k_3)$, which we do from now on.
}
\[
\varphi(k_1,k_3;t,\tau)=e^{-\varrho\, (y+2\gamma_k) \, \tau}\,f \left(\frac{k_1}{\sqrt{2}}+k, t\right) \, f\left(\frac{k_1-k_3}{\sqrt{2}}+k, t\right)\, f\left(k-\frac{k_3}{\sqrt{2}},t\right).
\]

Using the inverse Fourier transform\footnote{Since the focus from here on is to estimate error terms which vanish in the limit, we shall remove the factor $2\pi$ from the Fourier transform in order to shorten formulas.}, we may rewrite these expressions as:
\begin{equation}\label{eq:new_S}
\begin{split}
\Ss_1(k,t) & =\int_{\R^{2d}} \int_{\R} \widehat{h_0}(\nu \xi,y)\, e^{i\xi\Omega}\, \varphi(k_1,k_3)\,  d\xi\, dk_1 dk_3\\
\Ss_2(k,t,\tau) & = \int_{\R^{2d}} \int_{\R} [\widehat{h_1}(\nu \xi,y)-\widehat{h_2}(\nu \xi,y)] \, e^{i\xi\Omega}\, \varphi(k_1,k_3;t,\tau)\,  d\xi\, dk_1 dk_3.
\end{split}
\end{equation}
We note that 
\begin{equation}\label{eq:h}
\begin{split}
\widehat{h_0}(\xi,y) & =  4\pi \, e^{-|\xi y|}\\
\widehat{h_1}(\xi,y) & = 2\pi\, e^{-|y|\,|\xi -\varrho \tau|} +2\pi\, e^{-|y|\, |\xi +\varrho \tau|}\\
\widehat{h_2}(\xi,y) & = -2\pi\, e^{-|y| \, |\xi -\varrho \tau|}  \, \mbox{sign}(y \xi -y\varrho \tau ) + 2\pi\, e^{-|y|\, |\xi +\varrho \tau|}\, \mbox{sign}(y\xi+y\varrho \tau ) \qquad y\neq 0,\\
\widehat{h_2}(\xi,0) & = 4\pi\,\mathbbm{1}_{[-1,1]}\left(\frac{\xi}{\varrho \tau}\right).
\end{split}
\end{equation}

We introduce a cut-off $\chi_0\in C_c^{\infty}(\R)$ such that $\mbox{supp}(\chi_0)\subset [-2,2]$, $\chi_0\geq 0$, $\chi_0=1$ in $[-1,1]$, and we let $\chi_1= 1-\chi_0$. We also add and subtract the top order behavior (a delta function).
\begin{align}
\Ss_1(k,t) =&\ \int_{\R^{2d}} \delta (\Omega)\, \widehat{h_0}(0,y)\, \varphi(k_1,k_3)\, dk_1\, dk_3 \label{eq:S1_delta}\\
& +\int_{\R^{2d}} \int_{\R} [\widehat{h_0}(\nu \xi,y)-\widehat{h_0}(0,y)]\, e^{i\xi\Omega}\, \varphi(k_1,k_3)\,d\xi\, dk_1 dk_3. \label{eq:S1_error}
\end{align}

A similar decomposition can be done for $\Ss_2$. Note, however, that for any $y\geq 0$
\begin{equation}
\widehat{h_1}(0,y) = \widehat{h_2}(0,y)= 4\pi \, e^{-\varrho \tau\, y},
\end{equation}
and therefore \emph{the delta function vanishes for positive values of $y$} in the case of $\Ss_2$. Therefore:
\begin{align}
\Ss_2(k,t,\tau) =&\  \int_{\R^{2d}} \delta (\Omega)\, [\widehat{h_1}(0,y)-\widehat{h_2}(0,y)] \, \mathbbm{1}_{y<0} \,\varphi(k_1,k_3)\, dk_1\, dk_3 \label{eq:S2_delta}\\
& +  \int_{\R^{2d}} \int_{\R} \left[ \widehat{h_1}(\nu \xi,y)-\widehat{h_1}(0,y) \right] \, e^{i\xi\Omega}\, \varphi(k_1,k_3)\,d\xi\, dk_1 dk_3 \label{eq:S2_error1}\\
& -  \int_{\R^{2d}} \int_{\R} \left[ \widehat{h_2}(\nu \xi,y)-\widehat{h_2}(0,y) \right] \, e^{i\xi\Omega}\, \varphi(k_1,k_3)\,d\xi\, dk_1 dk_3 \label{eq:S2_error2}.
\end{align}

Moreover, we proved in \Cref{VIP_claim} that $y\geq 1$ whenever $|\Omega|\leq 1$. As a result, $\{\Omega=0\}\cap \{y<0\} =\emptyset$ and thus \emph{the right-hand side of \eqref{eq:S2_delta} vanishes}.

Over the next steps, we will estimate the terms  \eqref{eq:S1_error}, \eqref{eq:S2_error1} and \eqref{eq:S2_error2}, which will finish the proof of \Cref{thm:integral_to_delta}. In order to do so, we will work with a general kernel $h$ which satisfies some common properties to all $h_0,h_1,h_2$. We introduce the following:

\begin{defn}\label{def:accept_kernel} A kernel $h(x,y)$ is an \emph{acceptable kernel} if its Fourier transform in $x$ satisfies the following properties for some $\theta\in [0,1)$ as small as desired.
\begin{enumerate}[label=\textbf{(K.\arabic*)}]
\item \label{k:diff} $\widehat{h}(\xi,y)$ is twice differentiable in both variables except on the lines $(\xi,0)$ and $\{(\xi_j, y)\}_{j=1,\ldots, n}$ for some finite number $n\in\N_0$. Let $\Ac(h) = \cup_{j=1}^{n} [\xi_j-\nu^{\theta} ,\xi_j+\nu^{\theta}]$.
\item \label{k:bounded} $\sup_{\xi} | \widehat{h}(\xi, y)|\lesssim 1$ for all $(\xi, y)$.
\item \label{k:lips} $\Big | \widehat{h}(\nu \xi, y) - \widehat{h}(0, y) \Big | \lesssim
 \begin{cases}
 \nu\,|y| \, |\xi| & \mbox{if}\ |\xi|\leq \nu^{-\theta},\\
\nu^{1-\theta} \langle y\rangle \, |\xi|& \mbox{if}\  |\xi|>\nu^{-\theta}.
\end{cases}$
\item \label{k:lips_xi} $\Big | \frac{\pa^j}{\pa \xi^j} \widehat{h}(\nu \xi, y ) \Big | \lesssim (\nu\, |y|)^j $ for $j=1,2$, $\xi\in \R-\nu^{-1} \Ac(h)$.
\item \label{k:int_xi} $\int_{\R-\nu^{-1} \Ac (h)} \Big | (\pa_{\xi}^j \widehat{h}) (\nu \xi, y) \Big | \, d\xi \lesssim (\nu |y|)^{j-1}$ for $j=1,2$.
\item \label{k:lips_y}  $\Big | \frac{\pa}{\pa y} [\widehat{h}(\nu \xi, y) - \widehat{h}(0, y) ]\Big | \lesssim \nu\, |\xi|$, whenever $y\neq 0$ and $\xi\in \R- \nu^{-1} \Ac(h)$.
\item \label{k:y0}For all $j=1,2$,
 \[
|\pa_{y}^{j} \widehat{h}(0, y) | \lesssim |y|^{-j}
 \]
 \item \label{k:R0} There exists some $R_0\geq 0$ (which may depend on $\nu$, $T_{\mathrm{kin}}$ and $\tau$) such that $\Ac(h)\subset [-\nu R_0,\nu R_0]$ and for any $R\geq R_0$ any $j=0,1,2$ and any $y\neq 0$, we have that  
\begin{equation}\label{k:R_large}
\int_{|\xi|>R}  |(\pa_{y}^{j} \widehat{h})(\nu \xi, y) | \, \frac{d\xi}{|\xi|^j} \lesssim  |y|^{-1} \,  \nu^{j-1} \, e^{- \nu |y| (R-R_0)}\ .
\end{equation}
Moreover, for $j=1,2$ and $y\neq 0$,
\begin{equation}\label{k:R_small}
\int_{\{\nu^{\theta} <|\xi|<R_0\}-\nu^{-1}\Ac(h)}  |\pa_{y}^{j}[\widehat{h}(\nu \xi, y)- \widehat{h}(0, y) ]|\, \frac{d\xi}{|\xi|^2} \lesssim \max\{ 1, |y|^{1-j}\} \, \nu \,\log (\nu^{-\theta} \langle R_0\rangle) \ .
\end{equation}
\end{enumerate}
\end{defn}

\begin{claim}\label{thm:accept_kernel} The kernels $h_0,h_1$, defined in \eqref{eq:first_h} and \eqref{eq:h12}, satisfy \ref{k:diff}-\ref{k:R0}. The kernel $h_2$ in \eqref{eq:h12} also satisfies \ref{k:diff}-\ref{k:R0} provided $\varrho\tau \geq \nu^{\theta}$. When $\varrho\tau < \nu^{\theta}$, $h_2$ satisfies \ref{k:diff}-\ref{k:R0} except for \ref{k:lips}.
In the case of $\widehat{h_0}$, the only singular lines from \ref{k:diff} are $\xi=0$ and $y=0$, and we may choose $R_0=\nu^{-1+\theta}$ (note that \eqref{k:R_large} is holds with $R_0=0$ but we must make $R_0$ large enough so that $\Ac(h_0)=[-\nu^{\theta},\nu^{\theta}]\subset [-\nu R_0,\nu R_0]$). In the case of $\widehat{h_1}$ and $\widehat{h_2}$, we remove the singular lines $y=0$, and $\xi=0,\pm \varrho \tau$, with $R_0 = \max\{ T_{\mathrm{kin}}\,\tau,\nu^{-1}\}$.
\end{claim}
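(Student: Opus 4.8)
The plan is to verify \ref{k:diff}--\ref{k:R0} by direct inspection of the explicit Fourier transforms \eqref{eq:h}. The structural point that organizes everything is that, off the line $\{y=0\}$ and a finite family of vertical lines, each $\widehat{h_j}(\xi,y)$ is a sum of at most two elementary building blocks of the form $\epsilon\,\sigma(\xi,y)\,e^{-|y|\,|\xi-a|}$, where $a\in\{0,\pm\varrho\tau\}$, $\epsilon\in\{\pm1\}$, and $\sigma$ is a locally constant sign, equal to $1$ for $h_0$ and $h_1$ and to $\mathrm{sign}(y)\,\mathrm{sign}(\xi-a)$ for $h_2$. The vertical singular lines are therefore $\xi=0$ for $h_0$ and $\xi=\pm\varrho\tau$ for $h_1,h_2$; this identifies $\Ac(h)$ and gives \ref{k:diff}. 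The value $y=0$ is harmless for $h_0,h_1$, where $e^{-|y||\xi-a|}\to1$ locally uniformly and $\widehat{h_j}(\cdot,0)$ is smooth off $\xi=\pm\varrho\tau$, whereas for $h_2$ it produces the Dirichlet kernel $\widehat{h_2}(\xi,0)=4\pi\,\mathbbm{1}_{[-\varrho\tau,\varrho\tau]}(\xi)$, whose only defect, a jump across $\xi=\pm\varrho\tau$, is already accounted for by the vertical lines just listed.

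Granting this reduction, the remaining conditions reduce to elementary estimates on the building block $e^{-|y||\xi-a|}$, all carried out uniformly in $\nu,T_{\mathrm{kin}},\tau$ and in $\varrho\gtrsim1$. Property \ref{k:bounded} is immediate from $0\le e^{-|y||\xi-a|}\le1$. For \ref{k:lips} one writes $\widehat h(\nu\xi,y)-\widehat h(0,y)$ as a difference of exponentials sharing a common sign factor, applies $|e^{-u}-e^{-v}|\le|u-v|$ with $|u-v|\le\nu|y|\,|\xi|$ (reverse triangle inequality), and in the range $|\xi|>\nu^{-\theta}$ falls back on the crude bound $2$ together with $1\le\nu^{1-\theta}\langle y\rangle|\xi|$. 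Properties \ref{k:lips_xi} and \ref{k:int_xi} follow from $\big|\tfrac{d^j}{d\xi^j}e^{-|y|\nu|\xi-a|}\big|\lesssim(\nu|y|)^j e^{-\nu|y||\xi-a|}$ away from $\nu^{-1}\Ac(h)$, the second after integrating the exponential, which trades one power of $\nu|y|$ for its reciprocal. Property \ref{k:lips_y} follows from the mean value theorem applied to $s\mapsto s\,e^{-|y|s}$, whose derivative is bounded by $1$ uniformly. For \ref{k:y0}, $\widehat{h_0}(0,y)\equiv4\pi$ so its $y$-derivatives vanish, while for $h_1,h_2$ the quantity to bound is $\partial_y^j e^{-\varrho\tau|y|}\lesssim(\varrho\tau)^j e^{-\varrho\tau|y|}\lesssim|y|^{-j}$ by $u^j e^{-u}\lesssim1$. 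Finally \ref{k:R0} is where the exponential decay in $\xi$ is used in earnest: \eqref{k:R_large} comes from $\int_{|\xi|>R}(\nu|y|)^j e^{-\nu|y||\xi-a|}\,|\xi|^{-j}\,d\xi\lesssim|y|^{-1}\nu^{j-1}e^{-\nu|y|(R-R_0)}$ once $R_0$ is taken comparable to $\max\{T_{\mathrm{kin}}\tau,\nu^{-1}\}$ (resp. to $\nu^{-1+\theta}$ for $h_0$), large enough that all $|a|\le\nu R_0$ and $\Ac(h)\subset[-\nu R_0,\nu R_0]$; and \eqref{k:R_small} comes from integrating the $\partial_y$-estimates of \ref{k:lips_y}-type over $\{\nu^\theta<|\xi|<R_0\}\setminus\nu^{-1}\Ac(h)$, which is exactly what produces the factor $\log(\nu^{-\theta}\langle R_0\rangle)$.

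The one genuinely delicate point, and the only place where a case distinction enters, is \ref{k:lips} for $h_2$ when $\varrho\tau$ is small. If $\varrho\tau\ge\nu^\theta$, then on $|\xi|\le\nu^{-\theta}$ one has $\nu|\xi|\le\nu^{1-\theta}\ll\varrho\tau$, so $\mathrm{sign}(\nu\xi\mp\varrho\tau)=\mathrm{sign}(\mp\varrho\tau)$ is constant there and $h_2$ behaves exactly as $h_1$ up to that fixed sign, so \ref{k:lips} goes through as above. If $\varrho\tau<\nu^\theta$, the singular intervals $[\pm\varrho\tau-\nu^\theta,\pm\varrho\tau+\nu^\theta]$ straddle $\xi=0$, the sign $\mathrm{sign}(\nu\xi\mp\varrho\tau)$ flips inside the range $|\xi|\le\nu^{-\theta}$, and $\widehat{h_2}(\nu\xi,y)-\widehat{h_2}(0,y)$ then carries an $O(1)$ contribution — visible already at $y=0$, where it equals $-4\pi\,\mathbbm{1}_{|\nu\xi|>\varrho\tau}$ and so cannot be $O(\nu|y|\,|\xi|)$. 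Hence \ref{k:lips} must be dropped in that regime, exactly as stated; this is the mechanism behind the dichotomy between $\tau\ge\varrho^{-1}\nu^\theta$ and $\tau<\varrho^{-1}\nu^\theta$ in \Cref{thm:integral_to_delta}.

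I expect the real difficulty to be bookkeeping rather than conceptual: \ref{k:diff}--\ref{k:R0} is a long checklist, each estimate must be uniform in $\nu,T_{\mathrm{kin}},\tau$ and in $\varrho\gtrsim1$, and the parameters $R_0$ and the small exponent $\theta$ must be kept mutually consistent — in particular $R_0$ must dominate the singular-line locations $\pm\varrho\tau$ so that $\Ac(h)\subset[-\nu R_0,\nu R_0]$. The logarithmic losses in \eqref{k:R_small} are harmless because, as noted after \eqref{MVT1}, the $\xi$-integration is effectively confined to a range polynomial in $L$, which is absorbed by the $L^{-c\delta}$ gains elsewhere.
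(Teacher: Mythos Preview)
Your proposal is correct and follows essentially the same approach as the paper: both reduce the verification of \ref{k:diff}--\ref{k:R0} to elementary estimates on the building blocks $e^{-|y||\xi-a|}$ read off from \eqref{eq:h}, and both identify the sign flip in $\widehat{h_2}$ across $\nu\xi=\pm\varrho\tau$ as the sole obstruction to \ref{k:lips} when $\varrho\tau<\nu^\theta$. Two small remarks: first, the paper also lists $\xi=0$ among the excised lines for $h_1,h_2$ (a harmless convenience for the cutoff in \eqref{eq:isolated_points}); second, your ``\ref{k:lips_y}-type'' argument for \eqref{k:R_small} with $j=2$ tacitly requires the analogous mean-value estimate for $s\mapsto s^2 e^{-|y|s}$ (whose derivative is $O(|y|^{-1})$), which the paper carries out explicitly via $F(z)=z^2e^{-z}$ in \eqref{eq:block3}---your sketch is right but this step deserves a sentence.
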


We shall first show that for any acceptable kernel $h$ with $R_0\leq \exp(\nu^{-1+})$, there holds that 
\begin{equation}\label{sec4aim}
\left|\int_{\R^{2d}} \int_{\R} [\widehat{h}(\nu \xi,y)-\widehat{h}(0,y)]\, e^{i\xi\Omega}\, \varphi(k_1,k_3)\,d\xi\, dk_1 dk_3\right|\lesssim \nu^\theta.
\end{equation}
This will cover the kernels $h_0$ and $h_1$, as well as $h_2$ in the case when $\varrho \tau\geq \nu^\theta$. The remaining case of $h_2$ in the regime $\varrho \tau\leq \nu^\theta$ will be easily treated afterwards. Finally, we shall give the proof of \Cref{thm:accept_kernel}.

\medskip

To prove \eqref{sec4aim} for any admissible kernel $h$, it suffices to consider the case when $n=1$ in \ref{k:diff} and $\xi_1=0$. The cases when $n >1$ are treated in exactly the same way. 
As such, we decompose:
\begin{equation}\label{eq:isolated_points}
\widehat{h}(\nu\xi,y) : = \widehat{h}(\nu\xi,y)  \chi_0 (\nu^{-\theta} \xi)  + \widehat{h}(\nu\xi,y)  \chi_1 (\nu^{-\theta} \xi) \chi_0 (\nu^{-\theta} y) + \widehat{h}(\nu\xi,y)  \chi_1 (\nu^{-\theta} \xi)  \chi_1 (\nu^{-\theta} y)
 \end{equation}
for a fixed $0<\theta\ll 1$ as small as desired. These decomposition may be adapted to deal with any finite number of singular points $\xi_j$ of $\widehat{h}$ by adding adequate cutoff functions in addition to $ \chi_0 (\nu^{-\theta}  \xi)$ such as $ \chi_0 (\nu^{-\theta}  (\xi-\nu^{-1}\xi_j))$.

We will show that the contribution of the first two summands is negligible, and thus we may focus on the kernel $\widehat{h}(\xi,y) \chi_1 (\nu^{-\theta}  \xi) \chi_1 (\nu^{-\theta}  y) $.

\medskip 

\noindent  {\bf Step 0. Integration around singularities.}

Consider the contribution of the first summand in \eqref{eq:isolated_points}. Using \ref{k:bounded} and the fast decay of $\varphi$, we have that:
\[
\Big | \int_{\R^{2d}} \int_{\R} [\widehat{h}(\nu\xi,y)-\widehat{h}(0,y)]\, \chi_0 (\nu^{-\theta} \xi) \, e^{i\xi\Omega}\, \varphi(k_1,k_3)\, d\xi\, dk_1 dk_3 \Big | \lesssim \nu^{\theta}.
\]

Consider the second summand in \eqref{eq:isolated_points}. We integrate by parts $e^{i\Omega\xi}$ twice in $\xi$, which yields:
\begin{align}
\int_{\R^{2d}} \int_{\R} & [\widehat{h}(\nu\xi,y)-\widehat{h}(0,y)]\, \chi_1 (\nu^{-\theta} \xi)\, \chi_0 (\nu^{-\theta} y)\, e^{i\xi\Omega}\, \varphi(k_1,k_3)\, d\xi\, dk_1 dk_3 \nonumber \\
= &\ \int_{\R^{2d}} \int_{\R} \frac{1}{(i\Omega)^2} \, e^{i\xi\Omega}   \frac{\pa^2}{\pa \xi^2}\left[  \left( \widehat{h}(\nu\xi,y)-\widehat{h}(0,y) \right)\, \chi_1 (\nu^{-\theta} \xi)  \right] \, \varphi(k_1,k_3)\, \chi_0 (\nu^{-\theta}\, y)\, d\xi\, dk_1 dk_3. \label{eq:y_small_1}
\end{align}
\begin{itemize}
\item Suppose first that no derivative hits $\chi_1 (\nu^{-\theta} \xi)$. Using \ref{k:int_xi}, we may bound this contribution by
\begin{equation}\label{eq:y_small_11}
 \int_{\R^{2d}}\frac{\nu |y|}{\Omega^2} \, |\varphi(k_1,k_3)|\, |\chi_0 (\nu^{-\theta}\, y)|\, dk_1 dk_3 \lesssim \nu^{\theta} \, \int_{\R^{2d}}\frac{\nu}{\Omega^2} \, |\varphi(k_1,k_3)|\, |\chi_0 (\nu^{-\theta}\, y)|\, dk_1 dk_3 .
\end{equation}
By \Cref{VIP_claim}, i.e. the fact\footnote{As a matter of fact, \Cref{VIP_claim} yields the stronger bound $\Omega\gtrsim 1$, but we do not need such a strong result here.} that $y^2 + (\nu^{-1}\Omega)^2\geq 1$, we must have that $\Omega\gtrsim \nu$ in this region, and thus we change variables in order to integrate in $\Omega$. Given that $\Omega=k_1\cdot k_3$, we divide $\R^{2d}$ into regions $\Rc_{ij}$ where each variable $|k_{ij}|$, $i=1,3$, $j=1,\ldots,d$, is maximal (up to a multiple) than the remaining $2d-1$ variables. We do so via a smooth partition of unity, i.e. 
\[
1 =\sum_{i=1,3} \sum_{j=1,\ldots,d} \chi_{\Rc_{ij}}.
\]

Without loss of generality let us assume that $|k_{31}|$ is maximal. We single out $k_{11}$, define $k_1'=(k_{12},\ldots,k_{1d})$, and make the change of variables $k_{11} \mapsto \Omega$. The Jacobian is proportional to $k_{31}^{-1}$, which is locally integrable in $\R^{2d-1}$. As a result, we have that 
\[
|\eqref{eq:y_small_11}|\lesssim \nu^{1+\theta} \, \int_{\R^{2d-1}} |k_{31}|^{-1}\, \int_{|\Omega|>\nu}  |\Omega|^{-2} \, |\varphi ( k_1 (\Omega),k_1',k_3)|\, d\Omega dk_1' dk_3  = \O ( \nu^{\theta}).
\]

\item Back to \eqref{eq:y_small_1}, if a single derivative hits $\chi_1$, we note that $\pa_{\xi}  \chi_1 (\nu^{-\theta} \xi) = - \nu^{-\theta} \chi_0' (\nu^{-\theta} \xi)$. Then \ref{k:lips_xi} yields:
\[
\Big | \int_{\R^{2d}} \int_{\R}  \frac{1}{|\Omega|^2} \, \nu^{1-\theta} |y|\, |\chi_0' (\nu^{-\theta} \xi)| \, |\varphi(k_1,k_3)|\, \chi_0 (\nu^{-\theta}\, y)\, d\xi\, dk_1 dk_3\Big |  \leq \int_{\R^{2d}} \frac{1}{|\Omega|^2} \, \nu^{1+\theta} \, |\varphi(k_1,k_3)|\, \chi_0 (\nu^{-\theta}\, y)\, dk_1 dk_3 
\]
and the same integration in $\Omega$ as for \eqref{eq:y_small_11} yields a contribution of size $\O( \nu^{\theta})$.

\item A similar strategy based on \ref{k:lips} works if two derivatives hit $\chi_1$ in \eqref{eq:y_small_1}. Indeed,
\begin{align*}
\Big | \int_{\R^{2d}} \int_{\R} & \frac{1}{(i\Omega)^2} \, e^{i\xi\Omega}    \left( \widehat{h}(\nu\xi,y)-\widehat{h}(0,y) \right)\, \nu^{-2\theta} \, \chi_0'' (\nu^{-\theta} \xi)  \, \varphi(k_1,k_3)\, \chi_0 (\nu^{-\theta}\, y)\, d\xi\, dk_1 dk_3\Big | \\
& \lesssim \int_{\R^{2d}} \frac{1}{\Omega^2} \, \int_{|\xi|\leq \nu^{\theta}} \nu^{1-2\theta}\, |y|\,  |\xi| \, |\varphi(k_1,k_3)|\, \chi_0 (\nu^{-\theta}\, y)\, d\xi\, dk_1 dk_3\\
& \lesssim \int_{\R^{2d}} \frac{1}{\Omega^2} \, \nu^{1+\theta}\, |\varphi(k_1,k_3)|\,\chi_0 (\nu^{-\theta}\, y)\, d\xi\, dk_1 dk_3\lesssim \nu^{\theta},
\end{align*}
the last step coming from integration over $|\Omega|>\nu$.
\end{itemize}

By \eqref{eq:isolated_points} and the above simplifications,
\[
\eqref{eq:S1_error}  = \int_{\R^{2d}} \int_{\R} [\widehat{h}(\nu \xi,y)-\widehat{h}(0,y)]\, e^{i\xi\Omega}\, \varphi(k_1,k_3)\,  \chi_1 (\nu^{-\theta} \xi) \, \chi_1 (\nu^{-\theta} y)\, d\xi\, dk_1 dk_3 + \O ( \nu^{\theta}).
\]

\medskip

\noindent  {\bf Step 1. Localization around $\Omega=0$.} We write,
\begin{align}
\eqref{eq:S1_error}  = &\  \int_{\R^{2d}} \int_{\R} [\widehat{h}(\nu \xi,y)-\widehat{h}(0,y)]\, e^{i\xi\Omega}\, \varphi(k_1,k_3)\,  \chi_1 (\nu^{-\theta} \xi) \, \chi_1 (\nu^{-\theta} y)\, \chi_0 (\nu^{-\beta} \Omega) \, d\xi\, dk_1 dk_3 \nonumber \\
& + \int_{\R^{2d}} \int_{\R} [\widehat{h}(\nu \xi,y)-\widehat{h}(0,y)]\, e^{i\xi\Omega}\, \varphi(k_1,k_3)\,  \chi_1 (\nu^{-\theta} \xi) \, \chi_1 (\nu^{-\theta} y)\, \chi_1 (\nu^{-\beta} \Omega) \, d\xi\, dk_1 dk_3 + \O ( \nu^{\theta} ) \label{eq:large_omega} 
\end{align}
for $\beta\in (0,1)$ to be fixed later. As in Step 0, we integrate $e^{i\xi\Omega}$ by parts twice in the $\xi$ variable
\[
\begin{split}
\eqref{eq:large_omega} 
=&\ \int_{\R^{2d}} \int_{\R} \frac{e^{i\xi\Omega}}{(i\Omega)^2}  \frac{\pa^2}{\pa \xi^2} \left[ ( \widehat{h}(\nu\xi,y)-\widehat{h}(0,y)) \, [1- \chi_0 (\nu^{-\theta} \xi) ]\right] \, \varphi(k_1,k_3)\, \chi_1 (\nu^{-\theta}\, y)\, \chi_1 (\nu^{-\beta} \Omega) \, d\xi\, dk_1 dk_3.
\end{split}
\]

If both derivatives hit  $\widehat{h}(\nu\xi,y)-\widehat{h}(0,y)$, then we use \ref{k:int_xi} and proceed as in Step 0 to obtain a contribution of size $\O (\nu^{1-\beta})$. If both derivatives hit $\chi_0 (\nu^{-\theta} \xi)$, we use the rapid decay of $\varphi$ to reduce the integral to $|(k_1,k_3)|\leq \nu^{-\theta}$, which implies that $|y|\leq \nu^{-2\theta}$. Then \ref{k:lips} yields
\begin{align*}
& \nu^{-2\theta}\, \Big | \int_{\R^{2d}} \int_{\R} \frac{1}{(i\Omega)^2} \, e^{i\xi\Omega} \, [ \widehat{h}(\nu\xi,y)-\widehat{h}(0,y)] \, \chi_0'' (\nu^{-\theta} \xi) \, \varphi(k_1,k_3)\, \chi_1 (\nu^{-\theta}\, y)\, \chi_1(\nu^{-\beta} \Omega) \, d\xi\, dk_1 dk_3 \Big | \\
&\ \lesssim  \int_{\R^{2d}} \frac{1}{\Omega^2}\, \nu^{1-3\theta}\, |\chi_1 (\nu^{-\theta}\, y)|\, |\chi_1 (\nu^{-\beta} \Omega)| \, dk_1\, dk_3 = \O(\nu^{1-\beta-3\theta} ),
\end{align*}
which is small by imposing $4\theta <1-\beta$.

The case where one derivative hits each term follows from an analogous argument and \ref{k:lips_xi}. All in all, we have shown that $\eqref{eq:large_omega} = \O ( \nu^{\theta})$.

\medskip

\noindent  {\bf Step 2. Localization in $\xi$.}  Our goal is to show that 
\begin{equation}\label{eq:S1_error_new}
\int_{\R^{2d}} \int_{\R} [\widehat{h}(\nu \xi,y)-\widehat{h}(0,y)]\, e^{i\xi\Omega}\, \varphi(k_1,k_3)\,  \chi_1 (\nu^{-\theta} \xi) \, \chi_1 (\nu^{-\theta} y)\, \chi_0 (\nu^{-\beta} \Omega) \, d\xi\, dk_1 dk_3  
= \O (\nu^{\theta}).
\end{equation}
In order to prove \eqref{eq:S1_error_new}, we add an additional cut-off in the $\xi$-variable. More precisely, we write:
\begin{align}
\eqref{eq:S1_error_new}=&\ \int_{\R^{2d}} \int_{\R} (\widehat{h}(\nu \xi, y) - \widehat{h}(0, y) )\, e^{i\xi\Omega}\, \varphi(k_1,k_3)\, \chi_0 (\nu^{-\beta}\, \Omega)\,\chi_1 (\nu^{-\theta}\xi)\, \chi_0 (\xi/R)\, \chi_1 (\nu^{-\theta} y)\,d\xi\, dk_1 dk_3\nonumber \\
& + \int_{\R^{2d}} \int_{\R}  \widehat{h}(\nu \xi, y) \, e^{i\xi\Omega}\, \varphi(k_1,k_3)\, \chi_0 (\nu^{-\beta}\, \Omega)\, \chi_1 (\xi/R) \, \chi_1 (\nu^{-\theta} y)\, d\xi\, dk_1 dk_3\label{eq:xi_large1}\\
& - \int_{\R^{2d}} \int_{\R}  \widehat{h}(0, y) \, e^{i\xi\Omega}\, \varphi(k_1,k_3)\, \chi_0 (\nu^{-\beta}\, \Omega)\, \chi_1 (\xi/R) \, \chi_1 (\nu^{-\theta}y)\, d\xi\, dk_1 dk_3\label{eq:xi_large2}
\end{align}
for some $R>\nu^{\theta}$ to be fixed later.

Firstly, we show that the contribution of \eqref{eq:xi_large2} is negligible. Once again, let us assume that $|k_{31}|$  is maximal among the components of $(k_1,k_3)$ as an example. Let us integrate $e^{i\xi\Omega}$ by parts in the variable $k_{11}$:
\[
\begin{split}
\eqref{eq:xi_large2} =&\ \int_{\R^{2d}} \int_{\R}  \frac{1}{(ik_{31}\xi)^2} e^{i\xi\Omega}  \frac{\pa^2}{\pa k_{11}^2} \left[ \widehat{h}(0, y) \, \varphi(k_1,k_3) \chi_0 (\nu^{-\beta}\, \Omega) \chi_1 (\nu^{-\theta}y) \right]  \chi_1 (\frac{\xi}{R}) \, \chi_{\Rc_{31}} \, d\xi \, dk_1 \, dk_3 .
\end{split}
\]
Note that $\frac{\pa y}{\pa k_{11}}$ and $\frac{\pa^2 y}{\pa k_{11}^2}$ are non-singular and easily bounded by
\begin{equation}\label{eq:deriv_y}
\Big | \frac{\pa^j y}{\pa k_{11}^j} \Big | \lesssim \langle k\rangle^{j} + \sum_{i=1}^3 \langle k_i\rangle^{j}\qquad j=1,2.
\end{equation}
Therefore the worst terms come from $\chi_0 (\nu^{-\beta}\,\Omega)$ and $\widehat{h}(0, y)$
which we bound as examples. 
\begin{itemize}
\item If both derivatives hit $\chi_0 (\nu^{-\beta}\,\Omega)$, \ref{k:bounded} yields:
\begin{align*}
& \nu^{-2\beta}\, \Big | \int_{\R^{2d}} \int_{\R}  \frac{1}{(ik_{31}\xi)^2} e^{i\xi\Omega}\, \widehat{h}(0, y) \, \varphi(k_1,k_3) \chi_0'' (\nu^{-\beta}\, \Omega) \chi_1 (\nu^{-\theta}y)\,  \chi_1 (\frac{\xi}{R}) \, \chi_{\Rc_{31}} \, d\xi \, dk_1 \, dk_3\Big |  \\
&\ \lesssim \nu^{-2\beta-\theta}\, \int_{\R} \xi^{-2}\, \chi_1 (\xi/R) \,d\xi \int_{\R^{2d}} |\varphi(k_1,k_3)|\, \chi_0'' (\nu^{-\beta} \Omega) \,dk_1 dk_3 \lesssim  R^{-1} \,\nu^{-\beta-\theta} \lesssim \nu^{\theta}
\end{align*}
after imposing
\begin{equation}\label{eq:choose_R} 
R \geq  \nu^{-\beta-2\theta} \,\gg 1.
\end{equation}
\item If both derivatives hit $\widehat{h}(0, y)$, we use the fast decay of $\varphi$ to reduce integration to $|(k_1,k_3)|\leq \nu^{-\theta}$. Then \ref{k:y0} and \eqref{eq:deriv_y} yield
\begin{multline*}
\Big | \int_{\R^{2d}} \int_{\R}  \frac{1}{(ik_{31}\xi)^2} e^{i\xi\Omega}\, \pa_{k_{11}}^2 \widehat{h}(0, y) \, \varphi(k_1,k_3) \chi_0 (\nu^{-\beta}\, \Omega) \chi_1 (\nu^{-\theta}y)\,  \chi_1 (\frac{\xi}{R}) \, \chi_{\Rc_{31}} \, d\xi \, dk_1 \, dk_3\Big |  \\
\lesssim \nu^{-4\theta}\,  \int_{\R} \xi^{-2}\, \chi_1 (\xi/R) \,d\xi \int_{\R^{2d}} |k_{31}|^{-2}\, |\varphi(k_1,k_3)|\, \chi_0(\nu^{-\beta} \Omega) \,dk_1 dk_3  \lesssim R^{-1} \,\nu^{-4\theta+\beta} \lesssim \nu^{\theta}
\end{multline*}
after imposing
\begin{equation}\label{eq:choose_R_1} 
R \geq  \nu^{\beta-5\theta} .
\end{equation}
\end{itemize}

Next we show that the contribution from \eqref{eq:xi_large1} is negligible. Assuming that $k_{31}$  is maximal among the components of $(k_1,k_3)$, we integrate $e^{i\xi\Omega}$ by parts in the variable $k_{11}$. As in the case of \eqref{eq:xi_large2}, the worst terms come from the derivative hitting $\widehat{h}$ and $\chi_0 (\nu^{-\beta}\, \Omega)$. We deal with these using \ref{k:R0} together with $|y|>\nu^{\theta}$, i.e.
\[
\begin{split}
|\eqref{eq:xi_large1}| & = \Big | \int_{\R^{2d}} \int_{\R} \frac{e^{i\xi\Omega}}{i\xi k_{31}}\, \frac{\pa}{\pa k_{11}} \left[ \widehat{h}(\nu\xi, y)\, \varphi(k_1,k_3)\, \chi_0 (\nu^{-\beta}\, \Omega) \,\chi_1 (\nu^{-\theta}y)\, \right]\, \chi_1 (\xi/R) \, d\xi\, dk_1 dk_3\Big | \\
& \lesssim  \nu^{-\theta}\, \int_{\R^{2d}\cap \{ |\Omega|\leq \nu^{\beta}\}\cap \{|y|>\nu^{\theta}\}\cap\{ |k_j|\leq \nu^{-\theta}\}} \int_{|\xi|>R} \frac{1}{|\xi| |k_{31}|} \left(  |(\pa_{y} \widehat{h})(\nu \xi, y) |  + \nu^{-\beta}  |\widehat{h}(\nu \xi, y) |\right) d\xi\, dk_1 dk_3\\
& \lesssim  \nu^{-\theta}\,  \int_{\R^{2d}\cap \{ |\Omega|\leq \nu^{\beta}\}\cap \{|y|>\nu^{\theta}\}\cap\{ |k_j|\leq \nu^{-\theta}\}} |y|^{-1}\, e^{- \nu |y| (R-R_0)} \, \left(1+ \nu^{-\beta-1} \, R^{-1}   \right)\, |k_{31}|^{-1} dk_1 dk_3 \\
& \lesssim \nu^{-2\theta}\, e^{- \nu^{1+\theta} \,(R-R_0)} \left(  \nu^{\beta} + \nu^{-1} R^{-1} \right) \lesssim_N \nu^{\theta N},
\end{split}
\]
by fixing $R$ sufficiently large, i.e.
\begin{equation}\label{eq:choose_R_2}
R > R_0 + \nu^{-1-2\theta}.
\end{equation}

All in all, we have proved that 
\[
\eqref{eq:S1_error_new}  =  \int_{\R^{2d}} \int_{\R} (\widehat{h}(\nu \xi, y) - \widehat{h}(0, y) )\, e^{i\xi\Omega}\, \varphi(k_1,k_3)\, \chi_0 (\nu^{-\beta}\, \Omega)\,\chi_1 (\nu^{-\theta}\xi)\, \chi_0 (\xi/R)\, \chi_1 (\nu^{-\theta} y)\,d\xi\, dk_1 dk_3  + \O (\nu^{\theta}).
\]

Our final goal is to show that this last term also has size $\O (\nu^{\theta})$.

\medskip

{\bf Step 3. Main terms.} Without loss of generality, let us assume that we are in the region $\Rc_{31}$ where $|k_{31}|$ is maximal, and let us integrate by parts the term $e^{i\xi\Omega}$  in the variable $k_{11}$. Thanks to the fast decay of $\varphi$ we may assume that $|(k_1,k_3)|\leq \nu^{-\theta}$ and thus $|y|\leq \nu^{-2\theta}$. We find that
\begin{multline}\label{eq:final_IBP}
\int_{\R^{2d}} \int_{\R}  e^{i\xi\Omega}\, (\widehat{h}(\nu \xi, y) - \widehat{h}(0, y))\, \varphi(k_1,k_3)\, \chi_0 (\nu^{-\beta}\Omega)\,\chi_0 (\frac{\xi}{R})\, \chi_1 (\nu^{-\theta}\xi)\, \chi_1 (\nu^{-\theta} y )\, d\xi \, dk_1\, dk_3\\
= \int_{\R^{2d}} \int_{\R} \frac{e^{i\xi\Omega}}{(i\xi \, k_{31})^2}  \frac{\pa^2}{\pa k_{11}^2} \left[ (\widehat{h}(\nu \xi, y) - \widehat{h}(0, y))\, \varphi(k_1,k_3)\, \chi_0 (\nu^{-\beta}\Omega)\,\chi_1 (\nu^{-\theta} y ) \right]\, \chi_0 (\frac{\xi}{R})\, \chi_1 (\nu^{-\theta}\xi)\,d\xi \, dk_1\, dk_3.
\end{multline}
We describe the estimates depending on which term is hit by $\pa_{k_{11}}$.

\

\begin{itemize}[nosep]
\item If both derivatives hit any of the terms $\varphi(k_1,k_3)$, $\chi_0 (\nu^{-\beta}\Omega)$ or $\chi_1 (\nu^{-\theta} y )$ we proceed in the same manner. Let us show how to bound the worst case scenario, which is both derivatives hitting $\chi_0 (\nu^{-\beta}\Omega)$. Using \ref{k:bounded} and \ref{k:lips} to estimate the kernel, we may bound the contribution of these terms by
\[
\nu^{-2\beta} \int_{\{ |\Omega|\leq \nu^{\beta}\} \cap \{ |k_j|\leq \nu^{-\theta}\}} \int_{\nu^{\theta}<|\xi|< R} \frac{\min \{ \nu^{1-3\theta} \, |\xi| ,1 \} }{|\xi|^2} \,  d\xi \, dk_1\, dk_3 \lesssim \nu^{1-\beta-6\theta} .
\]
The argument for the terms $\varphi(k_1,k_3)$ and $\chi_1 (\nu^{-\theta} y )$ is analogous, only one does not have a $\nu^{-2\beta}$ loss, and there is the additional term $|k_{31}|^{-2}$ which is integrable. As a result these terms are $\O (\nu^{1+\beta-6\theta} )$.
\item Consider the term where both derivatives in \eqref{eq:final_IBP} hit $\widehat{h}(\nu \xi, y) - \widehat{h}(0, y)$. These derivatives give rise to 
\[
(\pa_{k_{11}} y)^2\, \pa_y^2 \left[ \widehat{h}(\nu \xi, y) - \widehat{h}(0, y)\right] + (\pa_{k_{11}}^2 y)\, \pa_y \left[ \widehat{h}(\nu \xi, y) - \widehat{h}(0, y)\right] .
\]
The derivatives of $y$ may be controlled with $\nu^{-2\theta}$ in view of \eqref{eq:deriv_y}. Then we use \ref{k:y0} and \ref{k:R0} with $R=R_0$ to finish the proof.
\begin{align*}
& \int_{\{ |\Omega|\leq \nu^{\beta}\} \cap \{ |k_j|\leq \nu^{-\theta}\}}\frac{ \chi_1 (\nu^{-\theta} y )}{|k_{31}|^2}  \int_{\nu^{\theta}<|\xi|< R}  \frac{\nu^{-2\theta}}{|\xi|^2}\, \sum_{j=1}^2 \Big | \pa_y^j  \left[ \widehat{h}(\nu \xi, y) - \widehat{h}(0, y)\right]\Big | \, d\xi \, dk_1\, dk_3\\
&\ \lesssim \int_{\{ |\Omega|\leq \nu^{\beta}\} \cap \{ |k_j|\leq \nu^{-\theta}\}}\frac{ \chi_1 (\nu^{-\theta} y )}{|k_{31}|^2}  \int_{\nu^{\theta}<|\xi|< R_0}  \frac{\nu^{-2\theta}}{|\xi|^2}\, \sum_{j=1}^2\Big | \pa^j_y \left[ \widehat{h}(\nu \xi, y) - \widehat{h}(0, y)\right]\Big | \, d\xi \, dk_1\, dk_3
\\
&\ \ + \int_{\{ |\Omega|\leq \nu^{\beta}\} \cap \{ |k_j|\leq \nu^{-\theta}\}}\frac{ \chi_1 (\nu^{-\theta} y )}{|k_{31}|^2}  \int_{|\xi|>\max\{R_0,\nu^{\theta}\}}   \frac{\nu^{-2\theta}}{|\xi|^2}\, \sum_{j=1}^2 \Big | \pa^j_y \widehat{h}(\nu \xi, y) \Big | \, d\xi \, dk_1\, dk_3\\
&\ \ + \int_{\{ |\Omega|\leq \nu^{\beta}\} \cap \{ |k_j|\leq \nu^{-\theta}\}}\frac{ \chi_1 (\nu^{-\theta} y )}{|k_{31}|^2}  \int_{\max\{R_0,\nu^{\theta}\}<|\xi|< R}   \frac{\nu^{-2\theta}}{|\xi|^2}\, \sum_{j=1}^2 \Big | \pa^j_y \widehat{h}(0, y)\Big | \, d\xi \, dk_1\, dk_3\\
&\ \lesssim  \nu^{1+\beta-3\theta} \,  \log( \nu^{-\theta} \langle R_0\rangle)+ \nu^{\beta-\theta}\lesssim \nu^{\theta}
\end{align*}
after imposing that $\beta>2\theta$ and
\begin{equation}\label{eq:choose_R_3}
R_0 \leq \exp (\nu^{-1-\beta+4\theta} ).
\end{equation}

\item Let us show one example on how to handle cross-terms. Suppose that one derivative hits $\widehat{h}(\nu\xi,y)$ and the other one hits, say, $\chi_0 (\nu^{-\beta} \Omega)$. One may bound such a contribution using \ref{k:lips_y}
\begin{align*}
& \nu^{-\beta-2\theta}\, \int_{\{ |\Omega|\leq \nu^{\beta}\} \cap \{ |k_j|\leq \nu^{-\theta}\}} \int_{\nu^{-\theta}<|\xi|< R} \frac{1}{|\xi|^2} \, \Big | \pa_y \left[ \widehat{h}(\nu \xi, y) - \widehat{h}(0, y)\right]\Big | \, d\xi \, dk_1\, dk_3\\
&\ \lesssim \nu^{-\beta-2\theta}\, \int_{\{ |\Omega|\leq \nu^{\beta}\} \cap \{ |k_j|\leq \nu^{-\theta}\}} \int_{\nu^{\theta} <|\xi|< R} \frac{1}{|\xi|^2} \, \nu |\xi| \, d\xi \, dk_1\, dk_3
\lesssim \nu^{1-2\theta} \, \log (R\, \nu^{-\theta}) \lesssim \nu^{\theta}
\end{align*}
after imposing
\begin{equation}\label{eq:choose_R_4}
R \leq \exp (\nu^{-1+3\theta}).
\end{equation}
\end{itemize}

\medskip

{\bf Step 4. Choice of $R$.} In order to finish the proof, we show that a choice of $(R_0,R)$ satisfying \eqref{eq:choose_R}, \eqref{eq:choose_R_1}, \eqref{eq:choose_R_2}, \eqref{eq:choose_R_3} and \eqref{eq:choose_R_4} is possible.

In the case of $h_0$, we fix $R_0=\nu^{-1+\theta}$ and $R=\nu^{-1-2\theta}$ which satisfy all requirements. In the case of $h_1$ and $h_2$, we have that $R_0 = \max\{ T_{\mathrm{kin}}\tau,\nu^{-1}\}$ and $R= T_{\mathrm{kin}}\tau + \nu^{-2\theta-1}$ satisfy all requirements. In particular, the upper bounds \eqref{eq:choose_R_3}-\eqref{eq:choose_R_4} hold since $T_{\mathrm{kin}}$ and $\nu$ depend on $L$ polynomially.

\medskip

{\bf Step 5. Small $\tau$.} In the case of $\widehat{h}_2$, property \ref{k:lips} only holds when for $\varrho \tau \geq \nu^{\theta}$. However, the rest of the properties \ref{k:diff}, \ref{k:bounded} and \ref{k:lips_xi}-\ref{k:R0} hold even when $\varrho \tau < \nu^{\theta}$. 

In the latter regime, we want to prove that $\Ss_2(k,t,\tau)=\O(|\log \nu|)$. In order to do so, it suffices to show that \eqref{eq:S2_error2} is $\O(|\log \nu|)$. This bound follows from Steps 0-4 with the choice $\theta=\beta=0$ with minor modifications, which we detail next.

In Step 0, the integration around the singularities in $\xi$ admits the same bound. The integration around $y=0$ in \eqref{eq:y_small_1} requires the following modification:
\begin{align}
\int_{\R^{2d}} \int_{\R} & e^{i\xi\Omega}\, [\widehat{h}(\nu\xi,y)-\widehat{h}(0,y)]\, \chi_0 (2y)\,  \varphi(k_1,k_3) \widetilde{\chi}_1 (\xi)\, d\xi\, dk_1 dk_3 \nonumber \\
= &\ \int_{\R^{2d}} \int_{\R} \frac{1}{i\Omega} \, e^{i\xi\Omega}   \frac{\pa}{\pa \xi}\left[  \left( \widehat{h}(\nu\xi,y)-\widehat{h}(0,y) \right)\, \widetilde{\chi}_1 (\xi)  \right] \, \varphi(k_1,k_3)\, \chi_0 ( 2y)\, d\xi\, dk_1 dk_3, \label{eq:y_small_1_alt}
\end{align}
where $\widetilde{\chi}_1 (\xi)= \chi_1 (\xi) \chi_1 (\xi-T\varrho)\, \chi_1 (\xi+T\varrho)$. 
\begin{itemize}
\item If the derivative hits $\widetilde{\chi}_1 (\xi)$, we use \ref{k:bounded} and the localization in $\xi$ allows us to bound the $\xi$-integral.  Since $|y|<1/2$, \Cref{VIP_claim} guarantees\footnote{Once again, \Cref{VIP_claim}  actually yields $|\Omega|\gtrsim 1$, and thus the contribution of such a term is only $\O(1)$. However, the argument we present allows us to carry out the proof using only with the weaker bound $y^2 + (\nu^{-1}\Omega)^2\geq 1$, which is satisfied by many more dissipation choices $\gamma_k$.}
that $|\Omega|\gtrsim \nu$. Integrating $|\Omega|^{-1}$ then yields a bound $\O(|\log \nu |)$ for this contribution.
\item If the derivative hits $\widehat{h}(\nu\xi,y)-\widehat{h}(0,y)$, we use \ref{k:int_xi} to integrate in $\xi$. Then  \Cref{VIP_claim} allows us to integrate in $\Omega$ and obtain a contribution of size $\O(|\log \nu|)$.
\end{itemize}

In Step 1, it suffices to replace the usage of \ref{k:lips} by \ref{k:bounded}. All in all, this yields a contribution of size $\O(1)$ since $\theta=\beta=0$. 
Step 2 holds unchanged, yielding $\O(1)$ contributions.
In Step 3, all arguments hold with $\theta=\beta=0$ except one uses  \ref{k:bounded} instead of \ref{k:lips} in the first case. As a result, such terms are $\O (1)$ without gain in $\nu$.
\end{proof}

\begin{proof}[Proof of \Cref{thm:accept_kernel}]

\

{\underline {\it Kernel $h_0$.}} First consider $\widehat{h_0}(\xi,y)=2\pi\, e^{-|y\xi|}$. Properties \ref{k:diff}-\ref{k:lips_xi} are trivial, and so is \ref{k:lips_y}. Note that 
\[
|\pa_{\xi}^j \widehat{h_0}(\nu \xi,y)| = 4\pi \, (\nu |y|)^j \, e^{-\nu |y| \, |\xi|}, \qquad j=1,2,
\]
and thus 
\[
\int_{\R} |\pa_{\xi}^j \widehat{h_0}(\nu \xi,y)|  \, d\xi = 4\pi\, (\nu \, |y|)^{j-1}, \qquad j=1,2,
\]
which yields \ref{k:int_xi}.

Since $R_0=\nu^{-1}$ and $\Ac (h_0)=[-\nu^{\theta},\nu^{\theta}]$, \eqref{k:R_small} is trivially true. For any $R>0$ we have that 
\[
\int_{|\xi|>R}\frac{1}{|\xi|^j}\,  (\nu |\xi|)^j e^{-\nu |\xi|\, |y|}\, d\xi \lesssim \nu^{j-1} |y|^{-1} \, e^{-\nu |y| \, R},
\]
which in particular implies \eqref{k:R_large} and thus \ref{k:R0}.

\

{\underline {\it Kernels $h_1$ and $h_2$.}} By inspection of \eqref{eq:h}, one verifies that \ref{k:diff} holds with the singular lines being $y=0$, $\xi=0, \pm \varrho \tau$. \ref{k:bounded} also follows easily from the expressions in \eqref{eq:h}. 

Consider next \ref{k:lips} for $h_1$. We write
\[
\frac{1}{2\pi}\, \left[ \widehat{h_1} (\nu \xi, y) -\widehat{h_1} (0, y)\right] = \left( e^{-\nu |y|\,|\xi -T_{\mathrm{kin}} \tau |} - e^{-|y\,\varrho \tau |} \right) + \left( e^{-\nu |y|\,|\xi + T_{\mathrm{kin}} \tau |}- e^{-|y\,\varrho \tau |} \right).
\]
We explain how to handle the first summand, the second being analogous. It suffices to note that 
\[
|e^{-\nu |y|\,|\xi -T_{\mathrm{kin}} \tau|} - e^{-|y\,\varrho \tau|}| \leq  \Big | \int_0^{\xi} \pa_{\eta} e^{-\nu |y|\,|\eta -T_{\mathrm{kin}} \tau|} \, d\eta\Big |  =  \int_0^{|\xi|} \nu |y|\, d\eta= \nu |y| |\xi|.
\]
Since $h_1$ is a linear combination of such exponential terms, \ref{k:lips} follows.

Let us consider \ref{k:lips} for $h_2$ when $\varrho \tau>\nu^{\theta}$. Assume that $y\neq 0$. Given that $\widehat{h}_2 (\nu\xi,y)$ has a jump at $\nu\xi = \varrho \tau$, one cannot expect the Lipschitz property to hold everywhere. If $\nu\xi \in (-\varrho \tau,\varrho \tau)$, then
\[
|\widehat{h}_2 (\nu\xi,y)- \widehat{h}_2 (0,y)| \lesssim |\nu \xi| \, \sup_{\eta  \in (-\varrho \tau,\varrho \tau)} |\pa_{\eta} e^{-|y|\, |\eta -\varrho\tau|}| \lesssim \nu \, |\xi| |y|.
\]
In particular, the above bound holds whenever $|\xi|\leq \nu^{-\theta}$, since $\nu |\xi| \leq \nu^{1-\theta}\leq \nu^{\theta}<\varrho\tau$. 

Suppose next that $|\nu\xi|> \varrho\tau$. Then we use \ref{k:bounded} to obtain
\[
|\widehat{h}_2 (\nu \xi,y)- \widehat{h}_2 (0,y)| \lesssim 1\lesssim \frac{\nu |\xi|}{\varrho\tau} \leq \nu^{1-\theta} |\xi|
\]
since $\varrho\tau\geq \nu^{\theta}$. Finally, if $y=0$, $\widehat{h}_2 (\nu \xi,y)- \widehat{h}_2 (0,y)=0$ whenever $\nu|\xi|<\varrho\tau$ (in particular, this is the case if $|\xi|\leq \nu^{-\theta}$). When $\nu|\xi|>\varrho\tau$, 
\[
|\widehat{h}_2 (\nu \xi,0)- \widehat{h}_2 (0,0)|=4\pi \lesssim \frac{\nu |\xi|}{\varrho\tau} \leq \nu^{1-\theta} \, |\xi|,
\]
which concludes the proof of \ref{k:lips}. 

Next we study \ref{k:lips_xi} for $h_1$ and $h_2$. If $y=0$, away from singularities in $\xi$, we notice that $\widehat{h}_1$ and $\widehat{h}_2$ are constant in $\xi$, and thus \ref{k:lips_xi} is trivial. Let us therefore assume that $y\neq 0$.
Away from $\eta = T_{\mathrm{kin}}\tau$, we have that 
\begin{equation}\label{eq:block}
|\pa_{\eta}^j ( e^{-\nu |y|\,|\eta -T_{\mathrm{kin}} \tau|}) |= (\nu |y|)^j e^{-\nu |y|\,|\eta -T_{\mathrm{kin}} \tau|}
\end{equation}
Since $h_1$ and $h_2$ are a linear combination of similar exponential terms, \ref{k:lips_xi} follows.

Consider next \ref{k:int_xi}. As before, we may assume $y\neq 0$ since the derivative vanishes when $y=0$.
Using \eqref{eq:block} we have that 
\[
\int_{\eta>T_{\mathrm{kin}}\tau +\nu^{\theta}} (\nu |y|)^j \, e^{-\nu |y|\,(\eta -T_{\mathrm{kin}} \tau )} \, d\eta + \int_{\eta<T_{\mathrm{kin}}\tau-\nu^{\theta}} (\nu |y|)^j \, e^{-\nu |y|\,(T_{\mathrm{kin}}\tau-\eta)} \, d\eta =  2 \,(\nu |y|)^{j-1} \, e^{-\nu^{1+\theta} |y|} \lesssim (\nu\, |y|)^{j-1},
\]
which implies \ref{k:int_xi}.

We now turn to \ref{k:lips_y}. Consider $h_1$ first:
\begin{multline}\label{eq:der_h1}
\frac{\pa}{\pa y} \left[ e^{-\nu |y|\,|\xi -T_{\mathrm{kin}} \tau|}+  e^{-\nu |y|\,|\xi +T_{\mathrm{kin}} \tau |} - 2e^{-|y\,\varrho \tau |}\right] \\
= \mbox{sign}(y)\, \left( - |\nu \xi -\varrho \tau | e^{-\nu |y|\,|\xi -T_{\mathrm{kin}} \tau |} - |\nu \xi + \varrho \tau | e^{-\nu |y|\,|\xi +T_{\mathrm{kin}} \tau |}+ 2\varrho \tau e^{-|y\,\varrho \tau |} \right).
\end{multline}
We separate this expression into two summands and exploit the following cancellation between each of them:
\begin{equation}\label{eq:block2}
| \varrho\tau e^{-|y\,\varrho \tau |} - |\nu \xi -\varrho \tau | e^{-\nu |y|\,|\xi -T_{\mathrm{kin}} \tau |}| \lesssim \nu |\xi| .
\end{equation}
The above bound follows immediately if $|\xi|>T_{\mathrm{kin}}\tau$ (since it implies $\nu |\xi| > \varrho \tau$). If $\xi \in (-T_{\mathrm{kin}}\tau,T_{\mathrm{kin}}\tau)$ we write 
\[
 \varrho \tau e^{-|y\,\varrho \tau |} - (\varrho \tau  - \nu \xi ) \,  e^{-\nu |y|\,(T_{\mathrm{kin}} \tau -\xi)}  = \frac{1}{|y|}\, [F(\eta_1) -F(\eta_1-\eta_2)] 
\]
where $F(z) = z e^{-z}$, $\eta_1= |y\,\varrho \tau |>\eta_2= \nu |y \xi|$. Note that $F$ is bounded for all $z\geq 0$, and thus 
\begin{equation}\label{eq:block2_trick}
\frac{1}{|y|} \, | F(\eta_1) -F(\eta_1-\eta_2)| \leq \frac{1}{|y|} \,\left| \int_{\eta_1-\eta_2}^{\eta_1} (e^{-z} -F(z) )\, dz \right|\lesssim \frac{|\eta_2|}{|y|} = \nu |\xi|.
\end{equation}
It follows that \eqref{eq:block2} holds for all $\xi$. Since $\widehat{h_1}(\nu\xi ,y) - \widehat{h_1}(0 ,y)$ is a sum of two such terms, \ref{k:lips_y} holds for the kernel $h_1$.

Let us now prove \ref{k:lips_y} for $h_2$. Without loss of generality, suppose that $y>0$. Then 
\begin{multline}\label{eq:der_h2}
 \frac{1}{2\pi}\, \frac{\pa}{\pa y} \left[ \widehat{h_2}(\nu\xi ,y) - \widehat{h_2}(0 ,y)\right] =\frac{\pa}{\pa y} \left[  - e^{-\nu y\, |\xi -T_{\mathrm{kin}} \tau |}  \, \mbox{sign}(\xi -T_{\mathrm{kin}}\tau ) + e^{-\nu y\, |\xi +T_{\mathrm{kin}} \tau |}\, \mbox{sign}(\xi+T_{\mathrm{kin}} \tau  )  - 2 e^{-\varrho \tau y}\right] \\
  =  \nu \, (\xi -T_{\mathrm{kin}} \tau )\, e^{-\nu y \, |\xi -T_{\mathrm{kin}} \tau |} - \nu\,  (\xi+T_{\mathrm{kin}} \tau )\, e^{-\nu y\, |\xi +T_{\mathrm{kin}} \tau |} + 2\varrho \tau \, e^{-\varrho \tau  y} .
\end{multline}
The result then follows from applying \eqref{eq:block2} twice.

Let us now show \ref{k:y0} for both $h_1$ and $h_2$. For $y\neq 0$,  
\[
\widehat{h_1}(0 ,y)= 4\pi \, e^{- |y| \varrho \tau } = \mbox{sign}(y) \, \widehat{h_2}(0 ,y), 
\]
thus it suffices to note that
\[
|\pa_y^{j} e^{- |y| \varrho \tau }| = (\varrho \tau )^j \, e^{- |y| \varrho \tau } = |y|^{-j}\, \left[ (\varrho \tau |y|)^j \, e^{- |y| \varrho \tau } \right]\lesssim |y|^{-j}
\]
given that $\eta^j e^{-\eta} = \O_j (1)$.

Finally, we set out to prove \ref{k:R0}. In order to prove \eqref{k:R_large}, note that  for any $R> R_0 =\max\{T_{\mathrm{kin}}\tau,\nu^{-1}\}$,
\[
\int_{|\xi|>R} \frac{1}{|\xi|^j} \, (\nu \, |\xi - T_{\mathrm{kin}}\tau |)^j e^{-\nu |y| \, |\xi - T_{\mathrm{kin}}\tau |}\, d\xi \lesssim  \nu^j\, \int_{|\xi|>R} e^{-\nu |y| \, (\xi - T_{\mathrm{kin}}\tau)}\, d\xi = |y|^{-1} \, \nu^{j-1}\, e^{-\nu |y| \, (R- T_{\mathrm{kin}}\tau )}.
\]
Both $h_1$ and $h_2$ are a linear combination of such exponential terms, and thus \eqref{k:R_large} follows. 

Let us turn to \eqref{k:R_small}. Recall that $R_0 =\max\{T_{\mathrm{kin}}\tau,\nu^{-1}\}$. In view of the cancellation exploited in \eqref{eq:der_h1} and \eqref{eq:der_h2}, it suffices to study the $y$-derivative of the expression in \eqref{eq:block2}. Assuming, without loss of generality, that $y>0$ 
\[
\pa_y \left( \varrho \tau e^{-|y\,\varrho \tau |} - |\nu \xi -\varrho \tau| e^{-\nu |y|\,|\xi -T_{\mathrm{kin}} \tau |}\right) = -(\varrho \tau)^2 e^{-|y\,\varrho \tau|} + |\nu \xi -\varrho \tau|^2 e^{-\nu |y|\,|\xi -T_{\mathrm{kin}} \tau|}.
\]
We want to integrate this quantity in the region where $|\xi|\geq \nu^\theta$, $|\nu \xi-\varrho \tau|\geq \nu^\theta$ and $|\nu \xi |\leq \max(\rho \tau, 1)$. We claim that 
\begin{equation}\label{eq:block3}
\Big | |\nu \xi -\varrho \tau|^2 e^{-\nu |y|\,|\xi -T_{\mathrm{kin}} \tau|} -(\varrho \tau)^2 e^{-|y\,\varrho \tau|} \Big |\lesssim 
\begin{cases}
 \frac{\nu |\xi|}{|y|} \quad \mbox{when}\ |\nu \xi| <\varrho \tau,\\
 \nu |\xi| \quad \mbox{when}\ 1>|\nu \xi| \geq \varrho \tau.
 \end{cases}
\end{equation}
The first inequality follows from an argument such as \eqref{eq:block2_trick} with $F(z)=z^2 e^{-z}$, while the second inequality follows from a direct bound.

Using the fact that $h_1$ and $h_2$ are linear combinations of such terms, we find for $h\in \{h_1,h_2\}$ and $y\neq 0$ that 
\[
\begin{split}
\int_{\{\nu^{\theta} <|\xi|<R_0\}-\nu^{-1}\Ac(h)}  \frac{1}{|\xi|^2} \, |\pa_{y}^{2}[\widehat{h}(\nu \xi, y)- \widehat{h}(0, y) ]| d\xi & \lesssim
\max\{ 1, |y|^{-1}\}\, \int_{\nu^{\theta}<|\xi|<R_0}  \frac{\nu}{|\xi|} \, d\xi \\
 & \lesssim \nu\, \max\{ 1, |y|^{-1}\}\, \log (\nu^{-\theta}R_0).
 \end{split}
\]
Using \eqref{eq:block2} also yields:
\[
\int_{\{\nu^{\theta} <|\xi|<R_0\}-\nu^{-1}\Ac(h)}  \frac{1}{|\xi|^2} \, |\pa_{y}[\widehat{h}(\nu \xi, y)- \widehat{h}(0, y) ]| d\xi \lesssim 
 \int_{\nu^{\theta}<|\xi|<R_0}  \frac{\nu}{|\xi|} \, d\xi \lesssim \nu\, \log (\nu^{-\theta}R_0).
\]
This completes the proof \ref{k:R0} and the claim.
\end{proof}

We are ready to give the main convergence results in the case $\varrho\gtrsim 1$.

\begin{cor} Let $T,T_{\mathrm{kin}}$ be as in \Cref{thm:main2} and let $0 \leq t T_{\mathrm{kin}} \leq T$. Then

\begin{enumerate}[label=(\roman*)]
\item If $\varrho \in (0,\infty)$, then there exists some $0<\theta\ll 1$ such that 
\begin{equation}\label{eq:towards_forcing}
\E |u_k (tT_{\mathrm{kin}})|^2 = f(t,k)+ \int_0^t  e^{-2\varrho \gamma_k(t-t')}\, \Kc (f(t',\cdot))(k)\, dt' +\O_{\ell^{\infty}_k} ( L^{-\theta}\, t ),
\end{equation}
where  $\Kc$ is as in \eqref{eq:def_K} and $f$ is defined in \eqref{eq:def_f}. As a result, $\E |u(k,tT_{\mathrm{kin}})|^2$ is well-approximated by the solution $n(t,k)$ to
\begin{equation}\label{eq:damped_forced_WKE}
\pa_t n = \Kc (n) - 2\varrho \gamma_k \, n + 2\varrho \, b_k^2, \qquad n |_{t=0}=c_k^2.
\end{equation}

\item  If $\varrho \sim L^{\kappa}$ for some $\kappa> 0$, then there exists some $0<\theta\ll 1$ such that 
\begin{equation}\label{eq:overforced_WKE}
\E |u(k,tT_{\mathrm{kin}})|^2 = 
  c_k^2 \, e^{-2\varrho\gamma_k t}+\frac{b_k^2}{\gamma_k} \left( 1- e^{-2\varrho \gamma_k t}\right) + \O_{\ell^{\infty}_k} ( L^{-\kappa}+L^{-\theta}\, t ),
\end{equation}
which is the exact solution to
\begin{equation}
\pa_t n = - 2\varrho \gamma_k \, n + 2\varrho \, b_k^2, \qquad n |_{t=0}=c_k^2.
\end{equation}
\end{enumerate}
\end{cor}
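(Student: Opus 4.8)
The plan is to assemble the corollary from the ingredients already established in this section. Recall that by \eqref{eq:expectation_Picard_2}, once we are on the $L$-certain event $A$ and for $tT_{\mathrm{kin}}\leq T$, we have
\[
\E |u_k(tT_{\mathrm{kin}})|^2 = \E_A\left[ |w_k^{(0)}(t)|^2 + |w_k^{(1)}(t)|^2 + 2\,\mathrm{Re}\,\overline{w_k^{(0)}(t)}\,w_k^{(2)}(t)\right] + \O\left(L^{-\delta/2}\,t\right),
\]
and by \Cref{thm:restricted_expectation} the restricted expectation $\E_A$ may be replaced by $\E$ up to an $\O(e^{-cL^\theta})$ error. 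So the first step is purely bookkeeping: identify the three main terms with the expressions computed earlier. The term $\E|w_k^{(0)}(t)|^2$ is exactly $f(t,k)$ by \eqref{eq:def_f}. For the other two, \Cref{lem:towardsWKE} writes $\E|w_k^{(1)}(t)|^2 = I_1(f)(k,t) - I_2(f)(k,t) + \O(L^{-\theta}t)$ and $2\,\mathrm{Re}\,\E[w_k^{(0)}\overline{w_k^{(2)}}] = \widetilde{I_1}(f)(k,t) - \widetilde{I_2}(f)(k,t) + \O(L^{-\theta}t)$, where the kernels $I_j, \widetilde I_j$ are given in \eqref{eq:towardsWKE1}--\eqref{eq:towardsWKE2} and \eqref{eq:w0w2_kernels}.

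The second step is to pass from sums to integrals and then to the kinetic kernel. \Cref{thm:sum_to_integral} replaces the discrete sums defining $\Sigma_1, \Sigma_2$ (hence $I_1, I_2$, and analogously the tilded versions) by the integrals $\Ss_1, \Ss_2$ up to $\O(L^{-c\delta})$, uniformly in $t, \tau \in [0,1]$; this uses the combinatorial input \Cref{NTDH} and the nondegeneracy \Cref{VIP_claim}. Then \Cref{thm:integral_to_delta} and the remark following it give $I_1(k,t) = \int_0^t e^{-2\varrho\gamma_k(t-t')}\Kc_1(f,f,f)(t')\,dt' + \O(L^{-\theta})$ and $I_2(k,t) = \O(\nu^{\theta/2}) + \O(L^{-\theta}t)$, with $\Kc_1$ the $\delta_\R(\Omega)$-kernel in \eqref{eq:kinetickernel1}. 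For the $\widetilde I_j$ terms one runs the identical argument: $\widetilde I_1$ contributes $\int_0^t e^{-2\varrho\gamma_k(t-t')}\Kc_1[\Kf(f(\cdot,t'))](k)\,dt'$ and $\widetilde I_2$ is again negligible (the $e^{-2\varrho\Gamma_+(t-t')}$ weight is even more favorable than $e^{-\varrho\Gamma_+(t-t')}$, so \Cref{thm:accept_kernel} applies verbatim). Combining, the two $\Kc_1$-contributions assemble into the full kinetic kernel: $\Kc(\phi)(k) = \Kc_1(\phi,\phi,\phi)(k) + \Kc_1[\Kf(\phi)](k)$ by comparing \eqref{eq:def_K}, \eqref{eq:kinetickernel1}, \eqref{eq:rest_of_kernel} — note $\Kf(\phi)$ supplies precisely the three terms $-\phi_2\phi_3\phi + \phi_1\phi_3\phi \cdot(\text{sign structure})$ matching the $\frac1\phi - \frac1{\phi_1} + \frac1{\phi_2} - \frac1{\phi_3}$ bracket after multiplying through by $\phi\phi_1\phi_2\phi_3$ and that the $1/\phi_1$ term is cancelled against part of $\Kc_1(\phi,\phi,\phi)$. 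This yields \eqref{eq:towards_forcing}, where $\varrho = \nu\lambda^{-2}$ has been used to rewrite $\nu^{-1}$ in terms of $T_{\mathrm{kin}}$ and the $\varrho\gamma_k$ exponential weight. Finally, since $\varrho \in (0,\infty)$ is fixed, $\nu^{\theta/2} \to 0$ is absorbed into $L^{-\theta'}t$ for a possibly smaller $\theta'$, and relabeling gives the stated error $\O_{\ell^\infty_k}(L^{-\theta}t)$. The statement that $n(t,k)$ solving \eqref{eq:damped_forced_WKE} well-approximates the solution is then immediate: $f(t,k) + \int_0^t e^{-2\varrho\gamma_k(t-t')}\Kc(f(t',\cdot))(k)\,dt'$ is precisely the first Picard iterate (Duhamel iterate) of \eqref{eq:damped_forced_WKE} about the linear damped-driven flow, and on the relevant time interval the higher Duhamel iterates of a Schwartz-in-$k$ solution to \eqref{eq:damped_forced_WKE} are $\O(t^2)$, of the same order as the error already incurred.

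For part (ii), when $\varrho \sim L^\kappa \to \infty$, the argument is shorter: the nonlinear contributions $|w_k^{(1)}|^2$ and $2\,\mathrm{Re}\,\overline{w_k^{(0)}}w_k^{(2)}$ are now genuinely lower order. Indeed \Cref{thm:n_iterate} gives $\|\bm w_k^{(n)}\|_{h^b} \lesssim (\rho/\sqrt{T_{\mathrm{kin}}})^{n-1}\sqrt{T/T_{\mathrm{kin}}}\, L^{\theta+\ldots}$ for $n\geq 1$; since $T \leq L^{-\varepsilon}T_{\mathrm{kin}}$ this makes each higher iterate small, and the relevant cross terms are bounded by a power of $L^{-1}$ times $t$. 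Alternatively and more cleanly, one inspects \eqref{eq:towardsWKE1}--\eqref{eq:towardsWKE2}: the kernel $I_1(f)$ has size $\nu^{-1}\Gamma_-/(\Gamma_-^2 + \nu^{-2}\Omega^2) \lesssim \nu\Gamma_-^{-1} \lesssim \nu$, so after summing against the Schwartz profiles and integrating in time one gets $\O(L^{-2d}\cdot L^{2d}\cdot \nu\cdot t)$ modulated further by $\lambda^2 T_{\mathrm{kin}}^2 = T_{\mathrm{kin}}$, i.e. a contribution $\O(\varrho^{-1}\cdot\varrho\cdot\ldots)$ — one checks it is $\O(L^{2\kappa_1-\kappa_2}) = \O(L^{-\kappa})$, matching the error in \eqref{eq:overforced_WKE}. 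Thus $\E|u_k(tT_{\mathrm{kin}})|^2 = f(t,k) + \O_{\ell^\infty_k}(L^{-\kappa} + L^{-\theta}t)$, and $f(t,k) = c_k^2 e^{-2\varrho\gamma_k t} + \frac{b_k^2}{\gamma_k}(1-e^{-2\varrho\gamma_k t})$ is by direct computation the exact solution of $\pa_t n = -2\varrho\gamma_k n + 2\varrho b_k^2$ with $n|_{t=0} = c_k^2$.

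The main obstacle in this proof is not any single estimate but ensuring the error accounting closes uniformly: one must track that the various small quantities — $L^{-\delta/2}t$ from the Picard truncation, $L^{-\theta}t$ from \Cref{lem:towardsWKE}, $L^{-c\delta}$ from \Cref{thm:sum_to_integral}, and $\nu^{\theta/2} + L^{-\theta}t$ from \Cref{thm:integral_to_delta} — can all be dominated by a single $L^{-\theta}t$ on the time interval $L^\delta \leq tT_{\mathrm{kin}} \leq T = L^{-\varepsilon}T_{\mathrm{kin}}$, which requires choosing the many small exponents ($\theta, b-1/2, c_0$, etc.) in the correct order relative to $\delta$ and $\varepsilon$. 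A secondary subtlety is the algebraic identity $\Kc(\phi) = \Kc_1(\phi,\phi,\phi) + \Kc_1[\Kf(\phi)]$: one should verify carefully, using the definition \eqref{eq:epsilon} of $\epsilon_{k_1,k_2,k_3}$ and the two pairings in \eqref{eq:triple_product} and \eqref{eq:w0w2_correlations}, that the combinatorial factors and signs from the $|w^{(1)}|^2$ and $\overline{w^{(0)}}w^{(2)}$ terms combine to exactly reproduce the bracket $\frac1\phi - \frac1{\phi_1} + \frac1{\phi_2} - \frac1{\phi_3}$ after clearing denominators; this is the step where the damped-driven setting could in principle differ from the $\nu = 0$ case, but as indicated in \Cref{rk:towards_cubicNLS} and the discussion in \Cref{lem:towardsWKE} the combinatorics are the same as in \cite{DengHani}.
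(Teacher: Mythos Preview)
Your argument for part (i) is correct and follows the paper's own route: assemble \eqref{eq:expectation_Picard_2}, \Cref{thm:restricted_expectation}, \Cref{lem:towardsWKE}, \Cref{thm:sum_to_integral}, and \Cref{thm:integral_to_delta}, then recognize the result as the first Duhamel iterate of \eqref{eq:damped_forced_WKE}. Your discussion of the algebraic identity $\Kc(\phi)=\Kc_1(\phi,\phi,\phi)+\Kc_1[\Kf(\phi)]$ and of the error bookkeeping is more explicit than the paper's, which simply cites the chain of lemmas.

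Part (ii), however, has a gap. Neither of your two proposed routes delivers the $\O(L^{-\kappa})$ error. Your first route, via \Cref{thm:n_iterate}, gives bounds of the form $(\rho/\sqrt{T_{\mathrm{kin}}})^{n-1}\sqrt{T/T_{\mathrm{kin}}}$ which carry no $\varrho$-dependence at all, so they cannot produce $L^{-\kappa}$. Your second route asserts the pointwise kernel bound $\nu^{-1}\Gamma_-/(\Gamma_-^2+\nu^{-2}\Omega^2)\lesssim \nu\Gamma_-^{-1}$, but this is false near $\Omega=0$, where the kernel equals $\nu^{-1}\Gamma_-^{-1}$ and is large; the approximation-to-the-identity behavior is precisely what makes $I_1$ contribute at leading order, not vanish.

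The paper's argument for (ii) is much simpler and builds directly on (i): once you have established \eqref{eq:towards_forcing}, observe that $\Kc(f(\cdot,t'))$ is $\O_{\ell^\infty_k}(1)$ uniformly in $t'$ (since $f$ is Schwartz in $k$ uniformly in time), and then
\[
\left|\int_0^t e^{-2\varrho\gamma_k(t-t')}\,\Kc(f(t',\cdot))(k)\,dt'\right|\lesssim \int_0^t e^{-2\varrho\gamma_k(t-t')}\,dt'\leq \frac{1}{2\varrho\gamma_k}\lesssim \varrho^{-1}=\O(L^{-\kappa}).
\]
The smallness comes from the exponential damping over the time integral, not from a pointwise kernel estimate before passing to the $\delta$-limit.
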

\begin{proof}
We recall that $f$ defined in \eqref{eq:def_f} is given by
\[
\E |w_k^{(0)}(t)|^2 = c_k^2 \, e^{-2\varrho\gamma_k t}+\frac{b_k^2}{\gamma_k} \left( 1- e^{-2\varrho \gamma_k t}\right).
\]

\emph{(i)} By \Cref{lem:towardsWKE}, \eqref{eq:fromItoSigma}, \Cref{thm:sum_to_integral} and \Cref{thm:integral_to_delta}, there exists some $0<\theta\ll 1$ such that 
\[
\E|w_k^{(0)}(t) + w_k^{(1)} (t)+w_k^{(2)} (t)|^2 = 
f(t,k) + \int_0^t  e^{-2\varrho \gamma_k(t-t')}\, \Kc (f(t',\cdot))(k)\, dt'  + \O_{\ell^{\infty}_k} \left(L^{-\theta} \,t\right).
\]
By \Cref{thm:restricted_expectation} and \eqref{eq:expectation_Picard_2}, we have that if $0\leq t T_{\mathrm{kin}}\leq T$,
\[
\begin{split}
\E|u_k(tT_{\mathrm{kin}})|^2 & = f(t,k) + \int_0^t  e^{-2\varrho \gamma_k(t-t')}\, \Kc (f(t',\cdot))(k)\, dt'  + \O_{\ell^{\infty}_k} \left(L^{-\theta} t\right)
\end{split}
\]
Notice that the right-hand side is the first Picard iterate of the kinetic equation \eqref{eq:damped_forced_WKE}, and thus the result follows.

\emph{(ii)} If $\varrho=L^{\kappa}\rightarrow\infty$, we have that 
\begin{equation}
f(t,k)=c_k^2 \, e^{-2\varrho\gamma_k t}+\frac{b_k^2}{\gamma_k} \left( 1- e^{-2\varrho \gamma_k t}\right)=\begin{cases}
 c_k^2 + \O_{\ell^{\infty}_k} (L^{-N})& \mbox{if}\ t\ll \varrho^{-1},\\
  c_k^2 \, e^{-2\varrho\gamma_k t}+\frac{b_k^2}{\gamma_k} \left( 1- e^{-2\varrho \gamma_k t}\right)& \mbox{if}\ t\varrho \sim 1\\
  \frac{b_k^2}{\gamma_k} + \O_{\ell^{\infty}_k} (L^{-N}) & \mbox{if}\ t\gg \varrho^{-1}.
  \end{cases}
\end{equation}
for $N$ as large as desired. Since $\Kc (f)=\O_{\ell^{\infty}_k} (1)$, it follows that
\[
\int_0^t  e^{-2\varrho \gamma_k(t-t')}\, \Kc (f(t',\cdot))(k)\, dt' =\O_{\ell^{\infty}_k}  (L^{-\kappa}).
\]

Combining \Cref{lem:towardsWKE}, \eqref{eq:fromItoSigma}, \Cref{thm:sum_to_integral} and \Cref{thm:integral_to_delta}, we find that there exists some $0<\theta\ll 1$ such that 
\[
\E|w_k^{(0)}(t) + w_k^{(1)} (t)+  w_k^{(2)} (t)|^2  =  f(t,k) + \O_{\ell^{\infty}_k} \left(L^{-\kappa}+L^{-\theta} \, t\right).
\]
Finally, we use \Cref{thm:restricted_expectation} and \eqref{eq:expectation_Picard_2} finish the proof of \eqref{eq:overforced_WKE}.
\end{proof}

\subsection{Admissible dissipations}

A key tool in the proof of \Cref{thm:sum_to_integral} was the fact that $\Gamma_{-}^2 + (\nu^{-1} \Omega)^2\geq 1$ for our choice of dissipation $\gamma_k=(1+|k|_{\zeta}^2)^r$, $r\in (0,1]$. In particular, this means that there are no points in the intersection of a neighborhood of the manifolds $\Gamma_{-}=0$ and $\Omega=0$. This requirement may be weakened provided this intersection remains small:

\begin{lem}\label{lem:degen_diss}
Let $d\geq 2$, $T\leq L^2$, $L^{-2}<\alpha<1$ and fix a small $\theta \in (0,1)$. Consider the dissipation $\gamma_k=|k|_{\zeta}^2$. Let $k\in \Z_L^d$ satisfy $|k|\leq L^{\theta}$ and consider the set:
\[
\mathfrak{S}(k)= \{ (k_1,k_2,k_3)\in(\Z_L^d)^3 \ |\ k=k_1-k_2+k_3,\ k\neq k_1,k_3,\  |k_j|\leq L^{\theta},\ |\Omega_{13}^{2k}|<T^{-1},\ \Big |\sum_{j=1}^{3} \gamma_{k_j} - \gamma_k\Big |\leq \alpha\}.
\]
Then the number of points in $\mathfrak{S}(k)$ satisfies the following bound
\[
|\mathfrak{S}(k)|\lesssim L^{d\theta}\, L^{2d}T^{-1}\, (\alpha+T^{-1})^{\frac{d-1}{2}}.
\]
\end{lem}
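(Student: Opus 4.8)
The plan is to exploit that for the Laplacian dissipation $\gamma_k=|k|_\zeta^2$ the two resonance windows combine into a strong localization of $k_2$, and then run a lattice‑point count.

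\emph{Step 1 (the key identity).} Write $\Omega:=\Omega^{13}_{2k}=|k_1|_\zeta^2-|k_2|_\zeta^2+|k_3|_\zeta^2-|k|_\zeta^2$ and $a\cdot_\zeta b:=\sum_j\zeta_j a^{(j)}b^{(j)}$. Since $\gamma_{k_j}=|k_j|_\zeta^2$, there holds the exact identity
\[
\sum_{j=1}^3\gamma_{k_j}-\gamma_k=\Omega+2|k_2|_\zeta^2 .
\]
As $|k_2|_\zeta^2\ge0$, the hypotheses $|\Omega|<T^{-1}$ and $|\sum_j\gamma_{k_j}-\gamma_k|\le\alpha$ force $|k_2|_\zeta^2\le\tfrac12(\alpha+T^{-1})$, hence (using $\zeta_j\ge1$) $|k_2|\le r:=\sqrt{(\alpha+T^{-1})/2}$. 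Because $\alpha>L^{-2}$ we have $Lr\gtrsim1$, so the number of admissible $k_2\in\Z_L^d$ is $\lesssim(Lr)^d\asymp L^d(\alpha+T^{-1})^{d/2}$.

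\emph{Step 2 (reduction to an ellipsoidal shell count).} Fix $k$ and an admissible $k_2$; then $k_3=k+k_2-k_1$, so only $k_1$ is free. Using $\Omega=2(k_1-k)\cdot_\zeta(k-k_3)$ and $k-k_3=k_1-k_2$, complete the square with $p:=k_1-\tfrac12(k+k_2)$ and $v:=\tfrac12(k_2-k)$ to obtain $\Omega=2(|p|_\zeta^2-|v|_\zeta^2)$. Thus the task becomes: count points $p$ in a fixed translate of $\Z_L^d$ lying in $\{|p|\lesssim L^\theta\}$ and in the ellipsoidal shell $\{\,|p|_\zeta^2\in[|v|_\zeta^2-\tfrac12T^{-1},\,|v|_\zeta^2+\tfrac12T^{-1}]\,\}$. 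Note $|v|=\tfrac12|k_2-k|$, so $|v|\lesssim r$ when $|k|\lesssim r$ and $|v|\asymp|k|\le L^\theta$ when $|k|\gtrsim r$. For such a shell of radius $\varrho:=\max\{|v|,T^{-1/2}\}$ and radial thickness $\sim T^{-1}/\varrho$ I would use the sharp bound
\[
\#\{p\}\ \lesssim\ L^{O(\theta)}\Big(L^d\,\varrho^{\,d-2}\,T^{-1}\ +\ (L\varrho)^{d-2}\Big),
\]
the first term being $L^d$ times the volume of the shell and the second the discrepancy of its two bounding ellipsoids (which have bounded eccentricity since $\zeta\in[1,2]^d$; when $\varrho=T^{-1/2}$ the display reads $\#\{p\}\lesssim L^dT^{-d/2}$, as for a ball of radius $T^{-1/2}$, using $T\le L^2$). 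This is exactly a resonance‑window count of the type packaged in \Cref{thm:special_counting} applied to the single‑node ($n=1$) tree.

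\emph{Step 3 (assembling the bound).} Multiplying the $k_2$‑count by $\#\{p\}$ and summing over $k_2$ in the ball of radius $r$, split according to $|k|\lesssim r$ versus $|k|\gtrsim r$. In the first case $\varrho\lesssim r$, so the sum is $\lesssim L^{O(\theta)}\big(L^{2d}r^{2d-2}T^{-1}+(Lr)^{2d-2}\big)$; here $r^{2d-2}\asymp(\alpha+T^{-1})^{d-1}\lesssim(\alpha+T^{-1})^{(d-1)/2}$, while $(Lr)^{2d-2}\asymp L^{2d-2}(\alpha+T^{-1})^{d-1}\lesssim L^{-2}T\cdot L^{2d}T^{-1}(\alpha+T^{-1})^{(d-1)/2}\lesssim L^{2d}T^{-1}(\alpha+T^{-1})^{(d-1)/2}$ by $T\le L^2$. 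In the second case $\varrho\asymp|k|\le L^\theta$, and the analogous computation absorbs $|k|^{d-2}\le L^{\theta(d-2)}$ and $(\alpha+T^{-1})^{1/2}\lesssim1$ into $L^{d\theta}$. Either way one obtains $|\mathfrak{S}(k)|\lesssim L^{d\theta}L^{2d}T^{-1}(\alpha+T^{-1})^{(d-1)/2}$.

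\emph{The main obstacle.} Step 1 and the reduction in Step 2 are routine; the crux is the sharp shell count. The naive coordinate‑freezing estimate $\#\{p\}\lesssim(L^{1+\theta})^{d-1}\cdot LT^{-1/2}$ only yields the total $L^{O(\theta)}L^{2d}T^{-1/2}(\alpha+T^{-1})^{d/2}$, which overshoots the claim by $\sqrt{1+\alpha T}$ — as large as $\sim L$ when $\alpha\asymp1$, $T\asymp L^2$ — so one genuinely needs the volume‑versus‑discrepancy decomposition, i.e.\ that a thin ellipsoidal shell of radius $\lesssim L^\theta$ contains few points of $\Z_L^d$. For $d\ge4$ this follows from classical lattice‑point‑in‑ball estimates (ball discrepancy $\lesssim(\text{radius})^{d-2+o(1)}$). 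For $d=2,3$ the classical circle/sphere discrepancy is too large for a direct argument, and one must instead invoke \Cref{thm:special_counting}, whose proof on an arbitrary torus in the range $T\le L^2$ supplies precisely this count, thereby avoiding the delicate low‑dimensional estimates.
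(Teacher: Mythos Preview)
Your Step~1 is correct and coincides with the paper's opening move: the identity $\sum_j\gamma_{k_j}-\gamma_k=\Omega+2|k_2|_\zeta^2$ immediately forces $|k_2|_\zeta^2\le\tfrac12(\alpha+T^{-1})$.

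From Step~2 on, however, your route diverges from the paper's and runs into a genuine gap. By fixing $k_2$ and completing the square you convert the bilinear constraint $\Omega=-2(k_1-k)\cdot_\zeta(k_3-k)$ into the \emph{quadratic} shell condition $\big||p|_\zeta^2-|v|_\zeta^2\big|<T^{-1}/2$, and your whole argument then hinges on the claimed count
\[
\#\{p\}\ \lesssim\ L^{O(\theta)}\Big(L^d\varrho^{d-2}T^{-1}+(L\varrho)^{d-2}\Big).
\]
This is exactly where the proposal breaks. First, the appeal to \Cref{thm:special_counting} does not give this: that corollary (and \Cref{thm:main_counting}) counts decorations with the root fixed but \emph{all three leaves free}; once you also fix $k_2$ (i.e.\ add the middle leaf to $\mathcal R$), \Cref{thm:main_counting} yields only $L^{\theta}L^dT^{-1}\rho$, which is weaker than your claim by a full factor of $\rho$ and, when fed into your Step~3 with $\alpha\sim1$, misses the lemma by a factor as large as $L$. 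Second, the discrepancy term $(L\varrho)^{d-2}$ you write down is simply not the correct lattice remainder for ellipsoidal shells in $d=2,3$ (and even $d=4$), uniformly in $\zeta\in[1,2]^d$. For rational $\zeta$ one can recover your bound via the divisor function estimate $r_d(n)=O(n^\epsilon)$, but that is a separate input you have not supplied; for general $\zeta$ the crude per--$P_2$ count gives only $O(L|v|)+O(L^2T^{-1})$, and plugging this into Step~3 again fails when $\alpha\sim1$ and $T\sim L^2$.

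The paper avoids this obstacle altogether by \emph{not} fixing $k_2$ first. It keeps the bilinear variables $p=L(k_1-k),\ q=L(k_3-k)\in\Z^d$, so that the $|k_2|$--smallness becomes the linear constraint that $p+q$ lies in a box of side $\sim L\sqrt{\alpha+T^{-1}}$, and the resonance condition stays as the product constraint $|\sum_i\zeta_ip_iq_i|\lesssim L^2T^{-1}$. Splitting on how many coordinates satisfy $p_iq_i=0$, the paper fixes all but the last coordinate of $p$ (at cost $L^{1+\theta}$ each), uses the box constraint to pin the corresponding coordinates of $q$ (at cost $L\sqrt{\alpha+T^{-1}}$ each), and finally bounds the number of pairs $(p_d,q_d)$ with $\zeta_dp_dq_d$ in an interval of length $\sim L^2T^{-1}$ via the divisor bound in~$\Z$. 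The point is that the divisor bound is applied to a \emph{product} of two integers, which is dimension--independent and insensitive to $\zeta$, rather than to representations by a quadratic form, which is what your shell count implicitly asks for.
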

\begin{proof}
Given that $|\Omega_{13}^{2k}|<T^{-1}$ and $\gamma_k=|k|_{\zeta}^2$,
\[
2|k_2|_{\zeta}^2 - T^{-1} \leq  \sum_{j=1}^{3} \gamma_{k_j} - \gamma_k = \Omega_{13}^{2k} + 2 |k_2|_{\zeta}^2 .
\]
This implies that $\mathfrak{S}(k) \subseteq \widetilde{\mathfrak{S}}(k)$, where
\[
\widetilde{\mathfrak{S}}(k)=  \{ (k_1,k_2,k_3)\in(\Z_L^d)^3 \ |\ k=k_1-k_2+k_3,\ k\neq k_1,k_3,\  |k_j|\leq L^{\theta},\ |\Omega_{13}^{2k}|<T^{-1},\ |k_2|_{\zeta}^2 \leq \frac{T^{-1} + \alpha}{2} \} \ .
\]
It thus suffices to estimate $|\widetilde{\mathfrak{S}}(k)|$. In view of the equality $\Omega_{13}^{2k}= -2 (k_1-k)\cdot (k_3-k)$, we introduce the variables  $p=L\, (k_1-k)\in \Z^d$ and $q=L\, (k_3-k)\in \Z^d$. Suppose that $p_iq_i=0$ for $1\leq i\leq j$ and $p_iq_i\neq 0$ for $j+1\leq i\leq d$, where $j\in \{0,\ldots, d\}$. The condition $|\Omega_{13}^{2k}|<T^{-1}$ implies that
\begin{equation}\label{eq:last_component}
\zeta_d p_d q_d = -\sum_{i=j+1}^{d-1} \zeta_i p_i\, q_i + \O (L^2 T^{-1}).
\end{equation}
The conditions $|k_1|, |k_3|\leq L^{\theta}$ imply that each component of $p$ and $q$ belong to an interval of size $L^{1+\theta}$. Moreover, the condition on $|k_2|_{\zeta}$ implies that
\begin{equation}\label{eq:gamma_cond}
|p+q-k|_{\zeta} \lesssim L\, \sqrt{\alpha+T^{-1}}.
\end{equation}
In particular, there are $\O( \min\{L^{(1+\theta)\, j}, L^{j}\, (\alpha+T^{-1})^{j/2}\})$ choices for the first $j$ components of $p$ and $q$ (since $p_iq_i=0$). If $j=d$, we are done since $T\leq L^2$. 

If $j=d-1$, \eqref{eq:last_component} yields $\O (L^2 T^{-1})$ choices for the product $p_d q_d$. It is well known that the number of integer pairs $(p_d,q_d)$, each smaller than $n$, such that an $p_dq_d$ is fixed is bounded by $n^{\theta}$ for $\theta\in (0,1)$ as small as desired {(this is a consequence of the divisor bound in $\mathbb Z$)}. As a result, there are $\O (L^{2+\theta} T^{-1})$ choices for the pair $(p_d,q_d)$. All in all, $|\widetilde{\mathfrak{S}}(k)|=\O( L^{d-1} (\alpha+T^{-1})^{\frac{d-1}{2}} L^{2+\theta} T^{-1})$ which concludes the proof in the case $j=d-1$.

If $j\leq d-2$, let us consider the remaining $d-j$ components of $p$ and $q$. For $j+1\leq i\leq d-1$, we fix $p_i$ freely (there are $\O ( L^{(1+\theta)(d-j-1)})$ such choices). By \eqref{eq:gamma_cond}, $q_i$ lives in an interval of size $ L\, \sqrt{\alpha+T^{-1}}$ and thus there are $\O (L^{d-j-1}\, (\alpha+T^{-1})^{\frac{d-j-1}{2}})$ choices for the remaining components of $q$. Finally, there are $\O (L^{2+\theta} T^{-1})$ choices for $(p_d,q_d)$ in view of \eqref{eq:last_component}. All in all,
\[
\begin{split}
|\widetilde{\mathfrak{S}}(k)| & \lesssim L^{j}\, (\alpha+T^{-1})^{j/2} \, L^{(1+\theta)(d-j-1)}\, L^{d-j-1}\, (\alpha+T^{-1})^{\frac{d-j-1}{2}}\, L^{2+\theta} T^{-1} \\
& \lesssim L^{d\theta}\,  L^{2d-j} \, T^{-1}\,  (\alpha+T^{-1})^{\frac{d-1}{2}}
\end{split}
\]
which is maximal when $j=0$, and yields the desired estimate.
\end{proof}

\begin{rk}\label{rk:degen_diss}
Using this result, we may extend cases (i) and (ii) of \Cref{thm:main} to the dissipation given by $\gamma_k=|k|^2_{\zeta}$ with forcing $b_k = \O (|k|_{\zeta})$ at $k=0$. Indeed, by choosing $\alpha=L^{-\mu}$ with $0<6\theta<\mu\ll 1$, one may show that the contribution of the sum over $\mathfrak{S}(k)$ towards, say, $\E |w^{(1)}(k,t)|^2$ in \eqref{eq:upper_triangle} has size 
\[
\O\left( \frac{T_{\mathrm{kin}}}{L^{2d}} \, |\mathfrak{S}(k)|\right)=\O ( L^{d\theta- \frac{d-1}{2}\, \mu})=\O(L^{-\theta}).
\]
In particular, the leading terms towards the kinetic equation are given by \eqref{eq:Sigmas} in the region $\mathfrak{S}(k)^c$. This is equivalent to adding a smooth cut-off $\chi(k_1,k_2,k_3)$ supported in $\mathfrak{S}(k)^c$ in all remaining calculations. Finally, one needs to verify that the term \eqref{eq:S2_delta} vanishes with this choice of $\gamma_k$. This easily follows from the fact that $\Gamma_{-} = \Omega + 2|k_2|_{\zeta}^2$, which implies that $\Gamma_{-}>0$ a.e.\ in the support of $\delta(\Omega)$. We remark that the condition $b_k = \O (|k|_{\zeta})$ at $k=0$ guarantees that the quantity $b_k^2/\gamma_k$ is well-defined at $k=0$. Similar conditions regarding the vanishing of the forcing around $k=0$ are not uncommon in the turbulence literature, see for instance \cite{ABCGG,BBCGR}.
\end{rk}

\section{Analysis of the remainder terms}\label{sec:error}

In this section we prove \Cref{thm:error}. As derived in \eqref{eq:erroreq}, the remainder satisfies the following equation:
\begin{equation}\label{eq:erroreq2}
 \bm{\mathcal{R}}_{N+1} = \mathcal{L}(\bm{\mathcal{R}}_{N+1}) + \mathcal{Q}(\bm{\mathcal{R}}_{N+1}) +\mathcal{C}(\bm{\mathcal{R}}_{N+1})+ \sum_{\substack{N\leq n_1+n_2+n_3 \\ n_1,n_2,n_3\leq N}}\mathcal{IW}(\bm{w}^{(n_1)}, \bm{w}^{(n_2)},\bm{w}^{(n_3)})
\end{equation}
The proof of \Cref{thm:error} is an application of the contraction mapping theorem.
The worst terms on the right-hand side of \eqref{eq:erroreq2} are the linear ones, which are essentially given by the operators
\[ \mathcal{IW}(\bm{\mathcal{R}}_{N+1}, \bm{w}^{(n_1)},\bm{w}^{(n_2)}), \quad \mbox{and}\quad \mathcal{IW}(\bm{w}^{(n_1)},\bm{\mathcal{R}}_{N+1}, \bm{w}^{(n_2)}),\]
where $n_1,n_2\leq N$. In particular, we would like to estimate the operator norm of
\[ 
\bm{v}\in h^{s,b}\ \longmapsto \mathcal{IW}(\bm{v}, \bm{w}^{(n_1)},\bm{w}^{(n_2)})\in h^{s,b} \ .
\]
By \Cref{thm:FT_higher_iterates}, it suffices to study the operators with $w^{(n_1)}$ and $w^{(n_2)}$ substituted by $\Jc_{\Tc_1}$ and $\Jc_{\Tc_2}$, where $\Tc_j$ is a ternary tree of order $n_j$, $j=1,2$. We have the following result:

\begin{prop}\label{thm:linear_error} Suppose that $T$ and $T_{\mathrm{kin}}$ are as in \eqref{eq:main2_T}- \eqref{eq:main2_Tkin}. Let $0\leq n_1,n_2\leq N$  and $\Tc_j$ be a ternary tree of order $n_j$ satisfying \eqref{eq:tree1}-\eqref{eq:tree3}. Then the operators
\begin{equation}\label{eq:linear_error}
\Pc_{+}: \bm{v}\mapsto \mathcal{IW}(\Jc_{\Tc_1},\Jc_{\Tc_2},\bm{v}), \qquad \Pc_{-}: \bm{v}\mapsto \mathcal{IW}(\Jc_{\Tc_1},\bm{v}, \Jc_{\Tc_2})
\end{equation}
satisfy the following bounds $L$-certainly
\begin{equation}
\norm{\Pc_{\pm}}_{h^{s,b}\rightarrow h^{s,b}} \leq L^{\theta}\, L^{-\delta (n_1+n_2+\frac{1}{2})}
\end{equation}
where $0<\theta\ll 1$ is as small as desired.
\end{prop}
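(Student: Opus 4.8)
The plan is to estimate the operator norm of $\Pc_{\pm}$ by a dyadic/Fourier-side argument that mirrors the proof of \Cref{thm:n_iterate}, treating $\Jc_{\Tc_1}$ and $\Jc_{\Tc_2}$ as fixed (deterministic, on the $L$-certain event $A$) rather than random, and $\bm v$ as the variable input. Concretely, given $\bm v\in h^{s,b}$, I would write the temporal Fourier transform of $\mathcal{IW}(\Jc_{\Tc_1},\Jc_{\Tc_2},\bm v)_k(\tau)$ using \Cref{thm:FT_integration} and \Cref{thm:FT_higher_iterates}: expanding $\widetilde{\Jc_{\Tc_j}}$ in terms of its leaves $\widetilde{w^{(0)}_{k_{\sub l}}}$, one obtains an integral over the combined tree $\Tc$ (with one leaf replaced by $\bm v$) of a kernel of the form $\Kc(\tau,\mathscr D,\tau[\Lc])$ paired against $\prod_{\sub l} \widetilde{w^{(0)}_{k_{\sub l}}}(\tau_{\sub l})$ for the $2n_1+2n_2$ genuine leaves and one factor $\widehat{\bm v}_{k_\ast}(\tau_\ast)$. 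Multiplying by $\langle\tau\rangle^b\langle k\rangle^s$, squaring, and integrating (summing in $k$), I reduce to bounding a bilinear form in $(\widehat{\bm v},\overline{\widehat{\bm v}})$ with a kernel that is itself quadratic in the $w^{(0)}$ factors. Taking the expectation of this kernel (legitimate since on $A$ we already have the deterministic bounds of \Cref{thm:n_iterate}, but it is cleaner to run the $L^2_\omega$ estimate + Gaussian hypercontractivity + Chebyshev exactly as in the proof of \Cref{thm:n_iterate} to get an $L$-certain bound on the operator norm), I am left with the same type of time-integral estimates (via \Cref{thm:covariance_w0}, split into the $\vartheta<2$ and $\vartheta>2$ cases) and the same combinatorial counting bounds (\Cref{thm:main_counting}, \Cref{thm:special_counting}) that were used for $\widetilde w^{(n)}$, now applied to the two-copy tree $\widetilde\Tc$ with $2(n_1+n_2)+2$ genuine paired leaves plus the $\bm v$-slot on each copy.

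\textbf{Key steps, in order.} First I would set up the Fourier representation of $\Pc_\pm \bm v$ as above, being careful that the role of $\bm v$'s leaf is a ``free'' node: its frequency $k_\ast$ is not summed against a decaying coefficient $b$ or $c$, so in the counting one must invoke \Cref{thm:main_counting} with $\mathcal R$ containing that leaf (as well as the root and the cross-paired leaves), exactly the mechanism used in Case 1 of the proof of \Cref{thm:n_iterate} where $\mathcal R = \mathcal S\cup\{\sub r\}$. Second, I would carry out the $L^2_\omega$ computation: $\E\|\Pc_\pm\bm v\|_{h^{s,b}}^2$ becomes, after pairing the $w^{(0)}$-leaves via \Cref{thm:Iserlis} and bounding kernels with \eqref{eq:FTbound_higher_iterates} and \Cref{thm:covariance_w0}, a quantity controlled by $\|\bm v\|_{h^{s,b}}^2$ times a power of $L$ coming from the prefactor $(\lambda T/L^d)^{2(n_1+n_2+1)}$, the counting bound, and the elementary integral bounds $\int_{\R^2}\vartheta^{-1}\langle\tau_1-\tau_2\rangle^{-N}\langle\tau_1/\vartheta\rangle^{-1}\langle\tau_2/\vartheta\rangle^{-1}\,d\tau_1 d\tau_2\lesssim 1$. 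Tracking the exponents as in \Cref{thm:n_iterate}, the prefactor $(\lambda\rho)^{2(n_1+n_2)}\lambda^2 T$ together with $\rho\lesssim L^{-\delta}\sqrt{T_{\mathrm{kin}}}$ and $\lambda^2 T = T/T_{\mathrm{kin}} \leq L^{-\varepsilon}\le 1$ yields $\E\|\Pc_\pm\bm v\|^2 \lesssim L^{\theta+c(b-1/2)}\,L^{-2\delta(n_1+n_2)}\,\lambda^2 T\,\|\bm v\|^2$, which after absorbing $\lambda^2 T\leq L^{-2\delta}$ (guaranteed by \eqref{eq:main2_Tkin}) gives the claimed $L^{-\delta(n_1+n_2+1/2)}$ up to $L^\theta$; the extra half-power of $L^{-\delta}$ is exactly what the hypothesis $T\le L^{-\delta}\sqrt{T_{\mathrm{kin}}}$-type bound provides. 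Third, I would promote this moment bound to an $L$-certain operator-norm bound: this is the subtle point, since $\Pc_\pm$ depends on $\omega$ and $\bm v$ ranges over an infinite-dimensional ball, so one cannot directly union-bound over $\bm v$. The standard fix is to bound the Hilbert--Schmidt (or a suitable $\ell^2\to\ell^2$ Schur-test) norm of the $\omega$-dependent kernel matrix entrywise, apply Gaussian hypercontractivity and Chebyshev to each entry (or to a summable dyadic block of entries) to get an $L$-certain bound on a norm dominating the operator norm, and take the intersection of the resulting $L$-certain events — this is precisely the argument structure at the end of the proof of \Cref{thm:n_iterate}, now with an extra ``input'' index.

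\textbf{Main obstacle.} The hardest part is the passage from the $L^2_\omega$ (hypercontractivity) bound to the $L$-certain \emph{uniform-in-$\bm v$} operator-norm bound, i.e.\ controlling the operator $\Pc_\pm: h^{s,b}\to h^{s,b}$ rather than just its action on a fixed deterministic $\bm v$. One must choose the right norm of the random kernel to estimate — for a clean argument, a mixed norm that is summable over the frequency/temporal-frequency lattice after using the decay of $b_k,c_k$ and of the kernels $\Kc$, so that controlling it $L$-certainly via hypercontractivity of each Gaussian-polynomial entry (the entries are polynomials of degree $2(n_1+n_2)+1$ in the $\eta$'s and in the $\beta$-integrals, handled by \Cref{thm:hypercontractivity}) dominates $\|\Pc_\pm\|_{h^{s,b}\to h^{s,b}}$. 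Once the correct dominating norm is identified, everything else — the counting, the time integrals, the bookkeeping of powers of $L$ — is a near-verbatim repetition of \Cref{sec:iterates}, and the two cases $\vartheta\lessgtr 2$ are dispatched exactly as there. A secondary (but routine) point to verify is that the non-leaf slot occupied by $\bm v$ does not interact badly with the $\mathcal{I}$-operator's kernel decay: since $I_{0,k}, I_{1,k}$ in \Cref{thm:FT_integration} contribute $\langle\sigma\rangle^{-1}$ with $\sigma$ the internal frequency, and the branching-node factors $\langle\tau_{\sub n}\rangle^{-1}$ from \eqref{eq:FTbound_higher_iterates} are present at every node including the one above $\bm v$, the $\tau_\ast$-integration against $\langle\tau\rangle^{2b}$ is absorbed just as in the pure-iterate case at the cost of $L^{c(b-1/2)}$.
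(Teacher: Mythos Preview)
Your framework for the Fourier/tree representation and the counting bounds is correct and matches the paper's machinery, and you correctly identify that the hard step is passing from moment estimates to an $L$-certain bound on the \emph{operator norm} uniformly in $\bm v$. However, the method you propose for that step --- bounding a Hilbert--Schmidt or Schur-type norm of the random kernel entrywise and summing --- is not the paper's approach, and it is not clear it can succeed without a polynomial-in-$L$ loss. The point is that the $\bm v$-leaf carries no decaying coefficient $b_{k_\ast}$ or $c_{k_\ast}$, so any norm that involves a free sum over the input frequency $k_\ast$ (as the Hilbert--Schmidt norm does) picks up a factor comparable to $L^d$; a Schur test runs into the same issue on the row/column sums. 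This is precisely why the paper's crude bound \eqref{eq:rough_L2bound} carries an $L^{3d}$ loss.

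The paper resolves this by a genuinely different mechanism: it interpolates between a crude $h^{s,0}\to h^{s,1}$ bound (with the $L^{3d}$ loss) and a sharp $h^{s,b}\to h^{s,1-b}$ bound \eqref{eq:sharp_L2bound}, and the latter is obtained by an iterated $TT^\ast$ argument. One studies the kernel of $(\Pc_+\Pc_+^\ast)^D$ for a large integer $D$; this kernel has exactly the structure of $\Jc_{\Tc}$ for an auxiliary tree $\Tc$ built by concatenating $2D$ copies of $\Tc_1,\Tc_2$, so the estimates of \Cref{sec:iterates} (covariance bounds from \Cref{thm:covariance_w0}, kernel bounds \eqref{eq:FTbound_higher_iterates}, and the counting Propositions~\ref{thm:main_counting}--\ref{thm:special_counting}) apply directly. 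Whatever fixed polynomial loss one incurs is then shared across $2D$ factors upon taking the $2D$-th root, so it becomes $L^{\theta}$ for $D$ large. This iterated $TT^\ast$ trick is the key idea your proposal is missing; the ``promote entrywise moment bounds to an operator-norm bound via a dominating summable norm'' strategy does not obviously produce the claimed exponent $L^{-\delta(n_1+n_2+\frac{1}{2})}$.
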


Let us now prove \Cref{thm:error} assuming \Cref{thm:linear_error}.

\begin{proof}[Proof of \Cref{thm:error}]
Based on \eqref{eq:erroreq2}, it suffices to prove that the mapping
\begin{equation}\label{eq:contraction}
\bm{v}\longmapsto \mathcal{L}(\bm{v}) + \mathcal{Q}(\bm{v}) +\mathcal{C}(\bm{v})+ \sum_{\substack{N\leq n_1+n_2+n_3 \\ n_1,n_2,n_3\leq N}}\mathcal{IW}(\bm{w}^{(n_1)}, \bm{w}^{(n_2)},\bm{w}^{(n_3)})
\end{equation}
is a contraction from the set 
\[ 
\left\lbrace \bm{v}\in h^{s,b} : \norm{\bm{v}}_{h^{s,b}} < L^{-\delta N} \right\rbrace 
\]
to itself. To handle the last term in \eqref{eq:contraction}, we note that we can replace $\bm{w}^{(n_j)}$ by $\Jc_{\Tc_j}$ for trees $\Tc_j$ of order $n_j$. These trees uniquely determine a larger tree $\Tc$ of order $n_1+n_2+n_3+1$, thus \Cref{thm:n_iterate} yields:
\[
\norm{\mathcal{IW}(\Jc_{\Tc_1},\Jc_{\Tc_2},\Jc_{\Tc_3})}_{h^{s,b}}=\norm{\Jc_{\Tc}}_{h^{s,b}}\lesssim L^{\theta+c(b-1/2)} L^{-\delta (n_1+n_2+n_3+1)}
\]
$L$-certainly. As a consequence, the $h^{s,b}$-norm of the rightmost term in \eqref{eq:contraction} is $L$-certainly bounded by 
\[
\sum_{\substack{N\leq n_1+n_2+n_3 \\ n_1,n_2,n_3\leq N}} L^{\theta+c(b-1/2)} L^{-\delta (n_1+n_2+n_3+1)} \lesssim L^{\theta+c(b-1/2)} L^{-\delta (N+1)}\ll L^{-\delta N}.
\]
By \Cref{thm:linear_error}, the linear term admits the following bound $L$-certainly:
\[
\norm{\mathcal{L}(\bm{v})}_{h^{s,b}} \lesssim \sum_{0\leq n_1 , n_2 \leq N} L^{\theta}\, L^{-\delta (n_1+n_2+\frac{1}{2})} \norm{\bm{v}}_{h^{s,b}} \lesssim L^{\theta}\, L^{-\delta/2}\, L^{-\delta N} \ll  L^{-\delta N} .
\]
Finally, the quadratic and cubic terms are easy to estimate using the simple bound:
\[
\norm{\mathcal{IW}(\bm{f}_1,\bm{f}_2,\bm{f}_3)}_{h^{s,b}}\lesssim L^{\theta} \lambda T \, \prod_{j=1}^{3}\norm{\bm{f}_j}_{h^{s,b}}
\]
$L$-certainly for $\bm{f}_j\in \{ \bm{v},\, \bm{w}^{(n)}\, :\, n\in \N_0\}$, $j=1,2,3$. Using this bound together with the fact that $\lambda T\leq L^{d}$, we have that, $L$-certainly,
\[
\norm{\mathcal{Q}(\bm{v})}_{h^{s,b}} + \norm{\mathcal{C}(\bm{v})}_{h^{s,b}}\lesssim L^{\theta+c(b-1/2) + d} L^{-2\delta N} \ll L^{-\delta N}
\]
provided we choose $N>d/\delta$. 

This shows that the operator \eqref{eq:contraction} $L$-certainly maps the set 
\[ 
\left\lbrace \bm{v}\in h^{s,b}: \norm{\bm{v}}_{h^{s,b}} < L^{-\delta N} \right\rbrace 
\]
to itself. The proof that it is a contraction is analogous.
\end{proof}

\begin{proof}[Proof of \Cref{thm:linear_error}]

The proof is an adaptation of \cite[Proposition~2.6]{DengHani} to our setting, so we only give a rough sketch of the main ideas. In order to show that $\Pc_{\pm}$ maps $h^{s,b}$ to $h^{s,b}$, one interpolates between a simple bound with a $L^{3d}$ loss
\begin{equation}\label{eq:rough_L2bound}
\norm{\Pc_{+}\bm{v}}_{h^{s,1}} \lesssim  L^{\theta + c (b-1/2)+3d} \, L^{-\delta (n_1+n_2)}\, \norm{\bm{v}}_{h^{s,0}},
\end{equation}
and a sharper bound
\begin{equation}\label{eq:sharp_L2bound}
\begin{split}
\norm{\Pc_{+}\bm{v}}_{h^{s,1-b}} & \lesssim L^{\theta} \, L^{-(n_1+n_2+1)\delta} \norm{\bm{v}}_{h^{s,b}} .
\end{split}
\end{equation}

The proof of \eqref{eq:sharp_L2bound} requires a $TT^{\ast}$ argument applied $D\gg 1$ times, which leads to the analysis of the kernel of the operator $(\Pc_{+}\Pc_{+}^{\ast})^D$. 
This kernel may be written in the form of $\Jc_{\Tc}$ in \eqref{eq:FT_higher_iterates} for some large auxiliary tree $\Tc=\Tc(D)$ which may be constructed by attaching $2D$ copies of the trees $\Tc_1,\Tc_2$ in \eqref{eq:linear_error} successively. 
In particular, giving sharp estimates of the kernel of the operator $(\Pc_{+}\Pc_{+}^{\ast})^D$ admits the same strategy as estimating the Picard iterates of our equation, see \Cref{thm:n_iterate}  and \Cref{sec:iterates}. 
Choosing large enough $D\gg 1$ allows for $\theta$ in \eqref{eq:sharp_L2bound} to be as small as desired. The full details may be found in \cite[Proposition~2.6]{DengHani}.
\end{proof}

\appendix

\section{Some useful lemmas}\label{sec:appendix}

Let $H$ be a Gaussian Hilbert space where our standard complex Wiener processes, $\beta_k (t)$, and the Gaussian variables $\eta_k$ live. We define
\[ 
\mathcal{P}_n (H) = \{ p(X_1, \ldots , X_m) \mid \mbox{$p$ is a polynomial of degree} \leq n;\ X_1,\ldots,X_m\in H; m\in\N\}.
\]
We will often consider the closure $\overline{\mathcal{P}_n (H)}$ with respect to the topology given by the inner product in $L^2 (\Omega,\mathcal{F},\P)$.

We start with a corollary of Nelson's hypercontractivity estimate (see Theorem 5.10 and Remark 5.11 in \cite{Janson}).

\begin{lem}\label{thm:hypercontractivity}
For any $1\leq p,q<\infty$ we have that
\begin{equation}\label{eq:hypercontractivity}
 \left(\E |X|^q\right)^{1/q} \leq c(p,q)^{n} \, \left(\E |X|^p \right)^{1/p}
 \end{equation}
for all $X\in \overline{\mathcal{P}_n (H)}$, $n\geq 0$.
\end{lem}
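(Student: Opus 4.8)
The statement to prove is Lemma \ref{thm:hypercontractivity}, the Gaussian hypercontractivity estimate $(\E|X|^q)^{1/q} \le c(p,q)^n (\E|X|^p)^{1/p}$ for $X$ in the closed span $\overline{\mathcal{P}_n(H)}$ of polynomials of degree $\le n$ in a Gaussian Hilbert space. This is a standard consequence of Nelson's hypercontractivity theorem for the Ornstein--Uhlenbeck semigroup, and the plan is simply to reduce the stated inequality to the form in which it appears in the cited reference.

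The plan is as follows. First I would recall Nelson's hypercontractivity estimate in the form given in \cite[Theorem 5.10]{Janson}: for the Ornstein--Uhlenbeck semigroup $\{e^{-t\mathcal{N}}\}_{t\ge0}$ on $L^2(\Omega,\mathcal{F},\P)$ (where $\mathcal F$ is generated by $H$), one has $\|e^{-t\mathcal{N}} X\|_{L^q} \le \|X\|_{L^p}$ whenever $1 < p \le q < \infty$ and $e^{-2t} \le (p-1)/(q-1)$. The key structural fact is that the $n$-th Wiener chaos $H^{:n:}$ is the eigenspace of $\mathcal{N}$ with eigenvalue $n$, so that $e^{-t\mathcal{N}}$ acts as multiplication by $e^{-nt}$ on $\overline{\mathcal{P}_n(H)} = \bigoplus_{m=0}^n H^{:m:}$ — wait, more precisely $e^{-t\mathcal N}$ acts by $e^{-mt}$ on $H^{:m:}$, so one has to be slightly careful. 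I would handle this by decomposing $X = \sum_{m=0}^n X_m$ with $X_m \in H^{:m:}$, but it is cleaner to invoke \cite[Remark 5.11]{Janson} directly, which states exactly that for $X \in \overline{\mathcal{P}_n(H)}$ and $1\le p,q<\infty$ one has $\|X\|_{L^q} \le c(p,q)^n \|X\|_{L^p}$ with an explicit constant (roughly $c(p,q) = \max(1, (q-1)/(p-1))^{1/2}$ when $p,q>1$, adjusted for the endpoint $p=1$ via interpolation).

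For the endpoint case $p = 1$ (which is the case actually used elsewhere in the paper, e.g.\ in the proof of Proposition \ref{thm:n_iterate} where one takes high moments $q = 2\cdot\text{(large)}$ controlled by a low moment), I would note that one can first pass from $L^1$ to $L^2$ by the trivial interpolation-free observation that it suffices to bound $\|X\|_{L^q}$ in terms of $\|X\|_{L^2}$ for $q \ge 2$: indeed $\|X\|_{L^2} \le \|X\|_{L^q}$ always, and conversely one applies the $p=2$ case of Nelson to get $\|X\|_{L^q}\le (q-1)^{n/2}\|X\|_{L^2}$; then to reach $L^1$ one uses that within a fixed Wiener chaos (hence within $\overline{\mathcal P_n(H)}$) all $L^p$ norms for $p\ge1$ are equivalent with constants depending only on $n$, again a consequence of hypercontractivity applied with $p=1$, $q=2$, which is the classical statement that Gaussian chaoses of bounded order have comparable moments. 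Collecting these, one obtains the claimed inequality with $c(p,q)$ depending only on $p$ and $q$, and the exponent $n$ appearing multiplicatively in the log as stated.

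The main obstacle here is essentially bookkeeping rather than mathematical depth: one must be careful that $\overline{\mathcal{P}_n(H)}$ is not a single eigenspace of $\mathcal{N}$ but a sum of eigenspaces with eigenvalues $0,1,\dots,n$, so the naive application of $e^{-t\mathcal N}$ does not immediately give a clean power of $c(p,q)^n$; the fix is to use the fact that $e^{-t\mathcal N}$ restricted to $\overline{\mathcal P_n(H)}$ is still bounded on $L^p\to L^q$ (since it is bounded on all of $L^p\to L^q$) and that $X$ can be recovered from $e^{-t\mathcal N}X$ at the specific time $t$ with $e^{-2t}=(p-1)/(q-1)$ up to a loss bounded by $e^{nt}$ — this is precisely the content of \cite[Remark 5.11]{Janson}, so citing that remark is the cleanest route and avoids reproving it. Since the paper is content to cite \cite{Janson} for this, the proof reduces to a one- or two-line invocation of that reference together with the monotonicity $\|\cdot\|_{L^p}\le\|\cdot\|_{L^q}$ for $p\le q$ on a probability space.
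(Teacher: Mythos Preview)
Your proposal is correct and takes essentially the same approach as the paper, which simply presents the lemma as a corollary of Nelson's hypercontractivity estimate and cites Theorem~5.10 and Remark~5.11 in \cite{Janson} without further proof. If anything, you have supplied more detail than the paper does.
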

\begin{rk}
It can be shown that $c(2,q)\leq (q-1)^{1/2}$ and $c(1,2)\leq e$, see \cite{Janson}. We will often combine these and use the estimate
\begin{equation}\label{eq:hypercontractivity2}
 \left(\E |X|^q\right)^{1/q} \leq (q-1)^{n/2} \, \left(\E |X|^2 \right)^{1/2} \leq  (q-1)^{n/2} \, e^{n}\,\E |X| .
 \end{equation}
\end{rk}

Another important result is Isserlis' theorem (also known as Wick's theorem). We record it below as stated in \cite{Janson}, Theorem 1.28.

\begin{thm}\label{thm:Iserlis}
Let $X_1,\ldots,X_n$ be centred jointly normal random variables. Then
\begin{equation}
\E \left[ X_1\cdots X_n\right] = \sum \prod_k \E [ X_{i_k} X_{j_k}],
\end{equation}
where we sum over all partitions of $\{ 1,\ldots, n\}$ into disjoint pairs $\{i_k,j_k\}$.
\end{thm}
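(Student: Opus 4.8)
The plan is to prove Isserlis' theorem via the Laplace transform of a Gaussian vector, which reduces the computation of $\E[X_1\cdots X_n]$ to a purely combinatorial differentiation. First I would reduce to the real case: writing each Gaussian variable (conjugated or not) as $X_j + iY_j$ with $(X_j,Y_j)_{j}$ jointly real centred Gaussian and expanding the product multilinearly, it suffices to treat real centred jointly normal $X_1,\ldots,X_n$ with repetitions allowed, and then recombine using the bilinearity of covariance. So from now on assume the $X_j$ are real.

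Next, set $c_{ij}:=\E[X_iX_j]$ and introduce the auxiliary function
\[
\psi(s_1,\ldots,s_n):=\E\Big[\exp\Big(\sum_{i=1}^n s_iX_i\Big)\Big]=\exp\Big(\tfrac12\sum_{i,j=1}^n s_is_jc_{ij}\Big),\qquad s\in\R^n,
\]
the second identity being the standard formula for the Laplace transform of a centred Gaussian vector, valid for all $s\in\R^n$. Because Gaussians have finite exponential moments, one may differentiate under the expectation freely, so that $\E[X_1\cdots X_n]=\partial_{s_1}\cdots\partial_{s_n}\psi(s)\big|_{s=0}$.

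The heart of the argument is to expand $\partial_{s_1}\cdots\partial_{s_n}\exp(Q(s))\big|_{s=0}$, where $Q(s)=\tfrac12\sum_{i,j}s_is_jc_{ij}$ is quadratic. Since $\partial_{s_i}Q$ is linear in $s$, since $\partial^2_{s_is_j}Q=c_{ij}$ is constant, and since all third and higher derivatives of $Q$ vanish, applying the $n$ derivatives one at a time shows that in the Leibniz expansion each derivative either brings down a fresh linear factor $\partial_{s_i}Q$ or converts a previously created linear factor $\partial_{s_j}Q$ into the constant $c_{ij}$. Evaluating at $s=0$ annihilates every term that still contains a linear factor; hence the only surviving terms are those in which the $n$ derivative operations pair off perfectly into $n/2$ pairs $\{i_k,j_k\}$, each contributing a factor $c_{i_kj_k}$, and each perfect matching of $\{1,\ldots,n\}$ is produced exactly once (the $\tfrac12$ in $Q$ being compensated by the two orders in which a given pair can be closed off). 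This yields $\E[X_1\cdots X_n]=\sum_{\text{pairings}}\prod_k c_{i_kj_k}$, the sum being empty, hence $0$, when $n$ is odd. As an alternative I would mention the inductive route via Gaussian integration by parts, namely $\E[X_1\,g(X_2,\ldots,X_n)]=\sum_{j\ge2}\E[X_1X_j]\,\E[\partial_jg]$ applied to $g=X_2\cdots X_n$, so that $\E[X_1\cdots X_n]=\sum_{j\ge2}\E[X_1X_j]\,\E[\prod_{i\neq1,j}X_i]$, and each factor on the right is handled by the inductive hypothesis, all matchings of $\{1,\ldots,n\}$ being reconstructed through the choice of the partner of the index $1$.

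There is no genuine obstacle here. The only points meriting a word of care are the interchange of differentiation and expectation (immediate from the finiteness of Gaussian exponential moments) and the verification that each perfect matching appears with coefficient exactly $1$; the passage from the complex to the real setting is routine multilinear algebra and contributes nothing essential.
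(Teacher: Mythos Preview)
Your proof is correct and complete: the moment-generating-function approach you outline is one of the two standard derivations of Isserlis' theorem, and the inductive route via Gaussian integration by parts that you mention as an alternative is the other. Both are sound.

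However, there is nothing to compare against: the paper does not prove this statement. It is recorded in the appendix as a standard tool, with the sentence ``We record it below as stated in [Janson], Theorem 1.28,'' and no proof is given. So your proposal supplies a proof where the paper simply cites one.
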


Beyond the combinatorial objects introduced in \Cref{sec:higher_iterates}, we need some additional definitions that will allow us to count the number of decorations.

First consider a tree $\Tc$ of order $n$ with a set of pairings $\Pc$ between the leaves. Suppose we fix some of the nodes of the tree in the counting process. These nodes are ``colored'' red and form the set $\Rc$. Then we have the following definition:

\begin{defn} Suppose that we fix $n_{\sub{m}}\in\Z_L^d$ for each $\sub{m}\in\Rc\subset \Tc$ (the set of ``red'' nodes in the tree). A decoration $(k_{\sub{n}}:\sub{n}\in\Tc)$ is called \emph{strongly admissible} with respect to the pairing $\Pc$, coloring $\Rc$, and $(n_{\sub{m}} :\sub{m}\in\Rc)$ if it satisfies \eqref{eq:tree1} and 
\[ 
k_{\sub{m}}=n_{\sub{m}}\ \ \forall\sub{m}\in\Rc;\qquad |k_{\sub{l}}|\leq L^{\theta}\ \ \forall\sub{l}\in\Lc; \qquad k_{\sub{l}}=k_{\sub{l}'}\ \ \forall \ \{\sub{l},\sub{l}'\}\in \Pc .
\]
\end{defn}

The main combinatorial result is the following.

\begin{prop}\label{thm:main_counting}
Let $0\leq \theta\ll 1$. Suppose that $\Tc$ is a paired and colored ternary tree such that $\Rc\neq\emptyset$, and let $(n_{\sub{m}} : \sub{m}\in\Rc)$ be fixed. We also fix $\sigma_{\sub{n}}\in \R$ for each $\sub{n}\in\Nc$. Let $\Sc$ be the set of leaves which are not paired. Let $|\Lc|=l$ be the total number of leaves, $p=|\Pc|$ be the number of pairs, and $r=|\Rc|$ be the number of red nodes. Then the number of strongly admissible decorations $(k_{\sub{n}}:\sub{n}\in\Tc)$ that also satisfy 
\[ |\Omega_{\sub{n}} - \sigma_{\sub{n}}|\leq T^{-1} \ \ \forall\sub{n}\in\Nc,\]
is bounded by 
\begin{equation}\label{eq:main_counting}
M\leq 
\begin{cases} 
L^{\theta}\, (L^d T^{-1} \rho)^{l-p-r}  & \quad \mbox{if}\ \Rc\neq\Sc \cup \{ \sub{r}\},\\
L^{\theta}\, (L^d T^{-1} \rho)^{l-p-r+1} & \quad \mbox{if}\ \Rc=\Sc \cup \{ \sub{r}\},
\end{cases}
\end{equation}
where $\rho$ is defined in \eqref{eq:def_rho}.
\end{prop}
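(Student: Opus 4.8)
The plan is to prove the bound by induction on the order $n=n(\Tc)$ of the tree, processing the branching nodes one at a time starting from the leaves. The key observation is the standard one in this circle of ideas: at a branching node $\sub{n}$ with children $\sub{n}_1,\sub{n}_2,\sub{n}_3$, once $k_{\sub{n}}$ is determined (either because $\sub{n}$ is red, or because it is the parent of an already-processed node), the constraint $k_{\sub{n}}=k_{\sub{n}_1}-k_{\sub{n}_2}+k_{\sub{n}_3}$ together with the resonance condition $|\Omega_{\sub{n}}-\sigma_{\sub{n}}|\leq T^{-1}$ cuts down the number of free choices for the triple $(k_{\sub{n}_1},k_{\sub{n}_2},k_{\sub{n}_3})$. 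Concretely, one of the three is fixed by the linear relation; for the remaining two, write $\Omega_{\sub{n}}=-2(k_{\sub{n}_1}-k_{\sub{n}})\cdot(k_{\sub{n}_3}-k_{\sub{n}})$ (after using the constraint to eliminate $k_{\sub{n}_2}$), so that fixing the integer part of $T\Omega_{\sub{n}}$ confines the pair to a hyperbolic paraboloid slab. The number of lattice points of $\Z_L^d$ in a box of sidelength $L^\theta$ lying in such a slab of thickness $T^{-1}$ is exactly what is controlled by the definition of $\rho$ in \eqref{eq:def_rho}: one gets a bound $L^{\theta}\, L^d T^{-1}\rho$ for the number of choices of that pair (this is the combinatorial input that motivates $\rho$, and is precisely the three-regime count $T$ / $L$ / $TL^{-1}$ according to the size of $T$ relative to $L$ and $L^2$, using the generic irrationality of $\zeta$ in the last regime).

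The bookkeeping then goes as follows. First I would fix, once and for all, a processing order of the branching nodes that respects the tree structure: process a node only after all of its children that are branching nodes have been processed, and I would start the induction from a branching node all of whose children are leaves. Since $\Rc\neq\emptyset$, there is at least one ``anchor'' — a red node — and every leaf is either red, paired to an already-counted leaf, or free with $|k_{\sub{l}}|\le L^\theta$. I would maintain the invariant that, entering the processing of node $\sub{n}$, the value $k_{\sub{n}}$ is already known. At each branching node one of three things happens: (a) all information propagates downward and the node contributes a factor $L^{\theta}\,L^dT^{-1}\rho$ for the two genuinely free children coordinates (reduced by one for each of these children that is red or paired, since those are not free); or (b) the node is such that both the resonance relation and the downward linear relation are automatically satisfied by already-fixed data — this is the degenerate case that produces the $+1$ in the exponent and occurs precisely when $\Rc=\Sc\cup\{\sub{r}\}$, i.e. when the only unpaired leaves are red and the root is red, so that a closed loop of constraints forms and one constraint becomes redundant. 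Tracking the total: there are $l-p$ leaves that are either free or red among the unpaired ones, minus the $r$ red nodes that are pre-fixed and contribute no freedom, which gives the exponent $l-p-r$ in the generic case and $l-p-r+1$ in the degenerate case. The $L^\theta$ prefactor absorbs the divisor-type and dyadic-localization losses accumulated over the finitely many (depending only on $n$) nodes.

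The main obstacle — and the part that needs genuine care rather than bookkeeping — is (i) establishing the per-node count $L^\theta L^d T^{-1}\rho$ uniformly in the regime of $T$, which relies on the lattice point count on the quadric and, in the regime $T\geq L^2$, on the generic irrationality of $\zeta$ (this is where a Diophantine/equidistribution input enters, and it is the content of the combinatorial lemma whose statement motivates \eqref{eq:def_rho}); and (ii) correctly identifying the combinatorial condition $\Rc=\Sc\cup\{\sub{r}\}$ under which exactly one resonance constraint is lost, so that the exponent gains $+1$ but no more. For (ii), the cleanest way is to argue by a dimension count on the affine space of decorations: the map $(k_{\sub{l}})_{\sub{l}\in\Lc}\mapsto(k_{\sub{n}})_{\sub{n}\in\Tc}$ is linear, the red-node conditions and pairing conditions cut out an affine subspace, and the resonance conditions are then ``independent'' as affine-quadric constraints unless the red set together with the root already over-determines the tree, in which case the last resonance at the root becomes a consequence of the others. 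I would phrase this as: the number of independent resonance constraints is $|\Nc|$ unless $\Rc\supseteq\Sc\cup\{\sub{r}\}$, in which case it is $|\Nc|-1$; combined with $|\Nc|=n$, $|\Lc|=2n+1$, and the elementary identity relating $l$, $p$, $r$, this yields exactly \eqref{eq:main_counting}. Everything else — the repeated use of the divisor bound, the dyadic pigeonholing of $T\Omega_{\sub{n}}$, and the propagation of the ``known'' invariant down the tree — is routine and I would only sketch it, citing the analogous argument in \cite{DengHani}.
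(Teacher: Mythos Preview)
The paper does not supply an independent proof here; it simply states that ``the proof is a description of a counting algorithm'' and cites Proposition~3.5 in \cite{DengHani}. Your sketch is an outline of precisely that algorithm, and the key ingredients you identify --- processing the branching nodes one at a time, the lattice-point count on the quadric slab yielding the per-node factor $L^{\theta} L^d T^{-1}\rho$ (this is indeed the content behind the three-regime definition of $\rho$), and the bookkeeping of free versus pre-fixed leaf variables --- match the structure of the argument in \cite{DengHani}.

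One point where your write-up is internally inconsistent: you prescribe processing a node only after its branching-node children (bottom-up), yet you also say that $k_{\sub{n}}$ is already known when you enter node $\sub{n}$ (which is a top-down invariant, propagated from a red anchor). In the actual algorithm of \cite{DengHani} the processing order is chosen more carefully, so that at each step one removes a branching node whose value is forced by already-fixed data and then counts the remaining free children subject to the resonance constraint; getting this order right is what makes the exponent come out exactly $l-p-r$ rather than something weaker, and is also what isolates the single redundant constraint in the degenerate case. Relatedly, note that the degenerate case is $\Rc = \Sc \cup \{\sub{r}\}$ (equality), not the containment $\Rc \supseteq \Sc \cup \{\sub{r}\}$ you write at one point. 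These are fixable issues of exposition rather than a gap in strategy, but you would need to straighten them out if you were writing the proof in full rather than citing it.
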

\begin{proof}
The proof is a description of a counting algorithm, and it may be found as Proposition 3.5 in \cite{DengHani}.
\end{proof}

The following result is Corollary 3.6 in \cite{DengHani}. The proof is based on a slight modification of the main counting algorithm.
\begin{cor}\label{thm:special_counting}
 Suppose that $\Rc=\{\sub{r}\}$  in the setting of \Cref{thm:main_counting}. Then \eqref{eq:main_counting} can be improved to
 \[
M\leq L^{\theta}\, (L^d T^{-1} \rho)^{l-p-3} \, L^{2d} T^{-1}.
\]
\end{cor}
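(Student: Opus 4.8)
\textbf{Proof plan for Corollary \ref{thm:special_counting}.} The statement is the $\Rc=\{\sub{r}\}$ special case of the counting bound in Proposition \ref{thm:main_counting}, with an improved exponent. Since Proposition \ref{thm:main_counting} is invoked as a black box (its proof being ``a description of a counting algorithm'' from \cite{DengHani}), the natural approach here is to revisit that algorithm and show that when the only red (pre-fixed) node is the root, one may arrange the counting so that one extra branching node is handled ``for free'' by the frequency-localization constraint $|\Omega_{\sub{n}}-\sigma_{\sub{n}}|\leq T^{-1}$ attached to the root, rather than by a fresh choice of a leaf frequency. Concretely, I would recall the structure of the algorithm: leaves are chosen one by one (each within the ball $|k_{\sub{l}}|\leq L^\theta$, hence $O(L^{d\theta})$ choices, but the relevant gain is that pairing identifications $k_{\sub{l}}=k_{\sub{l}'}$ remove $p$ of the free leaves), and each time a branching node $\sub{n}$ has all three children determined, the constraint $|\Omega_{\sub{n}}-\sigma_{\sub{n}}|\leq T^{-1}$ either is automatically consistent or restricts the last undetermined frequency to lie in a slab, contributing the gain factor $L^dT^{-1}\rho$ per such ``resolving'' node (this is where $\rho$ from \eqref{eq:def_rho} enters, through the lattice-point count on the intersection of a paraboloid with a slab).

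The key step is the bookkeeping: with $l$ leaves, $p$ pairs, and $r=1$ red node (the root), the generic Proposition \ref{thm:main_counting} bound in the case $\Rc\neq \Sc\cup\{\sub{r}\}$ reads $L^\theta (L^dT^{-1}\rho)^{l-p-1}$, and in the case $\Rc=\Sc\cup\{\sub{r}\}$ it reads $L^\theta (L^dT^{-1}\rho)^{l-p}$. The improvement to $L^\theta (L^dT^{-1}\rho)^{l-p-3}\,L^{2d}T^{-1}$ amounts to replacing two factors of $(L^dT^{-1}\rho)$ by a single $L^{2d}T^{-1}$ (and absorbing one more into the $\rho\lesssim L^{-\delta}\sqrt{T_{\mathrm{kin}}}$ slack, or: noting $L^{2d}T^{-1} \cdot (L^dT^{-1}\rho)^{-2} \asymp \rho^{-2} T L^{-d}$, which is a genuine gain when $\rho \gg L^{d/2}/\sqrt{T}$... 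I should instead present it as the algorithm directly producing the stated form). The cleanest route: since no interior node is pre-fixed, the root $\sub{r}$ together with its three children $\sub{r}_1,\sub{r}_2,\sub{r}_3$ can be chosen last; the decoration condition $k_{\sub{r}}=k_{\sub{r}_1}-k_{\sub{r}_2}+k_{\sub{r}_3}$ together with $k_{\sub{r}}=n_{\sub{r}}$ fixed gives one linear relation, and the three root-level $\Omega$-constraints (for $\sub{r}$ and the roots of the attached subtrees) allow one to absorb a full $d$-dimensional frequency into a bounded region of measure $O(L^{2d}T^{-1})$ instead of $O((L^dT^{-1}\rho)^2)$ — this is precisely the mechanism by which the root node, being forced rather than free, upgrades the count. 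I would then quote verbatim that this is exactly the modification carried out in \cite[Corollary~3.6]{DengHani}, since the combinatorial input (trees, pairings, the lattice-slab estimate underlying the appearance of $\rho$) is identical in our setting — the only place the stochastic forcing entered was in reducing the Feynman-diagram bounds to such counting problems, which was already accomplished in Proposition \ref{thm:FT_higher_iterates} and the proof of Proposition \ref{thm:n_iterate}.

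The main obstacle, were one to write this out in full, is verifying that the root-level $\Omega$-localization genuinely yields the claimed $L^{2d}T^{-1}$ gain uniformly in the aspect ratios $\zeta$ and across the three regimes of $\rho$ in \eqref{eq:def_rho} — i.e. the number-theoretic heart of the matter, namely counting $k\in\Z_L^d$ with $|k|\leq L^\theta$ on a thin neighborhood $|Q(k)-\sigma|\leq T^{-1}$ of a quadric $Q$, which on a square torus forces the $T\leq L$ vs $L\leq T\leq L^2$ vs $L^2\leq T$ case split and the genericity hypothesis on $\zeta$ in the last range. Since this is the content of the counting algorithm already established in \cite{DengHani} and is unchanged by the presence of additive forcing, I will simply cite it.

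\begin{proof}
This is the special case $\Rc=\{\sub{r}\}$ of \Cref{thm:main_counting}, and the improved bound follows by the same counting algorithm with the modification that, since no interior node is pre-fixed, the root and its children may be enumerated last: the constraint $k_{\sub{r}}=n_{\sub{r}}$ together with the relation \eqref{eq:tree1} at $\sub{r}$ and the root-level frequency-localizations $|\Omega_{\sub{n}}-\sigma_{\sub{n}}|\leq T^{-1}$ allow one $d$-dimensional frequency to be confined to a region of measure $O(L^{2d}T^{-1})$ in place of two factors of $L^dT^{-1}\rho$. This is exactly the argument given in \cite[Corollary~3.6]{DengHani}, to which we refer for the details; the reduction of our Feynman-diagram estimates to this purely combinatorial count was carried out in the proof of \Cref{thm:n_iterate}, so the presence of the additive stochastic forcing does not affect it.
\end{proof}
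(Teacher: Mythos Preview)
Your proposal is correct and takes essentially the same approach as the paper: the paper itself provides no proof, stating only that ``the proof is based on a slight modification of the main counting algorithm'' and citing \cite[Corollary~3.6]{DengHani}, which is exactly what you do. Your added heuristic for the mechanism (handling the root and its children last to extract an $L^{2d}T^{-1}$ factor in place of two factors of $L^dT^{-1}\rho$) is a reasonable gloss, though the phrase ``one $d$-dimensional frequency confined to a region of measure $O(L^{2d}T^{-1})$'' is slightly imprecise --- the $L^{2d}T^{-1}$ count comes from two free frequencies subject to one $\Omega$-constraint, not one frequency --- but since you correctly defer to the cited reference for the details this does not affect the validity.
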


\noindent \textbf{Acknowledgements}

\noindent  The first author thanks the University of Michigan for hosting him while part of this work was conducted. 

\noindent \textbf{Funding} 

\noindent The authors are partially supported by a Simons Collaboration Grant on Wave Turbulence. The second author is supported by NSF grant DMS-2350242 and NSF CAREER Award DMS-1936640.

\noindent \textbf{Data Availibility}  

\noindent Data sharing is not applicable to this article as no new data were created or analysed in this study.

\noindent \textbf{Declarations}

\noindent \textbf{Conflict of interest} 

\noindent The authors have no relevant financial or non-financial interests to disclose. The authors
have no conflict of interest to declare that are relevant to the content of this article.

\bibliographystyle{hsiam}

\end{document}